\documentclass[12pt]{report}

\usepackage[noguesstabularheaders]{latexml}
\usepackage{array}
\usepackage{graphicx}
\usepackage{amssymb}
\usepackage{amsmath}

\iflatexml
  \usepackage{makecell}
  \lxRequireResource[type=text/css,content={
    .ltx_tocentry_appendix > a .ltx_tag_ref:before { content: "Appendix "; }
    .ltx_tabular .ltx_tabular { width: auto; }
  }]{}
\else
  \usepackage{uqthesis}
  \usepackage[a-2b]{pdfx}
\fi
\usepackage{hyperref}

\newtheorem{theorem}{Theorem}
\newtheorem{algorithm}{Algorithm}
\newtheorem{lemma}{Lemma}

\newtheorem{conjecture}{Conjecture}
\newtheorem{question}{Question}

\newtheorem{preproof}{{\bf Proof.}}

\newenvironment{proof}[1]{\begin{preproof}{\rm
               #1}\hfill{\rule[-0.5mm]{2mm}{2mm}}}{\end{preproof}}

\def\lcs#1{{\rm lcs}$(#1)$}
\def\scs#1{{\rm scs}$(#1)$}

\begin{document}

\title{{\bf Critical Sets in Latin Squares\\
        and\\
        Associated Structures}} 
\author{ Richard Winston Bean, B.Sc. (Hons) (UQ) \\[1.0cm]
 {\em A thesis submitted to}\\
 {\em the Department of Mathematics at}\\
 {\em The University of Queensland} \\
 {\em in fulfilment of the requirements for}\\
 {\em the degree of Doctor of Philosophy.}
\vspace{20mm}\\
The University of Queensland  \\
St. Lucia, Queensland, Australia
\vspace{8mm}\\
April, 2001}
\date{}
\maketitle

{\bf {\huge Statement of Originality}} \\

I declare that the work presented in this thesis is, to the best of my
knowledge and belief, original and my own work, except as acknowledged
in the text, and that this material has not been submitted, either in
whole or in part, for a degree at this or any other university.

Chapter~\ref{ch4} is joint work with Ebadollah Mahmoodian, submitted for publication
~\cite{me3}.

Parts of Chapter~\ref{ch5} are based on discussions with Ian Wanless.  This is
explained more fully in the text.

Chapter~\ref{ch6} is joint work with Diane Donovan and is published
in ~\cite{me1}.  

Chapter~\ref{ch7} is joint work with Diane Donovan, Abdollah Khodkar, and Anne
Street, and is published in ~\cite{me2}.

Chapter~\ref{ch8} is joint work with Peter Adams and Abdollah Khodkar, submitted for publication ~\cite{abk} and to appear in ~\cite{disj}.

\vskip 50mm

Richard Winston Bean
\newpage
{\bf {\huge Acknowledgements}}

Father and Mother for their love and support;

Diane Donovan for her ideas and for reading through countless drafts
of various chapters of my thesis;

Elizabeth Billington for some proof-reading;

Ebadollah Mahmoodian for always being interested in my crazy ideas 
and showing me related problems I would not have otherwise come across;

Peter Adams and Abdollah Khodkar for collaborating on the final paper;

Donald Arseneau and Neil Williams for technical help with \LaTeX;

Anne Street for lending me books no-one else could afford;

John Nelder, Peter Owens, David Bedford, Vassili Mavron, Tom McDonough,
Donald Preece, Ian Anderson, Donald Keedwell, Ian Wanless, and Anthony Hilton
for discussions all around the United Kingdom in July and August, 2000;

Ken Gray for miscellaneous thesis help;

Brendan McKay for {\it nauty} help, main class data and ideas for
intercalate-rich Latin squares;
 
Chris Rodger for helpful suggestions at conferences;

Jennie Seberry for her enthusiasm and many ideas;

Eric Mendelsohn for correspondence on greedy critical sets;

Nick Cavenagh for giving me a reason to finish more quickly;

John van Rees and Mohammad Mahdian for some more proof-reading;

Qscgz, Terry Ritter, Kathy Heinrich, Wal Wallis, John Bate, Ljiljana
Brankovi{\'c}, Adelle Howse, and Julie Lawrence for miscellaneous discussions.

\tableofcontents
\listoffigures
\listoftables
\chapter*{Abstract}
A critical set in a Latin square of order $n$ is a set of entries in an
$n \times n$ array which can be embedded in precisely one Latin square
of order $n$, with the property that if any entry of the critical set
is deleted, the remaining set can be embedded in more than one Latin
square of order $n$.

The number of critical sets grows super-exponentially as the order of the
Latin square increases.  It is difficult to find patterns in Latin squares
of small order (order 5 or less) which can be generalised in the process
of creating new theorems.  Thus, I have written many algorithms to find
critical sets with various properties in Latin squares of order greater
than 5, and to deal with other related structures.  
Some algorithms used in the body of the thesis are presented in Chapter~\ref{ch3}; 
results which arise from the computational studies and observations
of the patterns and subsequent results are presented in Chapters~\ref{ch4}, \ref{ch5}, \ref{ch6}, \ref{ch7} and \ref{ch8}.

The cardinality of the largest critical set in any Latin square of
order $n$ is denoted by \lcs{n}.  In 1978 Curran and van Rees proved
that \lcs{n} $\leq n^2 - n$.  In Chapter~\ref{ch4}, it is shown that \lcs{n}
$\leq n^2-3n+3$.

Chapter~\ref{ch5} provides new bounds on the maximum number of intercalates in
 Latin squares of orders $2^\alpha m$
($m$ odd, $\alpha \geq 2$) and $2^\alpha m + 1$ ($m$ odd, $\alpha \geq
2$ and $\alpha \neq 3$), and a new lower bound on \lcs{4m}.
It also discusses critical sets in intercalate-rich
Latin squares of orders 11 and 14.

In Chapter~\ref{ch6} a construction is given which verifies the existence of a critical 
set of size $\displaystyle{\frac{n^2}{4}} + 1$ when $n$ is even and $n \geq 6$.
The construction is based on the
discovery of a critical set of size 17 for a Latin square of order 8.

In Chapter~\ref{ch7} the representation of Steiner trades of volume less than
or equal to nine is examined.  Computational results are used to identify
those trades for which the associated partial
Latin square can be decomposed into six disjoint Latin interchanges.

Chapter~\ref{ch8} focusses on critical sets in Latin squares of order at most six and 
extensive computational routines are used to identify all the
critical sets of different sizes in these Latin squares.

\chapter{Introduction}\label{ch1}
This thesis examines the combinatorial structure of the ``Latin square''
and related ideas.  A ``Latin square'' can be thought of as a set of
ordered triples having certain properties.  The first known written
reference on this combinatorial structure was in
1723 \cite{dk}.  A Latin square of order $n$ is most commonly described
as an $n \times n$ array of symbols from a set $N$ of cardinality $n$
such that each symbol from the set $N$ occurs once in each row and
column.  One of the earliest problems relating to Latin squares, the
``Thirty-six Officers Problem'', was stated by Euler in 1779 \cite{dk}.
In this thesis, the topic under examination is the concept of subsets
of a Latin square which contain just enough information to generate the
complete Latin square.  These subsets are known as ``critical sets''.

The name ``critical set'' and the concept were invented by a statistician,
John Nelder, in 1977, and his ideas were first published in a note
\cite{Nel1}.  This note posed the problem of giving a formula for the
size of the largest and smallest critical sets for a Latin square of a
given order.

The initial theory of critical sets was expanded by authors such
as Curran, van Rees, Smetaniuk, C. Colbourn, M. Colbourn, and
Stinson \cite{MR80j:05022,MR81d:05015,MR84g:05036,MR85k:68035}
between 1978 and 1983.  After eight years of silence,
the topic was re-examined in a paper by Cooper, Donovan and
Seberry in 1991 \cite{MR92i:05049}.  Since then, the topic has
been prolifically covered by many authors; for instance, Donovan
\cite{MR1758263,MR99k:05038,MR99i:05039,MR99a:05018,MR98a:05030,MR97g:05032,
MR95k:05030,MR95j:05043}, Keedwell \cite{
MR99j:05036,MR97i:05017,MR97f:05031,
MR96a:05027,MR1605114}, and Mahmoodian
\cite{MR2000b:05023,MR99i:05084,MR99b:05060,MR98e:05019,MR98b:05044}.

More recently, critical sets have been put forward as a possible
secret-sharing scheme in Street \cite{MR93c:05026}, Cooper, Donovan
and Seberry \cite{MR95j:05043} and Seberry and Street \cite{MR1760181}.  Latin squares
have been used in cryptographic contexts in papers such as \cite{desv}
and \cite{MR98i:94027} and critical sets would be a useful way of reducing
the storage space required for the Latin squares.

Chapter~\ref{ch2} provides definitions which will be used in the thesis and gives
the appropriate background information which has been presented in
these papers.

In order to discover critical sets with unusual properties, it is crucial
to write efficient algorithms, to use fast computers and to experiment
with unorthodox approaches.  As the computational aspects of my work
underlie the rest of the thesis, Chapter~\ref{ch3} is devoted to the presentation
of some of the algorithms used in the rest of the thesis.  Chapters~\ref{ch4}
to \ref{ch8} then present results arising from the use of these algorithms.

Critical sets are complex structures and we are only just beginning
to understand them.  Data generated by comprehensive computer
searches for critical sets with particular properties has helped us
to generate much of the knowledge we now have about these structures.
However, research has shown that computer analysis of critical sets,
defining sets and premature partial Latin squares is computationally
expensive (see Colbourn \cite{MR85d:05055} and Colbourn, Colbourn and
Stinson \cite{MR85k:68035}).  Thus, it has been useful to write fast and
efficient algorithms in order to generate critical sets in Latin squares
of non-trivial orders.  These algorithms are documented in Chapter~\ref{ch3},
and have aided the discovery of patterns in such Latin squares.  For example,
the development of the main theorem in Chapter~\ref{ch6} required the generation of a critical set
of order 8 and size 17.  Results for specific orders of Latin squares
have guided the development of general results and conjectures which
are given in Chapters~\ref{ch4} to \ref{ch8}.
%

The cardinality of the largest critical set in any Latin square of
order $n$ is denoted by \lcs{n}.  In 1978 Curran and van Rees proved
that \lcs{n} $\leq n^2 - n$.  In Chapter~\ref{ch4}, it is shown that \lcs{n}
$\leq n^2-3n+3$.  This is joint work with Ebadollah Mahmoodian, and has
been submitted for publication (see \cite{me3}).

In Chapter~\ref{ch4}, we also show that the constructions for the largest known
critical sets are closely related to constructions
for Latin squares containing the largest known number of intercalates for a
given order.  This connection is expanded upon in Chapter~\ref{ch5}.  Chapter~\ref{ch5}
is based on discussions with Ian Wanless, and gives new bounds for the
maximum number of intercalates in Latin squares of orders $2^\alpha m$
($m$ odd, $\alpha \geq 2$) and $2^\alpha m + 1$ ($m$ odd, $\alpha \geq 2$
and $\alpha \neq 3$), and a new lower bound on \lcs{4m}.
We also discuss critical sets in intercalate-rich
Latin squares of orders 11 and 14.

Later papers such as \cite{MR1758263} introduce the idea of verifying
the existence of certain possible sizes of critical sets, instead
of just looking for the upper and lower bounds.  In the cited paper,
Donovan and Howse proved that for all $n$ there exist critical sets of
order $n$ and size $s$, where $\lfloor \displaystyle{\frac{n^{2}}{4}}
\rfloor \leq s \leq \displaystyle{\frac{n^{2}-n}{2}}$ with the exception
of the case $s = \displaystyle{\frac{n^{2}}{4}} + 1$ when $n$ is even.
In Chapter~\ref{ch6} a construction is presented for this exception, where $n
\geq 6$.  It is based on the discovery of a critical set of size 17
for a Latin square of order 8.  This verifies that there does exist
a critical set of order $n$ and size $\displaystyle{\frac{n^{2}}{4}}$
+ 1 when $n$ is even and $n \geq 6$.  This chapter is joint work with Diane Donovan,
and is published in \cite{me1}.

There is a connection between critical sets in Latin squares,
defining sets in block designs (an analogous idea --- see for example
\cite{MR93c:05026}) and premature partial Latin squares (see Brankovi{\'c},
Hor{\'a}k, Miller and Rosa {\cite{mirka1}}).  However, in the past, critical
sets, defining sets and premature partial Latin squares have been studied
in isolation and, in many cases,  using different techniques.  But as
articles \cite{dks1} and \cite{dks2} showed, there is much to be gained
by studying these configurations in unison. A crucial element in the
identification of defining sets or critical sets is the determination of
interchangeable sets within the design or Latin square.  In designs
these interchangeable sets are known as trades and in Latin squares
as ``Latin interchanges'' (also known as ``critical partial Latin squares''
\cite{MR96a:05027,MR98b:05019}).  So in Chapter~\ref{ch7}, we focus on the
connection between Latin interchanges and trades in designs, and develop
new results which help us classify these structures.

Latin interchanges are particularly important when searching for critical sets
with given properties such as a fixed size, or symmetrical properties,
and are used to establish that certain subsets of Latin squares are critical.
The use of interchanges was important in proving the existence of a
critical set of order $n$ ($n$ even) and size $\displaystyle{\frac{n^2}{4}} + 1$, and
also in the enumeration of critical sets of order at most six (Chapter~\ref{ch8}).

The representation of Steiner trades of volume less than or equal to
nine, provided in Khosrovshahi and Maimani \cite{MR2000g:05030}, is
examined and those for which the associated partial Latin square can be
decomposed into six disjoint Latin interchanges are identified.  This is
joint work with Diane Donovan, Abdollah Khodkar, and Anne Street and has
been published in \cite{me2}.  This research has led to a study of the
inherent nature of these configurations in order to obtain information
for refining  searches and associated algorithms.

Chapter~\ref{ch8} focusses on critical sets in Latin squares of order at most six and all the
critical sets of different sizes in these Latin squares 
are enumerated.  We comment on properties of the numbers of critical
sets found, particularly for the case of order 6 Latin squares, and
establish that \lcs{6} $= 18$.  This chapter is joint work with Peter
Adams and Abdollah Khodkar (see \cite{abk}).

The conclusion, with suggestions for further research, forms Chapter~\ref{ch9}.

Three appendices are provided, giving results which are referred to in
Chapters~\ref{ch4}, \ref{ch6} and \ref{ch8}.
%
%
%
%
%
%
%
\makeatletter
\newcommand{\ecell}{\rule{0.5em}{0pt}}
\newlength{\lsq@cell}
\iflatexml
  \newenvironment{latinsq}[2][0]{%
    \settowidth{\lsq@cell}{#1}%
    \advance\lsq@cell by 1.1em
    \ifdim\lsq@cell<1.8em \lsq@cell 1.8em\fi
    \def\lsq@strut{\rule[-0.3\lsq@cell]{0pt}{\lsq@cell}}%
    \begin{tabular}{|*{#2}{>{\lsq@strut}c|}}%
  }{\end{tabular}}
\else
  \newenvironment{latinsq}[2][0]{%
    \settowidth{\lsq@cell}{#1}%
    \advance\lsq@cell by 1.1em
    \ifdim\lsq@cell<1.8em \lsq@cell 1.8em\fi
    \setlength{\tabcolsep}{0pt}%
    \def\lsq@strut{\rule[-0.3\lsq@cell]{0pt}{\lsq@cell}}%
    \begin{tabular}{|*{#2}{>{\lsq@strut\centering\arraybackslash}p{\lsq@cell}|}}%
  }{\end{tabular}}
\fi
\makeatother
\chapter{Definitions}\label{ch2}
This chapter gives all the basic definitions required in the body of this
thesis, and relevant references are given.

\section{Latin Squares}
An $n \times n$ {\it Latin square} is an $n \times n$ array of symbols
(or elements) chosen from a set $N$ of size $n$ such that each symbol
occurs exactly once in each row and exactly once in each column.  In this thesis,
we take $N =\{ 0, \dots, n-1 \}$ or $N = \{ 1, \dots, n \}$.   The context
in which the numbers occur will always make clear which set is in use.
The positive integer $n$ is known as the {\it order} of the Latin square.

A Latin square can be represented as a set of 3-tuples, or ordered
triples.  These triples will be referred to as {\it entries}.
The first element in a 3-tuple $(i,j;k)$ refers to the row number, $i$, the second to
the column number, $j$, and the third to the symbol, $k$, of $N$ contained
in the cell at the intersection of the row $i$ with the column $j$.  Throughout this thesis
it will be assumed that where an entry in an $n \times n$ Latin square
based on the set of symbols $N = \{ 0, \dots, n-1 \}$
is referred to as a 3-tuple, 
the third element of the tuple has an implicit ``(mod~
$n)$'' after it.  That is, the third element 
falls into the range $0, \dots, n-1$.  One example of an $n \times n$ Latin square is $BC_n =
\{ (i, j; i + j ) \mid 0 \leq i,j \leq n-1 \}$.  This Latin
square is known as the {\it back-circulant} Latin square of order $n$.
It is equivalent to the group table for the group of integers, ${\mathbb Z}_n$.
The Latin square $BC_6$ is depicted overleaf.

\begin{center}
\begin{tabular}{c}
\begin{latinsq}{6}
\hline 0&1&2&3&4&5 \\
\hline 1&2&3&4&5&0 \\
\hline 2&3&4&5&0&1 \\
\hline 3&4&5&0&1&2 \\
\hline 4&5&0&1&2&3 \\
\hline 5&0&1&2&3&4 \\
\hline 
\end{latinsq}\\[3pt]
 $BC_6$
\end{tabular}
\end{center}

Another representation of Latin squares is used in two Russian
sources, an encyclopedia of mathematics \cite{enc} and Sachkov \cite{sachkov}.  
Sachkov calls two mappings (also known
as functions) $\psi : X \rightarrow Y$ and $\phi : X \rightarrow Y$ 
{\it discordant} if for all $x \in X, \psi(x) \neq \phi(x)$.
A mapping $\psi : X \rightarrow Y$ is called a {\it substitution} if
$X = Y$ and $\psi$ is bijective.  Then an $n \times n$
Latin square $L$ is a sequence of 
$n$ mutually discordant substitutions $\phi_{i} = \{ (x,y) \mid x,y \in N \wedge \phi_{i}(x) = y \}$ on a symbol set $N$ of size $n$,
written $L = [ \phi_1, \dots, \phi_n]_n$.  An example of a $3 \times 3$
Latin square in this form is $L = [ \phi_{1}, \phi_{2}, \phi_{3} ]_3$, where $\phi_{1}
= \{ (1,1),(2,2),(3,3) \}$, $\phi_{2} = \{ (1,2),(2,3),(3,1) \}$,  and
$\phi_{3} = \{ (1,3),(2,1),(3,2) \}$.  (In this case, the substitutions are
represented as ordered pairs from $X \times Y$.)


In a similar manner to the 3-tuple defined above, the notation $(i,j)$ 
with reference to a Latin square denotes the {\it cell} or {\it position} which is the intersection of row $i$ and
column $j$ in the $n \times n$ array.  The symbol occurring in a certain position $(i,j)$ in a Latin square $L$
may be written as $L_{ij}$.

A Latin square $L$ is called {\it symmetric} if for all entries $(x, y; z)$ in $L$,
the entry $(y, x; z)$ is also in $L$. 

Similarly, a Latin square $L$ is called {\it totally symmetric}, first defined in \cite{MR80a:05032},
if for all entries $(x, y; z) \in L$, $\{ (y, x; z), (x, z; y), 
(y, z; x), (z, x; y), (z, y; x) \} \subseteq L$. 

A {\it transversal} ~$T$ in an $n \times n$ Latin square $L$ is a
set of $n$ entries from $L$, $\{(r_1,c_1;e_1),\dots,$ $(r_n,c_n;e_n)\}$
such that all rows, columns, and symbols are represented exactly once;
that is, $\{r_1,\dots,r_n$\}, $\{c_1,\dots,c_n\}$, and $\{e_1,\dots,e_n\}$
are each sets of size $n$.

Given a transversal $T = \{ (r_1,c_1;e_1), \dots, (r_n,c_n;e_n) \}$
in an $n \times n$ Latin square $L$, we {\it prolong} $L$ along $T$ to
obtain $L'$.  That is, we form a new Latin square $L'$ of order $n+1$ from
$L$, using the transversal $T$, as follows.  Let $L' = \{ (r_i,c_i;n+1)
\mid 1 \leq i \leq n \} \cup \{ (n+1,n+1;n+1) \} \cup \{ (r_i, n+1;
e_i), (n+1, c_i; e_i) \mid 1 \leq i \leq n \} \cup (L \setminus T)$.
This technique, called {\it prolongation}, will be used in Chapters~\ref{ch3} and \ref{ch5}.
It is also referred to as {\it stripping the transversal} in Lindner
and Rodger \cite{dt}.

The following definition of the {\it direct product} of two Latin squares
is taken from Bedford and Whitehouse \cite{bandw}.

Let $M$ and $L$ be Latin squares of order $m$ and $n$ respectively
with symbols from the sets $ \{ 0, 1, \dots, m-1 \}$ and
$ \{ 0, 1, \dots, n-1 \}$ respectively.  Define $L^r$ to be the array
obtained from $L$ by adding $rn$ to each symbol of $L$, for 
$r = 0, 1, \dots, m-1$.  The direct product of $M$ with $L$ is the
Latin square of order $mn$ constructed by replacing each symbol $r$ in $M$
by the array $L^r$.  This is denoted by $M \times L$.

In Chapters~\ref{ch3} and \ref{ch5}, we shall concisely denote the direct product of $BC_n$
with itself $m$ times as ${\mathbb Z}_n^m$.

\section{Partial Latin Squares}
A {\it partial Latin square} is an $n \times n$ array such that each
symbol from a set $N$ of size $n$ occurs at most once in each row and at
most once in each column.  The number of non-empty positions of the array
is called the {\it size} (or {\it volume}) of the partial Latin square.
The {\it shape} of the partial Latin square is the set of non-empty
positions.  Expressed in set-theoretic terms, if $P$ is a partial Latin
square represented as a set of ordered triples, the size of $P$ is $|P|$ and the shape of $P$ is $S(P) = \{
(i,j) \mid (i,j;k) \in P \}$.  An $n \times n$ partial Latin square containing $n^2$
entries is called a {\it complete Latin square} or just a Latin square.
If a partial Latin square $P$ is a subset of exactly one Latin square $L$
it is said that $P$ is {\it uniquely completable}, or {\it UC} for short.  A {\it completion}
of $P$ is a Latin square $L$ which is a superset of $P$.

For a partial Latin square $P$ in a Latin square $L$ with symbol set $N$, we define the following sets for each
row $i \in N$, column $j \in N$ and symbol $k \in N$. For fixed $i$, let $R_i(P) = \{ k \mid (i, j; k)
\in P \}$; for fixed $j$, let $C_j(P) = \{ k \mid (i, j; k) \in P \}$; and for fixed $k$, let $E_k(P) = \{ (i,j)
\mid (i, j; k) \in P \}$.  So $R_i(P)$ ($C_j(P)$) is the set of symbols which
appear in row $i$ (column $j$) of $P$ and $E_k(P)$ is the set of positions in $P$ where the
symbol $k$ appears.  Then, for each position $(i,j)$, $1 \leq i,j \leq
n$, we define $x_{i,j}(P) = | R_i(P) \cup C_j(P) |$.  The concepts of $R_i(P),
C_j(P), E_k(P)$ and $x_{i,j}(P)$ will help in explaining the ideas behind Chapter~\ref{ch4}, where we use these concepts to
tighten the bound on the largest size of a critical set in a Latin square.

An $m \times m$ {\it subsquare} of a Latin square $L$ with symbol set $N$ is a set $S$ of $m^2$
entries in $L$ such that the sets of first, second and third elements in 
the ordered triples in $S$ contain $m$ different rows,
$m$ different columns and $m$ different symbols respectively.  In formal terms, $|S| = m^2$ and
for all $i, j, k \in N, 
|R_i(S)| = m$ or $|R_i(S)| = 0, |C_j(S)| = m$ or $|C_j(S)| = 0,$ and $|E_k(S)| = m$ or $|E_k(S)| = 0$.

\section{Critical Sets}
A proper subset $P$ of a Latin square $L$ is called a {\it critical set} if
\begin{enumerate}
\item $P$ is uniquely completable, and
\item the omission of any entry in $P$ 
destroys the unique completion property \cite{Nel1}.
\end{enumerate}
For example, in Table~\ref{def-one} above, the partial Latin square $P$ is
a critical set for $BC_3$, since it has unique completion to $BC_3$,
but $P \setminus \{ (1,1;2) \}$ completes
to both $BC_3$ and $L_1$, and $P \setminus \{ (0,0;0) \}$
completes to both $BC_3$ and $L_2$.  $P \setminus \{ (1,1;2) \}$ and
$P \setminus \{ (0,0;0) \}$ each have precisely four completions.

\begin{table}
\begin{center}
\caption{Critical sets and Latin squares of order 3}
\label{def-one}
\begin{tabular}{cccc}
\begin{latinsq}{3}
\hline 0 & \ecell & \ecell \\
\hline  \ecell & 2 & \ecell \\
\hline  \ecell & \ecell & \ecell \\
\hline 
\end{latinsq} &
\begin{latinsq}{3}
\hline 0 & 1 & 2 \\
\hline 1& 2 & 0\\
\hline 2&0 &1 \\
\hline 
\end{latinsq} &
\begin{latinsq}{3}
\hline 0 & 2 & 1 \\
\hline 2 & 1 & 0 \\
\hline 1 & 0 & 2 \\
\hline 
\end{latinsq} &
\begin{latinsq}{3}
\hline 1 & 0 & 2 \\
\hline 0 & 2 & 1 \\
\hline 2 & 1 & 0 \\
\hline 
\end{latinsq} \\[3pt]
 $P$ & $BC_3$ & $L_1$ & $L_2$
\end{tabular}
\end{center}
\end{table}

All of the following definitions, related to ``weak'' or ``strong'' critical
sets of various kinds, will be used in Chapter~\ref{ch8}, where we enumerate and
classify all critical sets of order at most six.
The next two definitions are taken from Bate and van Rees \cite{MR2000g:05034}.

A {\it strong critical set} $C$ for a Latin square $L$ with symbol set $N$ is a critical set
such that there is a sequence of $m = n^2-|C|$ partial Latin squares $C
= P_1 \subset P_2 \subset \dots \subset P_m \subset L$ where for any $i$,
$1 \leq i \leq m-1$, $P_{i} \cup \{ (r_i, c_i; e_i) \} = P_{i+1}$
and $P_i \cup \{(r_i,c_i;e)\}$ is not a partial Latin square for any $e \in N
\setminus \{ e_i \}$. 

A {\it semi-strong critical set} $C$ for a Latin square $L$ with symbol
set $N$ is a critical set such that there is a sequence of $m = n^2 -
|C|$ partial Latin squares $C = P_1 \subset P_2 \subset \dots \subset
P_m \subset L$ where for any $i$, $1 \leq i \leq m-1$, $P_{i} \cup \{
(r_i, c_i; e_i) \} = P_{i+1}$ and one of $P_i \cup \{(r_i,c_i;e)\}$ or $P_i
\cup \{(r,c_i;e_i)\}$ or $P_i \cup \{(r_i,c;e_i)\}$ is not a partial
Latin square for any $e \in N \setminus \{ e_i \}$, or is not a partial
Latin square for any $r \in N \setminus \{ r_i \}$, or is not a partial
Latin square for any $c \in N \setminus \{ c_i \}$ respectively.

A {\it weak critical set} is a critical set which is neither strong nor semi-strong.

In the process of completing the critical set $C$ to the 
Latin square $L$ of order $n$ which it characterizes, we say that the addition
of an entry $t=(r,c;s)$ (where $(r,c)$ is empty in $C$) is {\em forced} (see \cite {MR99j:05036})
in the process of completion of a set $T$ of entries
$(|T|<n^2,\;{\cal C}\subseteq T\subset L)$ to the complete set of
entries which represents $L$, if one of the following holds:
\begin{itemize}
\item [(i)] $\forall r' \neq r$, $\exists z \neq c$ such that 
$(r',z;s)\in T$ or $\exists z \neq s$ such that 
$(r',c;z)\in T$, or 
\item [(ii)] $\forall c' \neq c$, $\exists z \neq r$ such that 
$(z,c';s)\in T$ or $\exists z \neq s$ such that 
$(r,c';z)\in T$, or
\item [(iii)] $\forall s' \neq s$, $\exists z \neq r$ such that 
$(z,c;s')\in T$ or $\exists z \neq c$ such that 
$(r,z;s')\in T$.
\end{itemize}
A critical set is called {\em totally weak} if no entry is forced.

The following extension of the concept of the semi-strong critical set is
taken from Bedford and Whitehouse \cite{bandw}.  To give the definition
of a near-strong critical set, we need to give a definition of a {\it
conjugate} of a partial Latin square, which will be expanded upon
in Section~\ref{sec27}.

If $\{ a,b,c \} = \{ 1,2,3 \}$, then the $(a,b,c)-conjugate$ of $P$
is denoted and defined by $P_{(a,b,c)} = \{ (x_a, x_b; x_c) \mid (x_1, x_2; x_3) 
\in P \}$.  For $\theta \in S_3$, the symmetric group on $\{1,2,3\}$,
we define $\theta(x_1,x_2,x_3)=(x_{\theta(1)},x_{\theta(2)},x_{\theta(3)})$.

Let $P$ be a partial Latin square of order $n$ defined on a symbol set $N$.  
Then $A_P$ is an {\it array of alternatives} for $P$ if
\begin{enumerate}
\item $A_P$ is an $n \times n$ array ;
\item whenever the $(i,j)^{\rm {th}}$ cell of $P$ is filled, the  
$(i,j)^{\rm {th}}$ cell of $A_P$ is empty; and
\item whenever the $(i,j)^{\rm {th}}$ cell of $P$ is empty, the 
$(i,j)^{\rm {th}}$ cell of $A_P$ contains all the symbols of $N$ which do 
not appear in the $i^{\rm {th}}$ row or $j^{\rm {th}}$ column of $P$.
\end{enumerate}
We denote the set of symbols in cell $(i,j)$ of $A_P$ by ${A_P}(i,j)$. 
{\rm
Let $P$ be a partial Latin square. We shall say that the symbol  
$k'\in A_P(i,j)$ is {\it forced out} of $A_P$ if either:
\begin{itemize}
\item [(1)] there exists $r>0$ and $i_1,i_2,\ldots,i_r$ (all $\neq i$) 
with $k'\in A_P(i_1,j)\cup\ldots\cup A_P(i_r,j)$ and 
$|A_P(i_1,j)\cup\ldots\cup A_P(i_r,j)|=r$; or 
\item [(2)] $\theta(i,j,k')$ satisfies $1$ in $A_{P_{\theta(1,2,3)}}$ for
some $\theta\in S_3$.
\end{itemize}

The reduced array of alternatives,
$RA_P$, is the array obtained from $A_P$ by successively removing
symbols which are forced out until no more symbols can be forced out.
Then the addition
of an entry $(i,j;k)$ to $P$ is said to be {\it semi-forced} if either:

\begin{enumerate}
\item $k$ is the only symbol in $RA_P(i,j)$; or
\item $k$ occurs exactly once in either the $i^{\rm {th}}$ row or
$j^{\rm {th}}$ column of $RA_P$.
\end{enumerate}
}

Note that if a triple is forced it is also semi-forced.

For example, consider the partial Latin square $A$ given in 
Table~\ref{def-two} above.  We examine
the $(1,3,2)$-conjugate of $A$, $A_{(1,3,2)}$.  The symbols 1 and 6 occur in
some order at the positions $(2,3)$ and $(3,3)$ of $A_{(1,3,2)}$, and
the position $(6,3)$ of $A_{(1,3,2)}$ must contain either 4 or 6.
Thus, the position $(6,3)$ of $A_{(1,3,2)}$ is forced to contain 4.
This is because the entry $(6,3;6)$ is forced out of the array
of alternatives for $A_{(1,3,2)}$.

Therefore, we say that the addition of the triple $(6,4;3)$ to $A$ is
semi-forced.

\begin{table}
\begin{center}
\caption{Example of a semi-forced entry in a partial Latin square $A$}
\label{def-two}
\begin{tabular}{ccc}
\begin{latinsq}{6}
\hline   \ecell & \ecell & \ecell & 4 & \ecell & \ecell \\
\hline   \ecell & 1 & 4 & 5 & 6 & \ecell \\
\hline   \ecell & 4 & 2 & 6  & \ecell & \ecell \\
\hline 4  & \ecell & \ecell & \ecell & 3 & \ecell \\
\hline   \ecell & \ecell & \ecell & \ecell & 2 & 4 \\
\hline 6 & 5 & 1 & \ecell & 4 & \ecell \\
\hline
\end{latinsq} &

\begin{latinsq}{6}
\hline  \ecell & \ecell & \ecell & 4 & \ecell & \ecell \\
\hline 2 & \ecell & \ecell & 3 & 4 & 5 \\
\hline   \ecell &3  & \ecell &2  & \ecell & 4 \\
\hline   \ecell & \ecell &5  &1  & \ecell & \ecell \\
\hline   \ecell &5  & \ecell &6  & \ecell & \ecell \\
\hline 3 & \ecell & \ecell & 5 & 2 & 1 \\
\hline
\end{latinsq} \\[3pt]
 $A$ & $A_{(1,3,2)}$
\end{tabular}
\end{center}
\end{table}

{\rm A UC set $U$ is {\it near-strong} UC to the Latin square $L$
if we can find a sequence of sets of triples 
$U=S_1\subset S_2\subset ... \subset S_f = L$ such that each triple
$t\in S_{v+1}\setminus S_v$ is semi-forced in $S_v$, where $1 \leq v \leq f-1$.
}

We call a UC set {\it Bedford-Whitehouse totally weak} if no
entry is semi-forced.  If a UC set is Bedford-Whitehouse totally weak,
this implies that it is also totally weak.

In Keedwell's terminology in \cite{MR99j:05036}, the phrase `strong
critical set' is equivalent to Bate and van Rees's semi-strong concept,
and a weak critical set is one which is not strong, which is equivalent
to the definition of Bate and van Rees.  The terminology of Keedwell
will be used in Chapter~\ref{ch8}.

A parallel concept to total symmetry for Latin squares exists for critical sets.
A critical set $C$ is called {\it totally symmetric} if
for all entries $(x, y; z) \in C$, \break $\{ (y, x; z), (x, z; y), (y, z; x), 
(z, x; y), (z, y; x) \} \subseteq C$ \cite{pre1}.

\section{Latin Interchanges}
Latin interchanges are subsets of Latin squares which are most often used 
in the process of determining whether a given subset of a Latin square
is a critical set.  Their use greatly speeds up this process, as testing
whether several Latin interchanges intersect a given set is a much faster
process than attempting to determine whether a given set has unique
completion.

A {\it Latin interchange} in an $n \times n$ Latin square $L_{1}$ is the set difference
between it and another $n \times n$ Latin square $L_{2}$; that is, $L_{1} \setminus L_{2}$.
This is the most concise definition to appear in the literature, and is used in \cite{njcint}.
A longer definition is given in papers such as \cite{MR96a:05027},
where Latin interchanges are known as critical partial Latin squares,
and \cite{MR98b:05019}.  The definition from \cite{MR98b:05019} follows.

Consider two partial Latin squares $L$ and $M$ of order $n$ with symbol
set $N$ which have the same size and shape.  These are said to be {\it
disjoint} if $L_{ij} \neq M_{ij}$ for all $i,j \in N$, and {\it mutually
balanced} if, for each column $c$ of $L$, the set of symbols in column $c$
of $L$ is equal to the set of symbols in column $c$ of $M$, and for each
row $r$ of $L$, the set of symbols in row $r$ of $L$ is equal to the
set of symbols in row $r$ of $M$.  Formally, $L$ and $M$ are mutually
balanced if for all $r,c \in N$, $R_r(L) = R_r(M)$ and $C_c(L) = C_c(M)$.

It is said that $M$ is a {\it disjoint mate} of $L$ if $L$ and $M$ are disjoint and 
mutually balanced.
Then a {\it Latin interchange} is a partial Latin square $P$
such that there exists a disjoint mate, $P'$ of $P$.
At times it will be useful to emphasise the connection between a Latin
interchange and its disjoint mate.  This will be particularly true
in Chapter~\ref{ch7}.  So in Chapter~\ref{ch7} we shall refer to a Latin interchange as a pair
of partial Latin squares $(P, P')$ and it will be assumed that $P$
and $P'$ are the same size and shape, and are disjoint and mutually balanced.

An example of a Latin interchange $I$ of order 3 and size 7 and its disjoint mate $I'$ is given below.
\begin{center}
\begin{tabular}{cc}
\begin{latinsq}{3}
\hline
 \ecell &2& 3  \\ \hline
1& \ecell & 2  \\ \hline
2&3& 1      \\ \hline
\end{latinsq}
&
\begin{latinsq}{3}
\hline
 \ecell &3&2 \\ \hline
2& \ecell &1 \\ \hline
1&2&3 \\ \hline
\end{latinsq} \\[3pt]
 $I$ & $I'$
\end{tabular}
\end{center}
An {\it intercalate} is a Latin interchange of size 4 \cite{MR1:199a}.
It is also a $2 \times 2$ subsquare.

In {\cite{MR96a:05027}}, Keedwell introduced the definition of the ``type'' of 
a Latin interchange. 
The {\em type} of a Latin interchange $S$ in an $n \times n$ Latin square is given by the following vector:
\begin{eqnarray*}
\left(
\begin{array}{c}
|C_1(S)| + |C_2(S)| + \dots + |C_n(S)| \cr
|R_1(S)| + |R_2(S)| + \dots + |R_n(S)| \cr
|E_1(S)| + |E_2(S)| + \dots + |E_n(S)| \cr
\end{array}
\right).
\end{eqnarray*}
Note that if any of the values $|C_i(S)|$, $|R_i(S)|$ or 
$|E_i(S)|$ in the above vector are zero, then for brevity they are omitted.
The type of the Latin interchange, $I$, given in the above example is
\begin{eqnarray*}
\left(
\begin{array}{c}
 2+2+3\cr
 2+2+3\\
 2+3+2
\end{array}
\right).
\end{eqnarray*}

There is a relationship between critical sets and Latin interchanges.
This relationship can be expressed in the following lemma.

\begin{lemma}
\label{lem11}
A partial Latin square $C\subset L$, of size $s$ and 
order $n$, is a critical set for a Latin square $L$ if and only if both the 
following hold:
\begin{enumerate}
\item $C$ contains an entry of every Latin interchange that occurs in $L$;
\item for each $(i,\, j;\, k)\in C$, there exists a Latin interchange
$I$ in $L$ such that $I \cap C = \{(i,\, j;\, k)\}.$  
\end{enumerate}
\end{lemma}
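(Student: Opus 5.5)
The plan is to reduce both conditions to the single equivalence: a partial Latin square $P\subseteq L$ is uniquely completable (to $L$) if and only if $P$ meets every Latin interchange in $L$. With this in hand, the lemma follows by applying it to $C$ and to each $C\setminus\{(i,j;k)\}$. The link between the two notions is the concise definition of a Latin interchange as a set difference $L\setminus L'$ of two Latin squares of order $n$.

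First I would prove the equivalence. Suppose $P\subseteq L$ misses some interchange $I$ in $L$, with disjoint mate $I'$. Then $L' = (L\setminus I)\cup I'$ is a Latin square. This holds because $I'$ has the same shape as $I$ and, being mutually balanced with $I$, the same symbol sets in each row and column, so every row and column of $L'$ still contains each symbol exactly once. Moreover $L'\neq L$ since $I$ and $I'$ are disjoint and $I\neq\emptyset$, and $P\subseteq L\setminus I\subseteq L'$. Hence $P$ has two completions.

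For the converse, suppose $P\subseteq L$ has a second completion $L'\neq L$. Then $I=L\setminus L'$ is nonempty and is a Latin interchange in $L$, with disjoint mate $L'\setminus L$. To verify this, note that $L$ and $L'$ agree on every cell outside the shape of $I$. Consequently, in each row the symbols of $L$ on the cells of $I$ are exactly the symbols missing from the common part, which are also the symbols of $L'$ on those cells; the same argument applies to columns. By construction the two squares disagree on every cell of that shape, so the pair is disjoint and mutually balanced. Since $P\subseteq L\cap L'$, we get $P\cap I=\emptyset$. The main care needed is in this balancing check, and in noting that interchanges are nonempty so that ``$P$ misses $I$'' is meaningful.

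Now I would deduce the lemma. By the equivalence, $C$ being UC is exactly condition~1. For the minimality clause, the set $C\setminus\{(i,j;k)\}$ fails to be UC if and only if some interchange $I$ in $L$ misses $C\setminus\{(i,j;k)\}$. Because $C$ itself meets every interchange (condition~1), such an $I$ must satisfy $I\cap C=\{(i,j;k)\}$, which is condition~2. Conversely, condition~2 supplies an interchange witnessing that $C\setminus\{(i,j;k)\}$ is not UC. Finally, $C$ is a proper subset of $L$, since $L$ contains interchanges whenever $n\ge 2$; this is implicit in the lemma's setting.
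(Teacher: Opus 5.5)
Your proof is correct. The paper gives no proof of this lemma: it is quoted in Chapter~2 as a known background fact and then used later. So there is no argument in the paper to compare against. Your route is the standard one: reduce to the equivalence ``$P\subseteq L$ is uniquely completable if and only if $P$ meets every Latin interchange in $L$''. One direction uses the swap $(L\setminus I)\cup I'$, the other the pair $(L\setminus L',\,L'\setminus L)$. Applied to $C$ and to each $C\setminus\{(i,j;k)\}$, this gives both conditions.

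One small correction concerns the properness of $C$ in the ``if'' direction. It does not follow from $L$ containing interchanges when $n\ge 2$, because condition~1 is already satisfied by $C=L$. It follows instead from condition~2. No Latin interchange consists of a single entry, since mutual balance would force its mate to carry the same symbol in that cell, contradicting disjointness. So $I\cap C=\{(i,j;k)\}$ gives $I\not\subseteq C$, and since $I\subseteq L$, we get $C\neq L$. The case $C=\emptyset$ is trivially proper.
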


\section{Designs, Defining Sets, and Trades}
The following definitions will be used in Chapter~\ref{ch7} of this thesis,
where a relationship between trades in Steiner triple systems and Latin
interchanges is introduced.  The concept of the defining set is analogous
to that of the uniquely completable set and it is interesting and useful
to look at connections between the two concepts, as in Chapter~\ref{ch7}.

Let $V=\{1,\dots, v\}$  and let ${\mathcal B}$  be a  collection of 
$3$-subsets chosen from $V$ in such a way that each pair of $V$ 
occurs in at most  one of the $3$-subsets. Then $(V,{\mathcal B})$ is 
said to be a  {\em partial Steiner triple system}  and is sometimes 
referred to as a 2-$(v,3)$ partial Steiner system. The $3$-subsets 
are called {\em blocks} or {\em triples} and the {\em replication number} 
for a given element $e\in V$ is the number of triples in $\mathcal{B}$ which contain $e$.
If  $|{\mathcal B}|=v(v-1)/6$ then each of the pairs of $V$ is 
contained in precisely one triple of ${\mathcal B}$  and in this 
case $(V,{\mathcal B})$ is said to be a {\em Steiner triple system of 
order} $v$.   We denote a Steiner triple system of order $v$ as STS($v$).

Take two such partial Steiner triple systems with triples $T$ and $T'$. If $|T|=|T'|$ and each of the pairs of elements of $V$ contained in the triples of $T$ are also contained in the triples of $T'$, then $T$ and $T'$ are said to be {\em mutually balanced}. If $T$ and $T'$ are mutually balanced and have no common triples,  they form a 2-$(v,3)$ {\em Steiner trade} usually denoted by ${\mathcal T}=(T,T')$. The {\em volume(T)} of the trade is $|T|$
and the {\em foundation} of ${\mathcal T}$ is 
$F({\mathcal T})=\{x\mid x \mbox{ is contained in a triple of } \mathcal{T}\}$. 

For example, consider the trade
${\mathcal T}=(T,T')$ 
where $T=\{1 2 3, 1 5 6, 4 3 5, 4 2 6\}$ 
and $T'=\{1 2 6, 1 3 5, 4 2 3, 4 5 6\}$.
${\mathcal T}$ has volume $|T|=4$ and foundation 
$F({\mathcal T})=\{1,2,3,4,5,6\}$.

Let ${\mathcal T}=(T,T')$ be a trade. We say ${\mathcal T}$ is a
{\it minimal} trade if there is no set $B$ satisfying $\emptyset\neq B\subset 
T$ and no set $B'$ satisfying $\emptyset\neq B'\subset T'$ such that
$(T\setminus B, T'\setminus B')$ is a trade.

Let $(V,{\mathcal B})$ be a partial Steiner triple system of order
$v$.  We define the corresponding {\em partial Steiner Latin square of
order $v$} to be the $v \times v$ array $I$ with entry $k$ in cell $(i,j)$ if and only
if $\{i,j,k\}\in {\mathcal B}$. We emphasise that  for  each triple
$\{x,y,z\}\in T$, $I$ contains six entries $(x,y;z)$, $(x,z;y)$,
$(y,x;z)$, $(y,z;x)$, $(z,y;x)$, $(z,x;y)$ \cite{gw:css}.  In Chapter~\ref{ch7}
we shall often shorten a triple $(x,y,z)$ to $xyz$ where the context
makes it clear that $xyz$ is a triple.

The pair of partial Latin squares $I$ and $I'$ given below correspond to the trade
${\mathcal T}=(T,T')$ given above.  The partial Latin square $I$ corresponds to $T$ and the partial Latin square $I'$ corresponds
to $T'$.

\begin{center}
\begin{tabular}{cc}

\begin{latinsq}{6}
\hline   \ecell & 3 & 2 & \ecell & 6 & 5 \\
\hline 3 & \ecell & 1 & 6 & \ecell & 4 \\
\hline 2 & 1 & \ecell & 5 & 4 & \ecell \\
\hline   \ecell & 6 & 5 & \ecell & 3 & 2 \\
\hline 6 & \ecell & 4 & 3 & \ecell & 1 \\
\hline 5 & 4 & \ecell & 2 & 1 & \ecell \\
\hline
\end{latinsq}
&
\begin{latinsq}{6}
\hline   \ecell & 6 & 5 & \ecell & 3 & 2 \\
\hline 6 & \ecell & 4 & 3 & \ecell & 1 \\
\hline 5 & 4 & \ecell & 2 & 1 & \ecell \\
\hline   \ecell & 3 & 2 & \ecell & 6 & 5 \\
\hline 3 & \ecell & 1 & 6 & \ecell & 4 \\
\hline 2 & 1 & \ecell & 5 & 4 & \ecell \\
\hline
\end{latinsq} \\[3pt]
 $I$ & $I'$
\end{tabular}
\end{center}

\section{The Spectrum of Critical Set Sizes}
In Nelder's 1977 note on critical sets \cite{Nel1}, he defined
the concept of critical sets and then proposed one problem,
that of finding a formula for the size of the largest
and smallest critical sets in $n \times n$ Latin squares.
He suggested that solutions should be sought first for prime $n$,
then for $n$ a prime power, and then for general $n$.  The best known 
bounds for these functions are outlined below.

For an $n \times n$ Latin
square, the size of the smallest and largest possible critical sets,
respectively, are denoted \scs{n} and \lcs{n} \cite{Nel1}.

The best known bounds on \scs{n} are:
\begin{itemize}
\item \scs{n} $\geq \lfloor \displaystyle\frac{7n-3}{6} \rfloor$
for $n > 20$ (\cite{MR1468170}).
\item \scs{n} $\geq n + 1$, for $n \geq 5$ (\cite{MR96h:05030} and  \cite{MR95h:05032} independently).
\end{itemize}

The best known bounds on \lcs{n} are:
\begin{itemize}
\item \lcs{n} $\geq \displaystyle\frac{n^2-n}{2}$, conjectured in \cite{nel2} and proved in \cite{MR97g:05032}.
\item \lcs{2^m} $\geq 4^m - 3^m$,  \cite{MR84g:05036}.
\item \lcs{2^m - 1} $\geq 4^n - 3^n - 2^{n+1} + 3$, \cite{MR96h:05030}.
\item \lcs{2m} $\geq \displaystyle\frac{5m^2-3m}{2}$, \cite{MR99a:05018}.
\end{itemize}
In Chapter~\ref{ch4} of this thesis, we shall show that \lcs{n} $\leq n^2 -
3n + 3$, and in Chapter~\ref{ch5}, we shall show that \lcs{4m}
$\geq \displaystyle{\frac{23m^2-9m}{2}}$.

\section{Classifying Latin Squares}\label{sec27}
Two Latin squares $L$ and $M$ are said to be $isotopic$ if the rows, columns,
or symbols of $L$ can be permuted to transform $L$ to $M$.

Formally, let $L = \{ (i_1, j_1; k_1) \mid i_1, j_1, k_1 \in N \}$ and 
$M = \{ (i_2, j_2; k_2) \mid i_2, j_2, k_2 \in N \}$ be two Latin squares
of order $n$.  Then $L$ is said to be $isotopic$ to $M$ if there
exist permutations $\alpha, \beta$ and $\gamma$ of $N$, such that 
$M = \{ (i_1 \alpha, j_1 \beta; k_1 \gamma) \mid (i_1, j_1; k_1) \in L \}$.
In this case $M$ is said to be an {\it isotope} of $L$ and the triple
$(\alpha, \beta, \gamma)$ is said to be an {\it isotopism} (see \cite{howse:thesis}).
Two Latin squares $L$ and $M$ are said to be $conjugate$ if rows, columns or symbols
in $L$ can be interchanged, so that $L$ is transformed to $M$.

Let $L$ be an $n \times n$ Latin square.
Then there are six Latin squares conjugate to $L$, or six {\it conjugates}:

\begin{itemize}
\item[] $L$;
\item[] $L^{*} = \{ (j, i; k) \mid (i, j; k) \in L \}$;
\item[] $^{-1}L = \{ (k, j; i) \mid (i, j; k) \in L \}$;
\item[] $L^{-1} = \{ (i, k; j) \mid (i, j; k) \in L \}$;
\item[] $^{-1}(L^{-1}) = \{ (j, k; i) \mid (i, j; k) \in L \}$; and
\item[] $(^{-1}L)^{-1} = \{ (k, i; j) \mid (i, j; k) \in L \}$. \cite{dk}
\end{itemize}

Two Latin squares $L$ and $M$ are said to be in the same $isotopy~class$ if $L$ is isotopic to $M$, and 
in the same $main~class$ if $L$ is isotopic to a conjugate of $M$.

These same concepts of isotopy classes and main classes can also
be applied to partial Latin squares.

The following table, Table~\ref{mainiso}, shows the number of main and isotopy
classes for Latin squares of order $1\leq n\leq 8$ (see D{\'e}nes and
Keedwell~\cite{dk}). 

\begin{table}
\begin{center}
\caption{Number of main and isotopy classes for Latin squares of small order}
\label{mainiso}
\begin{tabular}{|c|c|c|c|c|c|c|c|c|} \hline
order $n$ & $1$ & $2$ & $3$ & $4$ & $5$ & $6$ & $7$ & $8$ \\ \hline
Main classes & $1$ & $1$ & $1$ & $2$ & $2$ & $12$ & $147$ & $283\,657$ \\ \hline
Isotopic classes & $1$ & $1$ & $1$ & $2$ & $2$ & $22$ & $564$ & $1\,676\,267$ \\ \hline
\end{tabular} \vspace {3mm} \\
\end{center}
\end{table}

It is apparent from the table that even the number of main classes
grows super-exponentially with the order of the Latin square.  Thus, enumerating
all the critical sets by main classes would be currently impossible for Latin
squares of order greater than 7.  Erroneous values for the number 
of isotopy classes of orders 7 and 8 have been given in the standard references, \cite{dk} 
and \cite{crc}.




We say that an $n \times n$ partial Latin square is {\it reduced} or in {\it
reduced form} if it contains the symbols $1, \dots, n$ in this order
in the first row and in the first column.

\section{Intercalates in Latin Squares}
The maximum number of intercalates in an $n \times n$ Latin square
is denoted $I(n)$, \cite{MR84g:05034}.  Formally, where $L$ is 
an $n \times n$ Latin square, denote the
number of intercalates in $L$ by $I(L)$.  Let
\begin{eqnarray*}
A & = & \{ \{ (r_1,c_1;e_1),(r_1,c_2;e_2),(r_2,c_1;e_2),(r_2,c_2;e_1) \} \mid \\
&& \{ (r_1,c_1;e_1),(r_1,c_2;e_2),(r_2,c_1;e_2),(r_2,c_2;e_1) \} \subseteq L \\
&& \wedge (r_1 \neq r_2) \wedge (c_1 \neq c_2) \wedge (e_1 \neq e_2) \}, {\rm~and} \\
I(L) & = & |A|.
\end{eqnarray*}
Then $I(n)$ is the maximum value of $I(L)$ where $L$ ranges over
all $n \times n$ Latin squares.  Thus, $I(n) \geq I(L)$ for
any $n \times n$ Latin square, $L$.

Since an intercalate is the smallest possible Latin interchange, it has
been useful to investigate the number of intercalates in a Latin square.
This information was used in the search for a critical set of order 8
and size 17 in Chapter~\ref{ch6} (as outlined in Chapter~\ref{ch3}), and is used
in the conclusion to Chapter~\ref{ch4}, when commenting on the conjectured link
between Latin squares with $I(n)$ intercalates and critical sets of
order $n$ and size \lcs{n}.

Some exact, upper and lower bounds of $I(n)$ are known for specific values 
of $n$.

The following results are a summary of the theorems in Heinrich and Wallis, 
\cite{MR84g:05034}.  These results will be used in Chapter~\ref{ch5} when
discussing new bounds on $I(n)$.

\begin{itemize}
\item When $n$ is even, $I(n) \leq \displaystyle\frac{n^2(n-1)}{4}$ with equality if and only if $n = 2^m$; 
\item when $n$ is odd, $I(n) \leq \displaystyle\frac{n(n-1)(n-3)}{4}$ with equality if and only if $n = 2^m - 1$; 
\item when $m$ is odd, $I(2m) \geq m^3$;
\item when $m$ is odd and $\alpha \geq 1$, $I(2^\alpha m) \geq \displaystyle\frac{(2^\alpha m)^2 (2^\alpha m + 2^\alpha - 2)}{8}$;
\item when $m$ is odd and $\alpha \geq 2$, $I(2^\alpha m + 1) \geq 2^\alpha m ( 2^\alpha m(2^\alpha m + 2^\alpha - 10)/8 + m + 1 ) + 2^{\alpha-1} m(m-1)$;
\item when $(m,6) = 1$, $I(2m+1) \geq \displaystyle\frac{m(2m-3)(m-1)}{2}$;
\item when $(m,6) = 1$, $I(2^\alpha m+1) \geq (2^\alpha m) ( (2^\alpha m) (2^\alpha m+2^\alpha - 2)-10m+6 ) /8$.
\end{itemize}

The next two results are from Kotzig and Zaks \cite{MR85h:05026}:
\begin{itemize}
\item when $k \geq 1$, $I(4k+1) \leq 2k(8k^2-4k-1)$;
\item when $k \geq 1$, $I(4k+2) \leq (2k+1)(8k^2+1)$.
\end{itemize}

In Chapter~\ref{ch5}, we shall prove that 
$I(2^\alpha m) \geq (2^\alpha m)^2
(3.2^\alpha m + 2^\alpha - 4)/16$, for $\alpha \geq 2$ and $m$ odd,
and that $I(2^\alpha m + 1) \geq 2^\alpha m (
2^\alpha m (3.2^\alpha m+2^\alpha - 20)/16 + m + 1 ) + 2^{\alpha-1}
m(m-1)$, for $\alpha = 2$ or $\alpha \geq 4$ and $m$ odd.




\chapter{Algorithms}\label{ch3}
%
%
%

Most of the results in this thesis were obtained via a combination
of theoretical analysis and computational methods.  In this chapter,
we discuss some of the algorithms which were used.  In particular,
we describe algorithms for the discovery of critical sets and 
Latin interchanges, and the completion of partial Latin squares. 

The discovery of patterns which will lead to new theorems such as those
which are presented in this thesis requires the study of Latin squares
of non-trivial order, that is, of order greater than 5.  Since critical
sets are complex structures, and as the number of Latin squares increases
super-exponentially with the order, it was necessary to develop fast
algorithms to generate critical sets with certain desired properties,
and as a consequence of this, to develop fast completion algorithms and
new algorithms for finding Latin interchanges.

The limitations of applying general principles to Latin squares of small
order are clearly seen in the slow progress which has been made in
improving the general bound on \scs{n}, the size of the
smallest critical set in a Latin square.  In 1978, Curran and van Rees
\cite{MR80j:05022} showed that \scs{n} $\geq n-1$ and by 1994, Cooper,
McDonough and Mavron had proven \scs{n} $\geq n+1$ for $n \geq 
5$~\cite{MR95h:05032}.  Perhaps the examination of small critical sets
for non-trivial orders of Latin squares will produce further breakthroughs
for this bound, just as the examination of large critical sets of non-trivial 
orders did for \lcs{n} in Chapter~\ref{ch4}.

\section{Algorithms for finding critical sets}
In the search for a critical set of a particular size $m$ for a Latin
square of known order, one obvious approach is to find all
subsets of size $m$ in the Latin square, and test each of these
for unique completion.  For each subset $U$ which passes this test,
all the proper subsets of $U$ of size $|U|-1$
are tested for unique completion.  If no such subset has unique
completion, $U$ is a critical set.  In ~\cite{MR85k:68035},
Colbourn, Colbourn and Stinson proved that, in general, the problem of deciding
whether a partial Latin square $P$ has unique completion
is NP-complete, even given a Latin square completing $P$.  Thus,
it is desirable to avoid the process of exhaustively testing for unique
completion by eliminating many subsets which are candidates through the
use of algorithms which run in polynomial time.

For example, the basis of the main theorem in Chapter~\ref{ch6} was the discovery
of a critical set of order 8 and size 17.  Previously, many researchers
have attempted to find a critical set of this size with no success.
To search exhaustively for any such critical set in each of the 283\,657
main classes of $8 \times 8$ Latin squares would have required the testing
of $\displaystyle{\binom{64}{17}}$, or more than $10^{15}$, subsets in each Latin square.
Thus, this algorithm is inefficient for finding all critical sets of a
given size, and in order to find very large critical sets, a completely
different approach is required.   Consequently, two other more efficient
algorithms (Algorithms~\ref{311} and \ref{genint}) for finding a critical set of size
$m$ in a known Latin square of order $n \times n$ have been developed.
These two algorithms are important as they form the basis for all other
searches.

The first (Algorithm~\ref{311}) involves the same exhaustive search
through all 
$\displaystyle{\binom{n^2}{m}}$ subsets of size $m$ of the Latin square.  The process
is speeded by calculating in advance some Latin interchanges in the
Latin square and determining whether the subset $U$ intersects all these Latin
interchanges.  If a Latin interchange is found which does not intersect
$U$, then $U$ cannot be a critical set.  This approach has the advantage of
avoiding the time-intensive process of attempting to determine whether
the set $U$ has unique completion.  This algorithm is used in Chapter~\ref{ch8}
to determine all the critical sets in the main classes of Latin squares
of order at most six.
\newpage
\begin{algorithm}{Finding a critical set}
\label{311}
\begin{itemize}
\item Input an $n \times n$ Latin square $L$.
\item Input a set $\cal{I}$ of Latin interchanges in $L$.
\item Generate all size $m$ subsets of the Latin square $L$.  Place these subsets into a set $\cal{U}$.
\item For each subset $U$ in $\cal{U}$,
\begin{itemize}
\item Test whether there exists a Latin interchange $I$ in $\cal{I}$ such that $I \cap U = \phi$.
\item If such a Latin interchange exists, proceed to the next subset in $\cal{U}$.  Otherwise:
\item Test whether $U$ has unique completion.  If not, proceed to the next subset.  Otherwise:
\item Test whether any subset of $U$ of size $|U|-1$ has unique completion.  If so, then proceed to the next subset.
\item Otherwise, output $U$, a critical set, and proceed to the next subset.
\end{itemize}
\end{itemize}
\end{algorithm}

A further refinement of this method involves the decomposition of the
Latin square into disjoint Latin interchanges of small size, and ensuring in the generation step that at least one
entry of each of these Latin interchanges is included in the subset $U$.
Also, where the Latin interchanges are subsquares,
the intersection of any critical set with the subsquare must be a uniquely
completable set in the subsquare; otherwise, the subsquare 
has more than one completion.
This last refinement was particularly useful in Chapter~\ref{ch8}, where some of
the $6 \times 6$ Latin squares could be partitioned into
$3 \times 3$ subsquares.  We give an algorithm which assists with
splitting a Latin square into disjoint Latin interchanges.

\begin{algorithm}{Locating disjoint Latin interchanges in a Latin square}
\label{312}
\begin{itemize}
\item Input an $n \times n$ Latin square $L$.
\item Read in the array $\cal{I}$ of $s$ Latin interchanges in $L$ of small size.
\item Create an $n \times n$ array $T$ such that $T[i][j]$ contains the
number of Latin interchanges of $\cal{I}$ in which the cell $(i,j)$ is non-empty.
\item Initialize an empty $n \times n$ partial Latin square $P$.
\item When $T[i][j] = 1$, add the relevant Latin interchange to $P$, as it
must occur in any decomposition of $L$ into disjoint Latin interchanges.
\item Call the function choose(0).
\end{itemize}

The function choose(pos):

\begin{itemize}
\item If $P = L$, then $L$ can be decomposed into disjoint Latin interchanges.
\item For each Latin interchange $I$ from $\cal{I}$[pos] to $\cal{I}$[s],
if $I$ is disjoint to $P$, then add $I$ to $P$ and call choose(pos+1).
\end{itemize}
\end{algorithm}

The use of bitmaps to check whether the Latin interchange intersects the
proposed critical set speeds the search considerably.  Instead of
using a {\it for loop}, the set and the Latin interchange are represented
as bitmaps and a logical OR used to test whether the Latin interchange
intersects the set.

In Chapter~\ref{ch8}, when searching the $6 \times 6$ Latin squares for critical
sets of size greater than 18, Algorithm~\ref{311} was further speeded by
ensuring that in each partial Latin square examined, no row or column
was full and no symbol occurred six times.  Such partial Latin squares
cannot be critical sets as any entry may be removed from the relevant row,
column or set of symbols while maintaining the unique completion property.
As the partial Latin squares being tested become larger, this algorithm
runs comparatively more and more quickly; for example, for partial Latin
squares of size 21 it is several times faster than any other method.

\section{Unique completion and Latin interchanges}
Two key steps in the Algorithm~\ref{311} are discussed separately.
The first is the step which checks for unique completion, and the
second, which must be completed prior to running the algorithm,
is determining the set of Latin interchanges.

First, we give an algorithm which recursively fills all the blank cells
in a partial Latin square.  It was used as part of Algorithm~\ref{311}
to test whether a set is uniquely completable, and is thus used
in Chapters~\ref{ch4}, \ref{ch5}, \ref{ch6}, \ref{ch8}, and to determine the results of Appendices~\ref{app1} and \ref{app2}.

\begin{algorithm}{Checking for unique completion}
\label{323}
\begin{itemize}
\item Input the partial Latin square $P$.
\item Copy $P$ to $M$.
\newline 
\noindent Label 1
\item Determine an empty position in $M$, $(r,c)$.
\item Determine the set $\cal{E}$ of all the symbols that it is possible to place in $(r,c)$.
\newline
For each symbol $e \in \cal{E}$ in turn:
\begin{itemize}
\item Place the symbol $e$ in $M$ in position $(r,c)$.
\item If $M$ is now complete, output $M$; otherwise recursively jump to Label 1.
\end{itemize}
\end{itemize}
\end{algorithm}

The empty position $(r,c)$ in $M$ may be determined by two different means.  The
first is to simply proceed through $M$ column by column and row
by row, beginning at position $(0,0)$.  The second is to search for
the position in the Latin square in which the least number of alternatives
is possible.  That is, for each position in the Latin square, we
count the number of symbols which, if added to the partial Latin square
in that position, would result in an array which would not be a partial
Latin square.  The position in which this number is greatest is
chosen.  Through extensive testing, it was found that each alternative  is suitable for
different goals.  When all completions need to be generated, the first
approach is better.  However, given a strong critical set which is
being tested for unique completion, the second approach will determine
more quickly if only one completion is possible.  The speed of
the algorithms is also affected by the density of the partial Latin square,
that is, the ratio of the number of entries to the number of cells.

The key part of Algorithm~\ref{311} is finding the Latin interchanges,
and so the next group of algorithms is for finding Latin interchanges
of various sizes greater than or equal to 4.

If the Latin square contains a large number of intercalates (Latin
interchanges of size 4) we can considerably reduce the search space required
to find critical sets.
The reason for this is that each critical set in the Latin square must
contain at least one of the four entries from the intercalate.  Further
improvements are possible if the Latin square can be decomposed into
disjoint intercalates.  For example, suppose we are searching for a
critical set of size 9 in a $6 \times 6$ Latin square where the square
can be decomposed into 9 disjoint intercalates.  Then 
there are $4^9 = 262\,144$ cases to examine, compared to $\displaystyle{\binom{6^2}{9}} =
94\,143\,280$ for the exhaustive search through all the subsets of size 9.

Since the existence of intercalates can reduce the search size
dramatically, we give two algorithms for finding intercalates. 
There is an obvious $O(n^4)$
algorithm and a less obvious $O(n^3)$ algorithm for finding these.
These algorithms are given below.

\begin{algorithm}{$O(n^4)$ algorithm for finding intercalates}
\label{324}
\begin{itemize}
\item Input an $n \times n$ Latin square $L$.  

\item Generate the $\displaystyle{\binom{n}{2}}$ pairs of row numbers $r_{0}$ and $r_{1}$, with
$1 \leq r_{0} < r_{1} \leq n$.

\item Generate the $\displaystyle{\binom{n}{2}}$ pairs of column numbers $c_{0}$ and $c_{1}$, with
$1 \leq c_{0} < c_{1} \leq n$.

\item If $L_{r_{0}c_{0}} = L_{r_{1}c_{1}}$ and 
$L_{r_{1}c_{0}} = L_{r_{0}c_{1}}$ for any
of the generated pairs of $r_{0}, r_{1}, c_{0},$ and $c_{1}$, then an intercalate exists
in these four positions.
\end{itemize}
\end{algorithm}

\begin{algorithm}{$O(n^3)$ algorithm for finding intercalates}
\label{325}
\begin{itemize}
\item Input an $n \times n$ Latin square $L$.
\item For each symbol $e$, $1 \leq e \leq n$, and for each
column $c$, $1 \leq c \leq n$, determine in which row symbol $e$
occurs in column $c$.  Place this row number ($f$) in a two-dimensional $n \times n$ array
$d$ such that $d[c][e] = f$, where $L_{fc} = e$.

\item Consider each entry $(r,c;L_{rc})$ in the Latin square $L$
with $1 \leq r \leq n$, $1 \leq c \leq n$.  For each such entry,
consider all columns $b$ such that $c + 1 \leq b \leq n$.

\item Determine where the symbol $L_{rb}$ occurs in column $c$; 
that is, find $d[c][L_{rb}]$.  Call this row number $g$.

\item If $L_{rc} = L_{gb}$ then an intercalate exists in positions
corresponding to the intersection of rows
$r$ and $g$ with columns $c$ and $b$.
\end{itemize}
\end{algorithm}


We give an example of the application of this algorithm.  Take the
following Latin square $L = {\mathbb Z}_2^2$.

\begin{center}
\begin{tabular}{c}
\begin{latinsq}{4}
\hline 1&2&3&4 \\
\hline 2&1&4&3 \\
\hline 3&4&1&2 \\
\hline 4&3&2&1 \\
\hline 
\end{latinsq}\\[3pt]
 $L$
\end{tabular}
\end{center}

We calculate the array $d$.  We consider column $c$ = 1 first.  We
proceed through the symbols $e = 1$ up to $e = 4$.  Since symbol 1
occurs in column 1 at position (1,1), that is, in row 1, we assign the value 1 to $d[1][1]$.
Since symbol 2 occurs in column 1 at position (2,1), that is, in row 2, we assign the
value 2 to $d[1][2]$.  We proceed through all $n^2$ different entries
in $L$.

Next, we consider each entry in $L$.  Start at row $r = 1$ and column $c = 1$.
Then, consider column $b = 2$.

We determine where the symbol $L_{12} = 2$ occurs in column 1.  This
row number is contained in the array $d$ at $d[1][2]$.  The answer is $g = 2$.

Then, the last step says that if $L_{rc} = L_{gb}$, an intercalate
exists at the intersections of rows $r$ and $g$ with columns $b$ and $c$.
Since $L_{11} = L_{22} = 2$, we have an intercalate at positions $(1,1),
(1,2), (2,1)$ and $(2,2)$.  Also, the algorithm runs in $O(n^3)$ time, as filling the
array $d$ takes $n^2$ steps and the determination of the intercalates
requires $O(n)$ steps for each of the $n^2$ entries in the Latin square.

The complexity of the search for Latin interchanges which are
not intercalates is much higher.  Thus, when using Algorithm~\ref{311},
the best idea is to restrict the search space
as much as possible initially, by using smaller interchanges such
as intercalates.  For instance, suppose a Latin square
of order 6 can be decomposed into four $3 \times 3$ subsquares, and
we are searching for a critical set of size 9.  Then each $3 \times 3$
subsquare must contain a uniquely completable set for that subsquare.
So each subsquare must contain at least two entries.  In fact three of
the subsquares must contain two entries and the fourth must contain three.
There are nine UC sets of size two for any $3 \times 3$ subsquare and 75
UC sets of size three.  Thus, there are $9^3 \times 75 \times 4 = 218\,700$
possibilities which must be examined, compared to ${4^9 = 262\,144}$ when
using 9 disjoint intercalates.  For sizes greater than 10, however,
this method is slower than using the intercalates.  For various sizes
of subsets in $6 \times 6$ Latin squares, Table~\ref{split} lists
the number of cases which need to be considered with an exhaustive search,
and when the Latin square can be decomposed into nine disjoint $2 \times 2$ Latin 
subsquares or four disjoint $3 \times 3$ Latin subsquares.

\begin{table}
\caption{Number of subsets to be examined using various search methods in $6 \times 6$ Latin squares}
\label{split}
\begin{center}
\begin{tabular}{|c|c|c|c|}
\hline
Size of subset & Exhaustive & $2 \times 2$ & $3 \times 3$ \\
\hline
9  &    94\,143\,280 &       262\,144 &       218\,700 \\
10 &   254\,186\,856 &     3\,538\,944 &     3\,101\,166 \\
11 &   600\,805\,296 &    23\,592\,960 &    24\,740\,316 \\
12 & 1\,251\,677\,700 &   103\,219\,200 &   125\,331\,705 \\
13 & 2\,310\,789\,600 &   332\,365\,824 &   439\,425\,648 \\
14 & 3\,796\,297\,200 &   837\,697\,536 & 1\,149\,328\,764 \\
15 & 5\,567\,902\,560 & 1\,716\,436\,992 & 2\,366\,815\,464 \\
16 & 7\,307\,872\,110 & 2\,932\,162\,560 & 3\,982\,863\,312 \\
17 & 8\,597\,496\,600 & 4\,250\,133\,504 & 5\,620\,113\,720 \\
18 & 9\,075\,135\,300 & 5\,293\,364\,736 & 6\,771\,725\,820 \\
19 & 8\,597\,496\,600 & 5\,716\,214\,784 & 7\,057\,334\,304 \\
20 & 7\,307\,872\,110 & 5\,386\,735\,872 & 6\,419\,253\,726 \\
21 & 5\,567\,902\,560 & 4\,449\,137\,664 & 5\,127\,197\,616 \\
\hline
\end{tabular} \vspace{3mm} \\
\end{center}
\end{table}
\vspace{3mm}

In the search for a critical set of order 8 and size 17 (see
Chapter~\ref{ch6}), all $8 \times 8$ main classes
of Latin squares with $4 \times 4$ subsquares were generated, and then
all possible $4 \times 4$ UC sets were placed in the subsquares.  In a
similar effort, all $8 \times 8$ main classes of Latin squares with 16 disjoint
intercalates were found, and potential critical sets were generated
by taking one entry from each intercalate and then one entry 
from somewhere else in the complete Latin square.  These efforts showed
that no critical set of size 17 could exist in any of these main classes
of Latin squares.  Table~\ref{ebad} shows the possible numbers of 
intercalates in $8 \times 8$ Latin squares, and the number of main
classes which contain that number of intercalates.  In each pair
of columns, the first number (\#MC) is the number of main classes, with
the number of intercalates given in the second column (\#I).  

\begin{table}
\begin{center}
\caption{Number of intercalates in all main classes of $8 \times 8$ Latin squares}
\vspace{3mm}
\label{ebad}
\begin{tabular}{|c|c| |c|c| |c|c| |c|c| |c|c| |c|c|}
\hline
\#MC & \#I &  \#MC&\#I    & \#MC&\#I   &  \#MC&\#I  &   \#MC&\#I &   \#MC&\#I \\
\hline
3&0  &	    23206&11  &	6273&21	 &  211&31  &	1&41  &	    2&51   \\
14&1  &	    26212&12  &	5002&22	 &  255&32  &	24&42  &    9&52   \\
66&2  &	    26840&13  &	3094&23	 &  79&33  &	12&43  &    14&56   \\
265&3  &    26797&14  &	2609&24	 &  123&34  &	27&44  &    1&60   \\
758&4  &    24225&15  &	1532&25	 &  67&35  &	5&45  &	    12&64   \\
1830&5  &   21535&16  &	1265&26	 &  113&36  &	5&46  &	    1&68   \\
3893&6  &   18020&17  &	699&27  &   25&37  &	3&47  &	    1&72   \\
6587&7  &   14747&18  &	748&28  &   58&38  &	34&48  &    2&80   \\
10583&8	 &  11241&19  &	340&29  &   21&39  &	1&49  &	    1&88   \\
15073&9	 &  8905&20  &	350&30  &   75&40  &	2&50  &	    1&112  \\
19760&10  &     &     &    &    &     &    &     &    &      &      \\
\hline
\end{tabular} \vspace{3mm} \\
\end{center}
\end{table}

However, at times it is necessary to search for Latin interchanges
of size greater than four.  

An algorithm for finding Latin interchanges had been given previously by
Howse \cite{howse:thesis} which determined Latin interchanges of size up
to 11 in a given Latin square.  I independently developed a new algorithm
(Algorithm~\ref{genint})
which worked for Latin interchanges of any size and was used in Chapter~\ref{ch7} 
in the process of decomposing partial Latin squares.  In addition,
for the results of Chapters~\ref{ch6} and \ref{ch8}, it was necessary to determine how
and when the Latin interchanges should be used for maximum efficiency.
\newpage

\begin{algorithm}{Searching for Latin interchanges of general size $m$, $m > 4$}
\label{genint}
\begin{itemize}
\item Input the $n \times n$ Latin square $L$.

\item Generate all $\lfloor \displaystyle{\frac{m}{2}} \rfloor \times 
\lfloor \displaystyle{\frac{m}{2}} \rfloor$
subarrays of $L$.

\item For each subarray $S$, generate all subsets $U$ of size 
$m$.

\item Calculate all permutations of size $x$ of the symbols 
$\{1,2,\dots,\lfloor \displaystyle{\frac{m}{2}} \rfloor \}$ with no fixed points, where 
$2\leq x \leq \lfloor \displaystyle{\frac{m}{2}} \rfloor$.
\item Determine the size of each row and column in the subset $U$, and
the number of times each symbol occurs in the subset $U$. If each
of these numbers is greater than or equal to $2$, continue; else
move to the next subset.

\item Apply each of the permutations calculated above to each of 
the rows in each subset $U$.
If the columns are mutually balanced then a Latin interchange has been found.
\end{itemize}
\end{algorithm}

For Latin squares of order 10 or more, an optimization of this algorithm is possible.
For these sizes of Latin squares, it is sometimes faster to apply the
pre-calculated permutations to the columns rather than the rows.  For
example, if a Latin interchange consists of five columns of two entries each,
occurring in two rows of five entries, it is much faster to examine all
permutations of the columns than to examine all permutations of the rows.
The faster method requires calculating the number of permutations required
using both methods and determining whether there are more permutations
which can be applied to the rows or the columns.  This is followed by
the test for mutually balanced columns or rows, accordingly.

\section{Critical sets with a given property}

Sometimes we are specifically interested in determining critical sets
with a given property.

The next algorithm (Algorithm~\ref{337}) involves beginning with a complete Latin square 
and `intelligently' removing entries
while maintaining the critical set property.  This method is used
for finding examples of critical sets of a given size and is thus not generally
suitable for determining {\em all} critical sets of a given size.  It was used
to extend the spectrum of known critical set sizes in Latin squares of orders
9 and 10 which are shown in Appendix~\ref{app1}.

This algorithm has been especially successful in
demonstrating the existence of many critical sets of order 7 and size 25.
The results of Chapter~\ref{ch4} and especially the conjectures and questions
in its conclusion are the result of many computer searches for large
critical sets of order greater than 5.

\begin{algorithm}{Finding critical sets with a given property}
\label{337}
\begin{itemize}
\item Input an $n \times n$ Latin square $L$.
\item Input a property $\cal{R}$.
\item Copy $L$ into $P$.
\item Determine if there exists an entry $(x,y;z)$ in $P$ such that it both meets property $R$ and has the property that $L \setminus \{(x,y;z)\}$ has unique completion.
\item If there are no such entries, output $P$ and stop, otherwise remove $(x,y;z)$ from $P$ and repeat the last step.
\end{itemize}
\end{algorithm}

The entry removed can
be the first one reached (beginning at the top left of the Latin
square and moving down and to the right) the removal of which does not destroy
the critical set property.  Alternatively, it can be the entry where
the value of $x_{i,j}(P)$ is lowest or highest.  
(Recall that $x_{i,j}(P)$ is the number of different symbols in row $i$
and column $j$ in a partial Latin square $P$.)  Of course, where the value of $x_{i,j}(P)$ is $n$,
the entry at position $(i,j)$ can always be removed and the
result will still be a uniquely completable set.  This led to the
idea of attempting to remove the entry at position $(i,j)$ where
the value of $x_{i,j}(P)$ is the highest.  This proved to be effective
in practice, as did, paradoxically, removing the entry at $(i,j)$
where the value of $x_{i,j}(P)$ was lowest.  This idea led to the
generation of the critical sets given in Appendix~\ref{app1}, which improved
on the examples of Curran and van Rees \cite{MR80j:05022}, being the largest known
critical sets for the given orders.


As will be seen in Chapter~\ref{ch4}, removing a row, column and symbol from the
complete square before beginning the search is a good idea when searching
for large critical sets.  We comment on the results of this idea further
for the $7 \times 7$ Latin squares.

For each of the 147 main classes of $7 \times 7$ Latin squares, we 
considered all $7^3$ triples of (row, column, symbol) and removed
all entries in the row and column and all occurrences of the symbol.
This left partial Latin squares of size $7^2-3 \times 7+2$ or
$7^2-3 \times 7+3$, that is, 30 or 31.  All subsets of size 25 in
these partial Latin squares were tested to see if any were critical
sets.

In the examination of the Latin square corresponding to the Steiner
triple system of order 7, 11\,592 critical sets of size 25 were found.
Also, critical sets of size 25 were found in 113 out of a total of 147
main classes of $7 \times 7$ Latin squares.  Further experimentation
by removing all $3 \times 7^2$ pairs of (row,column), (row,symbol),
and (column,symbol) led to the discovery of a total of 29\,484 critical
sets in the Latin square corresponding to STS(7).

Given the fact that only one critical set of size 25 was known \cite{kho} prior
to the discovery of this technique, it is obvious that the
discoveries of Chapter~\ref{ch4} have proved very useful in locating large
critical sets in Latin squares of order greater than 6.  For example,
the size of the largest known critical set in a Latin square of order 9
was increased from 39 to 44, and in a Latin square of order 10, from 55
to 57.

Using all six conjugates of the Latin square being examined provides
a larger search space.  Allowing for slight random variations (that is,
using a different property at random steps in the search) on the highest
and lowest $x_{i,j}(P)$, or picking a position at random where $x_{i,j}(P)$
is highest or lowest, extends the space still more.  The reason for
this is that the output will consist of a greater variety of critical sets
when some random variations are allowed, more so than when following a
fixed set of steps.

We may be searching for a critical set of largest possible size,
that is $lcs(n)$.  In this case, the best squares to
begin with seem to be $n \times n$ squares with $I(n)$ intercalates,
as proposed in Chapter~\ref{ch4}.  On the other hand, if critical sets of small
size are required, back circulant Latin squares or Latin squares with
fewer intercalates seem to be a better starting point.  The reason underlying
this is that any critical set in an intercalate-rich Latin square
must intersect large numbers of intercalates, and should therefore be
larger.  Also, as we have seen, it is easier to count 
intercalates than Latin interchanges of any other size, and thus
it is easier to locate Latin squares with many intercalates, as opposed
to Latin squares with many interchanges of size greater than 4.  Conversely,
intercalate-poor Latin squares are good places to look for small critical
sets.  Also, it has been found that the smallest critical sets occur
only in the back-circulant Latin squares for orders from 1 to 7.~\cite{AK2}

With this in mind we give Algorithm~\ref{many} for finding Latin
squares which have many intercalates.  First, we need to define
an algorithm which calculates all transversals in a given Latin square.

\section{Discovering transversals in a Latin square}
This algorithm will be used as part of the next algorithm to
prolong given Latin squares.

\begin{algorithm}{Finding all transversals in a Latin square $L$}
\begin{itemize}
\item Input an $n \times n$ Latin square $L$.
\item Initialise the size $n$ arrays $cols$ and $syms$.
\item Call findtransversal($L$,$0$,$cols$,$syms$).
\end{itemize}

The function findtransversal($L$,$r$,$cols$,$syms$):
\begin{itemize}
\item If $r = n$, we have a transversal in $L$, in the entries 
$\{ (i,cols[i];syms[i]) \mid 0 \leq i \leq n-1 \}$.  Output it and continue.
\item Otherwise, for each column $j$, $0 \leq j \leq n-1$,  set $flag = 0$;
\item For each row $i$, $0 \leq i \leq n-1$,
\item[-] If $syms[i] = L_{rj}$ or $cols[i] = j$, set $flag = 1$.
\item If $flag=0$, set $cols[r]=j$, $syms[r]=L_{rj}$, and call findtransversal($L$,$r+1$,$cols$,$syms$).
\end{itemize}
\end{algorithm}

\section{Algorithm for finding Latin squares with many intercalates}
\begin{algorithm}{Finding Latin squares with many intercalates}
\label{many}
\begin{itemize}
\item Generate as many different main classes of order $n$ Latin squares as possible.
\item Determine all the transversals in each of these Latin squares.
\item Prolong each of the Latin squares along all possible transversals as in ~\cite{MR2000g:05035},
to generate $(n+1) \times (n+1)$ Latin squares.
\end{itemize}
\end{algorithm}

A similar idea was used independently in Danziger and Mendelsohn
\cite{MR2000g:05035} and Heinrich and Wallis \cite{MR84g:05034}.



This leads to discovering the $n \times n$ Latin squares with the 
currently known maximum number of intercalates for $n = 9$ and $11$.
This $11 \times 11$ Latin square 
is presented with a corresponding large critical set in Chapter~\ref{ch5}.

There are several other ways to reduce the search space and
still determine intercalate-rich Latin squares.  All of
the following methods reduce the search space, which leads
to discovering intercalate-rich Latin squares more quickly.
Some of these ideas can also be combined.  

\begin{itemize}
\item Start with a reduced partial Latin square 
and find all completions.

\item Enforce symmetry in the completions.  That is, when adding an
entry $(x,y;z)$, add the entry $(y,x;z)$ also.

\item Enforce total symmetry in the completions, as defined in 
~\cite{MR80a:05032}.  That is, when adding an entry $(x,y;z)$,
add the entries $(y,x;z), (y,z;x), (x,z;y), (z,x;y),$ and $(z,y;x)$ also.

\item Begin with a partial Latin square containing just ones along the
main diagonal.  This was originally suggested in ~\cite{MR84g:05034}.
\end{itemize}

Finally, placing Latin subsquares in a partial Latin square and then 
using some combination of the above has also proved a useful approach.
For the subsquares, either group tables or the subsquare with the most 
intercalates may be used.  Some of these approaches led to the discovery
of a $10 \times 10$ Latin square with 117 intercalates, which is the
Latin square from which the largest known critical set of order 10
is drawn.  This critical set is shown in Appendix~\ref{app1}. Also, the results of
some of these ideas were used to construct the Latin squares from
which the critical sets of order 9 given in Appendix~\ref{app1} eere derived.

\section{Finding critical sets similar to a given critical set}
At times we may try to generate a critical set with a given property
by starting with a similar critical set and adapting it.  For instance,
the critical set of order 8 and size 17 in Chapter~\ref{ch6} was discovered by
starting with a critical set of order 8 and size 16, and looking at all
possible ways of removing two entries and then adding three.

%

The following algorithm takes a critical set $C$ and attempts to create
new critical sets which vary in size from $C$ by a small number of entries.
The idea is to input two numbers $x$ and $y$, and look at all
possible ways to remove $x$ entries and add $y$ other entries.  Each
resulting partial Latin square $P$ is tested to see whether it is a critical set.

\begin{algorithm}{Finding critical sets close to a given critical set}
\begin{itemize}
\item Input a critical set $C$ of size $m$ for an $n \times n$ Latin square $L$.
\item Input $x$, the number of entries to be removed, and $y$, the
number of entries to be added.
\item Generate all $\displaystyle{\binom{m}{x}}$ subsets of $C$ which are of size $x$ and
place them in an array $C_x$.
\item For each $x$-sized subset $X$ in $C_x$, remove $X$ from $C$, creating $E_X$.
\item Generate all $\displaystyle{\binom{m}{y}}$ subsets of $C$ which are of size $y$ and place them in an array $X_y$.
\item For each $y$-sized subset $Y$ in $X_y$ such that $X \cap Y = \emptyset$, add $Y$ to $E_X$.
\item Determine whether $E_X$ is a critical set.
\end{itemize}
\end{algorithm}

\section{A suggestion of Brendan McKay}
In a search for the maximum number of intercalates in a $9 \times 9$ Latin
square, McKay~\cite{mck} proposed beginning with three intercalates
in the first two rows and extending the square one row at a time,
maximizing the number of intercalates at each stage.  I wrote a program
to determine whether this idea was effective.  This only led to a Latin square
of order 9 containing 49 intercalates.  However, Owens and Preece in
~\cite{MR96g:05029} had already discovered $I(9) \geq 72$.

\section{Parallel Algorithms}
Many of the algorithms presented in this chapter can be parallelised;
that is, a single problem may be split up and the sub-problems run
on different computers.  We give two examples of this.

Splitting up a problem proved very useful in the case of finding large
critical sets in the main class of $6 \times 6$ Latin squares with
no intercalates.  This Latin square can be partitioned into $3 \times 3$
subsquares.  Thus, in the search for a critical set of size 17, 
we split the search up so that one computer was attempting to put 5, 4, 3
and 5 entries in each subsquare respectively while another computer
was attempting to put 4, 7, 4 and 2 entries into each subsquare.
In total there were 204 cases, which were split across five computers
running the Linux operating system.  This enabled us to calculate
some of the results in Chapter~\ref{ch8}  more quickly than any other method.
Using one computer would have been very slow, and the basic Algorithm~\ref{311}
would not have worked well for a Latin square with no intercalates.

In the case of the search for a critical set of order 8 and size 17
in Latin squares with precisely sixteen intercalates (related to Chapter~\ref{ch6})
a count was maintained of the number of subsets examined.  This simple
approach enabled each search to be split across eight nodes of an SGI
Power Challenge computer, which reduced the execution time considerably.



\section{Finding a small set of Latin interchanges satisfying a property}

There are 150 Latin interchanges of size 8 in the back-circulant Latin
square of order 5.  As it has never been proven that the minimal critical
set in a back-circulant Latin square of order $n$ has size 
$\lfloor \displaystyle{\frac{n^2}{4}} \rfloor$, we decided to take a close look
at the subsets of size $\lfloor \displaystyle{\frac{5^2}{4}} \rfloor - 1 = 5$ in $BC_5$.

In $BC_3$, the minimal size of a critical set is 2.  The size of the
smallest Latin interchange in $BC_3$ is 6, and there are nine such Latin interchanges.
We need only three of these interchanges to prove that every subset
of size $\lfloor \displaystyle{\frac{3^2}{4}} \rfloor - 1 = 1$ in $BC_3$ is not a critical
set.  That is, if $X_3 = \{ I_1, I_2, I_3 \}$ where $I_1 = \{ (0,0;0),
(0,1;1), (0,2;2), (1,0;1),\break (1,1;2), (1,2;0) \}$, $I_2 = \{ (0,0;0),
(0,1;1), (0,2;2), (2,0;2), (2,1;0), (2,2;1) \}$, and $I_3 \break = \{ (1,0;1), 
(1,1;2), (1,2;0), (2,0;2), (2,1;0), (2,2;1) \}$, then for every subset
$U \in BC_3$ such that $|U| = 1$, there exists $V \in X_3$ such that 
$V \cap U = \emptyset$.

This algorithm attempts to find a set $X_5$ (called $seq$ in the algorithm)
containing Latin interchanges of size 8 from $BC_5$ such that for every
subset $U \in BC_5$ where $|U| = 5$, there exists $V \in X_5$ such that
$V \cap U = \emptyset$.  The smallest such set of Latin interchanges found has been 
of size 41.

\begin{algorithm}{Find a subset of 150 interchanges satisfying the above property}
\begin{itemize}
\item Input $\cal{I}$, the 150 Latin interchanges of size 8 in $BC_5$.
\item Turn each Latin interchange into a bitmap (25 bits).
\item Initialise the array of Latin interchanges $seq$ of size 150 and set $len = 0$.
\item Call callseq($seq$,$len$).
\end{itemize}

The function callseq($seq$,$len$):
\begin{itemize}
\item For each $I \in \cal{I}$ such that $I \not\in$ seq:
\item[-] Let $count$ = the number of subsets of size 5
in $BC_5$, represented as bitmaps, that do not intersect any of the
interchanges in $seq \cup I$.
\item If $count = 0$ for any $I$, print seq and continue.
\item Otherwise, for the $I \in \cal{I}$ where count is a minimum, let $seq = seq \cup I$, $len = len + 1$, and callseq($seq$,$len$).
\end{itemize}
\end{algorithm}

Variations on this algorithm, for instance by replacing the test
for where count is a minimum with a test which ensures that the
Latin interchanges are evenly distributed over the Latin square, have
been attempted without much success.

\section{Near-strong critical sets}
This algorithm takes a critical set $C$ and tests if it is
near-strong.  It is an extremely complex
algorithm and the order of the heavily nested for loops is critical to
the correct operation of the algorithm. 

The basic idea is to simulate the union of sets of symbols by setting bits in
a binary string to represent the presence of symbols in their union, and then
counting them or testing for the presence of a particular symbol at the
end of a loop.  

For each empty position $(i,j)$ under consideration in
the array of alternatives for the partial Latin square $P$, $A_P$, 
we need to determine whether a symbol
$k \in A_P(i,j)$ is forced out.  We do this by determining if there exists a $g$, $1
\leq g \leq n$ such that 
\begin{itemize}
\item [(1)] there exist distinct $i_1, \dots, i_g$ (all $\neq i$) with $k' \in
(i_1,j)_{A_P} \cup \dots \cup (i_g,j)_{A_P}$ and $|(i_1,j)_{A_P} \cup
\dots \cup (i_g,j)_{A_P}| = g$, or there exist distinct $j_1, \dots,
j_g$ (all $\neq j$) with $k' \in (i,j_1)_{A_P} \cup \dots \cup
(i,j_g)_{A_P}$ and $|(i,j_1)_{A_P} \cup \dots \cup (i,j_g)_{A_P}| = g$; or
\item [(2)] $\theta(i,j,k')$ satisfies $1$ in $A_{P_{\theta(1,2,3)}}$ for
$\theta = (2\;3)$ or $\theta = (1\;3)$.
\end{itemize}
This is equivalent to the definition of a symbol ``forced out'' 
of an array of alternatives given in Chapter~\ref{ch2}.

Obviously the definition of a near-strong critical set relies heavily
on the use of unions of sets and so the use of binary strings will be
important in the following algorithm.

Also, the algorithm for picking $g$ objects from $n$ objects is taken
from a program on the World Wide Web by Rhoads \cite{prog}, which is
based on code from Reingold, Nievergelt, and Deo \cite{MR57:11164}.

\begin{algorithm}{Testing whether a critical set $C$ is near-strong}
\begin{itemize}
\item Input the critical set $C$ based on the symbol set $N=\{0,\dots,n-1\}$
\item Copy $C$ to $P$.
\item Repeat the following until no more entries can be added to $P$:
\item[-] Call the function force($P$) to generate the $n \times n$ array of binary strings $A$ corresponding to the reduced array of alternatives for $P$, $RA_P$.
\item[-] If any binary string $A[i][j]$ has exactly one bit $x$ set, add the entry $(i,j;x)$ to $P$.
\item[-] If in row $i$ or column $j$ of $A$, the binary string $A[i][j]$ has bit $x$ set and no other binary string in row $i$ or column $j$ of $A$, respectively, has bit $x$ set, add the entry $(i,j;x)$ to $P$.
\item[-] Call the function force($P_{(1,3,2)}$) to generate the $n \times n$ array of binary strings $A$ corresponding to the reduced array of alternatives for $P_{(1,3,2)}$, $RA_{P_{(1,3,2)}}$.
\item[-] If any binary string $A[i][j]$ has exactly one bit $x$ set, add the entry $(i,x;j)$ to $P$.
\item[-] If in row $i$ or column $j$ of $A$, the binary string $A[i][j]$ has bit $x$ set and no other binary string in row $i$ or column $j$ of $A$, respectively, has bit $x$ set, add the entry $(i,x;j)$ to $P$.
\item[-] Call the function force($P_{(3,2,1)}$) to generate the $n \times n$ array of binary strings $A$ corresponding to the reduced array of alternatives for $P_{(3,2,1)}$, $RA_{P_{(3,2,1)}}$.
\item[-] If any binary string $A[i][j]$ has exactly one bit $x$ set, add the entry $(x,j;i)$ to $P$.
\item[-] If in row $i$ or column $j$ of $A$, the binary string $A[i][j]$ has bit $x$ set and no other binary string in row $i$ or column $j$ of $A$, respectively, has bit $x$ set, add the entry $(x,j;i)$ to $P$.
\item Finally, if $P$ is a complete Latin square, then $C$ is near-strong.
\end{itemize}

The function force($P$):
\begin{itemize}
\item For every empty position $(i,j)$ in $P$:
\item[-] If $|R_i \cup C_j| = n-1$, add the entry $(i,j;N \setminus (R_i 
\cup C_j))$ to $P$, and continue the completion;
\item[-] Otherwise, generate the array of alternatives for $P$, represented by an $n \times n$ array of binary strings, $A$, with bit $x$ of the binary string $A[i][j]$ set if and only if $x \in R_i \cup C_j$.
\item Repeat the following until the array of alternatives $A$ is unchanged; that is, when $A$ corresponds to the reduced array of alternatives for $P$.
\item[-] For every empty position $(i,j)$ in $P$:
\item[-] For every symbol $k'$ which is a possibility at $(i,j)$: 
\item[-] For $g$ from 1 to $n$:
\item[-] Pick $g$ numbers $c[1]$, \dots, $c[g]$ from the numbers $0$ to $n-1$.
\item[-] Where $c[x] \neq j$ and $P_{c[x]j}$ is non-empty for all $1 \leq x \leq g$, calculate $u$, the binary string with bit $y$ set if and only if the symbol $y$ is contained in $P_{c[x]j}$ for some $1 \leq x \leq g$.
\item[-] If bit $k'$ of $u$ is set and the number of bits in $u$ equals $g$, set bit $k'$ of $A[i][j]$ to 0.
\item[-] Where $c[x] \neq i$ and $P_{ic[x]}$ is non-empty for all $1 \leq x \leq g$, calculate $u$, the binary string with bit $y$ set if and only if the symbol $y$ is contained in $P_{ic[x]}$ for some $1 \leq x \leq g$.
\item[-] If bit $k'$ of $u$ is set and the number of bits in $u$ equals $g$, set bit $k'$ of $A[i][j]$ to 0.
\item Return the array $A$, corresponding to the reduced array of alternatives for $P$.
\end{itemize}
\end{algorithm}
\chapter{Largest critical sets in a Latin square}\label{ch4}
%
%
\def\n#1{\vbox to 3mm{\vspace{1mm}\vfill \hbox to 2.0mm{\hfill
             $#1$\hfill} \vfill }}
\def\arraystretch{1.0}                 
\pagestyle{plain}
%
%
%
%
In this chapter, we use the concept of $x_{i,j}(P)$, the number of different
symbols in the intersection of row $i$ and column $j$ in a partial Latin square $P$,
to prove a new upper bound on \lcs{n}; that is, \lcs{n} $\leq n^2-3n+3$.
Recall that \lcs{n} is the size of the largest critical set in
an $n \times n$ Latin square.
\section{The value of \lcs{n}  for small $n$}      
In Table~\ref{lcstable}, the known values of \lcs{n} are listed for small
values of $n$.  The extra columns are to compare different bounds
discussed subsequently in Section~\ref{sec43}.

\begin{table}
\caption{The sizes of the largest known critical sets of small order, with conjectured bounds}
\label{lcstable}
\begin{center}
$\begin{array}{c|cccc}
n & \lcs{$n$}  & n^{2}-3n+3 & \lfloor n^{2}-n^{3/2} \rfloor &
\lfloor (1 - (\frac{3}{4})^{log_{2}n}) n^2 \rfloor \\
\hline
 1  &       0  &  1  &  0  &  0  \\
 2  &       1  &  1  &  1  &  1  \\
 3  &       3  &  3  &  3  &  3  \\
 4  &       7  &  7  &  8  &  7  \\
 5  &      11  & 13  & 13  & 12  \\
 6  &      18  & 21  & 21  & 18  \\
 7  & \geq 25  & 31  & 30  & 27  \\
 8  & \geq 37  & 43  & 41  & 37  \\
 9  & \geq 44  & 57  & 54  & 48  \\
10  & \geq 57  & 73  & 68  & 61  \\
\end{array}$ \vspace{3mm} \\
\end{center}
\end{table}

All bounds on \lcs{n} given in column 2,
expect for $n = 5, 7, 9,$ and $10$,
are taken from \cite{MR99k:05038}.  The current bounds for $n = 5$ and 7
were given by A.~Khodkar \cite{kho}.
In Appendix~\ref{app1}, we give some examples for the largest
known critical sets for $n = 5, 7, 9$, and $10$.
The bound for $n = 6$ is given in Chapter~\ref{ch8}, which is based on \cite{abk}.

\section{Non-critical sets}    
The following lemma is our main tool for improving the upper bound
on \lcs{n}.
\begin{lemma}
\label{3.1}
Let $C$ be a critical set for a Latin square $L$ and assume that
there exists $i$
such that $|R_{i}(C)| = n - 1$.
Then the missing symbol in row $i$ does not occur anywhere in $C$,
and the column corresponding to the missing symbol is empty.
That is, if ${(i, j; k)} \in L\setminus C$, then
$|C_{j}(C)| =|E_{k}(C)| =  0$.
\end{lemma}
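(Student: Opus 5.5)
The approach is to argue by contradiction, using the minimality clause in the definition of a critical set (equivalently, condition 2 of Lemma~\ref{lem11}). Write the unique entry of $L\setminus C$ in row $i$ as $(i,j;k)$. Row $i$ of $C$ therefore contains every symbol except $k$, in every column except $j$.

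First suppose some entry $e=(i',j';k')\in C$ has $j'=j$ or $k'=k$. We show that $C\setminus\{e\}$ is still uniquely completable. This contradicts criticality, so no such entry exists, which gives $|C_j(C)|=|E_k(C)|=0$.

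The key observation is that $C\setminus\{e\}$ still forces $(i,j;k)$ whenever $e$ is not in row $i$. Row $i$ of $C\setminus\{e\}$ still has $n-1$ filled cells, in all columns but $j$, with all symbols but $k$. Any completion must therefore put $k$ in cell $(i,j)$.

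The cases are as follows.
\begin{itemize}
\item \emph{Column case:} $e$ lies in column $j$, so $j'=j$ and $i'\neq i$. Any completion $M$ of $C\setminus\{e\}$ contains $(i,j;k)$. Now consider cell $(i',j)$ in $M$; I want to show it must be $k'$. Pass to the set $C\cup\{(i,j;k)\}\setminus\{e\}$, which every completion contains. I first tried to conclude directly, but this is the step that does not immediately close, so I would switch approach and use Lemma~\ref{lem11}.

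By Lemma~\ref{lem11}, there is a Latin interchange $I$ in $L$ with $I\cap C=\{e\}$. If $I$ avoids cell $(i,j)$, then $I$ meets row $i$ nowhere: every other cell of row $i$ is in $C$, and $e$ is not in row $i$. If $I$ contains $(i,j;k)$, then $I$ meets row $i$ in exactly one cell. This is impossible, because in an interchange every row that is used contains at least two cells, since the disjoint mate must permute the row's symbol set with no fixed points.

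So $I$ avoids row $i$ entirely. Hence $I$ does not contain $(i,j;k)$, yet $e\in I$ sits in column $j$. This alone is not yet contradictory. The fix is to use the symbols: the mate $I'$ places the symbol $k'$ of $e$ in column $j$ elsewhere, and some entry of $I'$ places a symbol in column $j$ at row $i''\neq i$.

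The cleaner contradiction I expect is a column count. Column $j$ of $I$ contains $e$ and at least one more cell, each of which lies in $C$ or is $(i,j;k)$. Every cell of $I$ other than $e$ lies outside $C$, and $(i,j;k)\notin I$. This is consistent so far, so the column count does not finish the argument either.
\item \emph{Symbol case:} $k'=k$. Symmetrically, the interchange $I$ must move symbol $k$ within column $j'$. Conjugation swaps the roles of symbols and columns, so this case mirrors the column case.
\end{itemize}

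Honest assessment: the plan is to combine Lemma~\ref{lem11}, applied to the interchange $I$ isolating $e$, with the fact that row $i$ is almost full. The aim is to show that $I$ and its mate $I'$ would have to use cell $(i,j)$, so that $I$ meets row $i$ in exactly one cell, contradicting the fact that interchanges meet each used row at least twice.

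The main obstacle is forcing $I$ to use cell $(i,j)$. I would try to get this from the mate: $I'$ gives a second completion of $C\setminus\{e\}$. The cell $(i,j)$ is forced to $k$ in it, so $I'$ agrees with $L$ on row $i$, and I need this to conflict with $e$ lying in column $j$ or carrying symbol $k$. If that fails, I would instead extend the argument by examining how column $j$ and symbol $k$ of $I'$ interact with the forced entry, trying an alternating-path argument along column $j$ and symbol $k$.
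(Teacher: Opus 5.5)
\textbf{Verdict: there is a genuine gap.} Your proposal does not yet contain a proof. The column case is left open, and the route you chose cannot close it, because the interchange $I$ with $I\cap C=\{e\}$ is the wrong object to study. You correctly show that $I$ avoids row $i$ entirely, but nothing contradictory follows from that. The set $(L\setminus I)\cup I'$ is a genuine second completion of $C\setminus\{e\}$, and it agrees with $L$ on all of row $i$, including the forced cell $(i,j)$. So the near-fullness of row $i$ places no constraint on it. Your plan of ``forcing $I$ to use cell $(i,j)$'' cannot succeed, since you have just proved that $I$ never meets row $i$. The column counts and the alternating-path idea fail for the same reason: a single entry $e$ outside row $i$ carries no information about row $i$.

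The missing idea is to apply Lemma~\ref{lem11}(2) to the $n-1$ entries of row $i$ rather than to $e$. You then use the fact you already stated, that each row an interchange meets contains at least two of its cells.
\begin{itemize}
\item For $x\neq j$, let $(i,x;s_x)\in C$, and let $I_x$ be an interchange with $I_x\cap C=\{(i,x;s_x)\}$.
\item The only cell of row $i$ outside $C$ is $(i,j)$. So row $i$ of $I_x$ is exactly $\{(i,x;s_x),(i,j;k)\}$, and the mate $I'_x$ must contain $(i,x;k)$ and $(i,j;s_x)$.
\item Column balance then gives entries $(r,x;k)$ and $(t,j;s_x)$ of $I_x$ with $r,t\neq i$. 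Neither entry lies in $C$.
\item Since $L$ has only one $k$ in column $x$ and only one $s_x$ in column $j$, symbol $k$ does not occur in column $x$ of $C$, and symbol $s_x$ does not occur in column $j$ of $C$.
\item Let $x$ range over all columns other than $j$; then $s_x$ ranges over all symbols other than $k$. Together with $(i,j;k)\notin C$, this gives $|E_k(C)|=|C_j(C)|=0$ directly.
\end{itemize}
This is the paper's argument. It needs no contradiction set-up and no separate conjugation step for the symbol case.
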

\begin{proof}{
Without loss of generality, let $i=1$ and assume that
$C$ contains the entries $\{ (1,x;x) |\ 1 \leq x \leq n - 1\}$
and that position $(1,n)$ is empty.

By Lemma~\ref{lem11} part (2), for each $x$  ($1 \leq x \leq n-1 $)
there exists a Latin interchange $I_x \subseteq L$ such that
$I_x \cap C = \{ (1,x;x)\}$.
Since there is only one empty position in the first row, it follows that
$\{ (1,x;x), (1,n;n) \} \subseteq I_x$.  Now the Latin interchange $I_x$
has a disjoint mate, say $I'_x$.  In this case since
$ (1,x;n)  \in I'_x$,  for some $r$, $ (r,x; n)  \in I_x$, and since
$|I_x \cap C| = 1$,  $ (r,x;n) \in L\setminus C$.
So symbol $n$ does not occur in column $x$ of $C$.
Since $x$ ranges over all columns from 1 to $n-1$,
symbol $n$ does not occur in $C$ at all.
Therefore $|E_{n}(C)| = 0$.

Also we have $(1,n;x)  \in I'_x$. Thus for some $s$, $ (s,n;x) \in I_x$.
Similarly we have $ (s,n;x)  \not\in C$; therefore no symbol apart from
$n$ may occur in column $n$ in $C$, and we have said that symbol $n$ does
not occur in column $n$ either. Therefore column $n$ is empty.
So $|C_{n}(C)| = 0$.
}\end{proof}

We can generalize Lemma~\ref{3.1} to the following.

\begin{lemma}
\label{3.2}
Let $C$ be a critical set for a Latin square $L$ and assume that
there exists $i$, such that
$|R_i(C)| = n-m$, where
$\{(i, c_1; e_1), (i, c_2; e_2)\dots, (i, c_m; e_m) \}
\subseteq L\setminus C$ \  and
$\{(i,c_{m+1};e_{m+1}), \dots, (i, c_n; e_n) \}\subseteq  C$.
Then we have
\begin{itemize}
\item[$(1)$]
In each of the columns $c_{m+1},c_{m+2}, \dots, c_n$ in $C$, at least
one of the symbols $e_1, e_2, \dots, e_m$ is missing.
That is, for each $x \in \{c_{m+1}, c_{m+2}, \dots, c_n \}$,
there exists a symbol $y \in \{e_1, e_2, \dots, e_m \}$, and a row
$r \in \{1,2, 3, \dots, n \}\setminus \{i\}$
such that $(r, x; y)  \in L\setminus C$.
\item[$(2)$]
For each symbol $e \in \{e_{m+1},e_{m+2}, ..., e_n\}$, we have
a column $c \in \{c_1, c_2, \dots, c_m \}$, from which
this symbol is missing.
\end{itemize}
\end{lemma}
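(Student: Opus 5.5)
The plan is to mimic the proof of Lemma~\ref{3.1}, replacing the single empty cell of row $i$ by the $m$ empty cells $(i,c_1),\dots,(i,c_m)$. Fix an entry $(i,x;e)$ of $C$ in row $i$, so $x=c_t$ and $e=e_t$ for some $t\in\{m+1,\dots,n\}$. By Lemma~\ref{lem11} part (2) there is a Latin interchange $I\subseteq L$ with $I\cap C=\{(i,x;e)\}$. Let $I'$ be a disjoint mate of $I$. Every entry of $I$ other than $(i,x;e)$ lies in $L\setminus C$. In particular, the entries of $I$ in row $i$ are $(i,x;e)$ together with some entries among $(i,c_1;e_1),\dots,(i,c_m;e_m)$.

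For part (1), look at the cell $(i,x)$ in $I'$. Since $I'$ is disjoint from $I$, it contains some symbol $y\neq e$ there. Since $I$ and $I'$ are mutually balanced in row $i$, $y\in R_i(I)\setminus\{e\}\subseteq\{e_1,\dots,e_m\}$. Column balance at column $x$ then gives $y\in C_x(I)$, so there is a row $r$ with $(r,x;y)\in I$. We have $r\neq i$, because $L_{ix}=e\neq y$. Also $(r,x;y)\neq(i,x;e)$, so $(r,x;y)\in I\setminus C\subseteq L\setminus C$. This is exactly the required row $r$ and symbol $y$. Letting $x$ range over $c_{m+1},\dots,c_n$ proves (1).

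Part (2) is the dual argument, with the roles of columns and symbols swapped. We need a column $c\in\{c_1,\dots,c_m\}$ in which the symbol $e$ does not occur in $C$. Row balance in row $i$ says $e\in R_i(I')$, so $I'$ has $e$ in some cell $(i,c)$ of row $i$ that belongs to the shape of $I$. We cannot have $c=x$, since $I'$ is disjoint from $I$ there. Hence $(i,c)$ is one of the empty cells $(i,c_1),\dots,(i,c_m)$. Column balance in column $c$ now gives an entry $(s,c;e)\in I$. It differs from $(i,x;e)$ (different column), so $(s,c;e)\in L\setminus C$. Because $L$ is a Latin square, the symbol $e$ occurs exactly once in column $c$ of $L$, namely at $(s,c)$, and that entry is not in $C$. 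So $e\notin C_c(C)$, that is, $e$ is missing from column $c$ of $C$.

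The only delicate point is bookkeeping: one must check that the witnesses found lie in $L\setminus C$ and are distinct from the unique entry of $I\cap C$. In each case this follows from $|I\cap C|=1$ together with disjointness of $I$ and $I'$ at the cell $(i,x)$. I expect no real obstacle. As a consistency check, the case $m=1$ specialises to Lemma~\ref{3.1}.
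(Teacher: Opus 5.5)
Your proof is correct and follows the paper's argument. For each entry $(i,x;e)\in C$ in row $i$, you take the interchange $I$ from Lemma~\ref{lem11}(2) and use row balance to place the relevant symbol (resp.\ cell) among the $m$ missing entries of row $i$. You then use column balance to obtain the witness $(r,x;y)$ (resp.\ $(s,c;e)$) in $I\setminus C\subseteq L\setminus C$. You also spell out two steps the paper leaves implicit: why $y\in\{e_1,\dots,e_m\}$ and $c\in\{c_1,\dots,c_m\}$, and why a witness in $L\setminus C$ means that symbol is absent from that column of $C$.
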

\begin{proof}{(1)
Without loss of generality we may assume that $i=1$ and $c_j = e_j = j$, for
$j = 1,2, \dots, n$.
For each $x \in \{m+1, m+2, \dots, n \}$,
there exists a Latin interchange $I_x$ such that
$I_x\subseteq L$ and $I_x \cap C = \{ (1, x;x) \}$.
So if $I'_x$ is the disjoint mate of $I_x$ then there exists
$y \in \{1, 2, \dots, m \}$ such that $(1,x;y) \in I'_x$, implying that
there exists $r \in \{ 2, \dots, n \}$ such that $(r,x;y) \in I_x$.
Since $|I_x \cap C| = 1$, $(r,x;y) \in L\setminus C$.

\noindent
(2) Similarly
for each $e \in \{m+1, m+2, \dots, n \}$,
there exists a Latin interchange $I_e$ such that
$I_e\subseteq L$ and $I_e \cap C = \{ (1, e;e) \}$.
So if $I'_e$ is the disjoint mate of $I_e$ then there exists
$c \in \{1, 2, \dots, m \}$ such that $(1,c;e) \in I'_e$, implying that
there exists $s \in \{ 2, \dots, n \}$ such that $(s,c;e) \in I_e$.
Since $|I_e \cap C| = 1$, $(s,c;e) \in L\setminus C$.
}\end{proof}
\begin{theorem}
\label{th3.1}
If $C$ is a uniquely completable partial Latin square of order $n$
completing to the Latin square $L$
with $| C | > n^2-3n+3$, then $C$ is not a critical set.
\end{theorem}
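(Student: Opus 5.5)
The plan is to argue by contradiction. Suppose $C$ is a critical set for $L$ with $|C| \geq n^2-3n+4$. Then the set $E=L\setminus C$ of entries missing from $C$ satisfies $|E| \leq 3n-4$. I want to show $|E| \geq 3n-3$. Two earlier facts do the work.

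First, a critical set cannot contain a complete row. If it did, any entry of that row is forced back by the other $n-1$ entries of the row, so deleting it keeps unique completion, contradicting condition (2) of Lemma~\ref{lem11}. By applying this to the conjugates of $L$ (critical sets are preserved under conjugation), no column is full and no symbol occurs $n$ times in $C$.

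Second, Lemmas~\ref{3.1} and \ref{3.2} also hold for all conjugates. So the same conclusions apply with the roles of rows, columns and symbols permuted.

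Since $|E| \leq 3n-4 < 3n$, some row has at most two empty cells, and by the first fact it has at least one. This gives two cases; in each I would collect explicitly distinct cells of $E$ until reaching $3n-3$.

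\emph{Case 1: some row $i$ has exactly one empty cell $(i,j)$ with $L_{ij}=k$.}
\begin{itemize}
\item By Lemma~\ref{3.1}, column $j$ is entirely empty in $C$, contributing $n$ cells of $E$.
\item Also by Lemma~\ref{3.1}, symbol $k$ is absent from $C$. Its $n-1$ occurrences outside column $j$ contribute $n-1$ further cells of $E$.
\item This gives $|E| \geq 2n-1$ so far. To get $n-2$ more cells, I apply Lemma~\ref{3.1} to the conjugate in which columns play the role of rows. Column $j$ is empty, so I look instead at a symbol, or at another row, with few gaps.
\item More directly: every row other than $i$ and other than the row $r$ with $L_{rj}=k$ already has two empty cells counted, namely $(r',j)$ and the cell holding $k$. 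By the first fact applied to columns, it must have yet another empty cell unless it is exhausted.
\end{itemize}
I would refine this by rerunning the minimum argument on the conjugate that swaps rows and symbols.

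\emph{Case 2: every row has at least two empty cells, and row $i$ has exactly two, at columns $c_1,c_2$ with symbols $e_1,e_2$.}
\begin{itemize}
\item Lemma~\ref{3.2}(1) gives, for each of the $n-2$ columns $c_3,\dots,c_n$, an empty cell outside row $i$ containing $e_1$ or $e_2$.
\item Lemma~\ref{3.2}(2) gives, for each of the $n-2$ symbols $e_3,\dots,e_n$, an empty cell in column $c_1$ or $c_2$.
\item These $2+(n-2)+(n-2)=2n-2$ cells are pairwise distinct, since they are separated by column and by symbol.
\end{itemize}
I then add the constraint that each of the other $n-1$ rows has at least two empty cells. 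A row containing at most one of the cells already found must contribute an extra empty cell. Double counting rows against the $2n-4$ cells found outside row $i$ should yield the remaining $n-1$ cells, giving $|E|\geq 3n-3$.

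The main obstacle is this final counting step, in both cases: proving that the cells supplied by the lemmas cannot be concentrated so that the "at least two per row" bound is already met. I would first try to choose the row $i$ (or the line, across all three conjugates) minimizing the number of empty cells. I would then use that minimality to force every other line to carry additional gaps, which should close the gap between $2n-2$ and $3n-3$.
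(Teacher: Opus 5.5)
Your reduction to showing $|E|\ge 3n-3$ for $E=L\setminus C$ is correct, and so are the no-full-line fact and the cells you read off from Lemmas~\ref{3.1} and \ref{3.2}. But neither case is finished. Case~1 is easy to repair, just not along rows, since nothing forces a row to have a third empty cell; count columns instead. Either some column $j'\ne j$ has exactly one empty cell, and then Lemma~\ref{3.1} for the transpose makes some row empty, adding at least $n-2$ new cells. Or every column other than $j$ has at least two empty cells, so $|E|\ge n+2(n-1)$. This is what the paper does.

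In Case~2 you should first use the transposed Case~1 to assume every column also has at least two empty cells. By Lemma~\ref{3.2}(2), each $e_m$ with $m\ge 3$ occurs in $C$ at most once in columns $c_1,c_2$ together, and $e_1,e_2$ occur there at most once each. So these two columns contain at least $n$ empty cells, which gives $|E|\ge n+2(n-2)=3n-4$. Your row double count cannot do better: the remaining $n-1$ rows with at least two gaps each only give $|E|\ge 2n$.

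The genuine gap is the one missing cell, i.e.\ excluding $|C|=n^2-3n+4$. No bookkeeping of cells supplied by Lemmas~\ref{3.1}, \ref{3.2} and the no-full-line fact can produce it, whatever conjugate or minimal line you choose. For example, take $C\subset BC_4$ to be the entries in cells $(i,j)$ with $j-i\in\{2,3\}$ modulo $4$. Then $|C|=8=n^2-3n+4$ and $C$ is uniquely completable. Every row, column and symbol is missing exactly twice, and the conclusions of both lemmas hold in all six conjugates.

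The paper excludes this case with a different, global use of criticality. Put $x_{i,j}(C)=|R_i(C)\cup C_j(C)|$, so that $\sum_{i,j}x_{i,j}(C)=n^3-\sum_k(n-|E_k(C)|)^2$. Empty cells have $x_{i,j}(C)\le n-1$ because $C$ completes. Filled cells have $x_{a,b}(C)\le n-1$ because otherwise $(a,b)$ would still be forced after deleting its entry.

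By conjugacy one may assume $|E_k(C)|\le n-2$; set $f_k=n-2-|E_k(C)|$. Then $\sum_k f_k=n-4$ and $\sum_k f_k^2\le (n-4)^2$, which show that the average of $x_{a,b}(C)$ over filled cells is at least $n-1$. Hence every inequality is an equality; in particular every empty cell has $x_{i,j}(C)=n-1$.

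Now delete any entry $(a,b;e)$. The empty cells off row $a$ and column $b$ are still forced, and after filling them so are row $a$ and column $b$. So $C\setminus\{(a,b;e)\}$ is uniquely completable, contradicting criticality. Your plan needs an argument of this kind to close Case~2.
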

\begin{proof}{
We prove this result by contradiction. Suppose $C$ is a critical set.
Since a critical set in a Latin square of order $n$ cannot have $n$
triples whose $i$th components are the same ($1 \leq i \leq 3$)
(see for example \cite{MR80j:05022}),
we can assume that any row or column contains at most $n-1$ symbols
and any symbol occurs at most $n-1$ times.

We have three cases to consider.

\noindent
{\bf Case 1 \ }
There exists a row $i$ such that $|R_{i}(C)| = n-1$. Assume that
$(i,j;k) \in L\setminus C.$
Then by Lemma~\ref{3.1}, $|C_{j}(C)| = |E_{k}(C)| =  0$.
Now if there exists $j'$ ($j' \neq j$) such that $|C_{j'}(C)| = n-1$
and $(i', j'; k') \in L\setminus C$,
then we have $|R_{i'}(C)| = 0$. These together imply that
$|C| \le n^2 - (2n-1)-(n-2)=n^2 - 3n + 3$.  Otherwise
for all $l$, $1 \leq l \leq n$, $|C_{l}(C)| \leq n-2$ and thus
$|C|\leq  n(n -2) -(n-2) =  n^2 - 3n + 2$.

\noindent
{\bf Case 2 \ }
For all $i$ ($1\le i \le n$) we have $|R_{i}(C)| \leq n-3$. Then
$|C| \leq n(n - 3) = n^2 - 3n$.

\noindent
{\bf Case 3 \ } For all $i$ ($1\le i \le n$) we have $|R_i(C)| \leq n-2$ and
there exists a row $r$ such that $|R_{r}(C)| = n-2$.
And by contrast for all $j$ ($1\le j \le n$) we have $|C_j(C)| \leq n-2$.
Assume that $R_r(C) = \{e_{3},e_{4}, \dots, e_{n}\}$,
and $\{(r, c_{1}; e_{1}), (r, c_{2}; e_{2})\}\subset L\setminus C $.
Then by Lemma~\ref{3.2}
each of the symbols $e_{3}, e_4, \dots, e_{n}$
occurs at most once in columns $c_{1}$ and $c_{2}$.   This means
$|C_{c_{1}}(C)| + |C_{c_{2}}(C)| \leq n$. Thus
$|C|\leq  n(n -2) -(n -4) =  n^2 - 3n + 4$.
We shall show that $|C| =  n^2 - 3n + 4$ is also impossible. Proof of this fact
is somewhat involved and we need to introduce more notation.

First note that if we consider the conjugate of the Latin square $L$
we may assume that for all $k$ ($1\le k \le n$) we have $|E_k(C)| \leq n-2$.
Let  $f_k = n-2 - |E_k(C)|$. We have $f_k \ge 0$, for all $k$ ($1\le k \le n$)
and
\begin{displaymath}
\sum_{k=1}^{n}
f_k = n(n-2)- |C|= n-4.
\end{displaymath}
For each position $(i,j)$, $1 \leq i,j \leq n$, we have $x_{i,j}(C) = |R_i(C) \cup C_j(C)|$.
We have
\begin{displaymath}
\hspace*{-3.3cm}
(*)
\hspace*{3cm}
\sum_{1 \le i,j \le n}
x_{i,j}(C)= n^3 - \sum_{k=1}^{n}(n-|E_k(C)|)^2.
\end{displaymath}
In fact, for each position $(i,j)$, $1 \leq i,j \leq n$, we have $x_{i,j}(C) = n$, except when a
symbol $k$ is missing from {\it both}
row $i$ and column $j$ in $C$.
For each $k$ we have
exactly $(n-|E_k(C)|)^2$  such positions. They are the positions which are in
the $(n-|E_k(C)|) \times (n-|E_k(C)|)$ subsquare obtained from the $n \times n$
array by omitting all the rows and columns containing symbol $k$ in $C$.
Each such position causes a ``$-1$'' in the summation of the
left hand side of $(*)$.

Note that since $C$ is a critical set, for each  position
$(i,j) \in L\setminus C$, that is, for each position in $L$ in which $C$ is
empty, we have \ $x_{i,j}(C) \leq n-1$.  Recall that the shape of a partial
Latin square $P$ is $S(P) = \{ (i,j) \mid (i,j;k) \in P \}$.  Thus
\begin{displaymath}
\begin{array}{ccl}
\displaystyle{\frac{1}{|C|}}{\displaystyle{\sum_{(i,j)\in C} x_{i,j}(C)}} & = &
\displaystyle{\frac{1}{|C|}}\Big( (n^3 -{\displaystyle{ \sum_{k=1}^{n}(n-|E_k(C)|)^2)}} -
{\displaystyle{\sum_{(i,j)\in S(L\setminus C)} x_{i,j}(C)}} \Big)    \\
&\ge &
\displaystyle{\frac{1}{n^2-3n+4}}\Big( (n^3 -{\displaystyle{ \sum_{k=1}^{n}(f_k+2)^2)}} -
(3n-4)(n-1) \Big)  \\
& = &
\displaystyle{\frac{1}{n^2-3n+4}}{\displaystyle(n^3 - 3n^2-n +12 -{\sum_{k=1}^{n}f^2_k})}. \\
\end{array}
\end{displaymath}
Since \
${\displaystyle{\sum_{k=1}^{n}f^2_k}} \le
{\displaystyle{(\sum_{k=1}^{n}f_k})^2} = (n-4)^2$, \ it follows that \\
$$
\displaystyle{\frac{1}{|C|}}{\displaystyle{\sum_{(i,j)\in S(C)} x_{i,j}(C)}} \ge
\displaystyle{\frac{n^3 - 3n^2-n +12 -(n-4)^2}{n^2-3n+4}} = n-1.
$$

\noindent
This implies that, either
\begin{enumerate}
\item[(i)]
for some position $(i,j)\in S(C)$ we have $x_{i,j}(C) > n-1$; or
\item[(ii)]
for all $(i,j)\in S(C)$,  $x_{i,j}(C) = n-1$.
\end{enumerate}
The first case is contradictory with $C$ being a critical set.
In the second case if we remove an entry $(a,b;e) \in C$ and let $C' = C \setminus \{ (a,b,e) \}$, then we have
\begin{itemize}
\item
 $x_{a,b}(C') = n-2$ \ and \
 $x_{a,j}(C') , x_{i,b}(C') \le n-1$, for all \ $(a,j) \ {\rm and} \ (i,b) \in S(C')$; \
 and
 \item
  $x_{i,j}(C') = n-1$; otherwise.
\end{itemize}


But if case (ii)
holds, then all of the inequalities that we have above
must be equalities, and
this implies that for every $(i,j)\not\in S(C)$, we have $x_{i,j}(C) = n - 1$.
This follows because we have used the inequality  $x_{i,j}(C)\le  n-1$, 
where $(i,j) \in S(L \setminus C)$.
So  $C'$ can be completed to $L$, first by completing any
position not in the row $a$ or column $b$, then the positions of row $a$
and column $b$. This is a contradiction.

}\end{proof}
\section{Conjectures and Questions}\label{sec43}
%
%
The study of lower bounds on \lcs{n} has been examined by many researchers.
While the work presented in this thesis improves on this bound, it does not settle the open problem
of what the exact value of \lcs{n} is.
Here we list some conjectures and questions which arise from this research.
\begin{conjecture}      \ \
\lcs{n} $ \leq n^2 - n^{3/2}$.
\end{conjecture}
This is motivated by the proof of Theorem~\ref{th3.1}.
It is analogous to a similar conjecture
made by Brankovi{\'c}, Hor{\'a}k, Miller, and Rosa, in \cite{mirka1}, concerning
the size of the largest premature partial Latin square.
\begin{conjecture} \ \
\lcs{n} $\leq (1 - (\frac{3}{4})^{log_2{n}}) n^{2}$.
\end{conjecture}
This is true for the current known values of \lcs{n}. (That is, $1 \leq n 
\leq 17$.) It implies that \lcs{2^n} $= 4^{n} - 3^{n}$.

This conjecture is based on Stinson and van Rees's result in
\cite{MR84g:05036} that \lcs{2^n} $\geq 4^{n} - 3^{n}$.
We postulate that this is an equality.  Below in Questions~\ref{q432} and \ref{q433}, we ask how $I(n)$, the
maximum number of intercalates in an $n \times n$ Latin square, and
\lcs{n} are related.  This conjecture assumes that as the value of $I(n)$
reaches a theoretical maximum when $n$ is a power of 2, so too does the value of \lcs{n}.

\begin{question}\label{q431}
Where $C$ is a critical set of order $n$ and of size \lcs{n},
do there exist $i,j,k$, $1 \leq i,j,k \leq n$, such that
$|R_{i}(C)| = |C_{j}(C)| = |E_{k}(C)| = 0$?  That is, is there
always an empty row, an empty column, and a missing symbol in a critical
set of size \lcs{n}?
\end{question}
Evidence for the ``yes'' case in Question~\ref{q431} is that every
example in Stinson and van Rees ~\cite{MR84g:05036}, and in Donovan
~\cite{MR99k:05038} where critical sets of largest known size are given,
have this property.  Also, every critical set of largest size in Latin
squares of orders 1 to 6 has this property.  Additionally, all of the
largest known critical sets in Latin squares of orders from 8 and 9 have
this property.

All the constructions for large critical sets given in articles
such as~\cite{MR99a:05018},~\cite{MR96h:05030},~\cite{nel2} and~\cite{MR84g:05036}
have this property.  However, the example of a critical set of largest known size in a Latin square
of order 10, given in Appendix~\ref{app1}, does not have this property.  Also, as
we found in Chapter~\ref{ch3}, there are many critical sets of order 7 and size 25
from the Latin square corresponding to STS(7) which do not have this property.

\begin{question}\label{q432}
Where $C$ is a critical set for the Latin square $L$ of order $n$ and size
\lcs{n}, does $L$ have $I(n)$ intercalates?
\end{question}

\begin{question}\label{q433}
Where $L$ is a Latin square of order $n$ with $I(n)$ intercalates,
does $L$ contain a critical set $C$ of size \lcs{n}?
\end{question}

In what follows, we examine evidence for and against both of these 
questions.

Evidence for the ``yes'' case in both of these questions is that all of
the largest known critical sets in Latin squares of orders 1 to 6 and 8 have
the property that they occur in Latin squares with the largest known
number of intercalates.

Also, for each order of Latin square $n$, $1 \leq n \leq 6$, the largest
critical set of order $n$ occurs only in the Latin square with $I(n)$
intercalates.

The original construction for a critical set of size $\displaystyle{\frac{n^2-n}{2}}$,
given by Nelder \cite{nel2}, is in the back-circulant Latin square of
order $n$.  However, apart from this construction, all known constructions
for large critical sets complete to Latin squares which provide lower bounds
for $I(n)$ in ~\cite{MR84g:05034}, as shown in this list.

\begin{itemize}
\item \lcs{2^m} $\geq 4^m - 3^m$,  ~\cite{MR84g:05036}.  The completion of
this critical set is isomorphic to the Latin square in 
~\cite{MR84g:05034} with $I(2^m) = \displaystyle\frac{4^m(2^m-1)}{4}$ intercalates.
\item \lcs{2^m-1} $\geq 4^n - 3^n - 2^{n+1} + 3$, ~\cite{MR96h:05030}.  The
completion of this critical set is isomorphic to the Latin square in 
~\cite{MR84g:05034} with $I(2^m-1) = \displaystyle\frac{(2^m-1)(2^m-2)(2^m-4)}{4}$ intercalates.
\item \lcs{2m} $\geq \displaystyle\frac{5m^2-3m}{2}$, ~\cite{MR99a:05018}.
The completion of this critical set is isomorphic to the Latin square
in~\cite{MR84g:05034} with $m^3$ intercalates, which demonstrated that 
$I(2^m) \geq m^3$.  In the next chapter, the completion of this critical set 
is denoted by $L_2$ and the Latin square in \cite{MR84g:05034} is denoted by 
$L_1$, and we find that $L_1^{-1} = L_2$.

\end{itemize}

As the number of intercalates in a Latin square increases, any
critical set in such a Latin square must intersect an increasing number
of intercalates.  Therefore, it seems reasonable to assume that, in general,
such critical sets would grow in size.  A ``yes'' answer for Question~\ref{q433} would fit in
with this expectation.

We now examine the evidence for the ``no'' case.  The largest known
critical set of order 9 does not come from a Latin square with $I(9)$
intercalates, since all known examples of this size are derived
from Latin squares with 53 or 54 intercalates, and yet Heinrich and
Wallis~\cite{MR84g:05034} found I(9) $\geq$ 64 and more recently the author
of this thesis
found I(9) $\geq$ 72, which had been independently discovered by Owens
and Preece~\cite{MR96g:05029}.

Also, the largest known critical set of order 10 comes from a Latin
square with 117 intercalates, but Heinrich and Wallis~\cite{MR84g:05034}
found I(10) $\geq$ 125.  No critical set of size greater than 55 has
been found in the order 10 Latin square with 125 intercalates.

Additionally, there are at least 113 (out of 147) main classes of
$7 \times 7$ Latin squares which contain a critical set of size 25.


\section{Conclusion}
In this chapter we have developed a new upper bound on \lcs{n} which has improved
considerably the bound given by Curran and van Rees in ~\cite{MR80j:05022}.
We also speculated on the evidence given in a multitude of papers which
links constructions for large critical sets and the classic paper on
the maximum number of intercalates in a Latin square.  Such links have
not been made before.  Additional observations about the nature of
published large critical sets and a large amount of data about large
critical sets led to further conjectures and questions, for which there
is conflicting evidence.

In the last chapter of this thesis, we use the new upper bound on \lcs{n}
to calculate the value of \lcs{6} more quickly.  In the next chapter,
we provide more evidence for the close link between Latin squares with
large numbers of intercalates and large critical sets in these Latin 
squares.
\chapter{New constructions for intercalate-rich Latin squares and their large critical sets}\label{ch5}

There are two well-known papers giving bounds on the value of $I(n)$,
the maximum number of intercalates in an $n \times n$ Latin square.
The first is by Heinrich and Wallis \cite{MR84g:05034} (1980) and the
second is by Kotzig and Zaks \cite{MR85h:05026} (1983).  This chapter
gives new bounds on the values of $I(2^\alpha m)$ and $I(2^\alpha m +
1)$ when $\alpha \geq 2$ ($\alpha \neq 3$ in the $I(2^\alpha m + 1)$
case) and $m$ is odd, by constructing the relevant Latin squares of orders
$2^\alpha m$ and $2^\alpha m + 1$.

Heinrich and Wallis proved that $I(2^\alpha m) \geq (2^\alpha m)^2 
(2^\alpha m+2^\alpha - 2)/8$, for $\alpha \geq 1$ and $m$ odd.
We shall show that $I(2^\alpha m) \geq (2^\alpha m)^2
(3m.2^\alpha + 2^\alpha - 4)/16$, for $\alpha \geq 2$ and $m$ odd.
This is an improvement because the Latin square constructed in this chapter
contains an extra 
$(2^\alpha m)^2 (2^\alpha m - 2^\alpha)/16$ intercalates.

Also, by using the technique of prolonging a transversal, (defined
in Chapter~\ref{ch2}), in the
Latin square of order $2^\alpha m$, Heinrich and Wallis found
$I(2^\alpha m+1) \geq 2^\alpha m ( 2^\alpha m (2^\alpha m + 2^\alpha -
10) / 8 + m + 1 ) + 2^{\alpha-1} m(m-1)$, for $\alpha \geq 2$ and $m$ odd.
By prolonging a different transversal in our newly constructed square of order
$2^\alpha m$, we shall show that $I(2^\alpha m + 1) \geq 2^\alpha m (
2^\alpha m (3.2^\alpha m+2^\alpha - 20)/16 + m + 1 ) + 2^{\alpha-1}
m(m-1)$, for $\alpha = 2$ or $\alpha \geq 4$ and $m$ odd.
The Latin square constructed in this chapter also contains an extra $(2^\alpha m)^2 (2^\alpha m - 2^\alpha)/16$
intercalates.

Both of these bounds are greater than the Heinrich and Wallis bound, and 
represent significant improvements.

We noted in the previous chapter that all except one of the constructions
for the largest known critical sets complete to Latin squares which
are isomorphic to those given in Heinrich and Wallis's paper.  By
combining constructions for critical sets mentioned in 
Donovan and Cooper \cite{MR97g:05032}, together with
the above mentioned construction for a Latin square of order $2^\alpha m$,
we can find a new lower bound for \lcs{4m}, for $m$
any positive integer.  We also give a construction for an $11 \times 11$
Latin square with a record number of intercalates, and we comment
on critical sets from intercalate-rich $14 \times 14$ Latin squares.

The discoveries of this $11 \times 11$ Latin square with 172 intercalates
and a $12 \times 12$
Latin square with 324 intercalates were
a result of joint work with Ian Wanless.  The generalization of
this $12 \times 12$ Latin square to derive a new bound for $I(2^\alpha m)$,
the construction giving the new bound for $I(2^\alpha m+1)$, and the new
critical set construction giving a new bound for \lcs{4m}, are all
new and original work, by the author of this thesis.

\section{Background}\label{sec51}
The rest of this chapter will involve the concatenation of Latin squares
and partial Latin squares to form larger Latin and partial Latin squares,
in order to create new bounds on the maximum number of intercalates
in Latin squares of order $2^\alpha m$, $m$ odd and $\alpha \geq 2$, and the size of critical
sets in such Latin squares.  We define new notation to clarify this process.  

For a partial Latin square $P$ of order $m$, define
\begin{eqnarray*}
S(P,x,y,z) = \{ ( xm + i, ym + j; zm + P_{ij} ) \mid (i,j;k) \in P \}
\end{eqnarray*}

In this chapter, we number from zero to $n-1$ for the rows, columns and symbols in a Latin square of order $n$,
as it is more convenient for the frequent modulo $n$ arithmetic which is used.

Heinrich and Wallis gave the following construction for a Latin
square $L_1$ of order $2m$, $m$ odd, with at least $m^3$ intercalates.
For $L_1$ and the subsequent constructions we shall demonstrate
that there are exactly $m^3$ intercalates.

Let $A = \{ (i,j; (i-j) \pmod{m}) \mid 0 \leq i,j \leq m-1 \}$
and $B = \{ (i,j; (i+j) \pmod{m}) \mid 0 \leq i,j \leq m-1 \}$.
Then if we re-order the columns of $A$ in the order: column $0$, column
$m-1$, column $m-2$, \dots, column $1$, the result is $B$.  That is,
for all $0 \leq i,j \leq m-1$, the entry $(i,j; (i-j) \pmod{m})$
in $A$ gets mapped to $(i,(m-j) \pmod{m}; (i-j) \pmod{m})$
= $(i,k;(i+k) \pmod{m})$ in $B$, where $k = (m-j) \pmod{m}$.  Thus $A$ and $B$
are isotopic.

Recall from Chapter~\ref{ch2} that $M^n$ denotes the Latin square $M$ of order $m$
with $nm$ added to each of the symbols.  We denote the transpose of $M$
by $M^T$.  Then $L_1$ can be diagrammatically represented as follows.

\begin{center}
\begin{tabular}{c}
\begin{tabular}{|c|c|}
\hline
$A^0$ & $B^1$ \\
\hline
$B^1$ & $A^0$ \\
\hline
\end{tabular} \\[3pt]
 $L_1$
\end{tabular}
\end{center}
Thus $L_1 = S(A,0,0,0) \cup S(B,0,1,1) \cup S(B,1,0,1) \cup S(A,1,1,0)$.

We wish to prove that there exist $m^3$ intercalates in $L_1$. 

For any $i,j \in \{ 0,1, \dots, m-1 \}$, $(i, j; (i-j) \pmod{m}) \in S(A,0,0,0)$,
and for any $l \in \{ 0,1, \dots, m-1 \}$, $(i, l+m; (i+l) \pmod{m}+m) \in S(B,0,1,1)$.

In addition, $((i+l-j) \pmod{m}+m, j; (i+l) \pmod{m}+m) \in S(B,1,0,1)$. 

We need to show that 
\begin{eqnarray*}
&& ((i+l-j) \pmod{m}+m, l+m; ((i+l-j) - l)  \pmod{m} ) \\
&=& ((i+l-j) \pmod{m}+m, l+m; (i-j) \pmod{m} )
\end{eqnarray*}
which it obviously does.

So 
\begin{eqnarray*}
I_1 &=& \{ (i,j; (i-j)\pmod{m}), (i,l+m; (i+l) \pmod{m}+m), \\
&& ((i-j+l) \pmod{m}+m,j; (i+l) \pmod{m}+m),\\
&& ((i-j+l) \pmod{m}+m,l+m; (i-j)\pmod{m}) \mid \\
&& 0 \leq i,j,l \leq m-1 \}
\end{eqnarray*}
is an intercalate.

Since $m$ is odd, $B$ contains no intercalates \cite{njcint} and
since $A$ is isomorphic to $B$, $A$ contains no intercalates either.
Every pair of entries $(i,j; i-j)$ and $(i,l+m; (i+l) \pmod{m}+m)$ of $L_1$
where $0 \leq i,j,l \leq m-1$ is contained in an
intercalate.  Therefore there are exactly $m^3$ intercalates in $L_1$.  Similar
arguments will apply to $L_2, L_3, L_4, L_5$ and $L_6$ below.

We define $L_2, L_3, L_4, L_5$ and $L_6$, respectively, as follows:

\begin{center}
\begin{tabular}{@{}c@{\hspace{5pt}}c@{\hspace{5pt}}c@{\hspace{5pt}}c@{\hspace{5pt}}c@{}}

\begin{tabular}{|c|c|}
\hline
$A^0$ & $(A^1)^T$ \\
\hline
$A^1$ & $(A^0)^T$ \\
\hline
\end{tabular}
&
\begin{tabular}{|c|c|}
\hline
$(A^0)^T$ & $(A^1)^T$ \\
\hline
$A^1$ & $A^0$ \\
\hline
\end{tabular}
&

\begin{tabular}{|c|c|}
\hline
$(A^0)^T$ & $A^1$ \\
\hline
$(A^1)^T$ & $A^0$ \\
\hline
\end{tabular}
&

\begin{tabular}{|c|c|}
\hline
$A^0$ & $A^1$ \\
\hline
$(A^1)^T$ & $(A^0)^T$ \\
\hline
\end{tabular}
&

\begin{tabular}{|c|c|}
\hline
$(A^0)^T$ & $B^1$ \\
\hline
$B^1$ & $(A^0)^T$ \\
\hline
\end{tabular}\\[3pt]
 $L_2$ & $L_3$ & $L_4$ & $L_5$ & $L_6$
\end{tabular}
\end{center}

Thus 
\begin{eqnarray*}
L_2 &=& S(A,0,0,0) \cup S(A^T,0,1,1) \cup S(A,1,0,1) \cup S(A^T,1,1,0), \\
L_3 &=& S(A^T,0,0,0) \cup S(A^T,0,1,1) \cup S(A,1,0,1) \cup S(A,1,1,0), \\
L_4 &=& S(A^T,0,0,0) \cup S(A,0,1,1) \cup S(A^T,1,0,1) \cup S(A,1,1,0), \\
L_5 &=& S(A,0,0,0) \cup S(A,0,1,1) \cup S(A^T,1,0,1) \cup S(A^T,1,1,0) {\rm ~and} \\
L_6 &=& S(A^T,0,0,0) \cup S(B,0,1,1) \cup S(B,1,0,1) \cup S(A^T,1,1,0).
\end{eqnarray*}

There exist $m^3$ intercalates in each of $L_2, L_3, L_4, L_5$, and $L_6$.

To prove that $L_2$ contains $m^3$ intercalates we proceed as before.
For any $i,j \in \{ 0,1, \dots, m-1 \}$, $(i, j; (i-j) \pmod{m} ) \in S(A,0,0,0)$.

Then take any $l \in \{0,1, \dots, m-1 \}$, $(i, l+m; (l-i) \pmod{m} + m ) \in S(A^T,0,1,1)$ and
$((l-i+j) \pmod{m} + m, j; (l-i) \pmod{m}+m) \in S(A,1,0,1)$.

Now we need to check that
\begin{eqnarray*}
&&( (l-i+j) \pmod{m} + m, l+m; (l-(l-i+j)) \pmod{m} ) \\
&=& ((l-i+j) \pmod{m} + m, l+m; (i-j) \pmod{m}) 
\end{eqnarray*}
which it obviously does.

Thus there exist $m^3$ intercalates in $L_2$ of the form:
\begin{eqnarray*}
I_2 &=& \{(i,j;(i-j) \pmod{m}),(i,l+m;(l-i) \pmod{m}+m),\\
&&((l-i+j) \pmod{m}+m,j;(l-i) \pmod{m}+m), \\
&& ((l-i+j) \pmod{m}+m,l+m;(i-j) \pmod{m}) \mid \\
&& 0 \leq i,j,l \leq m-1 \}
\end{eqnarray*}


It follows, since $L_3 = L_2^T$ and $L_6 = L_1^T$, that $L_3$ and $L_6$
each contain $m^3$ intercalates.  Also, since each of the Latin squares
is made up of the union of four subsquares, as in the definition
above, it is obvious that transposing all four of the subsquares will
not affect the number of intercalates.  This kind of transposition
maps $L_2$ to $L_4$ and $L_3$ to $L_5$.  Therefore $L_5$ and $L_4$
each contain $m^3$ intercalates.

We recall from Chapter~\ref{ch2} that one of the six conjugates of a Latin square 
$L$ is $L^{-1} = \{ (i, k; j) \mid (i, j; k) \in L \}$.
We find that $L_1^{-1} = L_2$, and $L_3, L_4$ and $L_5$ must be in the
same main class as $L_2$ by the previous arguments.

\section{The $2^\alpha m \times 2^\alpha m$ construction}
We can combine $L_1, L_2, L_3$ and $L_6$ to reach a Latin square
$L'$ of order $4m$, $m$ odd, as follows.  We note that the underlying
structure of $L'$ corresponds to ${\mathbb Z}_2^2$, as displayed below.


\begin{center}
\begin{tabular}{c c}
\begin{tabular}{|c|c|c|c|}
\hline
$A^0$ & $(A^1)^T$ & $(A^2)^T$ & $B^3$ \\
\hline
$A^1$ & $(A^0)^T$ & $B^3$ & $(A^2)^T$ \\
\hline
$A^2$ & $B^3$ & $(A^0)^T$ & $(A^1)^T$ \\
\hline
$B^3$ & $A^2$ & $A^1$ & $A^0$ \\
\hline
\end{tabular}
&
\begin{latinsq}{4}
\hline 0 & 1 & 2 & 3 \\
\hline 1 & 0 & 3 & 2 \\
\hline 2 & 3 & 0 & 1 \\
\hline 3 & 2 & 1 & 0 \\
\hline \end{latinsq}
\\
$L'$ & $\mathbb{Z}_2^2$
\end{tabular}
\end{center}

Thus 
\begin{eqnarray*}
L' &=& S(A,0,0,0) \cup S(A^T,0,1,1) \cup S(A^T,0,2,2) \cup S(B,0,3,3) \cup \\
&&      S(A,1,0,1) \cup S(A^T,1,1,0) \cup S(B,1,2,3) \cup S(A^T,1,3,2) \cup \\
&&      S(A,2,0,2) \cup S(B,2,1,3) \cup S(A^T,2,2,0) \cup S(A^T,2,3,1) \cup \\
&&      S(B,3,0,3) \cup S(A,3,1,2) \cup S(A,3,2,1) \cup S(A,3,3,0).
\end{eqnarray*}

We now use $L'$ to verify that  
$I(2^\alpha m) \geq (2^\alpha m)^2 (3m 2^\alpha+2^\alpha-4)/16$, when $\alpha \geq 2$.
\begin{theorem}
For $\alpha \geq 2$,
$I(2^\alpha m) \geq (2^\alpha m)^2 (3m 2^\alpha+2^\alpha-4)/16$.
\end{theorem}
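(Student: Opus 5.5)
The plan is to build the required square as the direct product $L' \times {\mathbb Z}_2^{\alpha-2}$, with $L'$ the order-$4m$ square constructed above. Here each symbol $r$ of $L'$ is replaced by a shifted copy of $G={\mathbb Z}_2^{\alpha-2}$, a square of order $k=2^{\alpha-2}$. This gives a Latin square of order $4mk=2^\alpha m$. I will only count intercalates from below, so two explicit families suffice. The target is
$$(4km)^2(12mk+4k-4)/16 = 12m^3k^3 + 4m^2k^2(k-1).$$
I expect these two numbers to be exactly the sizes of the two families.

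\textbf{Step 1: $L'$ has at least $12m^3$ intercalates.} The block pattern of $L'$ is ${\mathbb Z}_2^2$. This square has $16\cdot 3/4=12$ intercalates, i.e.\ choices of two block rows and two block columns whose four blocks form a $2\times 2$ pattern with symbol blocks $x,y,y,x$. For each of these 12 block configurations, I will check against the displayed decomposition of $L'$ that the four blocks form one of $L_1,L_2,L_3,L_6$, up to renaming the symbol blocks $0,1$ as $x,y$. Each of those squares was shown in Section~\ref{sec51} to contain $m^3$ intercalates. The intercalates coming from different block configurations are distinct, since they occupy different sets of block cells. This bookkeeping is the step I expect to be the main obstacle: it means verifying all twelve sub-configurations from the $S(\cdot,x,y,z)$ description, including the ones whose two block rows are not adjacent. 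Some of these may be a transpose, or a row or column swap, of one of $L_1,\dots,L_6$, and those operations preserve intercalate counts. The case $\alpha=2$ ($k=1$) of the theorem is exactly this count, $12m^3$.

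\textbf{Step 2: intercalates inside the copies of $G$.} Each of the $(4m)^2$ cells of $L'$ becomes a $k\times k$ copy of $G$. By the Heinrich--Wallis result, $G$ has $k^2(k-1)/4$ intercalates, since $k$ is a power of $2$. This gives $16m^2\cdot k^2(k-1)/4 = 4m^2k^2(k-1)$ intercalates.

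\textbf{Step 3: intercalates lifted from $L'$.} Fix an intercalate of $L'$ with symbols $x,y$, in which cells $(a,c),(b,d)$ hold $x$ and cells $(a,d),(b,c)$ hold $y$. In the product, choose any row $i$ of the block row $a$ and any columns $j,j'$ within the block columns $c,d$. Put $s=G(i,j)$ and $t=G(i,j')$. Since $G$ is elementary abelian, the row $i'=i+j+j'$ satisfies $G(i',j)=i+j'=t$ and $G(i',j')=i+j=s$. So the four cells $(a,i;c,j)$, $(a,i;d,j')$, $(b,i';c,j)$ and $(b,i';d,j')$ carry symbols $(x,s),(y,t),(y,t),(x,s)$, and they form an intercalate. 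The triple $(i,j,j')$ can be chosen in $k^3$ ways, and distinct triples give distinct intercalates. This family therefore contributes $k^3\cdot 12m^3$ intercalates. Its members are disjoint from those of Step 2, because they span two different block rows.

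Adding the two families gives $12m^3k^3+4m^2k^2(k-1)$, which equals the target expression, and this proves the theorem.
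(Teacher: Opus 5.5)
Your proposal is correct and follows the paper's route: you show $I(L')\geq 12m^3$ by checking the twelve block configurations, then take a direct product with ${\mathbb Z}_2^{\alpha-2}$; each configuration is in fact a copy of $L_1$, $L_2$, $L_3$ or $L_6$, three of each, so no transposes are needed. The only difference is that the paper cites the Heinrich--Wallis bound $I(kl)\geq I(k)l^2+I(l)k^2+4I(k)I(l)$ with $k=2^{\alpha-2}$, $l=4m$, whereas you count directly: your Step~2 gives the term $I(k)l^2$, and your $k^3$ lifts per intercalate of $L'$ give exactly $I(l)k^2+4I(k)I(l)$.
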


\begin{proof}
{
Consider ${\mathbb Z}_2^2$ displayed above; it contains 12 distinct intercalates.
Then if $\{(r_1,c_1;e_1),(r_1,c_2;e_2),(r_2,c_1;e_2),(r_2,c_2;e_1)\}$
is an intercalate in ${\mathbb Z}_2^2$, we have that
\begin{eqnarray*}
D &=& \{ (i,j;L'_{ij}) \mid ((c_1m \leq j \leq c_1m+m-1) \vee (c_2m \leq j \leq c_2m+m-1)) \wedge \\
&& ((r_1m \leq i \leq r_1m+m-1) \vee (r_2m \leq i \leq r_2m+m-1)) \}
\end{eqnarray*}
is a subsquare of $L'$ which is isomorphic to one of $L_1, L_2, L_3, L_4, L_5$
or $L_6$ and thus contains exactly $m^3$ intercalates.  Therefore $I(L') = 12m^3$, and thus $I(4m) \geq 12m^3$.

Heinrich and Wallis counted the number of intercalates in the direct
product of two Latin squares $M$ (of order $k$) and $N$ (of order $l$).
This count was used to create a new lower bound on $I(kl)$:
\begin{eqnarray*}
I(kl) \geq I(k) l^2 + I(l) k^2 + 4.I(k).I(l)
\end{eqnarray*}

If we use the Latin squares $M = {\mathbb Z}_2^{\alpha-2}$ and $N = L'$,
we may use this formula with $k = 2^{\alpha-2}$ and $l = 4m$
and the values $I(2^{\alpha-2}) = \displaystyle{\frac{(2^{\alpha-2})^2
(2^{\alpha-2}-1)}{4}}$ (known from Heinrich and Wallis), and $I(4m)
\geq 12m^3$.  Thus we deduce that:
\begin{eqnarray*}
I(2^\alpha m) &\geq& I(4m).(2^{\alpha-2})^2 + I(2^{\alpha-2}).(4m)^2 + 4.I(2^{\alpha-2}).I(4m) \\
&=& 12m^3.2^{2\alpha-4} + 16m^2.\displaystyle{\frac{(2^{\alpha-2})^2 (2^{\alpha-2}-1)}{4}} +  \\
&& 4.\displaystyle{\frac{(2^{\alpha-2})^2 (2^{\alpha-2}-1)}{4}}.12m^3 \\
&=& (2^\alpha m)^2 (3m.2^\alpha + 2^\alpha - 4)/16
\end{eqnarray*}
}
\end{proof}

We comment on an attempt to generalise this construction to $8m \times 8m$
Latin squares.  It was expected, since $I(4m) \geq I(4) m^3$, and since
this had been obtained by ``doubling'' previous constructions in
a clever way, that a construction could be obtained which would give
$I(8m) \geq I(8) m^3$, that is, $I(8m) \geq 112 m^3$.

There is a total of twelve $2m \times 2m$ Latin squares containing
$m \times m$ subsquares isomorphic to $A, A^T$ and $B$.  We shall
refer to these subsquares as $L_1, \dots, L_{12}$.  

All concatenations of four Latin squares from $L_1, \dots, L_{12}$ into
$4m \times 4m$ squares were examined by computer.
Thus, a total of $12^4$ $4m \times 4m$ squares were examined,
giving 96 subsquares similar to $L'$ which contain $12m^3$ intercalates.
(We number these $L'_1, \dots, L'_{96}$.)
Then all concatenations of four Latin squares from $L'_1, \dots, L'_{96}$ into $8m
\times 8m$ squares were examined.  This is a total of $96^4$ $8m \times
8m$ Latin squares.  Unfortunately, all of these Latin squares contained the
number of intercalates predicted by the above theorem.

\section{A critical set of order $4m$}
It will be useful to restate the following lemmas
from Donovan \cite{MR99a:05018}.

\begin{lemma}
\label{lem1}
If $L$ is a Latin square of order $n$, $S$ a subsquare in $L$ and $C$
a critical set in $L$, then $C \cap S$ must have a unique completion in $S$.
\end{lemma}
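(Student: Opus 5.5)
The plan is a proof by contradiction: if $C \cap S$ failed to have a unique completion in $S$, we would swap in a second completion of $S$ and contradict the unique completability of $C$. Throughout, write $S$ for the subsquare of $L$ occupying rows $R_S$, columns $K_S$ and symbol set $N_S$, with $|R_S| = |K_S| = |N_S| = m$. Note first that $C \cap S$ is a partial Latin square on the $m \times m$ array indexed by $R_S \times K_S$ with symbols from $N_S$. Moreover, $S$ itself is one completion of $C \cap S$ within that array. So the only thing that can go wrong is the existence of a \emph{second} completion $S' \neq S$, that is, a Latin square on rows $R_S$, columns $K_S$ and symbols $N_S$ with $C \cap S \subseteq S'$.

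Assuming such an $S'$ exists, define $L' = (L \setminus S) \cup S'$. The first key step is to check that $L'$ is a Latin square of order $n$. Consider a row $r \in R_S$. In $L$, the cells of row $r$ in columns $K_S$ contain exactly the symbols $N_S$, and hence the cells of row $r$ outside $K_S$ contain exactly $N \setminus N_S$. In $S'$, row $r$ again contains exactly the symbols $N_S$ in the columns $K_S$. Therefore row $r$ of $L'$ still contains every symbol exactly once. The same argument applies to each column in $K_S$. Rows not in $R_S$ and columns not in $K_S$ are unchanged. So $L'$ is Latin.

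The second step is to check that $C \subseteq L'$. Entries of $C$ outside $S$ lie in $L \setminus S \subseteq L'$, while entries of $C$ inside $S$ lie in $C \cap S \subseteq S'$. Since $S' \neq S$, we have $L' \neq L$. Thus $C$ has two distinct completions, contradicting condition (1) of the definition of a critical set. Equivalently, in the language of Lemma~\ref{lem11}, $S \setminus S'$ is a Latin interchange in $L$ with disjoint mate $S' \setminus S$, and it misses $C$.

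I expect no real obstacle here. The only point needing care is the first step, namely that $S$ occupies all of $N_S$ in each of its rows and columns. This is exactly what the subsquare definition in Chapter~\ref{ch2} guarantees, and it is what allows the symbols outside $K_S$ (respectively outside $R_S$) to remain compatible after the swap.
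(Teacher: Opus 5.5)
Your proof is correct: a second completion $S'$ of $C\cap S$ on the array $R_S\times K_S$ with symbols $N_S$ gives a Latin square $(L\setminus S)\cup S'\neq L$ containing $C$, which contradicts condition (1). Equivalently, $S\setminus S'$ is a Latin interchange of $L$ that misses $C$. The only implicit step is that the shape of $S$ is all of $R_S\times K_S$ and that each of its rows and columns carries exactly $N_S$. This follows from $|S|=m^2$ by a short count, as you indicate.

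The thesis gives no proof of this lemma; it is restated from Donovan's paper, so there is no competing argument to compare against. Yours is the standard substitution argument. As you observe, it uses only unique completability, so the conclusion holds for every uniquely completable subset of $L$, not just critical sets.
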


\begin{lemma}
\label{lem2}
Let $L$ be a Latin square with critical set $C$.  Let $(\alpha, \beta, \gamma)$
be an isotopism from the critical set $C$ onto $C'$.  Then $C'$ is a critical
set in the Latin square $L'$ isotopic to $L$.
\end{lemma}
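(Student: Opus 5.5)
The statement to prove is Lemma~\ref{lem2}: isotopisms carry critical sets to critical sets. The plan is to check that applying an isotopism $(\alpha,\beta,\gamma)$ to every entry of a partial array preserves all the notions in the definition of a critical set. Write $\phi(i,j;k)=(i\alpha,j\beta;k\gamma)$ and extend $\phi$ to sets of triples. Then $L'=\phi(L)$ and $C'=\phi(C)$.

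First, $\phi$ is a bijection on the set of all triples $N\times N\times N$, with inverse given by the isotopism $(\alpha^{-1},\beta^{-1},\gamma^{-1})$. Since $\alpha,\beta,\gamma$ are injective, two triples share a row (column, symbol) if and only if their images do. It follows that a set of triples $P$ is a partial Latin square if and only if $\phi(P)$ is, and likewise for complete Latin squares. Also $P\subseteq M$ if and only if $\phi(P)\subseteq\phi(M)$, and $|\phi(P)|=|P|$.

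Next, completions correspond bijectively. If $M$ is a Latin square containing $P$, then $\phi(M)$ is a Latin square containing $\phi(P)$; conversely, apply $\phi^{-1}$. So the number of completions of $P$ equals the number of completions of $\phi(P)$.

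Now verify the two defining conditions for $C'$. $C'$ is a proper subset of $L'$ because $C\subsetneq L$ and $\phi$ is injective. $C'$ has exactly one completion, namely $L'$, because $C$ has exactly one completion $L$. For any entry $t'\in C'$, write $t'=\phi(t)$ with $t\in C$. Then $C'\setminus\{t'\}=\phi(C\setminus\{t\})$, which has as many completions as $C\setminus\{t\}$, and that number is at least two by criticality of $C$. Hence $C'$ is a critical set in $L'$.

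Alternatively, one could argue through Lemma~\ref{lem11}, since $\phi$ maps Latin interchanges with disjoint mates to Latin interchanges with disjoint mates and preserves intersections. The direct argument above is shorter. The only real point needing care is that $\phi$ preserves the Latin property in both directions, which follows immediately from $\alpha,\beta,\gamma$ being permutations. I expect no genuine obstacle in this proof.
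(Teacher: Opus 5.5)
Your proof is correct and complete. The paper gives no proof of this lemma: it is restated from Donovan~\cite{MR99a:05018} and then used in Chapter~\ref{ch5}. Your argument supplies the standard justification the paper takes for granted. The relabelling $\phi$ preserves the partial-Latin-square property in both directions, so it induces a bijection between the completions of $P$ and those of $\phi(P)$; applying this to $C$ and to each $C\setminus\{t\}$ gives the result. You were also right to fix $L'=\phi(L)$ explicitly. For an isotope of $L$ obtained by a different isotopism, $C'$ need not even lie in $L'$, so this is the only sensible reading of the statement.
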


\begin{lemma}
\label{lem3}
Let $L$ be a Latin square with critical set $C$ and let $C'$ be a 
conjugate of $C$.  Then $C'$ is a critical set in the corresponding
conjugate $L'$ of $L$.
\end{lemma}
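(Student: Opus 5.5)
The statement to prove is Lemma~\ref{lem3}: a conjugate of a critical set is a critical set in the corresponding conjugate Latin square. I would prove it directly from the definition of a critical set. The key observation is that conjugation by a fixed $\theta\in S_3$ acts on sets of triples and preserves the defining property of partial Latin squares. That property says that for any two of the three coordinates, at most one triple has a given pair of values there, and conjugation just permutes which two coordinates are fixed. So the map $P\mapsto P_\theta$ is a bijection from the partial Latin squares of order $n$ to themselves, it sends Latin squares to Latin squares, and it has inverse $P\mapsto P_{\theta^{-1}}$. It clearly preserves inclusion and cardinality, and it sends a single entry $t$ to a single entry $\theta(t)$.

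The argument then runs in three steps. Let $C\subset L$ be critical and write $C'=C_\theta$ and $L'=L_\theta$.

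First, $C'\subset L'$ is a proper subset, since conjugation is injective on triples.

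Second, unique completion transfers. Suppose $M'$ is any Latin square with $C'\subseteq M'$. Applying $\theta^{-1}$ gives a Latin square $M=M'_{\theta^{-1}}$ with $C\subseteq M$. Since $C$ is uniquely completable, $M=L$, and hence $M'=L'$.

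Third, minimality transfers. For any entry $\theta(t)\in C'$ with $t\in C$, we have $C'\setminus\{\theta(t)\}=(C\setminus\{t\})_\theta$. Because $C$ is critical, $C\setminus\{t\}$ has two distinct completions $L$ and $M$. Their conjugates $L'$ and $M_\theta$ are distinct Latin squares containing $C'\setminus\{\theta(t)\}$, so that set is not uniquely completable.

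Alternatively, the same argument can be phrased through Lemma~\ref{lem11}. Conjugation maps Latin interchanges of $L$ to Latin interchanges of $L'$, because $I=L\setminus M$ gives $I_\theta=L'\setminus M_\theta$. It also preserves intersections, so both conditions of Lemma~\ref{lem11} carry over.

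The only step needing any care is the preliminary claim that a conjugate of a (partial) Latin square is again a (partial) Latin square. It amounts to checking that in a Latin square each pair of coordinates determines the third, which is symmetric in the three coordinates. I would verify this explicitly for the generators $(1\;2)$ and $(2\;3)$ of $S_3$ and then extend to all of $S_3$ by composition. Once that is in place, the rest is bookkeeping with the bijection.
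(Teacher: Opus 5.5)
Your proof is correct. The thesis gives no proof of Lemma~\ref{lem3} to compare against: it restates the lemma from Donovan \cite{MR99a:05018} without argument. Your direct verification is the standard one and supplies what is omitted. Conjugation by a fixed $\theta\in S_3$ is a bijection on partial Latin squares of order $n$ that preserves inclusion and cardinality, with inverse given by $\theta^{-1}$. Hence unique completability of $C$, and non-unique completability of each $C\setminus\{t\}$, transfer to $C_\theta$ inside $L_\theta$.

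Your alternative via Lemma~\ref{lem11}, using $I=L\setminus M \Rightarrow I_\theta = L_\theta\setminus M_\theta$, is closer to how the thesis certifies criticality elsewhere, through intersections with Latin interchanges. The direct route is more economical, since it needs only the definition of a critical set and no characterisation of interchanges. Checking the generators $(1\;2)$ and $(2\;3)$ separately is also unnecessary. The condition that any two coordinates of a triple determine the third is visibly invariant under every permutation of the three positions.
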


We also restate Theorem 2 from Donovan and Cooper~\cite{MR97g:05032}.
\begin{theorem}
\label{thm1}
Let $L$ be the back-circulant Latin square of order $n$; then the set
\begin{eqnarray*}
C &=& \{ (i,j; i+j) \mid i = 0, \dots, n-2 {\rm~and~} j = 0, \dots, n-2-i \}
\end{eqnarray*}
is a critical set in $L$ of size $\displaystyle{\frac{n^2-n}{2}}$.
\end{theorem}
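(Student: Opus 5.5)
The plan is to verify the two conditions of Lemma~\ref{lem11} directly for the back-circulant square $L=BC_n$ and the staircase set $C$. $C$ occupies the cells $(i,j)$ with $i+j\le n-2$, the region strictly above the back anti-diagonal. There are $\sum_{i=0}^{n-2}(n-1-i)=\frac{n^2-n}{2}$ such cells, which gives the size. For clarity we number rows, columns and symbols $0,\dots,n-1$ mod $n$.

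\emph{Unique completion.} We fill the anti-diagonals $i+j=n-1, n, \dots, 2n-2$ in turn, by a forcing argument. For cell $(i,n-1-i)$, row $i$ of $C$ contains the symbols $i,\dots,n-2$. Column $n-1-i$ of $C$ contains the symbols $n-1-i+r$ for $r=0,\dots,i-1$, that is, $n-1-i,\dots,n-2$. Together these give only $\{i,\dots,n-2\}$, so this cell is not forced by its row and column alone.

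Instead, I would force by symbol. Symbol $n-1$ does not appear in $C$, and in row $i$ it has available cells only in columns $j\ge n-1-i$. I would show inductively, working from the last row upward or by a Hall-type counting, that the cells available to each symbol form a staircase with a unique placement. Concretely, symbol $s$ already occupies in $C$ the cells $(i,s-i)$ for $i\le s$ with $s-i\le n-2-i$, i.e.\ rows $0,\dots,s$ when $s\le n-2$. It must still be placed in rows $s+1,\dots,n-1$, and in those rows the columns not yet used by $s$ are $s+1,\dots,n-1$. These are also constrained to cells outside $C$, i.e.\ $j\ge n-1-i$. Row $n-1$ has all cells empty, but column $j$'s constraints pin things down. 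The cleanest route is induction on $n$: deleting row $0$ and column... Alternatively, and this is the route I would commit to, I will show that every empty cell in row $n-1-k$ is forced step by step, starting from the bottom row's neighbour. In row $n-2$ only cells $(n-2,0)$ hold entries, and so on. I expect this bookkeeping, namely finding the right order in which each entry is uniquely forced (mixing row, column and symbol forcing), to be the main obstacle. I would first test it on $n=4,5$ to guess the forcing order, and then write it as a double induction on the anti-diagonal and the position along it.

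\emph{Criticality.} For each $(a,b;a+b)\in C$, I exhibit a Latin interchange $I\subseteq L$ with $I\cap C=\{(a,b;a+b)\}$. The natural candidate is the rectangle of rows $\{a,\, n-1-b\}$ and a suitable set of columns: in $BC_n$, two rows $r_1,r_2$ differ by the shift $d=r_2-r_1$, and the cells of rows $r_1,r_2$ over a coset of $\langle d\rangle$ in the columns form an interchange (swap the two rows on those columns). I would instead use the standard ``row-pair cycle'' interchange, taking rows $a$ and $a+1$ and columns $b, b+1,\dots$ and following the cycle $j\mapsto j+1$. This gives the interchange $\{(a,j;a+j),(a+1,j;a+1+j): b\le j\le n-1-a\}$, which is closed by the wraparound at $(a+1,n-1-a)$ and $(a,b)$ once we pair $(a,b)$ with $(a+1,n-1-a)$. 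Its only cell with $i+j\le n-2$ is $(a,b)$ exactly when $b=n-2-a$. For general $b$, I would shift to rows $a$ and $a+t$ with $t=n-1-a-b$, taking the $t$-step analogue over $t$ consecutive rows. This is the generalized interchange $\{(r,j): a\le r\le a+t-?\}$, which I would pin down by the small cases. Once the interchange is found, checking that it lies in $L$ with the mate obtained by cyclic shift is routine. By Lemma~\ref{lem11} the set $C$ is then critical.
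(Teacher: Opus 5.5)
The thesis does not prove this statement; it is quoted from Donovan and Cooper \cite{MR97g:05032}. Your outline therefore has to stand on its own, and as written only the size count is actually done.

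For unique completion you never settle on a forcing order. Your remark that $(i,n-1-i)$ is not forced by its row and column is also false for $i=0$, since row $0$ of $C$ already contains $0,\dots,n-2$. The order you want is simply top to bottom: complete row $0$, then rows $1,\dots,n-1$ in turn, each from column $n-1$ leftwards. When you reach $(i,n-1-k)$ with $0\le k\le i$, rows $0,\dots,i-1$ are already complete. So column $n-1-k$ holds the $i$ cyclically consecutive symbols $n-1-k,\dots,n-2-k+i$, and row $i$ holds $i-k,\dots,n-2$. Together these miss only $i-1-k\equiv i+(n-1-k)\pmod n$. Every cell is thus forced by its row and column alone, and no symbol-forcing or Hall-type bookkeeping is needed.

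The real gap is criticality, which is the substance of the theorem. The set $\{(a,j;a+j),(a+1,j;a+1+j): b\le j\le n-1-a\}$ is not a Latin interchange. On those columns row $a$ carries $a+b,\dots,n-1$ and row $a+1$ carries $a+b+1,\dots,n-1,0$, and these sets differ because $a+b\le n-2$. Pairing $(a,b)$ with $(a+1,n-1-a)$ by ``wraparound'' cannot repair this, since the two cells lie in different columns.

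More generally, as you observe, a two-row interchange on rows $r$ and $r+d$ of $BC_n$ must occupy a union of cosets of $\langle d\rangle$ among the columns. For $d=1$, and for every $d$ when $n$ is prime, it therefore consists of two complete rows, which meet $C$ in a single entry only for rows $n-2,n-1$. Your rows-$\{a,n-1-b\}$ variant works only when $(c+1)\mid n$ and $b\le c$, where $c=n-2-a-b$. The ``$t$-step analogue'' meant to cover the remaining entries is never specified, so condition (2) of Lemma~\ref{lem11} is not established in general.

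The interchanges needed are genuinely not two-row objects. For $n=5$ and the entry $(1,1;2)$, one that works is
$\{(1,1;2),(1,3;4),(2,2;4),(2,3;0),(3,1;4),(3,2;0),(4,1;0),(4,3;2)\}$,
whose mate swaps the two entries in each row; it spreads over four rows. Only the three corner entries are cheap:
\begin{itemize}
\item $(0,0;0)$, by swapping symbols $0$ and $n-1$;
\item $(n-2,0;n-2)$, by swapping rows $n-2$ and $n-1$;
\item $(0,n-2;n-2)$, by swapping columns $n-2$ and $n-1$.
\end{itemize}
To finish, you must write down an explicit family of interchanges indexed by $(a,b)$. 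For each you must verify row and column balance, disjointness from the mate, and $I\cap C=\{(a,b;a+b)\}$. Constructions of that kind, compare the Euclidean-algorithm-style interchange of Appendix~\ref{app3}, are where the actual work of this theorem lies.
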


In the $4m \times 4m$ Latin square $L'$ given above we can find a critical set
$P$ of size $\displaystyle{\frac{23m^2-9m}{2}}$.
This construction will work for any integer $m$, not just the odd values.

We define some new notation to create critical sets 
in the Latin squares $A, A^T$ and $B$.
Let $L$ be an $m \times m$ Latin square, and let 
$G(L) = \{ (i,j;L_{ij}) \mid (0 \leq i,j \leq m-1) \wedge (m \leq i+j \leq 2m-2) \}$, 
$H(L) = \{ (i,j;L_{ij}) \mid 1 \leq j \leq i \leq m-1 \},$ and $J(L) = \{
(i,j;L_{ij}) \mid 1 \leq i \leq j \leq m-1 \}$.   
Then $|G(L)| = |H(L)| =  |J(L)| =
\displaystyle{\frac{m^2-m}{2}}$.  Note that $H(L^T) = (J(L))^T$.

Now let $P$ be the following partial Latin square:

\begin{center}
\begin{tabular}{|c|c|c|c|}
\hline
$H(A^0)$ & $H((A^1)^T)$ & $H((A^2)^T)$ & $G(B^3)$ \\
\hline
$J(A^1)$ & $(A^0)^T$ & $G(B^3)$ & $(A^2)^T$ \\
\hline
$J(A^2)$ & $G(B^3)$ & $(A^0)^T$ & $(A^1)^T$ \\
\hline
$G(B^3)$ & $A^2$ & $A^1$ & $A^0$ \\
\hline
\end{tabular}
\end{center}

Thus 
\begin{eqnarray*}
P &=& S(H(A),0,0,0) \cup S(H(A^T),0,1,1) \cup S(H(A^T),0,2,2) \cup \\
&& S(G(B),0,3,3) \cup \\
&&    S(J(A),1,0,1) \cup S(A^T,1,1,0) \cup S(G(B),1,2,3) \cup S(A^T,1,3,2) \cup \\
&&    S(J(A),2,0,2) \cup S(G(B),2,1,3) \cup S(A^T,2,2,0) \cup S(A^T,2,3,1) \cup \\
&&    S(G(B),3,0,3) \cup S(A,3,1,2) \cup S(A,3,2,1) \cup S(A,3,3,0).
\end{eqnarray*}

For example, when $m = 3$, $P$ is the following critical set:

\begin{center}
\begin{latinsq}[10]{12}
\hline  \ecell & \ecell & \ecell & \ecell & \ecell & \ecell & \ecell & \ecell & \ecell & \ecell & \ecell & \ecell \\ 
\hline  \ecell &  0 & \ecell & \ecell &  3 & \ecell & \ecell &  6 & \ecell & \ecell & \ecell &  9 \\ 
\hline  \ecell &  1 &  0 & \ecell &  5 &  3 & \ecell &  8 &  6 & \ecell &  9 & 10 \\ 
\hline  \ecell & \ecell & \ecell &  0 &  1 &  2 & \ecell & \ecell & \ecell &  6 &  7 &  8 \\ 
\hline  \ecell &  3 & 5 &  2 &  0 &  1 & \ecell & \ecell &  9 &  8 &  6 &  7 \\ 
\hline  \ecell & \ecell &  3 &  1 &  2 &  0 & \ecell &  9 & 10 &  7 &  8 &  6 \\ 
\hline  \ecell & \ecell & \ecell & \ecell & \ecell & \ecell &  0 &  1 &  2 &  3 &  4 &  5 \\ 
\hline  \ecell &  6 & 8 & \ecell & \ecell &  9 &  2 &  0 &  1 &  5 &  3 &  4 \\ 
\hline  \ecell & \ecell &  6 & \ecell &  9 & 10 &  1 &  2 &  0 &  4 &  5 &  3 \\ 
\hline  \ecell & \ecell & \ecell &  6 &  8 &  7 &  3 &  5 &  4 &  0 &  2 &  1 \\ 
\hline  \ecell & \ecell &  9 &  7 &  6 &  8 &  4 &  3 &  5 &  1 &  0 &  2 \\ 
\hline  \ecell &  9 & 10 &  8 &  7 &  6 &  5 &  4 &  3 &  2 &  1 &  0 \\ 
\hline
\end{latinsq}
\end{center}

\begin{theorem}
The partial Latin square $P$ is a critical set contained in $L'$, a Latin square of size $4m$, and
$|P| = \displaystyle{\frac{23m^2-9m}{2}}$.  Therefore \lcs{4m} 
$\geq \displaystyle{\frac{23m^2-9m}{2}}$.
\end{theorem}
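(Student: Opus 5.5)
The plan has three parts: count $|P|$, show $P$ is uniquely completable, and verify minimality through Lemma~\ref{lem11}. The count is immediate. The full blocks are the six copies of $A^T$ or $A$ in positions $(1,1),(1,3),(2,2),(2,3),(3,1),(3,2),(3,3)$. That is seven full blocks, giving $7m^2$ entries. The remaining nine blocks are each of the form $H$, $J$ or $G$ and each has size $(m^2-m)/2$. So
$$|P| = 7m^2 + 9\cdot\frac{m^2-m}{2} = \frac{23m^2-9m}{2}.$$

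For unique completion I would complete $P$ in stages. First, the lower-right $3m\times 3m$ region is the block structure with only the four $G(B^3)$ pieces missing from the full symbol-$3$ blocks. In each block-row $1,2,3$, the $3m$ filled-in full blocks carry all symbols $0,\dots,3m-1$ in each row restricted to those blocks. Hence the cells of $G(B^3)$ can only take symbols from $\{3m,\dots,4m-1\}$. The same holds in block-column $0$ and block-row $0$ for the $G(B^3)$ blocks at $(0,3)$ and $(3,0)$.

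So each $G(B^3)$ block reduces to completing $G(B)$ inside an $m\times m$ array on $m$ symbols. By isotopy (Lemma~\ref{lem2}) $G(B)$ is the Nelder/Donovan--Cooper critical set of Theorem~\ref{thm1}, reflected (entries $i+j\ge m$ of a back-circulant square). It is therefore UC, and in fact strongly forced. The constraint that these symbols live only in the anti-diagonal blocks needs a short argument: row $i$ of block-row $1$ already has symbols $0..3m-1$ in its other blocks except in block-column $0$, where $J(A^1)$ is partial. I would handle this by forcing column by column, completing block-rows $1$--$3$ for block-columns $1$--$3$ first, and then the block $(3,0)$ via its $G$. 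After that, block-column $0$ and block-row $0$ are forced. Symbol sets are pinned down because block-row $0$ must use symbols $\{0..m-1\}$ in block $(0,0)$, and so on. The remaining blocks $H(A^0)$, $H((A^1)^T)$, $H((A^2)^T)$, $J(A^1)$, $J(A^2)$ each complete uniquely as critical sets of Nelder type in their isotopes. Here I would again invoke Theorem~\ref{thm1} with Lemmas~\ref{lem2} and~\ref{lem3}, since $H(A)$ and $J(A)$ are triangular critical sets of $A$, $A^T$ up to isotopy/conjugacy.

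For criticality I use part (2) of Lemma~\ref{lem11}: for each entry of $P$ I exhibit a Latin interchange of $L'$ meeting $P$ only in that entry.
\begin{itemize}
\item Entries inside an $H$, $J$ or $G$ block: the interchange comes from the criticality of the triangular set in that $m\times m$ subsquare, since each block is a subsquare of $L'$.
\item Entries in the seven full blocks: the interchange must be an intercalate from the $12m^3$ found earlier, of type $I_1$ or $I_2$. It pairs an entry in a full block with entries in partial blocks that are \emph{empty} cells. For example, for $(i,j)$ in block $(1,1)$ I would choose the intercalate through blocks $(0,0),(0,1),(1,0),(1,1)$ whose other three cells lie in the complements of $H$, $H$, $J$.
\end{itemize}

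The main obstacle is choosing the free parameter $l$ (or $j$) for these intercalates. This has to be done so that the three companion cells fall in the empty triangles, for instance row $0$ or column $0$ of the relevant $H$/$J$ block, or $i+j<m$ for $G$. The empty row $0$ and column $0$ of blocks in block-row/column $0$ are what make this possible. I would check each of the seven full-block positions by explicit index arithmetic using the formulas for $I_1$ and $I_2$. The $m=3$ example serves as a sanity check.
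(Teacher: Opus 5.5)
Your proposal is correct and is essentially the paper's proof. Both arguments use the same steps: force the $G(B^3)$ blocks onto the symbols $3m,\dots,4m-1$, then complete the triangular sets uniquely, then establish necessity with the same witnesses. Those witnesses are interchanges inside the $m\times m$ blocks, and intercalates inside the seven $2m\times 2m$ subsquares of type $Q$, $R$, $U$, $U^T$, which the paper packages as critical sets of $L_2$, $L_1$, $L_6$, $L_1$. The index arithmetic you defer does go through, with the cell diagonally opposite the target always lying in the empty first row or first column of its block. One slip to repair: block-rows $1$ and $2$ contain only two full blocks each. So for the $G(B^3)$ blocks at $(1,2)$ and $(2,1)$ you must combine the row constraint with the column constraint to exclude all of $0,\dots,3m-1$; for $(1,2)$ these are the full blocks $(A^0)^T$ and $A^1$ in block-column $2$.
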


\begin{proof}
{

%
%

Consider the following partial Latin squares $Q$ and $U$:

\begin{center}
\begin{tabular}{cc}

\begin{tabular}{|c|c|}
\hline
$H(A^0)$ & $H((A^1)^T)$ \\
\hline
$J(A^1)$ & $(A^0)^T$ \\
\hline
\end{tabular}
&

\begin{tabular}{|c|c|}
\hline
$H((A^0)^T)$ & $G(B^1)$ \\
\hline
$G(B^1)$ & $(A^0)^T$ \\
\hline
\end{tabular}
\\[3pt]
 $Q$ & $U$
\end{tabular}
\end{center}

Thus 
\begin{eqnarray*}
Q &=& S(H(A),0,0,0) \cup S(H(A^T),0,1,1) \cup \\
&&  S(J(A),1,0,1) \cup S(A^T,1,1,0), \\
U &=& S(H(A^T),0,0,0) \cup S(G(B),0,1,1) \cup \\
&& S(G(B),1,0,1) \cup S(A^T,1,1,0).
\end{eqnarray*}

Given these sets, we may consider $P$ as:
\begin{eqnarray*}
P &=& S(Q,0,0,0) \cup S(U,0,1,1) \cup S(U^T,1,0,1) \cup S(L_3,1,1,0).
\end{eqnarray*}

We shall begin by proving that $S(Q,0,0,0)$ and $S(U,0,1,1)$ are critical
sets in the Latin squares $S(L_2,0,0,0)$ and $S(L_6,0,1,1)$ respectively,
and so by Lemma~\ref{lem1} every entry in these subarrays and
the subarray $S(U^T,1,0,1)$ is necessary for the unique completion of $P$.

%
%
%

We prove that $Q$ is a critical set for $L_2$.  Consider the partial Latin subsquare
of $Q$, $S(H(A),0,0,0)$.  By Lemmas~\ref{lem1}, \ref{lem2} and \ref{lem3}
and Theorem~\ref{thm1}, $H(A)$ is a critical set for $A$.  Thus if
any entry $(x,y;z)$ is removed from $S(H(A),0,0,0)$, $S(H(A),0,0,0) \setminus \{ (x,y;z)
\}$ will no longer uniquely complete to $A$ and thus the partial Latin
square $Q$ will no longer have unique completion.  
Thus every entry
occurring in $S(H(A),0,0,0)$ is necessary in $Q$ for $Q$ to have unique
completion to $L_2$.  Similar arguments apply for the partial Latin
subsquares $S(J(A),1,0,1)$ and $S(H(A^T),0,1,1)$.

We consider the entries of $Q$ occurring in the Latin subsquare of
$Q$, $S(A^T,1,1,0)$.  For $m \leq i \leq 2m-1$, $m \leq j \leq 2m-1$ and
either $i = m$, or $j \neq m$ and $i \geq j$, there is an intercalate 
\begin{eqnarray*}
I &=& \{ (i,j; (j-i) \pmod{m}), (0,j;j), \\
&&( 0, (i-j) \pmod{m}; (j-i) \pmod{m}), (i, (i-j) \pmod{m}; j) \}
\end{eqnarray*}
such that $I \cap Q = \{( i,j;(j-i) \pmod{m}) \}$.  For $m \leq i \leq 2m-1$, $m
\leq j \leq 2m-1$ and either $j = m$, or $i \neq m$ and $i \leq j$,
there is an intercalate 
\begin{eqnarray*}
I &=& \{ (i,j; (j-i) \pmod{m} ), (i,0;i), \\
&&( (j-i) \pmod{m}, 0; (j-i) \pmod{m}), ((j-i) \pmod{m}, j;  i) \}
\end{eqnarray*}
such that $I \cap Q = \{ (i, j; (j-i) \pmod{m}) \}$.
Therefore every entry occurring in $S(A^T,1,1,0)$ is necessary in $Q$
for $Q$ to have unique completion.

We complete $Q$ to $L_2$ by noting that each row and column of $S(A^T,1,1,0)$ 
contains all of the symbols
$m,$ $\dots,$ $2m-1$ and thus both $S(J(A),1,0,1)$ and $S(H(A^T),0,1,1)$
are forced to use only the symbols $0, \dots, m-1$.  By
Lemmas~\ref{lem1}, \ref{lem2} and \ref{lem3} and Theorem~\ref{thm1}, $J(A)$ 
is a critical set for $A$, $H(A)$ is a critical set for $A$ and 
$H(A^T)$ is a critical set for $A^T$.  
Thus the completions in $Q$ of $S(H(A^T),0,1,1)$
and $S(J(A),1,0,1)$ are forced to be $S(A^T,0,1,1)$ and $S(A,1,0,1)$ respectively,
which forces the unique completion in $Q$ of $S(H(A),0,0,0)$ to $S(A,0,0,0)$.
Thus $Q$ has a unique completion to $L_2$, and is a critical set.
The fact that all the entries in $Q$ are necessary for
unique completion to $L_2$ will be essential
to the proof that $P$ is a critical set, because several subsquares in $P$
are isomorphic to $Q$.

The partial Latin square $U$ is proven to be a critical set for $L_6$
in a similar manner to $Q$.  Every entry in $S(H(A^T),0,0,0)$,
$S(G(B),0,1,1)$ and $S(G(B),1,0,1)$ is required for $U$ to have unique
completion to $L_6$.

We consider the entries of $U$ occurring in the Latin subsquare,
$S(A^T,1,1,0)$.  For $m \leq i \leq 2m-1$, $m \leq j \leq 2m -1$ and $i
\geq j$, there is an intercalate 
\begin{eqnarray*}
I &=& \{ (i, j; (j-i) \pmod{m}), \\
&& (i-j, j; i), (i, 0; i), (i-j, 0; (j-i) \pmod{m}) \}
\end{eqnarray*}
such that $I
\cap U = \{ (i, j; (j-i) \pmod{m}) \}$.  For $m \leq i \leq 2m-1$, $m \leq j \leq
2m-1$ and $i < j$, there is an intercalate
\begin{eqnarray*}
I &=& \{ (i, j; j-i), (i, 2m-1-i; 2m-1), \\
&& (2m-1-i, 2m-1-j; j-i), (2m-1-j, j; 2m-1) \}
\end{eqnarray*}
such that
$I \cap U = \{ (i, j; (j-i) \pmod{m}) \}$.  

The unique completion of $U$ is shown in a manner analogous to $Q$.
Thus $U$ is a critical set in the Latin square $L_6$.
Similarly, by Lemmas~\ref{lem1}, \ref{lem2}, and \ref{lem3}, $U^T$ must
be a critical set for $L_1$.

To complete the proof that $P$ is a critical set for $L'$, we must
also show that the set 
\begin{eqnarray*}
R &=& S(H(A),0,0,0) \cup S(G(B),0,1,1) \cup \\
&& S(G(B),1,0,1) \cup S(A,1,1,0)
\end{eqnarray*}
is a critical set in $L_1$.

The partial Latin square $R$ may be represented by the following diagram.
\begin{center}
\begin{tabular}{|c|c|}
\hline
$H(A^0)$ & $G(B^1)$ \\
\hline
$G(B^1)$ & $A^0$ \\
\hline
\end{tabular}
\end{center}
and we can see that if the mapping $i \rightarrow m-i$, $1 \leq i \leq m-1$,
is applied to the symbols of $R$, then we obtain $U$.  Hence $R$ and $U$
are isotopic.  Thus, by the arguments presented above, $R$ is a critical
set in $L_1$.

We now have enough information to prove that $P$ is a critical set for $L'$.

There are three distinct types of partial Latin subsquare of size $2m \times 2m$
in $P$, which correspond to $Q$, $R$, $U$ and $U^T$.

The partial Latin subsquares in $P$, 
\begin{eqnarray*}
&& S(H(A),0,0,0) \cup S(H(A^T),0,1,1) \cup S(J(A),1,0,1) \cup S(A^T,1,1,0) {\rm ~and} \\
&& S(H(A),0,0,0) \cup S(H(A^T),0,2,2) \cup S(J(A),2,0,2) \cup S(A^T,2,2,0),
\end{eqnarray*}
correspond to $Q$.

The partial Latin subsquare in $P$, 
\begin{eqnarray*}
S(H(A),0,0,0) \cup S(G(B),0,3,3) \cup S(G(B),3,0,3) \cup S(A,3,3,0),
\end{eqnarray*}
corresponds to $R$.

The partial Latin subsquares in $P$, 
\begin{eqnarray*}
&& S(H(A^T),0,2,2) \cup S(G(B),0,3,3) \cup S(G(B),1,2,3) \cup S(A^T,1,3,2) {\rm ~and} \\
&& S(H(A^T),0,1,1) \cup S(G(B),0,3,3) \cup S(G(B),2,1,3) \cup S(A^T,2,3,1),
\end{eqnarray*}
correspond to $U$.  

The partial Latin subsquares in $P$,
\begin{eqnarray*}
&& S(J(A),2,0,2) \cup S(G(B),2,1,3) \cup S(G(B),3,0,3) \cup S(A,3,1,2), \\
&& S(J(A),1,0,1) \cup S(G(B),1,2,3) \cup S(G(B),3,0,3) \cup S(A,3,2,1),
\end{eqnarray*}
correspond to $U^T$.

Now all of the subsquares listed above which correspond to $Q$, $R$,
$U$ and $U^T$ are made up of the union of four partial Latin squares.
If we consider the partial Latin squares that make up these unions,
we find that there is a total of sixteen different partial Latin squares.
These correspond to the sixteen partial Latin squares used in the
first definition of $P$.

We have shown that $Q$, $R$, $U$ and $U^T$ are critical sets for $L_2$,
$L_1$, $L_6$ and $L_1$ respectively.  Thus, if any entry from any of
the sixteen partial Latin squares is removed, one of the partial
Latin squares corresponding to $Q$, $R$, $U$ or $U^T$ above will
not have unique completion.  Therefore,
we have, by Lemmas~\ref{lem1} and \ref{lem2}, all of the entries
in $P$ are necessary for $P$ to have unique completion.  The reasoning
is the same as in the proof that $R$ and $U$ are critical sets. 
Any entry $(x,y;z)$ removed from $P$ in a partial Latin subsquare $X$ corresponding to $Q$, $R$,
$U$ or $U^T$ ensures that the partial Latin subsquare $X \setminus \{(x,y;z)\}$ no longer has unique completion.  Thus the
partial Latin square $P \setminus \{(x,y;z)\}$ also no longer has unique completion.

We complete $P$ by noting that both $S(A,3,3,0)$ $\cup$ $S(A,3,2,1)$
$\cup$ \break $S(A,3,1,2)$ and $S(A,3,3,0)$ $\cup$ $S(A,2,3,1)$ $\cup$ $S(A,1,3,2)$
use all of the symbols $0, \dots, 3m-1$ and thus both $S(G(B),0,3,3)$
and $S(G(B),3,0,3)$, respectively, are forced to use only the symbols
$3m, \dots, 4m-1$. As noted above, $G(B)$ is a critical set for $B$.
Thus $S(G(B),0,3,3)$ and $S(G(B),3,0,3)$ are
forced in $P$ to complete to $S(B,0,3,3)$ and $S(B,3,0,3)$ respectively.
Similar reasoning shows that $S(G(B),2,1,3)$ and $S(G(B),1,2,3)$ are
forced to complete in $P$ to $S(B,2,1,3)$ and $S(B,1,2,3)$ respectively.
Then the rest of the entries in the partial Latin square have forced completion
to $L'$.  Thus $P$ has a unique completion to $L'$, and is a critical set.

Also, we know that $|Q| = \displaystyle{\frac{5m^2-3m}{2}}$ and that
$|U| = 2 \times |G(B)| + |H(A)| + |A| = |Q|$.  Thus
$|P| = |Q| + |U| + |U^T| + |L_3| = \displaystyle{\frac{23m^2-9m}{2}}$.
} 
\end{proof}

This is a significant construction because it is the first which is not
made up of four back-circulant Latin squares combined, and it also
combines several other critical set constructions into one whole construction.
It also gives a better bound for \lcs{4m} than the previous bound given by Donovan \cite{MR99a:05018}
(\lcs{2m} $\geq \displaystyle{\frac{5m^2-3m}{2}}$).

\section{Prolonging the $2^\alpha m \times 2^\alpha m$ construction}

The technique of prolonging an $n \times n$ Latin square along a
transversal to reach an $(n+1) \times (n+1)$ Latin square was described
in Chapter~\ref{ch2}.  We consider prolonging the construction for the $4m \times
4m$ Latin square constructed above.  For $m$ odd, we can prolong $L'$,
given in the last section, along the transversal $T$.  We give $T$ below.
\begin{eqnarray*}
T & = & \{ (i, j; i-j) \mid (0 \leq i,j \leq m-1) \wedge ((i + j) \equiv 0\pmod{m}) \} \cup \\
 && \{ (i, j; i+j) \mid (m \leq i \leq 2m-1) \wedge (2m \leq j \leq 3m-1) \wedge \\
&& (i \equiv j\pmod{m} ) \} \cup \\
 && \{ (i, j; j-i) \mid (2m \leq i \leq 3m-1) \wedge (3m \leq j \leq 4m-1) \wedge \\
&& ((i + j) \equiv  0\pmod{m}) \} \cup \\
&& \{ (i,j ; i-j) \mid (3m \leq i \leq 4m-1) \wedge (m \leq j \leq 2m-1) \wedge \\
&& ((i+j) \equiv 0\pmod{m} ) \}.
\end{eqnarray*}

\begin{theorem}
For $\alpha = 2$ or $\alpha \geq 4$, 
\begin{eqnarray*}
I(2^\alpha m+ 1) &\geq& 2^\alpha m [ 2^\alpha m(3m.2^\alpha + 2^\alpha - 20)/16 + m + 1] + 2^{\alpha-1} m (m-1).
\end{eqnarray*} 
\end{theorem}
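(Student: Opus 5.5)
Write $n = 2^\alpha m$ and $N = n+1$. The plan has three steps. First, build a Latin square of order $n$ that contains the transversal $T$ in the right form. Second, prolong it along a suitable transversal $T^*$ to get a square of order $N$. Third, count the intercalates that survive, together with the new intercalates that appear through the added row and column.

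For $\alpha = 2$ the base square is $L'$ itself, with $T^* = T$. For $\alpha \geq 4$ I take the direct product $M = {\mathbb Z}_2^{\alpha-2} \times L'$. I would first check that $T$ is a transversal of $L'$. Its four pieces occupy the block positions $(0,0),(1,2),(2,3),(3,1)$. These blocks are in distinct block rows and block columns and carry symbol blocks $0,3,1,2$, so the blocks are distinct. Within each block, the conditions $i+j \equiv 0$ or $i \equiv j \pmod m$ pick one cell per row and per column of the block. Because $m$ is odd, the map $i \mapsto 2i \pmod m$ is a bijection, so the symbols picked in each block are also distinct.

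For the product I need a transversal $T_0$ of ${\mathbb Z}_2^{\alpha-2}$. Such a transversal exists exactly when $\alpha - 2 \neq 1$, since ${\mathbb Z}_2$ has none and ${\mathbb Z}_2^k$ has one for $k \geq 2$. This is the source of the exclusion $\alpha \neq 3$; for $\alpha = 2$ no product is needed. The product transversal is then $T^* = T_0 \times T$.

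Next, the count. Prolonging replaces every entry $(r,c;e)$ of $T^*$ by $(r,c;n)$ and moves $e$ into the new row and column. An intercalate of $M$ is destroyed exactly when it contains a cell of $T^*$. New intercalates are created in two ways:
\begin{itemize}
\item[(a)] the four cells $(r_1,c_2),(r_2,c_1),(r_1,c_1),(r_2,c_2)$ with $(r_1,c_1;e_1),(r_2,c_2;e_2) \in T^*$ and $M_{r_1c_2} = e_2$, $M_{r_2c_1} = e_1$;
\item[(b)] intercalates that use the new row or column, of the form $(r,c;n),(r,n;e),(n,c;e'),\dots$, which arise when $M$ has the configuration $(r,c;e),(r,c';?)$ with the right relations to $T^*$.
\end{itemize}
Because the product structure respects the prolongation, I would organise the count as in Heinrich and Wallis. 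The target formula has the shape $n[\,n(3n+2^\alpha-20)/16 + m + 1\,] + 2^{\alpha-1}m(m-1)$. Compared with the bound of the previous theorem, this says that exactly $n \cdot 16n/16 = n^2$ worth of intercalates (i.e. $n$ per transversal cell on average) are lost net. The terms $n(m+1)$ and $2^{\alpha-1}m(m-1)$ are the gains.

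The main obstacle is the exact bookkeeping of these gains. For each cell of $T^*$ I would count the intercalates of $M$ through it. Each cell lies in $I$-many intercalates, given by the product formula, so about $(n + 2^\alpha - 4)\cdot$const per cell. I would then subtract the double counts from intercalates meeting $T^*$ twice. Those doubly-met intercalates are exactly the configurations of type (a), and they need to be counted using the explicit intercalate families $I_1, I_2$ from Section~\ref{sec51}.

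For the gains, I would verify directly in $L'$ that the chosen $T$ produces $m+1$ type-(b) intercalates per row of the new border, and $m(m-1)/2$ type-(a) intercalates in each $2m \times 2m$ block pair. The latter would come from pairs of $T$-cells $(i,-i;2i)$ and $(i',-i';2i')$ forming intercalates with off-transversal cells in the same $A$ or $B$ blocks. I would then lift these counts through the product with ${\mathbb Z}_2^{\alpha-2}$ using $T_0$, which multiplies them by the appropriate powers of $2$. Finally, I would check the resulting algebra against the stated expression, first testing it on $\alpha = 2$, $m = 1$ and $m = 3$ by direct computation.
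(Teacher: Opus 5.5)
Your construction step matches the paper's, and your check that $T$ is a transversal of $L'$ is fine. The paper also prolongs $E=\mathbb{Z}_2^{\alpha-2}\times L'$ along the product of a transversal of $\mathbb{Z}_2^{\alpha-2}$ with $T$, which needs $\alpha\neq 3$. The gap is in the count, which is the real content of the theorem.

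\textbf{Type (a) is impossible as written.} If $(r_2,c_2;e_2)\in T^*$ and $M_{r_1c_2}=e_2$, then $e_2$ appears twice in column $c_2$. Two transversal cells, both now holding the new symbol $n$, lie in a new intercalate exactly when $M_{r_1c_2}=M_{r_2c_1}$.

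\textbf{There are no double counts.} Any two cells of an intercalate share a row, a column or a symbol, so an intercalate meets a transversal in at most one cell.

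\textbf{Your split of the formula is false term by term.} You read ``$n^2$ lost, $n(m+1)$ gained through the new border, $2^{\alpha-1}m(m-1)$ gained inside'' off the shape of the target. Take $\alpha=2$, $m=3$:
\begin{itemize}
\item every cell of $L'$ lies in exactly $3m=9$ intercalates, so $108$ are lost, not $144$;
\item the new row and column carry $36$ new intercalates ($12$ through the corner cell and $24$ others), not $n(m+1)=48$.
\end{itemize}
For $\alpha=2$ and $3\nmid m$ they carry only the $n$ corner intercalates. So the verification planned in your last paragraph cannot succeed.

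\textbf{What is missing.} The $nm$ part of $n(m+1)$ belongs on the loss side, not the gain side.
\begin{itemize}
\item \emph{Loss.} A cell forms at most one intercalate with each other cell of its row. It forms none with the other $m-1$ cells of its row inside its own $m\times m$ block, because copies of $A$, $A^T$, $B$ are intercalate-free for $m$ odd and such an intercalate would stay inside the block. Hence at most $n(n-m)$ intercalates are destroyed.
\item \emph{Pairwise gains.} Each of the $2^\alpha$ blocks met by $T^*$ contains $m$ transversal cells, and any two of them satisfy $M_{r_1c_2}=M_{r_2c_1}$. In $A$, for instance, cells $(i,-i)$ and $(i',-i')$ both give $i+i'$, and similarly in $A^T$ and $B$. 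This gives $\binom{m}{2}$ new intercalates per block.
\item \emph{Corner gains.} Every transversal cell $(r,c;e)$ gives the corner intercalate $\{(r,c;n),(r,n;e),(n,c;e),(n,n;n)\}$.
\end{itemize}
Together that is at least $2^\alpha\binom{m+1}{2}=n+2^{\alpha-1}m(m-1)$ new intercalates. Then $I(E)-n(n-m)+n+2^{\alpha-1}m(m-1)$ is exactly the stated bound.

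If you instead compute the exact number $(3n+2^\alpha-4)/4$ of intercalates through each cell of $E$, as your product-formula remark suggests, the same gains suffice. For $\alpha\ge 4$ and $m>1$ this even gives a strictly stronger bound.
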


\begin{proof}
{
We begin with the direct product $E = D \times L'$, where $D =
{\mathbb Z}_2^{\alpha-2}$.  Since $L'$ has a transversal $T$, we can also find
a transversal in $E$ and prolong it.  This is possible since we know
from Heinrich and Wallis \cite{MR84g:05034} that $D$ also has a transversal (when $\alpha
\neq 3$).  Since $E$ consists of copies of $L'$, the
transversal in $E$ consists of copies of the transversal in $L'$
in the subsquares in $E$ corresponding to the transversal in $D$.

We also know the value of $I(E)$ from the previous theorem.

In the prolongation process, as in Heinrich and Wallis, 
at most $2^\alpha m (2^\alpha m - m)$ intercalates are destroyed
and at least $2^\alpha {m+1 \choose 2}$ intercalates
are recovered.  

We give the reasoning behind this.  For each entry $x$ in a row of
$E$, there are at most $(2^\alpha m - m)$ intercalates containing $x$,
since if we suppose that $x$ falls within a copy of $A$, $A^T$ or $B$, there is
an intercalate created with $x$ and any other entry in the same
row, but outside the copy of $A$, $A^T$ or $B$.  This accounts for the destroyed
intercalates.  However, in each of the $2^\alpha$ copies of $L'$ in $E$
which are affected by the transversal, the substitution of a new symbol
creates ${m+1 \choose 2}$ intercalates in each such copy.  To illustrate
this creation of intercalates, the diagram below gives a copy of $A$
where $m=5$ before and after the prolongation.  The copy of $A$ before
prolongation contains no intercalates, but after prolongation contains
${5+1 \choose 2}$ intercalates.  (The symbol $6$ represents the symbol
which is added after prolongation.  Note that in the actual prolongation of 
$L'$, the final row and column
of the prolongation of $A$ occur in the final row and column of $L'$.)

\begin{center}
\begin{tabular}{cc}
\begin{latinsq}{5}
\hline
1 & 5 & 4 & 3 & 2 \\
\hline
2 & 1 & 5 & 4 & 3 \\
\hline
3 & 2 & 1 & 5 & 4 \\
\hline
4 & 3 & 2 & 1 & 5 \\
\hline
5 & 4 & 3 & 2 & 1 \\
\hline
\end{latinsq}
&
\begin{latinsq}{6}
\hline
6 & 5 & 4 & 3 & 2 & 1 \\
\hline
2 & 1 & 5 & 4 & 6 & 3 \\
\hline
3 & 2 & 1 & 6 & 4 & 5 \\
\hline
4 & 3 & 6 & 1 & 5 & 2 \\
\hline
5 & 6 & 3 & 2 & 1 & 4 \\
\hline
1 & 4 & 2 & 5 & 3 & 6 \\
\hline
\end{latinsq} \\[3pt]
 $A{\rm~before}$ & $A{\rm~after}$
\end{tabular}
\end{center}
Therefore
\begin{eqnarray*}
I(2^\alpha m+ 1) &\geq& (2^\alpha m)^2 (3m.2^\alpha + 2^\alpha - 4)/16 -  2^\alpha m(2^\alpha m - m) + 2^\alpha {m+1 \choose 2} \\
&=& 2^\alpha m (2^\alpha m (3m.2^\alpha  + 2^\alpha - 4)/16 - 2^\alpha m + m + 1) + \\
&& \;\;\; 2^{\alpha-1} m(m-1) \\
&=& 2^\alpha m ( 2^\alpha m(3m.2^\alpha  + 2^\alpha - 20)/16 + m + 1) +  2^{\alpha-1} m (m-1).
\end{eqnarray*} 
}
\end{proof}

We note that this construction actually gives 264 intercalates when
$\alpha = 2$ and $m = 3$, since many more intercalates are added than
are counted above.

In \cite{MR85h:05026}, Kotzig and Zaks showed that $I(4m+1) \leq 
2m(8m^2-4m-1) = 16m^3-8m^2-2m$.  When $\alpha = 2$, the bound above gives $I(4m+1) \geq 12m^3 - 10m^2 + 2m$, and the old Heinrich and Wallis bound
gave $I(4m+1) \geq 8m^3 + 6m^2 - 10m$.  Thus, this new bound
is a significant improvement towards the theoretical bound.

\section{A construction of an $11 \times 11$ intercalate-rich Latin square}
If we begin with the Latin square ${\mathbb Z}_3^2$ and prolong it along
the main diagonal, we reach the following square $M'$:

\begin{center}
\begin{tabular}{|c|c|c|c|c|c|c|c|c|c|}
\hline 9 & 1 & 2 & 3 & 4 & 5 & 6 & 7 & 8 & 0 \\
\hline 1 & 9 & 0 & 4 & 5 & 3 & 7 & 8 & 6 & 2 \\
\hline 2 & 0 & 9 & 5 & 3 & 4 & 8 & 6 & 7 & 1 \\
\hline 3 & 4 & 5 & 9 & 7 & 8 & 0 & 1 & 2 & 6 \\
\hline 4 & 5 & 3 & 7 & 9 & 6 & 1 & 2 & 0 & 8 \\
\hline 5 & 3 & 4 & 8 & 6 & 9 & 2 & 0 & 1 & 7 \\
\hline 6 & 7 & 8 & 0 & 1 & 2 & 9 & 4 & 5 & 3 \\
\hline 7 & 8 & 6 & 1 & 2 & 0 & 4 & 9 & 3 & 5 \\
\hline 8 & 6 & 7 & 2 & 0 & 1 & 5 & 3 & 9 & 4 \\
\hline 0 & 2 & 1 & 6 & 8 & 7 & 3 & 5 & 4 & 9 \\
\hline
\end{tabular}
\vspace{5mm}
\\ $M'$ \\
\end{center}

Then $M'$ has 117 intercalates, and the largest known critical set of
order 10, shown in Appendix~\ref{app1}, is derived from this square.  If we prolong $M'$ along $T$
where 
\begin{eqnarray*}
T &=& \{ ( 3m+i, 3m+j ; 3m+i+j) \mid \\
&& \;\;\; (0 \leq i,j \leq 2) \wedge (j - i \equiv 1 \pmod{3}) \wedge (0 \leq m \leq 2) \} \cup \\
&& \;\;\; \{ (9,9;9) \},
\end{eqnarray*}
we reach the following square $M''$:

\begin{center}
\begin{latinsq}[10]{11}
\hline 9 & 10 &  2 &  3 &  4 &  5 &  6 &  7 &  8 &  0 &  1 \\ 
\hline  1 & 9 & 10 &  4 &  5 &  3 &  7 &  8 &  6 &  2 &  0 \\ 
\hline 10 &  0 &  9 &  5 &  3 &  4 &  8 &  6 &  7 &  1 &  2 \\ 
\hline  3 &  4 &  5 &  9 & 10 &  8 &  0 &  1 &  2 &  6 &  7 \\ 
\hline  4 &  5 &  3 &  7 &  9 & 10 &  1 &  2 &  0 &  8 &  6 \\ 
\hline  5 &  3 &  4 & 10 &  6 &  9 &  2 &  0 &  1 &  7 &  8 \\ 
\hline  6 &  7 &  8 &  0 &  1 &  2 &  9 & 10 &  5 &  3 &  4 \\ 
\hline  7 &  8 &  6 &  1 &  2 &  0 &  4 &  9 & 10 &  5 &  3 \\ 
\hline  8 &  6 &  7 &  2 &  0 &  1 & 10 &  3 &  9 &  4 &  5 \\ 
\hline  0 &  2 &  1 &  6 &  8 &  7 &  3 &  5 &  4 & 10 &  9 \\ 
\hline  2 &  1 &  0 &  8 &  7 &  6 &  5 &  4 &  3 &  9 & 10 \\ 
\hline 
\end{latinsq}
\vspace{5mm}
\\ $M''$ \\
\end{center}

This Latin square has 172 intercalates, which is more than double the
previous bound given by Heinrich and Wallis, who gave $I(11) \geq 80$.
Also, we can find very large critical sets in it; for example, the
following critical set in $M''$ is of size 70.

\begin{center}
\begin{latinsq}{11}
\hline 9 & \ecell & \ecell & 3 & 4 & 5 & 6 & 7 & 8 & 0 & \ecell \\
\hline  1 & 9 & \ecell & 4 & 5 & 3 & 7 & 8 & 6 & 2 & \ecell \\
\hline   \ecell & \ecell & 9 & 5 & 3 & 4 & \ecell & 6 & 7 & 1 & \ecell \\
\hline  3 & 4 & 5 & \ecell & \ecell & 8 & 0 & 1 & 2 & 6 & \ecell \\
\hline   \ecell & \ecell & \ecell & \ecell & \ecell & \ecell & \ecell & \ecell & \ecell & \ecell & \ecell \\
\hline   \ecell & 3 & 4 & \ecell & 6 & 9 & 2 & 0 & 1 & \ecell & \ecell \\
\hline  6 & \ecell & \ecell & \ecell & \ecell & 2 & 9 & \ecell & 5 & 3 & \ecell \\
\hline   \ecell & \ecell & 6 & \ecell & \ecell & 0 & 4 & 9 & \ecell & 5 & \ecell \\
\hline  8 & 6 & \ecell & 2 & \ecell & 1 & \ecell & \ecell & 9 & \ecell & \ecell \\
\hline   \ecell & \ecell & \ecell & 6 & 8 & 7 & 3 & 5 & 4 & \ecell & \ecell \\
\hline  2 & 1 & 0 & 8 & 7 & 6 & 5 & 4 & 3 & 9 & \ecell \\
\hline
\end{latinsq}
\vspace{5mm} \\ Critical set for $M''$ \\
\end{center}
Unfortunately, this construction does not appear to generalise well.

\section{A note on the $14 \times 14$ intercalate-rich Latin squares}
We focus on two known $14 \times 14$ constructions for intercalate-rich
Latin squares, and find critical sets of large size in these Latin squares.

If we take the direct product of
the Latin square corresponding to the Steiner triple
system of order 7 and ${\mathbb Z}_2$, the result is a $14 \times 14$ Latin square
with 385 intercalates.  The construction shown earlier, $L_1$, 
with $m = 7$, gives a $14 \times 14$ Latin square with 343 intercalates.

Donovan's critical set construction \cite{MR99a:05018} for Latin
squares of order $2m$ results in a critical set of size 112.  However,
critical sets larger than this can be found.

The first example given immediately below, $M_1$, is of size 117, and
is from the Latin square with 385 intercalates.  It was obtained by
starting with the relevant Latin square, and for each $0 \leq i,j,k \leq
13$, removing row $i$, column $j$ and all occurrences of symbol $k$, to
arrive at a partial Latin square we shall denote $Y(i,j,k)$.  For each
partial Latin square $Y(i,j,k)$, the subsquare of rows 0, \dots, 6 and of
columns 0, \dots, 6 was fixed while all unnecessary entries elsewhere
were removed, and when this process terminated, all unnecessary entries
everywhere in the partial Latin square were removed.

\begin{center}
\begin{latinsq}[10]{14}
\hline   \ecell & 3 & \ecell & 5 & \ecell & 7 & 6 & \ecell & \ecell & 9 & 12 & 11 & 14 & \ecell \\
\hline   \ecell & \ecell & \ecell & \ecell & \ecell & \ecell & \ecell & \ecell & \ecell & \ecell & \ecell & \ecell & \ecell & \ecell \\
\hline   \ecell & 1 & 3 & 7 & 6 & 5 & 4 & 9 & \ecell & 10 & 14 & 13 & \ecell & 11 \\
\hline 5 & 6 & 7 & 4 & 1 & \ecell & 3 & 12 & \ecell & \ecell & 11 & 8 & 9 & 10 \\
\hline 4 & 7 & 6 & 1 & 5 & 3 & \ecell & 11 & \ecell & 13 & 8 & 12 & 10 & 9 \\
\hline 7 & 4 & 5 & \ecell & \ecell & \ecell & \ecell & 14 & \ecell & 12 & 9 & 10 & \ecell & 8 \\
\hline   \ecell & 5 & 4 & 3 & \ecell & 1 & \ecell & 13 & \ecell & \ecell & 10 & 9 & 8 & \ecell \\
\hline   \ecell & 10 & 9 & \ecell & 11 & 14 & 13 & 1 & \ecell & \ecell & 5 & 4 & 7 & 6 \\
\hline 10 & 9 & 8 & \ecell & 14 & 11 & 12 & 3 & \ecell & 1 & 6 & 7 & 4 & 5 \\
\hline   \ecell & \ecell & \ecell & \ecell & \ecell & \ecell & 11 & \ecell & \ecell & 3 & 7 & 6 & \ecell & 4 \\
\hline 12 & 13 & 14 & \ecell & 8 & \ecell & 10 & 5 & \ecell & 7 & 4 & 1 & \ecell & 3 \\
\hline   \ecell & 14 & 13 & \ecell & \ecell & \ecell & \ecell & 4 & \ecell & 6 & 1 & 5 & 3 & \ecell \\
\hline   \ecell & 11 & 12 & 9 & 10 & \ecell & \ecell & \ecell & \ecell & 5 & \ecell & 3 & 6 & \ecell \\
\hline   \ecell & 12 & \ecell & \ecell & 9 & 8 & \ecell & 6 & \ecell & 4 & 3 & \ecell & 1 & 7 \\
\hline
\end{latinsq} \vspace{5mm}
\\ $M_1$ \\
\end{center}

The second example, $M_2$, is from the Latin square $L_1$ of Section~\ref{sec51} with 
$m=7$.  We have
that $|M_2| = 118$. Three of the four subsquares in the union
which defines $L_1$ contain critical sets of size 23 which are pairwise
conjugate to each other.  This critical set was constructed by starting
with a list of critical sets of size 23 in the back-circulant Latin square
of order 7 and combining critical sets isomorphic to it in $A$ and $A^T$
together with a complete subsquare, $A$, $A^T$, or $B$.  This critical set
is of interest because it is in a similar pattern to Donovan's construction
for a critical set of size $\displaystyle{\frac{5m^2-3m}{2}}$ in a $2m
\times 2m$ Latin square, but it uses conjugate critical sets of size greater
than $\displaystyle{\frac{7^2-7}{2}}$ in three of the four subsquares.
Such critical sets have not been achieved before.

Thus, this example raises the possibility that a construction of a
critical set of size greater than $\displaystyle{\frac{n^2-n}{2}}$ in
the back-circulant Latin square of order $n$ could lead to generalized
constructions of size greater than $\displaystyle{\frac{5m^2-3m}{2}}$ in a
$2m \times 2m$ Latin square.

\vspace{5mm}

\begin{center}
\begin{latinsq}[10]{14}
\hline  \ecell & \ecell & \ecell & \ecell & \ecell & \ecell & \ecell &  8 &  9 & 10 & 11 & 12 & 13 & 14 \\ 
\hline   \ecell &  1 & \ecell &  6 &  5 &  4 &  3 &  9 & 10 & 11 & 12 & 13 & 14 &  8 \\ 
\hline   \ecell & \ecell &  1 &  7 &  6 &  5 &  4 & 10 & 11 & 12 & 13 & 14 &  8 &  9 \\ 
\hline   \ecell & \ecell & \ecell & \ecell & \ecell &  6 &  5 & 11 & 12 & 13 & 14 &  8 &  9 & 10 \\ 
\hline   \ecell &  4 & \ecell & \ecell &  1 & \ecell &  6 & 12 & 13 & 14 &  8 &  9 & 10 & 11 \\ 
\hline   \ecell &  5 & \ecell & \ecell & \ecell &  1 &  7 & 13 & 14 &  8 &  9 & 10 & 11 & 12 \\ 
\hline   \ecell &  6 &  5 &  4 &  3 & \ecell &  1 & 14 &  8 &  9 & 10 & 11 & 12 & 13 \\ 
\hline   \ecell & \ecell & \ecell & \ecell & \ecell & \ecell & \ecell & \ecell & \ecell & \ecell & \ecell & \ecell & \ecell & \ecell \\ 
\hline   \ecell & 10 & 11 & 12 & 13 & 14 &  8 & \ecell &  1 & \ecell &  6 &  5 &  4 &  3 \\ 
\hline   \ecell & 11 & 12 & 13 & \ecell &  8 & \ecell & \ecell & \ecell &  1 &  7 &  6 &  5 &  4 \\ 
\hline   \ecell & 12 & 13 & \ecell &  8 & \ecell & 10 & \ecell & \ecell & \ecell & \ecell & \ecell &  6 &  5 \\ 
\hline   \ecell & 13 & 14 & \ecell & \ecell & \ecell & 11 & \ecell &  4 & \ecell & \ecell &  1 & \ecell &  6 \\ 
\hline   \ecell & \ecell &  8 & \ecell & \ecell & \ecell & 12 & \ecell &  5 & \ecell & \ecell & \ecell &  1 &  7 \\ 
\hline   \ecell &  8 & \ecell & \ecell & 11 & 12 & 13 & \ecell &  6 &  5 &  4 &  3 & \ecell &  1 \\ 
\hline 
\end{latinsq} \vspace{5mm}
\\ $M_2$ \\
\end{center}

\section{Conclusion}
We have now given a new construction for Latin squares of order $4m$ 
which proves that $I(4m) \geq I(4) m^3$.  This leads to a
better bound on $I(2^\alpha m)$ for $\alpha \geq 2$, $m$ odd.  Also,
critical sets can be discovered in such squares of extraordinary size.
This discovery is more evidence for the conjecture 
that Latin squares containing many intercalates are closely
related to the largest critical sets in Latin squares of a given
order.

Since a transversal existed in this construction, it was prolonged to
give a new bound on $I(4m+1)$, which was generalised to a new bound on
$I(2^\alpha m+1)$ for $\alpha = 2$ or $\alpha \geq 4$.

Further research might include trying to discover a combination of
subsquares of the form $A, A^T$ and $B$ into a square which could prove a
new bound on $I(7m)$: $I(7m) \geq I(7) m^3$, that is, $I(7m) \geq 42 m^3$.
It would also be interesting to look at the maximum number of $m \times m$
subsquares, $m > 2$, for a given order of Latin square.
\chapter{Closing a gap in the spectrum of critical sets}\label{ch6}
\section{Introduction}
In 1998 Donovan and Howse proved that for all $n$ there exist critical
sets of order $n$ and size $s$, where $\lfloor \displaystyle{\frac{n^{2}}{4}} \rfloor
\leq s \leq \displaystyle{\frac{n^{2}-n}{2}}$ with the exception of the case $s =
\displaystyle{\frac{n^{2}}{4}} + 1$ when $n$ is even.  In this chapter we shall present a
construction for this exception, where $n \geq 6$.  It is based on the discovery of a
critical set of size 17 for a Latin square of order 8.  Thus Theorem~\ref{thm636} verifies
that there does exist a critical set of order $n$ and size $\displaystyle{\frac{n^{2}}{4}}$ + 1 when $n$
is even and $n \geq 6$.

\section{Critical sets in Latin squares of orders 6 and 8}
Recall that $BC_n$ denotes the back circulant Latin square $\{ (i,j;i+j) 
\mid 0 \leq i,j \leq n-1 \}$ where the addition $i+j$ is taken modulo $n$.

Let $\mathcal{A} = \{ (i, j; i+j) \mid (0 \leq i,j \leq 5) \wedge ((0 \leq i + j \leq 1) \vee 
(8 \leq i + j \leq 10)) \}.$
Then $\mathcal{A}$ is a critical set of order 6 and size $\displaystyle{\frac{6^{2}}{4}} = 9$ in $BC_{6}$.
Beginning with $\mathcal{A}$, we remove entry $(5,4;3)$ and add entries $(3,2;5)$ and 
$(3,4;3)$ and denote the new partial Latin square by $\mathcal{A'}$.
Programs developed from Algorithm~\ref{311} can be used to verify that $\mathcal{A'}$
is a critical set of size $\displaystyle{\frac{6^{2}}{4}} + 1 = 10$ which
completes to the Latin square $\mathcal{LA}$ as shown in Table~\ref{closeone}.

\begin{table}
\begin{center}
\caption{Critical sets and Latin squares of order 6}
\label{closeone}
\begin{tabular}{ccc}
\begin{latinsq}{6}
\hline 0 & 1 & \ecell & \ecell & \ecell & \ecell \\
\hline 1 & \ecell & \ecell & \ecell & \ecell & \ecell \\
\hline   \ecell & \ecell & \ecell & \ecell & \ecell & \ecell \\
\hline   \ecell & \ecell & \ecell & \ecell & \ecell & 2 \\
\hline   \ecell & \ecell & \ecell & \ecell & 2 & 3 \\
\hline   \ecell & \ecell & \ecell & 2 & 3 & 4 \\
\hline 
\end{latinsq} &
\begin{latinsq}{6}
\hline 0 & 1 & \ecell & \ecell & \ecell & \ecell \\
\hline 1 & \ecell & \ecell & \ecell & \ecell & \ecell \\
\hline   \ecell & \ecell & \ecell & \ecell & \ecell & \ecell \\
\hline   \ecell & \ecell & 5 & \ecell & 3 & 2 \\
\hline   \ecell & \ecell & \ecell & \ecell & 2 & 3 \\
\hline   \ecell & \ecell & \ecell & 2 & \ecell & 4 \\
\hline 
\end{latinsq} &
\begin{latinsq}{6}
\hline 0 & 1 & 2 & 3 & 4 & 5 \\
\hline 1 & 2 & 3 & 4 & 5 & 0 \\
\hline 2 & 3 & 4 & 5 & 0 & 1 \\
\hline 4 & 0 & 5 & 1 & 3 & 2 \\
\hline 5 & 4 & 1 & 0 & 2 & 3 \\
\hline 3 & 5 & 0 & 2 & 1 & 4 \\
\hline 
\end{latinsq} \\[3pt]
 $\mathcal{A}$ & $\mathcal{A'}$ & $\mathcal{LA}$
\end{tabular}
\end{center}
\end{table}

Let $\mathcal{B} = \{ (i, j; i+j) \mid (0 \leq i,j \leq 7) \wedge ((0 \leq i + j \leq 2) \vee 
(11 \leq i + j \leq 14)) \}.$
Then $\mathcal{B}$ is a critical set of order 8 and size $\displaystyle{\frac{8^{2}}{4}} = 16$ in $BC_{8}$.
Beginning with $\mathcal{B}$, we remove entries $(7,5;4)$ and $(7,6;5)$ and add entries $(4,3;7)$, $(4,5;4)$,
and $(4,6;5)$, and denote the new partial Latin square by $\mathcal{B'}$.

Again, programs developed from Algorithm~\ref{311} can be used to verify that $\mathcal{B'}$ is a critical set of size $\displaystyle{\frac{8^{2}}{4}} + 1 = 17$ and completes to the Latin square $\mathcal{LB}$ as shown in Table~\ref{closetwo}.

\begin{table}
\begin{scriptsize}
\begin{center}
\caption{Critical sets and Latin squares of order 8}
\label{closetwo}
\begin{tabular}{@{}c@{\hspace{5pt}}c@{\hspace{5pt}}c@{}}
\begin{latinsq}{8}
\hline 0 & 1 & 2 & \ecell & \ecell & \ecell & \ecell & \ecell \\
\hline 1 & 2 & \ecell & \ecell & \ecell & \ecell & \ecell & \ecell \\
\hline 2 & \ecell & \ecell & \ecell & \ecell & \ecell & \ecell & \ecell \\
\hline   \ecell & \ecell & \ecell & \ecell & \ecell & \ecell & \ecell & \ecell \\
\hline   \ecell & \ecell & \ecell & \ecell & \ecell & \ecell & \ecell & 3 \\
\hline   \ecell & \ecell & \ecell & \ecell & \ecell & \ecell & 3 & 4 \\
\hline   \ecell & \ecell & \ecell & \ecell & \ecell & 3 & 4 & 5 \\
\hline   \ecell & \ecell & \ecell & \ecell & 3 & 4 & 5 & 6 \\
\hline 
\end{latinsq} &
\begin{latinsq}{8}
\hline 0 & 1 & 2 & \ecell & \ecell & \ecell & \ecell & \ecell \\
\hline 1 & 2 & \ecell & \ecell & \ecell & \ecell & \ecell & \ecell \\
\hline 2 & \ecell & \ecell & \ecell & \ecell & \ecell & \ecell & \ecell \\
\hline   \ecell & \ecell & \ecell & \ecell & \ecell & \ecell & \ecell & \ecell \\
\hline   \ecell & \ecell & \ecell & 7 & \ecell & 4 & 5 & 3 \\
\hline   \ecell & \ecell & \ecell & \ecell & \ecell & \ecell & 3 & 4 \\
\hline   \ecell & \ecell & \ecell & \ecell & \ecell & 3 & 4 & 5 \\
\hline   \ecell & \ecell & \ecell & \ecell & 3 & \ecell & \ecell & 6 \\
\hline 
\end{latinsq} &
\begin{latinsq}{8}
\hline 0 & 1 & 2 & 3 & 4 & 5 & 6 & 7 \\
\hline 1 & 2 & 3 & 4 & 5 & 6 & 7 & 0 \\
\hline 2 & 3 & 4 & 5 & 6 & 7 & 0 & 1 \\
\hline 3 & 4 & 5 & 6 & 7 & 0 & 1 & 2 \\
\hline 6 & 0 & 1 & 7 & 2 & 4 & 5 & 3 \\
\hline 5 & 7 & 6 & 1 & 0 & 2 & 3 & 4 \\
\hline 7 & 6 & 0 & 2 & 1 & 3 & 4 & 5 \\
\hline 4 & 5 & 7 & 0 & 3 & 1 & 2 & 6 \\
\hline \end{latinsq} \\[3pt]
 $\mathcal{B}$ & $\mathcal{B'}$ & $\mathcal{LB}$
\end{tabular}
\end{center}
\end{scriptsize}
\end{table}

Therefore $\mathcal{A'}$ and $\mathcal{B'}$ demonstrate that critical
sets of order $n$ and size $\displaystyle{\frac{n^2}{4}} + 1$ exist when $n = 6$
and $n = 8$ respectively.

\section{Critical sets in Latin squares of order $n$, $n$ even}
The above examples can be generalised to produce critical sets of
size $\displaystyle{\frac{n^{2}}{4}} + 1$, when $n$ is even.  

\begin{theorem}
\label{thm636}
Take the critical set
\begin{eqnarray*}
C & = & \{ (i, j; i+j ) \mid (0 \leq i + j \leq \displaystyle{\frac{n}{2}} - 2) \vee 
(\displaystyle{\frac{3n}{2}} - 1 \leq i + j \leq 2n - 2) \}.
\end{eqnarray*}
Construct the set
\begin{eqnarray*}
D & = & ( C
\backslash 
\{ ( n - 1, j; j - 1 )  \mid \displaystyle{\frac{n}{2}} + 1 \leq j 
\leq n - 2 \} ) \\ 
& & \quad{}\cup \{ ( \displaystyle{\frac{n}{2}}, j; j - 1)  \mid \displaystyle{\frac{n}{2}} + 1 
\leq j \leq n - 2 \} \cup 
\{ (\displaystyle{\frac{n}{2}}, \displaystyle{\frac{n}{2}} - 1; n - 1) \}.
\end{eqnarray*}
Then $D$ is a critical set of size $\displaystyle{\frac{n^{2}}{4}} + 1$.  
\end{theorem}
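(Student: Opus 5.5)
The proof splits into four steps: identify the completion, count $|D|$, prove unique completion by forcing, and prove minimality via Lemma~\ref{lem11}. Write $n=2h$. The first task is to name the Latin square $L$ that $D$ completes to. Following $\mathcal{LA}$ and $\mathcal{LB}$, I take $L$ to agree with $BC_n$ on rows $0,\dots,h-1$. The lower half, rows $h,\dots,n-1$, is $BC_n$ with some row/symbol entries rerouted. Concretely, row $h$ receives the entries $(h,j;j-1)$ for $h+1\le j\le n-2$, the entry $(h,h-1;n-1)$, and $(h,n-1;h-1)$ (from $C$). The removed entries $(n-1,j;j-1)$ are displaced by moving the shifted symbols down one row each. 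I would first write this $L$ down explicitly from the two examples and check that it is Latin.

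Next, count $|D|$. $C$ is the Donovan--Howse/Curran--van Rees type set with $|C|=n^2/4$. The removed set $\{(n-1,j;j-1)\}$ and the added set $\{(h,j;j-1)\}$ both have $h-2$ elements. The single extra entry gives $|D|=n^2/4+1$. One must also check that the added cells are empty in $C$, which holds since $i+j<3h-1$ there.

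Unique completion is by forcing, in the order the computer verification suggests. In $C$ the lower-right triangle, together with the small upper-left triangle, forces $BC_n$ by the standard argument; here the two triangles interact. Row $h$ now holds the symbols $h,\dots,n-3$ (shifted), plus $n-1$ and $h-1$. This forces in column $h$ the symbol $n-2$. Proceeding, the entries of row $h$ force the rows $h+1,\dots,n-1$ to complete as in $L$, one anti-diagonal at a time. Once the lower-right block is fixed, the remaining upper triangle is completed exactly as for $C$ in $BC_n$, i.e.\ row by row with a single missing symbol at each step.

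Criticality uses Lemma~\ref{lem11}(2). For each entry $e\in D$ I need an interchange $I\subseteq L$ with $I\cap D=\{e\}$. For entries in the upper-left triangle (rows $<h$) I reuse the interchanges used in the proof that $C$ is critical in $BC_n$: these are back-circulant ``strip'' interchanges living in rows $0,\dots,h-1$, which $L$ shares with $BC_n$. The remaining entries lie in the modified lower region, in row $h$ and the remaining cells of the lower-right triangle. For these I would exhibit intercalates or small interchanges (size $6$ or $8$, as in the order-$8$ example) built from the rerouted structure. I would extract their pattern from $\mathcal{LB}$ and generalise it with $j$ as a parameter. I expect this last step to be the main obstacle. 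Describing $L$ precisely enough to write uniform interchange families for the entries near row $h$, and then verifying they avoid the rest of $D$, is fiddly and case-heavy, particularly for the boundary entries $(h,h-1;n-1)$ and $(h,n-1;h-1)$. It also requires checking $n=6,8$ separately against the computed examples.
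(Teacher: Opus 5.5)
Your outline has the same shape as the paper's proof: name the completion, show it is forced, then apply Lemma~\ref{lem11} with explicit interchanges, reusing top-half interchanges for the upper-left triangle. But the content is missing, and the one concrete forcing step you state is wrong.

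\textbf{The forcing step.} Column $0$ of $D$ contains exactly the symbols $0,\dots,h-2$, and row $h$ of $D$ lacks exactly $\{0,\dots,h-2,n-2\}$. So it is $(h,0)$ that is forced to be $n-2$. Row $h$ then fills as $(h,j;j-1)$ for $1\le j\le h-2$ and $(h,h;h-2)$, so column $h$ receives $h-2$, not $n-2$ (compare row $4$ of $\mathcal{LB}$).

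\textbf{The completion.} The lower half is not a simple shift of $BC_n$. In the completion, row $n-1$ holds $h+j$ in columns $j\le h-3$; then $n-1$, $0$, $h-1$ in columns $h-2,h-1,h$; and $j-h$ in columns $h+1,\dots,n-2$. Each row $n-x$ with $2\le x\le h-1$ holds $n-1,n-2$ in columns $x-2,x-1$ and $h-x,h-1-x$ in columns $h-1,h$. The paper forces this in a definite order:
row $h$; rows $n-2,\dots,h+1$ in turn; the triangle with corners $(h+1,h+1)$, $(h+1,n-3)$, $(n-3,h+1)$; row $n-1$; then rows $0,\dots,h-1$ right to left.
``One anti-diagonal at a time'' is not yet an argument. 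Until $L$ is written down at this precision, neither uniqueness nor the interchange search can be carried out.

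\textbf{The main gap: criticality in the modified region.} Intercalates do handle two groups of entries: the added entries $(h,j;j-1)$ (rows $h-1,h$, columns $j-h,j$), and the interior entries with $x+y\ge 3h$ off row and column $n-1$ (rows $y-h,y$). The boundary entries are different. These are $(h,h-1;n-1)$, $(h,n-1;h-1)$, $(h+1,n-2;h-1)$, $(x,3h-1-x;h-1)$ for $h+2\le x\le n-2$, $(x,n-1;x-1)$ for $h+1\le x\le n-2$, $(n-1,h;h-1)$ and $(n-1,n-1;n-2)$. For these the paper uses two-row cyclic and three-row interchanges whose size grows linearly with $n$. For example, for $(n-1,n-1;n-2)$ it uses a single $h$-cycle on rows $h-1,n-1$ over columns $h-1,h+1,\dots,n-1$, of size $n$. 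The interchanges for $(h+1,n-2;h-1)$ and $(h,n-1;h-1)$ also change form according to whether $4\mid n$. So there is no reason to expect size-$6$ or size-$8$ patterns read off the single case $n=8$ to generalise. These families have to be constructed and checked against the rest of $D$.

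\textbf{The upper-left triangle.} The natural interchanges for $C$ in $BC_n$ are the intercalates on rows $y,y+h$ and columns $x,x+h$. These do not survive in $L$, because cells of row $y+h$ are altered (for instance whenever $y=0$ or $x=0$). What is needed is an interchange lying inside rows $y,\dots,h-1$ and columns $x,\dots,n-1$. The paper imports this from Donovan--Howse (Appendix~\ref{app3}); your plan assumes such interchanges exist without identifying them.
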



\begin{proof}{
Henceforth, we shall refer to the completion of $D$ as $\mathcal{LD}$.
The following process outlines how $D$ can be uniquely completed to $\mathcal{LD}$.
In completing $D$, at each step in the completion process
the given cell is forced to contain the specified symbol.  If any other
symbol were to be placed in the specified cell, the result
would not be a partial Latin square.

\vskip 2mm
\noindent We begin by filling row $\displaystyle{\frac{n}{2}}$ starting at column $j=0$ and moving right to column $j=\displaystyle{\frac{n}{2}} - 2$.
In row $\displaystyle{\frac{n}{2}}$, fill the cell in column $j$ with:
\begin{itemize}
\item[{}] $n - 2$, when $j=0$;
\item[{}] $j - 1$, when $1 \leq j \leq \displaystyle{\frac{n}{2}} - 2$;
\item[{}] $\displaystyle{\frac{n}{2}} - 2$, when $j = \displaystyle{\frac{n}{2}}$.
\end{itemize}

\noindent We shall fill rows $n-2$ to $\displaystyle{\frac{n}{2}}+1$ sequentially, from
left to right in columns 0 to $\displaystyle{\frac{n}{2}} - 2$,
then column $\displaystyle{\frac{n}{2}}$,
then column $\displaystyle{\frac{n}{2}} - 1$.  So, for $2 \leq x \leq \displaystyle{\frac{n}{2}} - 1$, and $0 \leq j \leq \displaystyle{\frac{n}{2}}$
fill the cell in row $n-x$ and column $j$ with:
\begin{itemize}
\item[{}] ($n-x)+j$ (mod $n$), when $j \neq x - 1$ and $j \neq x - 2$;
\item[{}] $n-1$, when $j = x - 2$;
\item[{}] $n-2$, when $j = x - 1$;
\item[{}] $\displaystyle{\frac{n}{2}} - 1 - x$, when $j = \displaystyle{\frac{n}{2}}$;
\item[{}] $\displaystyle{\frac{n}{2}}-x$, when $j = \displaystyle{\frac{n}{2}} - 1$.
\end{itemize}
\noindent When $n \geq 8$,
the triangle bounded by the cells $(\displaystyle{\frac{n}{2}}+1$, $\displaystyle{\frac{n}{2}}+1)$,
$(\displaystyle{\frac{n}{2}}+1, n-3)$, and $(n-3, \displaystyle{\frac{n}{2}}+1)$ is filled from
bottom to top and left to right.
\noindent If $n \geq 8$, for $3 \leq x \leq \displaystyle{\frac{n}{2}}-1$ fill the cell in row $n-x$,
column $j=\displaystyle{\frac{n}{2}}+1$ to $j=\displaystyle{\frac{n}{2}}+x-2$
with $(n-x)+j$ (mod $n$). \\
\noindent For $0 \leq j \leq \displaystyle{\frac{n}{2}} - 3$, fill the cell in row $n-1$ and column $j$ with
$\displaystyle{\frac{n}{2}}+j$ (mod $n$).  Fill the cell in row $n-1$ and column $j$ with
\begin{itemize} 
\item[{}] $n-1$, when $j = \displaystyle{\frac{n}{2}} - 2$ and
\item[{}] 0, when $j = \displaystyle{\frac{n}{2}} - 1$.
\end{itemize}

\noindent For $\displaystyle{\frac{n}{2}} + 1 \leq j \leq n - 2$, fill the cell in row $n-1$ and column $j$ with $j-\displaystyle{\frac{n}{2}}$ (mod $n$).

\noindent For $0 \leq x \leq \displaystyle{\frac{n}{2}} - 1$, fill the cells in row $x$ sequentially
right to left from column $j = n - 1$ to $j = \displaystyle{\frac{n}{2}} - 1 - x$ with $x+j$.

To prove the necessity of each of the symbols in the critical set $D$, 
three varieties of Latin interchanges will be used:

\vspace{.2cm}

\noindent {\bf Variety 1} \\
This Latin interchange uses only two rows and consequently
the same symbols in each row.
The disjoint mate is obtained by interchanging the rows.
For example, the Latin interchange $I$ and its disjoint mate $I'$ can be
represented as:
\begin{eqnarray*}
I & = & \{ ( r_{1}, c_{1}; e_{1} ), (r_{1}, c_{2}; e_{2} ), ..., (r_{1}, c_{m-1}; e_{m-1}), (r_{1}, c_{m}; e_{m} ) \} \\
&& \quad{}\cup \{ ( r_{2}, c_{1}; e_{2} ), (r_{2}, c_{2}; e_{3} ), ..., (r_{2}, c_{m-1}; e_{m}), (r_{2}, c_{m}; e_{1} ) \}, \mbox{ and} \\
I' & = & \{ ( r_{1}, c_{1}; e_{2} ), (r_{1}, c_{2}; e_{3} ), ... (r_{1}, c_{m-1}; e_{m}), (r_{1}, c_{m}; e_{1} ) \} \\
&& \quad{}\cup \{ ( r_{2}, c_{1}; e_{1} ), (r_{2}, c_{2}; e_{2} ), ..., (r_{2}, c_{m-1}; e_{m-1}), (r_{2}, c_{m}; e_{m}) \}.
\end{eqnarray*}

\noindent {\bf Variety 2} \\
This Latin interchange uses three rows, with the top row containing 
two entries.
For example, the Latin interchange $I$ and its disjoint mate $I'$ 
can be represented as: 
\begin{eqnarray*}
I & = & \{ (r_{1}, c_{1}; x ), (r_{1}, c_{m+1}; y) \} \\
&& \quad{}\cup \{ ( r_{2}, c_{1}; y ), (r_{2}, c_{2}; e_{1} ), (r_{2}, c_{3}; e_{2} ),  ..., (r_{2}, c_{m}; e_{m-1}), (r_{2}, c_{m+1}; e_{m} ) \} \\
&& \quad{}\cup \{ ( r_{3}, c_{1}; e_{1} ), (r_{3}, c_{2}; e_{2} ), (r_{3}, c_{3}; e_{3} ), ..., (r_{3}, c_{m}; e_{m}), (r_{3}, c_{m+1}; x ) \}, \mbox{ and} \\
I' & = & \{ (r_{1}, c_{1}; y ), (r_{1}, c_{m+1}; x) \} \\
&& \quad{}\cup \{ ( r_{2}, c_{1}; e_{1} ), (r_{2}, c_{2}; e_{2} ), (r_{2}, c_{3}; e_{3} ),  ..., (r_{2}, c_{m}; e_{m}), (r_{2}, c_{m+1}; y ) \} \\
&& \quad{}\cup \{ ( r_{3}, c_{1}; x ), (r_{3}, c_{2}; e_{1} ), (r_{3}, c_{3}, e_{2} ), ..., (r_{3}, c_{m}; e_{m-1}), (r_{3}, c_{m+1}; e_{m} ) \}.
\end{eqnarray*}

\noindent {\bf Variety 3} \\
The third variety of Latin interchanges take a number of forms and cannot be
written as simply as Variety 1 or Variety 2.  Full details of
these Latin interchanges are presented in Appendix~\ref{app3}.
\newline
\vskip 2mm 
\noindent For $n = 6$, proving that the entries in the example given above
are necessary can be verified using programs developed from Algorithm~\ref{311}.
We assume $n \geq 8$ and prove the following.
Latin interchanges $I_{1}$ through $I_{10}$, below, exist in $\mathcal{LD}$: 
\newline
\vskip 2mm
\noindent $I_{1}$ is a Latin interchange of Variety 1, and $I_{1} \cap D = \{ ( \displaystyle{\frac{n}{2}}, \displaystyle{\frac{n}{2}} - 1; n - 1) \}.$
\begin{eqnarray*}
I_{1} &= & \{ (\displaystyle{\frac{n}{2}}, 0; n - 2) \} \\
&& \quad{}\cup \{ (\displaystyle{\frac{n}{2}}, j; j-1) \mid 1 \leq j \leq \displaystyle{\frac{n}{2}} - 2 \} \\
&& \quad{}\cup \{ ( \displaystyle{\frac{n}{2}}, \displaystyle{\frac{n}{2}} - 1; n - 1 ), ( \displaystyle{\frac{n}{2}}, \displaystyle{\frac{n}{2}}; \displaystyle{\frac{n}{2}} - 2) \} \\
&& \quad{}\cup \{ (n - 2, 0; n - 1), (n - 2, 1; n - 2) \\ 
&& \quad{}\cup \{ ( n - 2, j; j - 2 ) \mid 2 \leq j \leq \displaystyle{\frac{n}{2}} - 2 \} \\
&& \quad{}\cup \{ ( n - 2, \displaystyle{\frac{n}{2}} - 1; \displaystyle{\frac{n}{2}} - 2 ), ( n - 2, \displaystyle{\frac{n}{2}};  \displaystyle{\frac{n}{2}}- 3) \}.
\end{eqnarray*}
\noindent $I_{2}$ is a Latin interchange of Variety 1, and $I_{2} \cap D = \{ (n - 1, n - 1; n - 2) \}$.
\begin{eqnarray*}
I_{2} & = & \{ ( \displaystyle{\frac{n}{2}} - 1, \displaystyle{\frac{n}{2}} - 1; n - 2 )  \} \\ 
&& \quad{}\cup \{ (\displaystyle{\frac{n}{2}} - 1, j; \displaystyle{\frac{n}{2}} + j - 1) | \displaystyle{\frac{n}{2}} + 1 \leq j \leq n - 1 \} \\ 
&& \quad{}\cup \{ ( n - 1, \displaystyle{\frac{n}{2}} - 1; 0 ) \} \\
&& \quad{}\cup \{ ( n - 1, j; j - \displaystyle{\frac{n}{2}} ) \mid \displaystyle{\frac{n}{2}} + 1 \leq j \leq n - 2 \} \\ 
&& \quad{}\cup \{ ( n - 1, n - 1; n - 2 ) \}.
\end{eqnarray*}

\noindent $I_{3}$ is a Latin interchange of Variety 1, and $I_{3} \cap D = \{ ( n - 1, \displaystyle{\frac{n}{2}}; \displaystyle{\frac{n}{2}} - 1 ) \}$.
\begin{eqnarray*}
I_{3} & = & \{ ( \displaystyle{\frac{n}{2}} - 1, j; \displaystyle{\frac{n}{2}} + j - 1) \mid 0 \leq j \leq \displaystyle{\frac{n}{2}} - 2 \} \\
&& \quad{}\cup \{ ( \displaystyle{\frac{n}{2}} - 1, \displaystyle{\frac{n}{2}}; n - 1 ) \} \\
&& \quad{}\cup \{ (n - 1, j; j + \displaystyle{\frac{n}{2}} ) \mid 0 \leq j \leq \displaystyle{\frac{n}{2}} - 3\} \\
&& \quad{}\cup \{ (n - 1, \displaystyle{\frac{n}{2}} - 2; n - 1 ), (n - 1, \displaystyle{\frac{n}{2}}; \displaystyle{\frac{n}{2}} - 1 ) \}.
\end{eqnarray*}

\noindent For $\displaystyle{\frac{n}{2}} + 2 \leq x \leq n - 2$,
$I_{4}$ is a Latin interchange of Variety 2, and 
$I_{4} \cap D = \{ (x, \displaystyle{\frac{3n}{2}} - 1 - x; \displaystyle{\frac{n}{2}} - 1) \}$.

For $\displaystyle{\frac{n}{2}} + 2 \leq x \leq n - 2$, construct the Latin interchange
\begin{eqnarray*}
H & = &  \{ (x - \displaystyle{\frac{n}{2}} - 1, n - x; \displaystyle{\frac{n}{2}} - 1), (x - \displaystyle{\frac{n}{2}} - 1, \displaystyle{\frac{3n}{2}} - 1 - x; n - 2) \} \\
&& \quad{}\cup \{ (x - 1, j; x - 1 + j) \mid \displaystyle{\frac{n}{2}} + 1 \leq j \leq \displaystyle{\frac{3n}{2}} - 1 - x \} \\
&& \quad{}\cup \{ (x - 1, n - x; n - 2) \} \\
&& \quad{}\cup \{ (x - 1, \displaystyle{\frac{n}{2}}-1; x - \displaystyle{\frac{n}{2}} - 1), (x - 1, \displaystyle{\frac{n}{2}}; x - \displaystyle{\frac{n}{2}} - 2) \} \\
&& \quad{}\cup \{ (x, j; x + j) \mid \displaystyle{\frac{n}{2}} + 1 \leq j \leq \displaystyle{\frac{3n}{2}} - 1 - x \} \\ 
&& \quad{}\cup \{ (x, j; x + j) \mid n - x \leq j \leq \displaystyle{\frac{n}{2}} - 2 \} \\ 
&& \quad{}\cup \{ (x, \displaystyle{\frac{n}{2}} - 1; x + \displaystyle{\frac{n}{2}}), (x, \displaystyle{\frac{n}{2}}; x + \displaystyle{\frac{n}{2}} - 1) \}.
\end{eqnarray*}

Then when $x = \displaystyle{\frac{n}{2}} + 2$, let $I_{4} = H$, and when $\displaystyle{\frac{n}{2}} + 3
\leq x \leq n - 2$, let $I_{4} = H \cup \{ (x - 1, i; x - 1 + i) \mid n -
x + 1 \leq i \leq \displaystyle{\frac{n}{2}} - 2 \}$.

\noindent For $\displaystyle{\frac{n}{2}} + 1 \leq x \leq n - 2$, 
$I_{5}$ is a Latin interchange of Variety 2, and 
$I_{5} \cap D$ = $\{(x, n-1; x-1)\}$.

\begin{eqnarray*}
I_{5} & = & \{ (x - \displaystyle{\frac{n}{2}}, j; x - \displaystyle{\frac{n}{2}} + j) \mid \displaystyle{\frac{n}{2}} - 1 \leq j \leq n-1 \} \\
&& \quad{}\cup \{ (x - \displaystyle{\frac{n}{2}} + 1, j; x- \displaystyle{\frac{n}{2}} + 1 + j) \mid \displaystyle{\frac{n}{2}} - 1 \leq j \leq n - 1 \} \\
&& \quad{}\cup \{ (x, \displaystyle{\frac{n}{2}} - 1; x - \displaystyle{\frac{n}{2}}), (x, n - 1; x - 1) \}. 
\end{eqnarray*}

\noindent $I_{6}$ is a Latin interchange of Variety 1, and $I_{6} \cap D = \{ (\displaystyle{\frac{n}{2}}+1,n-2;\displaystyle{\frac{n}{2}} - 1) \}$.

If $4 \mid n$, construct the Latin interchange
\begin{eqnarray*}
I_{6} & = & \{ (\displaystyle{\frac{n}{2}} - 1, 2j; \displaystyle{\frac{n}{2}} - 1 + 2j) \mid 0 \leq j < \displaystyle{\frac{n}{4}} \} \\
&& \quad{}\cup \{ (\displaystyle{\frac{n}{2}} - 1, \displaystyle{\frac{n}{2}} - 1; n - 2) \} \cup \\
&& \quad{}\cup \{ (\displaystyle{\frac{n}{2}} - 1, 2j; \displaystyle{\frac{n}{2}} - 1 + 2j) \mid \displaystyle{\frac{n}{4}} < j < \displaystyle{\frac{n}{2}} \} \\
&& \quad{}\cup \{ (\displaystyle{\frac{n}{2}} + 1, 2j; \displaystyle{\frac{n}{2}} + 1 + 2j) \mid 0 \leq j < \displaystyle{\frac{n}{4}}-1 \} \\
&& \quad{}\cup \{ (\displaystyle{\frac{n}{2}} + 1, \displaystyle{\frac{n}{2}} - 2; n - 2), (\displaystyle{\frac{n}{2}} + 1, \displaystyle{\frac{n}{2}} - 1; 1) \} \\
&& \quad{}\cup \{ (\displaystyle{\frac{n}{2}} + 1, 2j; \displaystyle{\frac{n}{2}} + 1 + 2j) \mid \displaystyle{\frac{n}{4}} < j < \displaystyle{\frac{n}{2}} \}.
\end{eqnarray*}

If $4 \nmid n$, construct the Latin interchange
\begin{eqnarray*}
I_{6} & = & \{ (\displaystyle{\frac{n}{2}}-1,2j;\displaystyle{\frac{n}{2}}-1+2j) \mid 0 \leq j < \displaystyle{\frac{n}{4}} - 1 \} \\
&& \quad{}\cup \{ (\displaystyle{\frac{n}{2}}-1,\displaystyle{\frac{n}{2}}; n-1) \} \\
&& \quad{}\cup \{ (\displaystyle{\frac{n}{2}}-1,2j;\displaystyle{\frac{n}{2}}-1+2j) \mid \displaystyle{\frac{n}{4}} < j < \displaystyle{\frac{n}{2}} \} \\
&& \quad{}\cup \{ (\displaystyle{\frac{n}{2}}+1,2j;\displaystyle{\frac{n}{2}}+1+2j) \mid 0 \leq j < \displaystyle{\frac{n}{4}} - 2 \} \\
&& \quad{}\cup \{ (\displaystyle{\frac{n}{2}}+1,\displaystyle{\frac{n}{2}}-3; n-1), (\displaystyle{\frac{n}{2}}+1,\displaystyle{\frac{n}{2}}; 0) \} \\
&& \quad{}\cup \{ (\displaystyle{\frac{n}{2}}+1,2j;\displaystyle{\frac{n}{2}}+1+2j) \mid \displaystyle{\frac{n}{4}} < j < \displaystyle{\frac{n}{2}} \}.
\end{eqnarray*}

\noindent $I_{7}$ is a Latin interchange of Variety 1, and $I_{7} \cap D = \{
(\displaystyle{\frac{n}{2}}, n - 1; \displaystyle{\frac{n}{2}} - 1) \}$.

If $4 \mid n$, construct the Latin interchange
\begin{eqnarray*}
I_{7} & = & \{ (\displaystyle{\frac{n}{4}} - 1, j; \displaystyle{\frac{n}{4}} + j - 1), (\displaystyle{\frac{n}{4}}, j; \displaystyle{\frac{n}{4}} + j) \mid \displaystyle{\frac{n}{4}} \leq j \leq n -1 \} \\
&& \quad{}\cup \{ (\displaystyle{\frac{n}{2}}, \displaystyle{\frac{n}{4}}; \displaystyle{\frac{n}{4}} - 1),  (\displaystyle{\frac{n}{2}}, n-1; \displaystyle{\frac{n}{2}} - 1) \}.
\end{eqnarray*}

If $4 \nmid n$, construct the Latin interchange
\begin{eqnarray*}
I_{7} & = & \{ (\displaystyle{\frac{n-2}{4}}, \displaystyle{\frac{n-2}{4}}; \displaystyle{\frac{n}{2}}-1), (\displaystyle{\frac{n-2}{4}}, n-1; \displaystyle{\frac{n-6}{4}}) \} \\
&& \quad{}\cup \{ (\displaystyle{\frac{n}{2}}, \displaystyle{\frac{n-2}{4}}; \displaystyle{\frac{n-6}{4}}),  ( \displaystyle{\frac{n}{2}}, n-1; \displaystyle{\frac{n}{2}} - 1) \}.
\end{eqnarray*}

\noindent For $\displaystyle{\frac{n}{2}} + 1 \leq x \leq n - 2$,
$I_{8}$ is a Latin interchange of Variety 1, and $I_{8} \cap D$ = \(
\{ ( \displaystyle{\frac{n}{2}}, x; x - 1) \}$.
\begin{eqnarray*}
I_{8} & = & \{ (\displaystyle{\frac{n}{2}} - 1, x - \displaystyle{\frac{n}{2}};  x - 1), (\displaystyle{\frac{n}{2}} - 1, x; \displaystyle{\frac{n}{2}} + x - 1) \} \\
&& \quad{}\cup \{ (\displaystyle{\frac{n}{2}}, x - \displaystyle{\frac{n}{2}}; \displaystyle{\frac{n}{2}} + x - 1),
(\displaystyle{\frac{n}{2}}, x; x-1 ) \}.
\end{eqnarray*}

\noindent For ($\displaystyle{\frac{3n}{2}} \leq x + y < 2n - 2) \wedge (x \neq n - 1) \wedge 
(y \neq n - 1)$, $I_{9}$ is a Latin interchange of Variety 1, and $I_{9} \cap D 
= \{ (y, x; y + x) \}$.  
\begin{eqnarray*}
I_{9} & = & \{ (y - \displaystyle{\frac{n}{2}}, x - \displaystyle{\frac{n}{2}}; y + x), (y - \displaystyle{\frac{n}{2}}, x; y + x - \displaystyle{\frac{n}{2}}) \} \\
&& \quad{}\cup \{ (y, x - \displaystyle{\frac{n}{2}}; y + x - \displaystyle{\frac{n}{2}}), (y, x; y+x) \}.
\end{eqnarray*}

Where $0 \leq x + y \leq \displaystyle{\frac{n}{2}} - 2$, there exists a Latin interchange $I_{10}$ of Variety 3, with $I_{10} \cap D = \{ (y, x; y+x) \}$.  

If $0 \leq x + y \leq \displaystyle{\frac{n}{2}} - 2$, determine the Latin interchange $I_{10}$ using
results found in \cite{MR1758263}.  See Appendix~\ref{app3} for details
on how to construct this interchange, which is referred to as $I$ therein.
}\end{proof}

Thus we have succeeded in proving the existence of a critical set
of order $n$ and size $\displaystyle{\frac{n^2}{4}}+1$ when $n$ is even, 
and $n \geq 6$, a problem which
has been open since 1977.  This completes the spectrum of critical
sets between the bounds Nelder conjectured in~\cite{nel2}, which
were $\displaystyle{\frac{n^2}{4}}$ and $\displaystyle{\frac{n^2-n}{2}}$ for the sizes of
the smallest and largest critical sets respectively in a Latin square of
order $n$.

\chapter{Steiner trades and Latin interchanges}\label{ch7}

Can our knowledge of the interchangeable sets in Latin squares (Latin interchanges) be used to classify the interchangeable sets in block designs (trades)?  It is this interesting question which we focus on here.   
In  Section~\ref{sec71} we detail the connection between Latin interchanges and Steiner trades.
In Section~\ref{sec72} we take all Steiner trades of volume less than or equal to nine and classify them according to the structure of the associated Latin interchanges.
A slight modification in the definition of ``Latin interchange'' will be
required for this chapter.  Here, we take a Latin interchange to represent
both the partial Latin square and its disjoint mate, instead of just
the partial Latin square.

\section{The connection between trades and Latin interchanges}\label{sec71}
\begin{lemma}
Let ${\mathcal T}=(T,T')$ be a $2$-$(v,3)$ Steiner trade based on the set $V$. Then the partial Steiner  Latin 
squares $I$ and $I'$ corresponding to $T$ and $T'$, respectively, 
form a Latin interchange and its disjoint mate denoted ${\mathcal I}=(I,I')$.
\end{lemma}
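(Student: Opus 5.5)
We check directly the three requirements in the definition of a Latin interchange with disjoint mate from Chapter~\ref{ch2}: $I$ and $I'$ are partial Latin squares of the same shape, they are disjoint, and they are mutually balanced.

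\textbf{Partial Latin squares.} Since $(V,T)$ is a partial Steiner triple system, each pair $\{i,j\}$ lies in at most one triple of $T$. So cell $(i,j)$ of $I$ receives at most one symbol, namely the $k$ with $\{i,j,k\}\in T$. Suppose a symbol $k$ appeared twice in row $i$, say at $(i,j;k)$ and $(i,j';k)$ with $j\neq j'$. Then the pair $\{i,k\}$ would lie in the two distinct triples $\{i,j,k\}$ and $\{i,j',k\}$, a contradiction. Columns are handled the same way, using the pair $\{j,k\}$. The same argument applies to $I'$.

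\textbf{Same shape.} Cell $(i,j)$ of $I$ is filled exactly when the pair $\{i,j\}$ is covered by a triple of $T$. Since $T$ and $T'$ are mutually balanced, they cover exactly the same pairs. Hence $S(I)=S(I')$.

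\textbf{Disjointness.} Suppose $(i,j;k)\in I\cap I'$. Then $\{i,j,k\}$ is a triple of both $T$ and $T'$, contradicting the fact that a trade has no common triples. So $I_{ij}\neq I'_{ij}$ on every filled cell.

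\textbf{Mutual balance.} This is the only step needing a little care, and it uses the symmetry of the construction. We have $k\in R_i(I)$ iff some $\{i,j,k\}\in T$, i.e.\ iff the pair $\{i,k\}$ is covered by $T$. The same equivalence holds for $I'$ with $T'$, and $T,T'$ cover the same pairs, so $R_i(I)=R_i(I')$. Because $I$ contains all six arrangements of each triple, $I$ is symmetric, and $C_j(I)=R_j(I)$; likewise for $I'$. Hence $C_j(I)=C_j(I')$ as well.

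Therefore $I'$ is a disjoint mate of $I$, and ${\mathcal I}=(I,I')$ is a Latin interchange.
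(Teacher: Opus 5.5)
Your proof is correct and follows essentially the same route as the paper: disjointness comes from $T\cap T'=\emptyset$, and row (hence column) balance comes from the fact that $T$ and $T'$ cover the same pairs, so $\{r,z\}$ lies in a triple of $T$ exactly when it lies in a triple of $T'$. You are a bit more explicit than the paper in three places. You check that $I$ and $I'$ are genuinely partial Latin squares, you derive equal shape from the equal sets of covered pairs rather than merely from $|T|=|T'|$, and you use the symmetry of $I$ to obtain the column condition instead of saying ``similarly''.
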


\begin{proof}{
 Note that $|T|=|T'|$ and $T\cap T'=\emptyset$;
hence $I$ and $I'$ have the same volume and shape and are disjoint. 
Next, assume that the rows of $I$ and $I'$ are not mutually balanced. That is, for some row $r$ there exists a column $j$ such that $(r,j;z)\in I$, but for the same row $r$, $(r,j';z)\notin I'$ for any column $j'$. Correspondingly the triple $\{r,j,z\}\in T$  for some $j\in V$, but $\{r,j',z\}\notin T'$  for any $j'\in V$, which is a contradiction as ${\mathcal T}=(T,T')$ is a trade. We may obtain a similar contradiction for the columns and so deduce that the rows and columns of $I$ and $I'$ are mutually balanced. Consequently ${\mathcal I}=(I,I')$ constitutes a Latin interchange and its disjoint mate as required.
}
\end{proof}

In \cite{dks2} Donovan, Khodkar and Street showed that for the given 
trade ${\mathcal T}=(T,T')$, where 
$T=\{1 2 3,1 4 5,1 6 7,2 4 8,3 6 8, 5 7 8\}$ and 
$T'=\{1 2 4,1 3 6,1 5 7,2 3 8,
4 5 8,
6 7 8\}$, 
the partial Steiner Latin squares associated with 
triples of ${\mathcal T}$ can be 
decomposed into six disjoint Latin  interchanges, 
denoted ${\mathcal I}_i=(I_i,I_i')$ for $i=1,\dots, 6$, in such a way that 
for each $i$ there is a one-to-one correspondence between the 
entries of $I_i$ and the triples of $T$. 
Further, they showed that no such decomposition exists for the Latin 
interchange associated with the trade  
${\mathcal T}=(T,T')$, where 
$T=\{1 2 3,1 4 5,1 6 7,2 4 7,3 4 6,3 5 7\}$ and 
$T'=\{1 2 4,1 3 6,1 5 7,2 3 7,3 4 5,4 6 7\}$. 
These results raise the following question:
\vspace{.2cm}

\begin{question}\label{714}
{
For which trades ${\mathcal T}=(T,T')$ can the corresponding 
Latin interchange, denoted ${\mathcal I}=(I,I')$, be decomposed into six disjoint 
Latin interchanges, denoted ${\mathcal I}_i=(I_i,I_i')$,  $1\leq i\leq 6$, 
such that for each $i=1,\dots, 6$ there is a one-to-one correspondence 
between the  triples of $T$ ($T'$) and the entries of $I_i$ ($I_i'$)
which maps $\{x,y,z\}\in T$ to $(x,y;z)\in I_i$?
}
\end{question}

In this chapter we give some partial answers to this question and, in addition, 
give an exact answer for all  Steiner trades with block size three and  
volume less than or equal to nine. 
Our list of trades of volume less than or equal to nine has been taken 
from \cite{MR2000g:05030} where Khosrovshahi and Maimani completely classified 
all Steiner trades with block size three and volume six to  nine.  

\section{Partial Answers}\label{sec72}
We begin by stating a result which identifies some Steiner trades 
whose corresponding partial Steiner  Latin squares 
can be decomposed into six disjoint 
Latin interchanges.

Let ${\mathcal T}=(T,T')$ be a trade. Recall that ${\mathcal T}$ is a
{\it minimal} trade if there is no $B$ with $\emptyset\neq B\subset T$ and
$B'$ with $\emptyset\neq B'\subset T'$ such that $(B,B')$
is a trade.  Also, the {\em foundation} of ${\mathcal T}$ is $F({\mathcal
T})=\{x\mid x \mbox{ is contained in a triple of } T\}$.

\begin{lemma}
\label{lemone}
Let ${\mathcal T}=(T,T')$ be a Steiner minimal trade based on the set $V$. 
For each element $x\in F({\mathcal T})$ suppose there exists a subset 
$S_x$ of $F({\mathcal T})$ such that $x\in S_x$ and so that each triple of 
$T$ intersects the set $S_x$ in precisely one element. 
Then the Latin interchanges corresponding  to ${\mathcal T}=(T,T')$,
denoted ${\mathcal I}=(I,I')$,  can be decomposed into six disjoint 
Latin interchanges. 
\end{lemma}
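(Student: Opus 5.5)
The plan is to build the six interchanges from a partition of the foundation into three ``transversal'' classes. Suppose $F(\mathcal T)=S_1\cup S_2\cup S_3$ is a disjoint union in which every triple of $T$ meets each $S_i$ in exactly one element. Then every triple $t\in T$ can be written uniquely as $\{t_1,t_2,t_3\}$ with $t_i\in S_i$. For each permutation $\pi$ of $\{1,2,3\}$ put
\[
I_\pi=\{(t_{\pi(1)},t_{\pi(2)};t_{\pi(3)})\mid t\in T\},
\]
and define $I'_\pi$ in the same way from $T'$. Each entry of $I$ comes from one triple of $T$ together with one ordering of it, and the ordering determines $\pi$. So the six sets $I_\pi$ are pairwise disjoint, their union is $I$, and $t\mapsto(t_{\pi(1)},t_{\pi(2)};t_{\pi(3)})$ is exactly the bijection required in Question~\ref{714}.

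First I check that the triples of $T'$ are also transversal to $S_1,S_2,S_3$. The trade condition says that $T$ and $T'$ cover the same pairs. Count, for a fixed $i$, the pairs having both elements in $S_i$: there are none in $T$, hence none in $T'$. Next count the pairs meeting $S_i$ in exactly one element: there are $2|T|$ in $T$. A triple of $T'$ meeting $S_i$ in $k\le 1$ elements contributes $2k$ such pairs, and $|T'|=|T|$. Together these force $k=1$ for every triple of $T'$.

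Next I verify that $(I_\pi,I'_\pi)$ is a Latin interchange. Disjointness follows from $T\cap T'=\emptyset$, since both sides use the same orientation rule. The shapes agree because the cell $(a,b)$ with $a\in S_{\pi(1)}$, $b\in S_{\pi(2)}$ is filled in $I_\pi$ exactly when the pair $\{a,b\}$ is covered by $T$, and the same holds for $T'$. Row $a$ of $I_\pi$ contains precisely the elements of $S_{\pi(3)}$ that lie in a triple with $a$. Pair-balance of $(T,T')$ makes this set the same in $I'_\pi$, and the columns are handled identically. This is the same argument as in the lemma proved just before Question~\ref{714}, now restricted to one orientation class.

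The main obstacle is producing the partition $S_1,S_2,S_3$ from the hypothesis, which only supplies for each $x$ a single transversal set $S_x\ni x$. I would take $S_1=S_{x}$ for some $x$, choose $y\notin S_1$, and try $S_2=S_y$. The difficulty is a triple whose unique $S_y$-element lies in $S_1$, which would break the splitting of $F\setminus S_1$. To deal with this, I would argue that the triples meeting $S_1\cap S_y$ form, together with the corresponding triples of $T'$, a sub-trade. By the same pair-counting as above, the pairs meeting $S_1\cap S_y$ are balanced between $T$ and $T'$. Minimality of $\mathcal T$ then forces $S_1\cap S_y=\emptyset$. With that in hand, each triple has one element in $S_1$, one in $S_2$, and so exactly one in $S_3=F\setminus(S_1\cup S_2)$. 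If this sub-trade argument fails, the fallback is to vary $y$, using the full family $\{S_x\}$, until a choice disjoint from $S_1$ is found.
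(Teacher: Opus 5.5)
Your route is essentially the paper's. Minimality forces two of the given transversal sets to be disjoint: the paper proves the general dichotomy $S_x=S_y$ or $S_x\cap S_y=\emptyset$, then uses $S_a,S_b,S_c$ for a triple $\{a,b,c\}\in T$. Your six orientation classes $I_\pi$ are exactly the six conjugates of the paper's $I_1$. Your pair-counting proof that the triples of $T'$ are also transversal, and your row/column check for $(I_\pi,I'_\pi)$, supply details the paper only asserts (it simply writes $T'=T_1'\cup T_2'$ and calls the interchange property easy to see).

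The sub-trade step, however, is not justified as written, and it has two gaps.

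\emph{The third pair of each triple.} Put $A=S_1\cap S_y$, and let $T_A$, $T'_A$ be the triples of $T$, $T'$ meeting $A$. Balance of the pairs meeting $A$ is not enough. Each triple of $T_A$ has the form $\{a,u,v\}$ with $a\in A$ and $u,v\notin S_1\cup S_y$, so its pair $\{u,v\}$ avoids $A$. You need the unique triple of $T'$ through $\{u,v\}$ to lie in $T'_A$. This holds because that triple meets both $S_1$ and $S_y$ (by the transversality of $T'$ you proved), while $u,v$ lie in neither; hence its third element is in $A$. The reverse inclusion is symmetric.

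\emph{Properness of the sub-trade.} Minimality only gives $T_A=\emptyset$ or $T_A=T$, and you must exclude $T_A=T$. Take any triple through $y$: its $S_y$-element is $y$, and $y\notin A$ because $y\notin S_1$. So this triple cannot also contain an element of $A\subseteq S_y$, and it is not in $T_A$.

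With these two additions the argument is complete, and the ``fallback'' can be dropped. The paper instead uses the complementary sub-trade, the triples avoiding $S_x\cap S_y$. There every pair of a triple contains an element of $(S_x\setminus S_y)\cup(S_y\setminus S_x)$, which makes the balance check immediate.
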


\begin{proof}{
First we prove that for $x,y\in F({\mathcal T})$ we
have either $S_x=S_y$ or $S_x\cap S_y=\emptyset$. Let $S_x\neq S_y$ and
$S_x\cap S_y\neq \emptyset$, as displayed in Figure~\ref{figure71}. Define 
\[\begin{array}{lcl}
T_1 & = & \{\{a,b,c\}\in T\mid \;a\in S_x\setminus S_y,\;b\in S_y\setminus
S_x,\; c\in F({\mathcal T})\setminus (S_x\cup S_y)\}, \\
T_2 & = & \{\{d,e,f\}\in T\mid \;d\in S_x\cap S_y,\;
e,f\in F({\mathcal T})\setminus (S_x\cup S_y)\}, \\ 
T_1' & = & \{\{a',b',c'\}\in T'\mid \;a'\in S_x\setminus S_y,\;
b'\in S_y\setminus S_x,\; c'\in F({\mathcal T})
\setminus (S_x\cup S_y)\}\;\; {\rm and} \\ 
T_2' & = & \{\{d',e',f'\}\in T'\mid \;d'\in S_x\cap S_y,\;
e',f'\in F({\mathcal T})\setminus (S_x\cup S_y)\}. \\
\end{array} \]
We note that if the pair $\{a,b\}$ occurs in a triple of $T$ then 
$a$ and $b$ cannot both be in $S_z$ for any $z\in {\mathcal T}$. This
leads to $T=T_1\cup T_2$ and $T'=T_1'\cup T_2'$. Now if the 
pair $\{a,b\}$ is in a triple of $T_1$ then $\{a,b\}$ is in a triple of
$T_1'$. So $(T_1,T_1')$ is a Steiner trade. This is a contradiction
since ${\mathcal T}=(T,T')$ is minimal. 
Hence either $S_x=S_y$ or $S_x\cap S_y=\emptyset$ for $x,y\in
F({\mathcal T})$. 
}
\end{proof}

\begin{figure}
\centering
\includegraphics[alt={Line drawing of a trade: within a dashed oval boundary, small circular vertices are connected by straight line segments forming several triangles; two large teardrop-shaped curved regions overlap in the centre, with labels $S_{x}$ (left), $S_{y}$ (right), and $F(\mathcal{T})$ near the top.}]{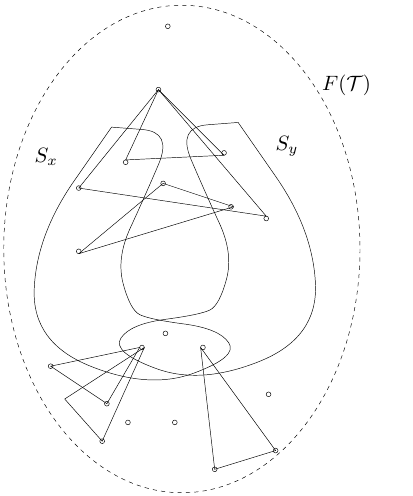}
\caption{A trade illustrating Lemma~\ref{lemone}}
\label{figure71}
\end{figure}

Now let the triple $\{a,b,c\}$ be in $T$; then 
$S_a\cup S_b\cup S_c=F({\mathcal T})$, 
$S_a\cap S_b=\emptyset$,
$S_a\cap S_c=\emptyset$ and
$S_b\cap S_c=\emptyset$. We define 
\[I_1=\{(x,y;z)\mid \; \{x,y,z\}\in T,\; x\in S_a,\;y\in S_b,\;z\in S_c\}. \]
It is easy to see that there is a one-to-one correspondence between the
entries of $I_1$ and the triples of $T$ which maps $(x,y;z) \in I_1$ to 
$\{x,y,z\} \in T$. Moreover, 
$I_1$ forms one of the Latin interchanges into which we are decomposing 
the partial Latin square associated with $T$. 
Now the six conjugates of $I_1$
decompose $I$ into six disjoint Latin interchanges, where
${\mathcal I}=(I,I')$ constitutes the Latin interchange and its disjoint mate
corresponding to 
${\mathcal T}=(T,T')$. 
\vspace{5mm}

However, the above condition of Lemma~\ref{lemone} is not necessary as 
is shown by the following example. The partial Steiner  Latin squares
corresponding to the trade ${\mathcal T}=(T,T')$ 
where $T=\{136,148,159,239,246,257,$ $347,358\}$ and
$T'=\{139,158,146,259,
236,247,357,348\}$ can be decomposed into six disjoint 
Latin interchanges. These may be obtained by taking the conjugates of the 
Latin interchange ${\mathcal I}_1=(I_1,I_1')$, where 
$I_1=\{(1,3;6),(1,4;8),(1,5;9),(2,3;9),$
$(2,4;6),(2,5;7),(3,4;7),\break (3,5;8)\}$.
This trade does not have the property set out in the above lemma but
it is decomposable.

Thus to further our study we focus on the trades of volume less than 
ten.
\vspace{.5cm}
%
%
%
%
%
%
%
%
%
%

\noindent {\bf REMARK:} We note that if such a decomposition exists for 
each $i=1,\dots, 6$ and each
$x\in V$, the partial Latin square $I_i$ is such 
that  $|R_x(I_i)|=|C_x(I_i)|=|E_x(I_i)|$ 
equals the replication number for symbol $x$. Also, since for 
$i=1,\dots, 6$, it follows that $|T|=|I_i|$,  the volume of each of the Latin 
interchanges ${\mathcal I}_i$ studied here is less than or equal to nine. In the 
paper  \cite{MR96a:05027} Keedwell classified the type of all 
Latin interchanges of volume less than or equal to 10. We have used 
his classifications when arguing that  decomposition is not possible 
and in many of these cases we shall frequently use the following lemma.

\begin{lemma}\label{lemma2} 
If the replication number of an element $e$ is $2$ or $3$, then 
for $i=1,\ldots,6$ in any given Latin interchange $\mathcal{I}_i$, 
$e$ can only occur as a row or a column
or a symbol.  If the replication number of a symbol $e$ is $4$,
then in any given Latin interchange $\mathcal{I}_i$, $e$ can only occur as a row,
a column, or a symbol, or a row and a column, a row and a symbol, 
or a column and a symbol.
\end{lemma}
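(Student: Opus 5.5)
The argument rests on a counting observation. Each entry of the corresponding partial Steiner Latin square $I$ is produced by some triple of $T$, and in the decomposition each interchange $\mathcal{I}_i$ takes exactly one entry from each triple of $T$. Fix an element $e$ with replication number $r_e$. Then $e$ lies in exactly $r_e$ triples $\{e,y,z\}\in T$, and each such triple meets $I_i$ in exactly one entry. That entry is one of the six orientations of $\{e,y,z\}$, so $e$ sits in it as a row, a column, or a symbol.

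By the Remark above, $|R_e(I_i)|=|C_e(I_i)|=|E_e(I_i)|=r_e$ whenever these are nonzero. More precisely, the balance of $\mathcal{I}_i$ and the correspondence with $T$ give the following. If $e$ occurs as a row in $I_i$, then row $e$ of $I_i$ is a row of a Latin interchange, so it contains at least two entries. Each of those entries comes from a distinct triple containing $e$, because the triple is determined by the entry.

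Let $a$, $b$, $c$ denote the number of the $r_e$ triples through $e$ whose $I_i$-entry has $e$ as row, column and symbol respectively. Then $a+b+c=r_e$. The key step is to show that each of $a,b,c$ is either $0$ or at least $2$. This holds because a row (column, symbol class) of a Latin interchange containing only one entry cannot be balanced by a disjoint mate: the mate would have to place the same single symbol in the same single cell.

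For $r_e=2$ or $3$, the conditions $a+b+c\le 3$ and $a,b,c\in\{0\}\cup\{2,3,\ldots\}$ force exactly one of $a,b,c$ to be nonzero. So $e$ appears only as a row, only as a column, or only as a symbol.

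For $r_e=4$, the admissible partitions are $4$ alone or $2+2$. The first gives a single role. The second gives exactly two roles, namely row and column, row and symbol, or column and symbol. Three roles would need at least $6>4$ triples. This is exactly the list in the statement of Lemma~\ref{lemma2}.

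The main obstacle is justifying carefully that a line of a Latin interchange cannot have exactly one entry, including the symbol version. For rows and columns this follows directly from disjointness together with mutual balance: a single entry $(e,j;k)$ in row $e$ forces the mate to have $(e,j;k)$ as well, which contradicts disjointness. For symbols, the row and column balance of $I_i$ and $I_i'$ makes the number of occurrences of each symbol equal in both. If symbol $e$ occurred once, at $(x,y)$, then row balance would force the mate to put $e$ in row $x$, and column balance would force it into column $y$. Hence the mate would contain $(x,y;e)$, again contradicting disjointness.

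I would also make explicit that the one-to-one correspondence sends distinct triples through $e$ to distinct entries. This ensures that the counts $a$, $b$, $c$ really equal the line sizes $|R_e(I_i)|$, $|C_e(I_i)|$, $|E_e(I_i)|$.
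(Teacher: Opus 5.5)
Your proof is correct and follows the paper's argument. Every nonempty row, column and symbol class of a Latin interchange has at least two entries, so the $r_e$ triples through $e$ (each contributing one entry to $I_i$) cannot be spread over two roles when $r_e\le 3$, nor over three roles when $r_e=4$; you supply the justifications the paper leaves implicit. One harmless slip: your reading of the Remark, $|R_e(I_i)|=|C_e(I_i)|=|E_e(I_i)|=r_e$ whenever nonzero, is false in exactly the $2+2$ case you later allow (e.g.\ element $1$ in $\mathcal{T}_{19}$), but your argument never uses it.
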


\begin{proof} 
{
Since a Latin interchange requires that 
$|R_e(I)| \geq 2$ 
and $|C_e(I)| \geq 2$, and 
$|E_e(I)| \geq 2$, 
when the replication number of $e$
is 2 or 3, the element cannot be split among all of rows and columns,
or rows and symbols, or columns and symbols.  Similarly when
the replication number of $e$ is 4, the element cannot be split
between rows, columns, and symbols.
}
\end{proof}
\vspace{.5cm}

There are 25 Steiner trades of volume less than or equal to nine, and 
classifying these further we see that up to isomorphism there is one 
Steiner trade of volume 4, two Steiner trades of volume 6, two 
Steiner trades of volume 7, nine Steiner trades of volume 8 and eleven 
Steiner trades of volume 9. The triples of these trades are listed below. 
Our testing verified that for twelve of these Steiner trades the 
corresponding partial Steiner Latin square can be decomposed into six 
disjoint Latin interchanges satisfying the properties given in 
Question~\ref{714}. These twelve cases are discussed below and the general nature 
of the decomposition is given.  For the remainder of the cases, we present 
theoretical arguments  that indicate why such a 
decomposition is not possible. 
\vspace{.5cm}

\noindent {\bf Trade of volume 4} ${\mathcal T}_0=(T,T')$ 
where $T=\{1 2 3,1 5 6,4 3 5,4 2 6\}$ and
$T'=\{1 2 6,1 3 5,\break 4 2 3,4 5 6\}$.
This trade can be decomposed into Latin interchanges, corresponding to
$\mathcal{I}_1 = (I_1,I_1')$ where 
$I_1$ = \{(2, 3; 1),(5,6;1),(5,3;4),(2, 6; 4)\}.
\vspace{.5cm}

\noindent {\bf Trade of volume 6} ${\mathcal T}_1=(T,T')$ where 
$T=\{1 2 3,1 4 5,1 6 7,2 4 7,3 4 6,3 5 7\}$ and 
$T'=\{1 2 4,1 3 6,$ $ 1 5 7,2 3 7,3 4 5,4 6 7\}$.
The replication numbers for the elements $1,\dots, 7$ are:
\begin{center}
\begin{tabular}{c|c|c|c|c|c|c|c}
             Element & 1 & 2 & 3 & 4 & 5 & 6 & 7 \\
			  \hline
 Replication number in $T$ & 3 & 2 & 3 & 3 & 2 & 2 & 3 
\end{tabular}
\end{center}

Assume that the Latin interchanges associated 
with ${\mathcal T}_1$ can be decomposed into six disjoint Latin 
interchanges; then since  $volume(T_1)=6$, one of these Latin 
interchanges must have type
\begin{eqnarray*}
\left(
\begin{array}{c}
    3+3\cr
3+3\\
2+2+2
\end{array}
\right).
\end{eqnarray*}
So without loss of generality assume column $1$ contains three entries; 
but this implies there are three nonempty rows, which is a 
contradiction. Therefore no such decomposition exists.
\vspace{.5cm}

\noindent {\bf Trade of volume 6} ${\mathcal T}_2=(T,T')$ where 
$T=\{1 2 3,1 4 5,1 6 7,2 4 8,3 6 8,5 7 8 \}$
and $T'= \{1 2 4,1 3 6,1 5 7,$ $2 3 8,4 5 8,6 7 8 \}$.
This trade can be decomposed into Latin interchanges, corresponding to 
$\mathcal{I}_1 = (I_1,I_1')$ where 
$I_1= \{(1, 3; 2),(1, 4; 5),
(1, 7; 6),(8,4;2),(8, 3; 6),\break (8, 7; 5)\}$.

Here we digress for a moment and use this trade  to illustrate  Lemma~\ref{lemone}.
Note that $S_1=S_8=\{1,8\}$, $S_2=S_5=S_6=\{2,5,6\}$ and
$S_3=S_4=S_7=\{3,4,7\}.$
\vspace{.5cm}

\noindent {\bf Trade of volume 7} ${\mathcal T}_3=(T,T')$ where 
$T=\{1 2 3,1 4 5,1 6 7,2 4 6,2 5 7,3 5 6,3 4 7\}$ and 
$T'=\{1 2 4,1 3 6,$ $1 5 7,2 3 7,2 5 6,3 4 5,4 6 7\}$. 
The only possible type of a Latin interchange $\mathcal I$ of volume
seven is
\begin{eqnarray*}
\left(
\begin{array}{c}
3+2+2 \\
3+2+2 \\
3+2+2 \\
\end{array}
\right).
\end{eqnarray*}
Since the replication number of each element is $3$, this type 
is not possible.
\vspace{.5cm}

\noindent {\bf Trade of volume 7} ${\mathcal T}_4=(T,T')$ 
where $T=\{1 2 3,1 4 5,1 6 7,2 4 8,3 5 8,3 6 9,5 7 9\}$ and 
$T'=\{1 2 4,1 3 6,$ $1 5 7,2 3 8,3 5 9,4 5 8,6 7 9\}$.
A decomposition exists in which 
$\mathcal{I}_1 = (I_1,I_1')$ and where
\[I_1 =\{(1, 2; 3),(1, 5; 4),(1, 6; 7),(8, 2; 4),(8, 5; 3),
(9, 6; 3),(9,5;7)\}.\]

\noindent {\bf Trade of volume 8}  ${\mathcal T}_5=(T,T')$ where 
$T=\{1 2 3,1 4 5,1 6 7,2 4 8,2 5 7,3 4 6,3 7 8,
5 6 8\}$ and
$T'=\{1 2 4,1 3 6,1 5 7,2 3 7,2 5 8,3 4 8,4 5 6,6 7 8\}$.
As in the case of trade ${\mathcal T}_3$, the replication number for each element is 3,
and so it is not possible to find a type 

\begin{eqnarray*}
\left(
\begin{array}{c}
    W\cr
X\\
Y
\end{array}
\right),
\end{eqnarray*}
in which the sums W, X, and Y consist only of 3s. 
Thus no decomposition exists.
\vspace{.5cm}

\noindent {\bf Trade of volume 8} ${\mathcal T}_6=(T,T')$ 
where $T=\{1 2 3,1 4 5,1 6 7,2 4 6,2 5 7,3 5 9,3 6 8,
4 8 9\}$ and
$T'=\{1 2 4,1 3 6,1 5 7,2 3 5,2 6 7,3 8 9,4 5 9,4 6 8\}$.
The replication numbers for the elements $1,\ldots,9$ are as follows.
\begin{center}
\begin{tabular}{c|c|c|c|c|c|c|c|c|c}
Element             & 1 & 2 & 3 & 4 & 5 & 6 & 7 & 8 & 9 \\
			  \hline
 Replication  number in $T$ & 3 & 3 & 3 & 3 & 3 & 3 & 2 & 2 & 2 \\
\end{tabular}
\end{center}
But there is no Latin interchange of size 8 which has type
\begin{eqnarray*}
\left(
\begin{array}{c}
    3+3+2\cr
3+3+2\\
3+3+2
\end{array}
\right)
\end{eqnarray*}
and thus no decomposition exists.
\vspace{.5cm}

\noindent {\bf Trade of volume 8} ${\mathcal T}_7=(T,T')$ 
where $T=\{1 2 3,1 4 5,1 6 7,1 8 9,2 4 7,3 4 6,3 5 8,
3 7 9\}$ and
$T'=\{1 2 4,1 3 6,1 5 8,1 7 9,2 3 7,3 4 5,3 8 9,4 6 7\}$.
The replication numbers for the elements $1,\dots, 9$ are as follows.
\begin{center}
\begin{tabular}{c|c|c|c|c|c|c|c|c|c}
Element              & 1 & 2 & 3 & 4 & 5 & 6 & 7 & 8 & 9 \\
			  \hline
 Replication  number in $T$ & 4 & 2 & 4 & 3 & 2 & 2 & 3 & 2 & 2
\end{tabular}
\end{center}

Assume that the Latin interchanges associated 
with ${\mathcal T}_7$ can be decomposed into six disjoint 
Latin interchanges; then since  $volume(T)=8$, one of these 
Latin interchanges must have type
\begin{eqnarray*}
\left(
\begin{array}{c}
    3+3+2\cr
X\\
Y
\end{array}
\right),
\end{eqnarray*}
where $X$ and $Y$ represent the appropriate sum values of  
the number of filled entries in the rows of the Latin interchange
and the number of occurrences of each symbol in the Latin interchange.
By Lemma~\ref{lemma2}, this implies that both row $4$ and row $7$ are 
simultaneously non-empty. Moreover, the elements $4$ and $7$ cannot
occur as symbols. This is a contradiction as $247\in T$.
Therefore no such decomposition exists.
\vspace{.5cm}

\noindent {\bf Trade of volume 8} ${\mathcal T}_8=(T,T')$ 
where $T=\{1 2 7,1 3 8,2 8 A,3 7 9,4 5 9,4 6 A,5 7 A,
6 8 9\}$ 
and $T'=\{1 2 8,1 3 7,2 7 A,3 8 9,4 5 A,4 6 9,5 7 9,6 8 A \}$.
A decomposition exists in which
$\mathcal{I}_1 = (I_1,I_1')$ where 
$I_1=\{(1, 2; 7),(1, 3; 8), 
(A, 2; 8),(9, 3; 7),(9, 5; 4),
(A, 6; 4),\break (A, 5; 7),(9, 6; 8)\}$.  
\vspace{0.5cm}

\noindent {\bf Trade of volume 8} ${\mathcal T}_9=(T,T')$ 
where $T=\{1 2 3,1 4 5,1 6 7,1 8 9,2 4 A,2 6 8,2 7 9,
3 5 A\}$ and 
$T'=\{1 2 4,1 3 5,1 6 8,1 7 9,2 3 A,2 6 7,2 8 9,4 5 A\}$.
A decomposition exists in which
$\mathcal{I}_1 = (I_1,I_1')$ where 
$I_1=\{(1, 2; 3),(1, 5; 4),
(1, 6; 7),(1, 9; 8),(A, 2; 4),
(2, 6; 8),(2, 9; 7),\break (A, 5; 3)\}$.
\vspace{0.5cm}

\noindent {\bf Trade of volume 8} ${\mathcal T}_{10}=(T,T')$ where 
$T=\{1 2 3,1 4 5,1 6 7,1 8 9,2 4 A,3 5 A,6 8 A,
7 9 A\}$ and 
$T'=\{1 2 4,1 3 5,1 6 8,1 7 9,2 3 A,4 5 A,6 7 A,8 9 A \}$.
A decomposition exists in which
$\mathcal{I}_1 = (I_1,I_1')$ where 
$I_1=\{(1, 2; 3),(1, 5; 4),
(1, 6; 7),
(1, 9; 8),(A, 2; 4),(A, 5; 3),
(A, 6; 8),\break (A, 9; 7)\}$.
\vspace{0.5cm}

\noindent {\bf Trade of volume 8} ${\mathcal T}_{11}=(T,T')$ 
where $T=\{1 2 3,1 4 5,1 6 7,1 8 9,2 4 A,3 6 A,5 8 A,
7 9 A\}$ 
and $T'=\{1 2 4,1 3 6,1 5 8,1 7 9, 2 3 A,4 5 A,6 7 A,8 9 A \}$.
A decomposition exists in which
$\mathcal{I}_1 = (I_1,I_1')$ where 
$I_1=\{(1, 2; 3),(1, 5; 4),(1, 6; 7),
(1, 9; 8),(A, 2; 4),(A, 6; 3),
(A, 5; 8),\break (A, 9; 7)\}$.
\vspace{0.5cm}

\noindent {\bf Trade of volume 8} ${\mathcal T}_{12}=(T,T')$ 
where $T=\{1 2 3,1 4 5,1 6 7,1 8 9,2 4 A,6 8 B,7 9 B,
3 5 A\}$ and 
$T'=\{1 2 4,1 3 5,1 6 8,1 7 9,2 3 A,4 5 A,6 7 B,8 9 B \}$.
A decomposition exists in which
$\mathcal{I}_1 = (I_1,I_1')$ where 
$I_1=\{(3, 2; 1),(4, 5; 1),
(7, 6; 1),
(8, 9; 1),(4, 2; A),
(8, 6; B),(7, 9; B),\break (3, 5; A)\}$.
\vspace{0.5cm}

\noindent {\bf Trade of volume 8} ${\mathcal T}_{13}=(T,T')$ 
where $T=\{1 2 3,1 4 5,2 4 A,3 5 A,6 7 8,6 9 B,7 9 C,\break
8 B C\}$ 
and $T'=\{1 2 4,1 3 5,2 3 A,4 5 A,6 7 A,6 8 B,7 8 C,9 B C\}$.
A decomposition exists in which
$\mathcal{I}_1 = (I_1,I_1')$ where 
$I_1=\{(3, 2; 1),(4, 5; 1),
(4, 2; A),
(3, 5; A),
(8, 7; 6),\break (9, B; 6),(9, 7; C),(8, B; C)\}$.

\noindent {\bf Trade of volume 9} ${\mathcal T}_{14}=(T,T')$ 
where $T=\{1 4 5,1 6 7,1 8 9,2 3 9,2 5 7,2 6 8,
3 4 6,
3 5 8,\break 4 7 9 \}$ 
and $T'=\{1 4 6,1 5 8,1 7 9,2 3 5,2 6 7,2 8 9,3 4 9,3 6 8,4 5 7 \}$.  
Again the replication number for each element $e$ is 3.  By Lemma~\ref{lemma2}, 
any Latin interchange $I_1$ must be a 3 $\times$ 3 subsquare.
Assume that ${\mathcal I}_1$ is one of the Latin interchanges into
which the partial Latin square associated with ${\mathcal T}_{14}$ can
be decomposed.  
There are no 3 $\times$ 3 subsquares in the partial Latin square associated 
with $T$.  We can show this by considering the partial Latin square $I_1$ 
containing
the entry $(4, 5; 1)$.  By Lemma~\ref {lemma2}, 4 can only occur as 
a row, and 1 can only occur as a symbol.  Because $671 \in T$,
6 must occur only as a row or column.
Assume that 6 occurs only as a row.  In this
case, because $346 \in T$, either $(6, 4; 3)$ or $(6, 3; 4)$ 
must occur in $I_1$ which is a contradiction since 4 can only be a row. 
Thus 6 must occur only as a column.  In this case $(7, 6; 1)$ 
must be in $I_1$ and thus because $479 \in T$, $(7, 9; 4)$
or $(7, 4; 9)$ must be an entry in $I_1$ which is a contradiction
since we are assuming that 4 is a row. Thus no such decomposition exists.
\vspace{0.5cm}

\noindent {\bf Trade of volume 9} ${\mathcal T}_{15}=(T,T')$ 
where $T=\{1 4 7,1 5 8,1 6 9,2 4 8,2 5 9,2 6 7,3 4 9,\break 
3 5 7,3 6 8\}$ and
$T'=\{1 4 8,1 5 9,1 6 7,2 4 9,2 5 7,2 6 8,3 4 7,3 5 8,3 6 9 \}$.
A decomposition exists where one Latin interchange is given by 
$\mathcal{I}_1 = (I_1,I_1')$ where 
$I_1=\{(1,4;7),\break
(1,5;8),
(1,6; 9),(2,4; 8),(2,5;9),$
$(2,6;7),
(3,4;9),(3,5;7),$ $(3,6;8)\}$.
\vspace{0.5cm}

\noindent {\bf Trade of volume 9} ${\mathcal T}_{16}=(T,T')$ 
where $T=\{1 2 3,1 4 5,1 6 7,1 8 9,2 4 8,2 5 7,2 6 9,
3 4 6,\break 4 7 9 \}$ and
$T'=\{1 2 5,1 3 6,1 4 8,1 7 9,2 3 4,2 6 7,2 8 9,4 5 7,4 6 9 \}$.

The replication numbers for the elements $1,\dots, 9$ are as follows. 

\begin{center}
\begin{tabular}{c|c|c|c|c|c|c|c|c|c}
 Element             & 1 & 2 & 3 & 4 & 5 & 6 & 7 & 8 & 9 \\
			  \hline
 Replication  number in $T$ & 4 & 4 & 2 & 4 & 2 & 3 & 3 & 2 & 3
\end{tabular}
\end{center}

Assume $(6, 9; 2)\in I_1$, where $I_1$ forms one of the Latin interchanges
into which we are decomposing the partial Latin square associated 
with $T$. Then by Lemma~\ref{lemma2} we can say that 6 
occurs only as a row, and 9 occurs only
as a column.  Since $167 \in T$, then 7 occurs only as a
column or symbol, and since $479 \in T$, then 7 occurs
only as a row or symbol.  This means that 7 occurs only as a symbol.
Thus $\{ (6, 1; 7), (4, 9; 7) \} \subseteq I_1$.
With this information, plus the fact that $257$ 
and $145$ are triples, we have four cases:

{\bf Case 1}  $\{ (5, 2; 7), (5, 1; 4) \} \subseteq I_1$. Since  
$(6,9;2)$, $(6, 1; 7)$ and $(4, 9; 7)$ are also in $I_1$, by Lemma~\ref{lemma2} we find that $(6,3;4)$, $(1,3;2)$, $(1,9;8)$ and
$(4,2;8)$ must be in $I_1$. Now it is easy to see that $I_1$ is not 
a Latin interchange. This is a contradiction.

{\bf Case 2} $\{ (5, 2; 7), (5, 4; 1) \} \subseteq I_1$.  
Since $(6,9;2)$, $(6, 1; 7)$ and $(4, 9; 7)$ are also in $I_1$, by Lemma~\ref{lemma2} we find that $(6,4;3)\in I_1$. Now either $(1,2;3)$ or
$(2,1;3)$ must be in $I_1$. But both are impossible by Lemma~\ref{lemma2}.  So this case is also impossible.

{\bf Case 3} $\{ (2, 5; 7), (4, 5; 1) \} \subseteq I_1$.  
Since $(6,9;2)$, $(6, 1; 7)$ and $(4, 9; 7)$ are also in $I_1$, by Lemma~\ref{lemma2} we find that $(8,9;1)$, $(8,4;2)$, $(6,4;3)$ and 
$(2,1;3)$ must be in $I_1$. Now it is easy to see that $I_1$ with these
entries cannot be a Latin interchange. This is a contradiction.

{\bf Case 4} $\{ (2, 5; 7), (1, 5; 4) \} \subseteq I_1$. 
Since $(6,9;2)$, $(6, 1; 7)$ and $(4, 9; 7)$ are also in $I_1$, by Lemma~\ref{lemma2} we find that $(1,9;8)\in I_1$. Now either
$(4,2;8)$ or $(2,4;8)$ must be in $T_1$. But both are impossible by
Lemma~\ref{lemma2}.

Thus no decomposition exists.
\vspace{0.5cm}

\noindent {\bf Trade of volume 9} ${\mathcal T}_{17}=(T,T')$ 
where $T=\{1 2 3,1 4 5,1 6 7,1 8 9,2 4 8,2 5 6,2 7 9,
3 4 6,\break 3 5 8\}$ and
$T'=\{1 2 4,1 3 6,1 5 8,1 7 9,2 3 5,2 6 7,2 8 9,3 4 8,4 5 6 \}$.

The replication numbers for the elements $1,\dots, 9$ are as follows. 

\begin{center}
\begin{tabular}{c|c|c|c|c|c|c|c|c|c}
Element              & 1 & 2 & 3 & 4 & 5 & 6 & 7 & 8 & 9 \\
			  \hline
 Replication  number in $T$ & 4 & 4 & 3 & 3 & 3 & 3 & 2 & 3 & 2
\end{tabular}
\end{center}

Assume $(3, 4; 6)$ $\in I_1$, where $I_1$ forms one of the Latin interchanges
into which we are decomposing the partial Latin square associated 
with $T$.  Then 
by Lemma~\ref{lemma2} we can say that 3 occurs only as a row, 4 occurs only
as a column, and 6 occurs only as a symbol.  Since $256 \in T$, 
then 5 occurs only as a column or a row, and since $358 \in T$,
then 5 occurs only as a column or a symbol.  This implies that
5 occurs only as a column.  However, this leads to a contradiction
since if we look at the triple $145$ of $T$, 4 and 5 must both occur
as columns.  Thus no decomposition exists. 
\vspace{0.5cm}

\noindent {\bf Trade of volume 9} ${\mathcal T}_{18}=(T,T')$ 
where $T=\{1 2 3,1 4 5,1 6 7,2 4 8,3 6 9,3 7 8,4 9 A,
5 7 9,\break 6 8 A\}$ and
$T'=\{1 2 4,1 3 6,1 5 7,2 3 8,3 7 9,4 5 9,4 8 A,6 7 8,6 9 A \}$.

The replication numbers for the elements $1,\dots, 9, A$ are as follows.

\begin{center}
\begin{tabular}{c|c|c|c|c|c|c|c|c|c|c}
Element              & 1 & 2 & 3 & 4 & 5 & 6 & 7 & 8 & 9 & A \\
			  \hline
 Replication  number in $T$ & 3 & 2 & 3 & 3 & 2 & 3 & 3 & 3 & 3 & 2
\end{tabular}
\end{center}

Assume $(1, 2; 3)$ $\in I_1$, where $I_1$ forms one of the Latin interchanges
into which we are decomposing the partial Latin square associated 
with $T$.  Then by Lemma~\ref{lemma2} we can say that 
1 occurs only as a row, 2 occurs only as a column, and 3 occurs 
only as a symbol.  Since $248 \in T$, 
we see that 4 occurs only as a row or a symbol, and since $145$
is a triple, we see that 4 occurs only as a column or a symbol.
Therefore, 4 occurs only as a symbol, 5 occurs only as a column,
and 8 occurs only as a row.
Since $49A \in T$, we see that 9 occurs only as a row
or a column, and since $579 \in T$, we see that 9
occurs only as a row or a symbol.  Therefore, 9 occurs only as
a row, A occurs only as a column, and 7 occurs only as a symbol.
However, this leads to a contradiction since in the triple $378$ of $T$,
3 and 7 must both be symbols.  Therefore no decomposition exists.

\noindent {\bf Trade of volume 9} ${\mathcal T}_{19}=(T,T')$ 
where $T=\{1 2 3,1 4 5,1 6 7,1 8 9,2 4 A,3 5 6,3 7 A,
4 6 8,\break 4 7 9\}$ and
$T'=\{1 2 4,1 3 5,1 6 8,1 7 9,2 3 A,3 6 7,4 5 6,4 8 9,4 7 A \}$.
A decomposition exists in which 
${\mathcal I}_1 = (I_1,I_1')$ where 
$I_1=\{(1, 2; 3),
(1, 5; 4),
(6, 7; 1),(9, 8; 1),(A, 2; 4),(6, 5; 3),\break
(A, 7; 3),(6, 8; 4),$ 
$(9,7;4)\}$.

\noindent {\bf Trade of volume 9} ${\mathcal T}_{20}=(T,T')$ 
where $T=\{1 2 3,1 4 5,1 6 7,1 8 9,2 4 A,3 6 8,3 9 A,\break
4 7 9,5 7 8\}$ 
and $T'=\{1 2 4,1 3 6,1 5 8,1 7 9,2 3 A,3 8 9,4 5 7,4 9 A,6 7 8\}$.

The replication numbers for the elements $1,\dots, 9, A$ are as
follows.

\begin{center}
\begin{tabular}{c|c|c|c|c|c|c|c|c|c|c}
Element              & 1 & 2 & 3 & 4 & 5 & 6 & 7 & 8 & 9 & $A$ \\
			  \hline
 Replication  number in $T$ & 4 & 2 & 3 & 3 & 2 & 2 & 3 & 3 & 3 & 2
\end{tabular}
\end{center}

Assume $(1, 2; 4)$ $\in I_1$, where $I_1$ forms one of the Latin interchanges 
into which we are decomposing the partial Latin square associated 
with $T$.  Then, by Lemma~\ref{lemma2}, we can say that 
2 occurs only as a column and 4 occurs 
only as a symbol.  Since $23A \in T$, we see that 
$A$ occurs only as a row or a symbol, and since $49A \in T$, 
we find that $A$ occurs only as a row or a column. Therefore, $A$ occurs 
only as a row and we must have $(A,2;3),(A,9;4)\in I_1$.
Then we must have $(8,9;3)\in I_1$.
Since $678 \in T$, we see that $6$ occurs only 
as a column or a symbol, and since $136 \in T$, 
we find that $6$ occurs only as a row or a column. Therefore, $6$ occurs 
only as a column and we must have $(8,6;7),(1,6;3)\in I_1$.
However, this leads to a contradiction since in the triple $457$ of $T$,
4 and 7 must both be symbols.  Therefore no decomposition exists.

\noindent {\bf Trade of volume 9} ${\mathcal T}_{21}=(T,T')$ 
where $T=\{1 2 3,1 4 5,1 6 7,1 8 9,2 4 A, 3 4 6,3 5 8,
3 9 A,\break 4 7 9\}$ 
and $T'=\{1 2 4,1 3 6,1 5 8,1 7 9,2 3 A,3 4 5,3 8 9,4 6 7,4 9 A\}$.

The replication numbers for the elements $1,\dots, 9, A$ are as follows.

\begin{center}
\begin{tabular}{c|c|c|c|c|c|c|c|c|c|c}
Element              & 1 & 2 & 3 & 4 & 5 & 6 & 7 & 8 & 9 & $A$ \\
			  \hline
 Replication  number in $T$ & 4 & 2 & 4 & 4 & 2 & 2 & 2 & 2 & 3 & 2
\end{tabular}
\end{center}

Assume that the partial Steiner Latin square $I$ 
associated with $T$ can be decomposed into six disjoint 
Latin interchanges; then, since  $volume(T)=9$, one of these Latin 
interchanges must have type
\begin{eqnarray*}
\left(
\begin{array}{c}
    W\cr
X\\
Y
\end{array}
\right),
\end{eqnarray*}
where  $W$, $X$ and $Y$  are all odd and represent the appropriate 
sums for the number of symbols in each row and column and the
frequency of each symbol's occurrence in $I$,
$|E_e(I)|$. However it is 
not possible to partition the multiset 
$\{4,2,4,4,2,2,2,2,3,2\}$ into three multisubsets 
such that the sum of the entries in each of these multisubsets is odd.
Therefore no such decomposition exists.

\noindent {\bf Trade of volume 9} ${\mathcal T}_{22}=(T,T')$ 
where $T=\{1 2 3,1 4 5,1 6 7,1 8 9,2 4 A, 3 6 A,4 6 8,
4 7 9,\break 5 7 8\}$ 
and $T'=\{1 2 4,1 3 6,1 5 8,1 7 9,2 3 A,4 5 7,4 6 A,4 8 9,6 7 8\}$.

The replication numbers for the elements $1,\dots, 9, A$ are as
follows.

\begin{center}
\begin{tabular}{c|c|c|c|c|c|c|c|c|c|c}
Element              & 1 & 2 & 3 & 4 & 5 & 6 & 7 & 8 & 9 & $A$ \\
			  \hline
Replication  number in $T$ & 4 & 2 & 2 & 4 & 2 & 3 & 3 & 3 & 2 & 2
\end{tabular}
\end{center}

Assume $(5, 7; 8)$ $\in I_1$, where $I_1$ forms one of the Latin interchanges 
into which we are decomposing the partial Latin square associated 
with $T$.  Then, by Lemma~\ref{lemma2}, we can say that 
$5$ occurs only as a row, $7$ occurs only as a column, and $8$ occurs
only as a symbol. On the other hand, since $167$ and $468$ are triples of $T$,
we must have $(6,7;1),(6,4;8)\in I_1$. Now considering
the triples $36A$ and $479$ of $T$, we have four different cases:

{\bf Case 1} $\{ (6, 3; A), (4, 7; 9) \} \subseteq I_1$ which means 
that 9 occurs only as a symbol by Lemma~\ref{lemma2}. Then $189$ being a 
triple of $T$ means that both 8 and 9 are symbols. This is a contradiction.

{\bf Case 2} $\{ (6, 3; A), (9, 7; 4) \} \subseteq I_1$ which means that 
9 occurs only as a row, A occurs only as a symbol, and 3 occurs only as a
column, by Lemma~\ref{lemma2}.  Then $189$ being a triple of $T$ means that 
$(9, 1; 8)$ is a entry in $I_1$.  Thus 1 occurs as both a column and 
a symbol. Since $123 \in T$, this means that $(2, 3; 1)$ must be
an entry in $I_1$, which means that 2 can only occur as a row.
Then $24A$ being a triple of $T$ means that $(2, 4; A)$ is an entry of $I_1$.
Thus 4 occurs as both a column and a symbol.
Then $145$ being a triple of $T$ means that $(5, 1; 4)$ must be an entry of $I_1$,
since column 1 needs to have two entries in it. It is now easy to see 
that $I_1$ with these entries cannot be a Latin interchange.
This is a contradiction.

{\bf Case 3} $\{ (6, A; 3), (4, 7; 9) \} \subseteq I_1$ which means that 
9 occurs only as a symbol by Lemma~\ref{lemma2}, leading to a 
contradiction as in Case 1.

{\bf Case 4} $\{ (6, A; 3), (9, 7; 4) \} \subseteq I_1$ which means 
that 9 occurs only as a row, A occurs only as a column, and 3 occurs 
only as a symbol by Lemma~\ref{lemma2}.  
Then $189$ being a triple of $T$ means that that $(9, 1; 8)$
must be an entry in $I_1$.  Thus 1 occurs as both a column and a symbol.
Since $123 \in T$, this means that $(2, 1; 3)$ must be
an entry in $I_1$, which means that 2 can only occur as a row.
Then $24A$ being a triple of $T$ means that $(2, A; 4)$ must be an entry of $I_1$.
Thus 4 occurs as both a column and a symbol.  Then $145$ being
a triple of $T$ means that $(5, 4; 1)$ must be an entry of $I_1$,
since row 5 needs to have two symbols in it. 
It is now easy to see that $I_1$ with these entries cannot be a 
Latin interchange.  This is a contradiction.  Thus no decomposition is possible.

\noindent {\bf Trade of volume 9} ${\mathcal T}_{23}=(T,T')$ 
where $T=\{1 2 3,1 4 5,1 6 7,2 4 8,3 6 8,4 9 A,5 7 9,
6 9 B,\break 8 A B\}$ 
and $T'=\{1 2 4,1 3 6,1 5 7,2 3 8,4 5 9,6 7 9,4 8 A,6 8 B,9 A B\}$.
A decomposition exists in which
$\mathcal{I}_1 = (I_1,I_1')$ where 
$I_1=\{(3, 2; 1),(4, 5; 1),
(7, 6; 1),(4, 2; 8),(3, 6; 8),\break (4, A; 9),
 (7, 5; 9),(B, 6; 9),$ $(B, A; 8)\}$.

\noindent {\bf Trade of volume 9} ${\mathcal T}_{24}=(T,T')$ 
where $T=\{1 2 3,1 4 5,1 6 7,1 8 9,2 4 A,3 6 A,4 9 B,\break
5 8 B,7 9 A\}$ 
and $T'=\{1 2 4,1 3 6,1 5 8,1 7 9,2 3 A,4 5 B,4 9 A,6 7 A,8 9 B\}$.
A decomposition exists in which
$\mathcal{I}_1 = (I_1,I_1')$ where 
$I_1=\{(3, 2; 1),(4, 5; 1),
(7, 6; 1),(8, 9; 1),(4, 2; A),\break (3, 6; A),
(4, 9; B),$ 
$(8, 5; B),
(7, 9; A)\}$.

\section{Conclusion}
Thus in answer to Question~\ref{714}, we have developed
a theorem which determines the decomposability of certain Latin
interchanges corresponding to Steiner minimal trades.  Also, we have given
a definite answer to the question of the decomposability of the Steiner
partial Latin squares corresponding to each Steiner trade of volume 
less than or equal to 9. 





\chapter{A census of critical sets in the Latin squares of order at most six}\label{ch8}

Many papers have examined the problems of determining the smallest and
largest critical sets for particular orders of Latin square, or given
examples of critical sets for small orders of Latin square.  We give a
brief overview of these papers.

The sizes of smallest critical sets for the Latin squares of orders
four and five were determined in \cite {MR95k:05030,MR80j:05022}.
Howse in \cite {MR98m:05026} finds smallest critical sets for all the
Latin squares of order six.  This chapter enumerates all critical
sets for each main class of order six, and Appendix~\ref{app2} gives examples of
each possible size of critical set in each main class.

Also, a paper \cite{MR99k:05038} by Donovan gave
examples of critical sets of order six of all possible sizes.

Adams and Khodkar in \cite{AK2} give smallest critical sets for all the
Latin squares of order at most seven. They also find, in \cite{MR2000k:05054},
smallest weak and smallest totally weak critical sets for the Latin
squares of order at most seven.  The size of smallest strong critical
sets in a Latin square has also been considered in the past (see \cite
{MR2000g:05034}).

This chapter deals with critical sets of different sizes in the Latin
squares of order at most six.  First, for each main class of Latin
square of order at most six, we calculate every possible critical set.
These will be of various sizes.  
Then, for each main class of
Latin square and possible size of critical set, we determine the main
classes and isotopy classes for this set of critical sets.  Next,
we determine which of the main classes of critical sets are strong,
near-strong, totally weak, and Bedford-Whitehouse
totally weak.  Some interesting properties concerning the greatest common
divisors of numbers of critical sets across main classes in the $6 \times
6$ Latin squares and ratios of various kinds are discussed.

Finally, for some of the Latin squares we consider, the possibility of the Latin
square being partitioned into disjoint critical sets is examined.

\vspace{5mm}

\section{Algorithms}

%
%
To obtain the results presented here, we used two basic algorithms 
to calculate all critical sets of a given size $m$ for a given main
class of $n \times n$ Latin squares.

The first was Algorithm~\ref{311}, and the second algorithm used the
improvements noted in Chapter~\ref{ch3}.  This algorithm divided the Latin
square up into disjoint Latin interchanges, ensuring that each candidate for
a critical set had at least one entry in each of the Latin interchanges.

For the case of the $6 \times 6$ Latin squares, in the search for critical
sets of size greater than 18, the improvements noted in Chapter~\ref{ch3} were
used.  We briefly recap these improvements here.  For such subsets examined,
the search speed was further increased by ensuring that 
no row or column was full and no symbol occurred six times.
Such subsets cannot be critical sets since any entry may be removed from
the relevant row, column or symbol set while maintaining the unique
completion property.

We also use the result of Chapter~\ref{ch4}, that \lcs{n} $\leq n^2-3n+3$, to
exclude from consideration any subset of size greater than 21.

\section{Tables of results}

\subsection{Explanation of headings}
The first column in the tables of results (Tables~\ref{one}, \ref{two}, 
\ref{three} and \ref{four}) is
the main class number $n.z$ (LS), followed by the size(s) of the critical
set(s) for the main class (Size), the number of critical sets of that
size in the main class (\#CS); this is then followed by the number of
isotopy classes (\#Iso) and the number of main classes of those critical sets (\#Main).
(The notation $n.z$ denotes main class $z$ in
a Latin square of order $n$, as in the CRC Handbook of Combinatorial
Designs,~\cite{crc}.)  


For the next four columns, we consider representatives of each main
class of critical sets, and list the number of critical sets of various
``strengths'' within the main classes of critical sets.  That is,
we calculate how many of the representatives of each main class of
critical set have the various ``strengths''.  We need only consider
representatives of each main class of critical set, since, for example,
a strong critical set remains a strong critical set when the rows, columns and
symbols are interchanged or swapped.  Similarly, a 
near-strong critical set remains near-strong  under
permutations or interchanges of rows, columns or symbols.
These last four columns are, in order, the number of 
near-strong critical sets (\#NS), the number of strong critical sets
(\#Strong), the number of totally weak critical sets (\#TW), and the
number of Bedford-Whitehouse totally weak critical sets (\#BWTW).

\subsection{Latin squares of order 3}

There is only one main class, denoted 3.1, for Latin squares of order three \cite{crc}:
\vspace{5mm}

\begin{center}
\begin{tabular}{|c|c|c|} \hline
1&2&3 \\ \hline
2&3&1 \\ \hline
3&1&2 \\ \hline
\end{tabular} \vspace{3mm}\\
{\bf 3.1}
\end{center}
\vspace{5mm}

For this Latin square, we have the results presented in Table~\ref{one} concerning the number
of critical sets of every possible size.

\begin{table}
\begin{center}
\caption{Critical set statistics for Latin squares of order 3}
\label{one}
\begin{tabular}{|c|c|c|c|c|c|c|c|c|} \hline
LS & Size & \#CS & \#Iso & \#Main & \#NS & \#Strong & \#TW & \#BWTW \\ \hline
$3.1$  & $2$ & $9$ & $1$ & $1$ & 1 & 1 & 0 & 0  \\ 
& $3$ & 18 & $1$ & $1$ & 1 & 1 & 0 & 0 \\ \hline
\end{tabular}\vspace {3mm} \\
\end{center}
\end{table}

\subsection{Latin squares of order 4}

There are two main classes, denoted 4.1 and 4.2, for Latin squares of order four \cite{crc}:
\vspace{5mm}

\begin{center}
\begin{tabular}{cccc}
  \begin{tabular}{|c|c|c|c|} \hline
  1&2&3&4 \\ \hline
  2&3&4&1 \\ \hline
  3&4&1&2 \\ \hline
  4&1&2&3 \\ \hline
  \end{tabular} &&&
  \begin{tabular}{|c|c|c|c|} \hline
  1&2&3&4 \\ \hline
  2&1&4&3  \\ \hline
  3&4&1&2  \\ \hline
  4&3&2&1 \\ \hline
  \end{tabular} \\
  {\bf 4.1} &&& {\bf 4.2} \\
\end{tabular}
\end{center}
\vspace{5mm}

For these Latin squares, we have the results presented in Table~\ref{two} concerning the number
of critical sets of every possible size.

\begin{table}
\begin{center}
\caption{Critical set statistics for Latin squares of order 4}
\label{two}
\begin{tabular}{|c|c|c|c|c|c|c|c|c|} \hline
LS & Size & \#CS & \#Iso & \#Main & \#NS & \#Strong & \#TW & \#BWTW \\ \hline
4.1 & $4$ & $32$  & $1$  & $1$ & 1 & 1 & 0 & 0\\ 
    & $5$ & $576$ & $18$ & $4$ & 4 & 4 & 0 & 0\\
    & $6$ & $128$ & $4$  & $2$ & 2 & 2 & 0 & 0\\ \hline
4.2 & $5$ & $96$  & $1$  & $1$ & 1 & 1 & 0 & 0\\
    & $6$ & $432$ & $7$  & $3$ & 3 & 3 & 0 & 0\\
    & $7$ & $48$  & $1$ & $1 $ & 1 & 1 & 0 & 0\\ \hline
\end{tabular}\vspace {3mm} \\
\end{center}
\end{table}

\subsection{Latin squares of order 5}

There are two main classes, denoted 5.1 and 5.2, for Latin squares of order five \cite{crc}:

\vspace{5mm}

\begin{center}
\begin{tabular}{cccc}
  \begin{tabular}{|c|c|c|c|c|} \hline
  1&2&3&4&5  \\ \hline
  2&3&4&5&1  \\ \hline
  3&4&5&1&2  \\ \hline
  4&5&1&2&3  \\ \hline
  5&1&2&3&4   \\ \hline
  \end{tabular} &&&
  \begin{tabular}{|c|c|c|c|c|} \hline
  1&2&3&4&5 \\ \hline
  2&1&4&5&3 \\ \hline
  3&4&5&1&2 \\ \hline
  4&5&2&3&1 \\ \hline
  5&3&1&2&4 \\ \hline
  \end{tabular} \\
  {\bf 5.1} &&& {\bf 5.2} \\
\end{tabular}
\end{center}
\vspace{5mm}

For these Latin squares, we have the results presented in Table~\ref{three} concerning the number
of critical sets of every possible size.

\begin{table}
\begin{center}
\caption{Critical set statistics for Latin squares of order 5}
\label{three}
\begin{tabular}{|c|c|c|c|c|c|c|c|c|} \hline
 LS & Size & \#CS & 
\#Iso & \#Main & \#NS & \#Strong & \#TW & \#BWTW \\ \hline
$5.1$ & $6$  & $50$    & $1$    & $1$ & 1 & 1 & 0 & 0 \\
      & $7$  & $1000$  & $10$   & $4$ & 4 & 4  & 0 & 0 \\
      & $8$  & $30900$ & $312$  & $57$ & 57 & 57 & 0 & 0 \\
      & $9$  & $18800$ & $188$  & $37$ & 37 & 37 & 0 & 0 \\
      & $10$ & $2500$  & $25$   & $6$ & 6 & 6 & 0 & 0 \\ \hline
$5.2$ & $7$  & $600$   & $50$   & $11$ & 10 & 10 & 1 & 1 \\
      & $8$  & $21588$ & $1802$ & $322$ & 311 & 311 & 1 & 1 \\
      & $9$  & $23718$ & $1981$ & $348$ & 348 & 348 & 0 & 0 \\
      & $10$ & $2340$  & $198$  & $39$ & 38 & 36 & 2 & 0 \\
      & $11$ & $216$   & $18$   & $4$ & 4 & 4 & 0 & 0 \\ \hline
\end{tabular}\vspace {3mm} \\
\end{center}
\end{table}

\subsection{Latin squares of order 6}
There are twelve main classes, denoted 6.1, \dots, 6.12, for Latin squares of order six \cite{crc}:

\begin{center}
\setlength{\tabcolsep}{4.5pt}
\begin{tabular}{cccc}
\begin{tabular}{|*{6}{c|}} \hline
1&2&3&4&5&6  \\ \hline
2&3&4&5&6&1  \\ \hline
3&4&5&6&1&2  \\ \hline
4&5&6&1&2&3  \\ \hline
5&6&1&2&3&4  \\ \hline
6&1&2&3&4&5  \\ \hline
\end{tabular}&
\begin{tabular}{|*{6}{c|}} \hline
1&2&3&4&5&6  \\ \hline
2&1&4&3&6&5  \\ \hline
3&4&5&6&1&2  \\ \hline
4&3&6&5&2&1  \\ \hline
5&6&1&2&4&3  \\ \hline
6&5&2&1&3&4  \\ \hline
\end{tabular}&
\begin{tabular}{|*{6}{c|}} \hline
1&2&3&4&5&6  \\ \hline
2&1&4&5&6&3  \\ \hline
3&4&1&6&2&5  \\ \hline
4&5&6&1&3&2  \\ \hline
5&6&2&3&1&4  \\ \hline
6&3&5&2&4&1  \\ \hline
\end{tabular} &
\begin{tabular}{|*{6}{c|}} \hline
1&2&3&4&5&6  \\ \hline
2&1&4&5&6&3  \\ \hline
3&4&1&6&2&5  \\ \hline
4&5&6&1&3&2  \\ \hline
5&6&2&3&4&1  \\ \hline
6&3&5&2&1&4  \\ \hline
\end{tabular} \\ 
{\bf 6.1} & {\bf 6.2} & {\bf 6.3} & {\bf 6.4}\\
\end{tabular}
\end{center}

\begin{center}
\setlength{\tabcolsep}{4.5pt}
\begin{tabular}{cccc}
\begin{tabular}{|*{6}{c|}} \hline
1&2&3&4&5&6  \\ \hline
2&1&4&5&6&3  \\ \hline
3&4&2&6&1&5  \\ \hline
4&5&6&2&3&1  \\ \hline
5&6&1&3&4&2  \\ \hline
6&3&5&1&2&4  \\ \hline
\end{tabular} & 
\begin{tabular}{|*{6}{c|}} \hline
1&2&3&4&5&6  \\ \hline
2&1&4&5&6&3  \\ \hline
3&4&5&6&1&2  \\ \hline
4&5&6&3&2&1  \\ \hline
5&6&1&2&3&4  \\ \hline
6&3&2&1&4&5  \\ \hline
\end{tabular} & 
\begin{tabular}{|*{6}{c|}} \hline
1&2&3&4&5&6  \\ \hline
2&1&4&3&6&5  \\ \hline
3&5&1&6&2&4  \\ \hline
4&6&2&5&1&3  \\ \hline
5&3&6&1&4&2  \\ \hline
6&4&5&2&3&1  \\ \hline
\end{tabular} & 
\begin{tabular}{|*{6}{c|}} \hline
1&2&3&4&5&6  \\ \hline
2&1&4&3&6&5  \\ \hline
3&5&1&6&2&4  \\ \hline
4&6&2&5&1&3  \\ \hline
5&3&6&2&4&1  \\ \hline
6&4&5&1&3&2  \\ \hline
\end{tabular} \\
{\bf 6.5} & {\bf 6.6} & {\bf 6.7} & {\bf 6.8}\\
\end{tabular}
\end{center}

\begin{center}
\setlength{\tabcolsep}{4.5pt}
\begin{tabular}{cccc}
\begin{tabular}{|*{6}{c|}} \hline
1&2&3&4&5&6  \\ \hline
2&1&4&3&6&5  \\ \hline
3&5&1&6&2&4  \\ \hline
4&6&2&5&3&1  \\ \hline
5&4&6&2&1&3  \\ \hline
6&3&5&1&4&2  \\ \hline
\end{tabular} & 
\begin{tabular}{|*{6}{c|}} \hline
1&2&3&4&5&6  \\ \hline
2&1&4&3&6&5  \\ \hline
3&5&1&6&4&2  \\ \hline
4&6&5&1&2&3  \\ \hline
5&3&6&2&1&4  \\ \hline
6&4&2&5&3&1  \\ \hline
\end{tabular} & 
\begin{tabular}{|*{6}{c|}} \hline
1&2&3&4&5&6  \\ \hline
2&1&4&5&6&3  \\ \hline
3&4&2&6&1&5  \\ \hline
4&6&5&2&3&1  \\ \hline
5&3&6&1&2&4  \\ \hline
6&5&1&3&4&2  \\ \hline
\end{tabular} & 
\begin{tabular}{|*{6}{c|}} \hline
1&2&3&4&5&6  \\ \hline
2&3&1&5&6&4  \\ \hline
3&1&2&6&4&5  \\ \hline
4&6&5&2&1&3  \\ \hline
5&4&6&3&2&1  \\ \hline
6&5&4&1&3&2  \\ \hline
\end{tabular} \\
{\bf 6.9} & {\bf 6.10} & {\bf 6.11} & {\bf 6.12}\\
\end{tabular}
\end{center}

For these Latin squares, we have the results presented in Table~\ref{four} concerning the number
of critical sets of every possible size.

\begin{table}
\begin{center}
\caption{Critical set statistics for Latin squares of order 6}
\label{four}
\begin{tabular}{|c|c|c|c|c|c|c|c|c|} \hline
 LS & Size & \#CS & \#Iso & \#Main& \#NS & \#Strong & \#TW & \#BWTW \\ \hline
$6.1$ & $9$  & $72$    & $1$    & $1$ & 1 & 1 & 0 & 0 \\
      & $11$  & $39384$ & $547$  & $97$ & 97 & 95 & 0 & 0  \\
      & $12$  & $1161036$ & $16149$  & $2740$ & 2541 & 2513 & 11 & 8 \\
      & $13$  & $3634344$ & $50492$  & $8481$ & 7815 & 7792 & 19 & 14 \\
      & $14$  & $886428$ & $12346$  & $2090$ & 1931 & 1920 & 10 & 9 \\
      & $15$  & $80064$ & $1118$  & $202$ & 182 & 168 & 8 & 0 \\
      & $16$  & $3240$ & $45$  & $8$ & 8 & 8 & 0 & 0 \\
      & $17$  & $108$  & $3$   & $1$ & 0 & 0 & 0 & 0 \\ \hline
$6.2$ & $11$  & $7848$ & $327$  & $60$ & 50 & 48 & 3 & 3 \\
      & $12$  & $658908$ & $27477$  & $4633$ & 4370 & 4325 & 35 & 27 \\
      & $13$  & $3328908$ & $138708$  & $23267$ & 22226 & 22187 & 52 & 36 \\
      & $14$  & $1800228$ & $75035$  & $12617$ & 12267 & 12263 & 11 & 8 \\
      & $15$  & $192480$ & $8022$  & $1362$ & 1354 & 1351 & 3 & 1 \\
      & $16$  & $15840$ & $660$  & $115$ & 113 & 113 & 0 & 0 \\
      & $17$  & $240$  & $10$   & $3$ & 3 & 3 & 0 & 0  \\ \hline
$6.3$ & $11$  & $1200$ & $10$  & $7$ & 2 & 2 & 0 & 0 \\
      & $12$  & $192360$ & $1603$  & $836$ & 749 & 748 & 14 & 14 \\
      & $13$  & $1837440$ & $15315$  & $7757$ & 7445 & 7440 & 33 & 29 \\
      & $14$  & $1727880$ & $14400$  & $7279$ & 7252 & 7252 & 2 & 2 \\
      & $15$  & $378928$ & $3162$  & $1610$ & 1610 & 1610 & 0 & 0 \\
      & $16$  & $20280$ & $169$  & $90$ & 90 & 90 & 0 & 0 \\
      & $17$  & $840$  & $7$   & $4$ & 4 & 4 & 0 & 0 \\ \hline
\end{tabular}\vspace {3mm} \\
\end{center}
\end{table}
\newpage
\begin{center}
Table~\ref{four} (continued)
\begin{tabular}{|c|c|c|c|c|c|c|c|c|} \hline
 LS & Size & \#CS & \#Iso & \#Main& \#NS & \#Strong & \#TW & \#BWTW \\ \hline
$6.4$ & $10$  & $56$ & $7$  & $5$ & 5 & 5 & 0 & 0 \\
      & $11$  & $34000$ & $4250$  & $2149$ & 2001 & 1980 & 16 & 10 \\
      & $12$  & $1590608$ & $198826$  & $99654$ & 94485 & 94024 & 197 & 136 \\
      & $13$  & $5498076$ & $687262$  & $344044$ & 328754 & 327801 & 331 & 232 \\
      & $14$  & $1931424$ & $241428$  & $120895$ & 116390 & 115691 & 58 & 34 \\
      & $15$  & $168752$ & $21095$  & $10586$ & 10102 & 9704 & 137 & 10 \\
      & $16$  & $13736$ & $1717$  & $871$ & 821 & 780 & 24 & 1 \\
      & $17$  & $148$  & $19$   & $11$ & 9 & 9 & 1 & 1 \\ \hline
$6.5$ & $10$  & $60$ & $15$  & $9$ & 9 & 9 & 0 & 0 \\
      & $11$  & $42992$ & $10748$  & $5406$ & 5132 & 5078 & 30 & 24 \\
      & $12$  & $1878236$ & $469559$  & $235063$ & 224705 & 223776 & 401 & 292 \\
      & $13$  & $6475142$ & $1618806$  & $809952$ & 778258 & 776251 & 648 & 473 \\
      & $14$  & $2182652$ & $545663$  & $273120$ & 264790 & 263229 & 119 & 76 \\
      & $15$  & $192416$ & $48104$  & $24108$ & 23304 & 22340 & 281 & 28 \\
      & $16$  & $16908$ & $4227$  & $2135$ & 2041 & 1961 & 43 & 3 \\
      & $17$  & $112$  & $28$   & $16$ & 15 & 15 & 0 & 0 \\ \hline
$6.6$ & $11$  & $12888$ & $358$  & $187$ & 177 & 175 & 0 & 0 \\
      & $12$  & $856908$ & $23803$  & $12005$ & 11191 & 11155 & 64 & 55 \\
      & $13$  & $4097790$ & $113839$  & $57151$ & 54038 & 53898 & 162 & 120 \\
      & $14$  & $1476864$ & $41024$  & $20664$ & 19770 & 19697 & 27 & 23 \\
      & $15$  & $155196$ & $4311$  & $2201$ & 2139 & 2117 & 8 & 4 \\
      & $16$  & $12744$ & $354$  & $186$ & 175 & 166 & 3 & 0 \\
      & $17$  & $216$  & $6$   & $4$ & 4 & 4 & 0 & 0 \\ \hline
\end{tabular} \vspace{3mm} \\
\end{center}
\newpage
\begin{center}
Table~\ref{four} (continued)
\begin{tabular}{|c|c|c|c|c|c|c|c|c|} \hline
LS & Size & \#CS & \#Iso & \#Main& \#NS & \#Strong & \#TW & \#BWTW \\ \hline
$6.7$ & $12$  & $4752$ & $22$ & $5$ & 3 & 3 & 0 & 0 \\
      & $13$  & $212328$ & $985$ & $183$ & 165 & 165 & 5 & 5 \\
      & $14$  & $893700$ & $4151$ & $736$ & 706 & 705 & 3 & 2 \\
      & $15$  & $545508$ & $2536$ & $465$ & 465 & 465 & 0 & 0 \\
      & $16$  & $125766$ & $583$ & $109$ & 109 & 109 & 0 & 0 \\
      & $17$  & $8208$ & $38$ & $13$ & 13 & 13 & 0 & 0 \\ 
      & $18$  & $648$ & $3$ & $1$ & 1 & 1 & 0 & 0 \\ \hline
$6.8$ & $11$  & $3264$ & $408$ & $75$ & 67 & 66 & 3 & 2 \\
      & $12$  & $324608$ & $40576$ & $6817$ & 6023 & 5986 & 37 & 33 \\
      & $13$  & $1826592$ & $228335$ & $38265$ & 35161 & 35063 & 123 & 80 \\
      & $14$  & $1093796$ & $136729$ & $22909$ & 21804 & 21764 & 10 & 8 \\
      & $15$  & $106296$ & $13290$ & $2260$ & 2178 & 2155 & 10 & 1 \\
      & $16$  & $8464$ & $1058$ & $185$ & 175 & 167 & 6 & 1 \\ 
      & $17$  & $216$ & $27$ & $5$ & 5 & 5 & 0 & 0 \\ \hline
$6.9$ & $10$  & $24$ & $2$ & $2$ & 2 & 2 & 0 & 0 \\
      & $11$  & $13980$ & $1165$ & $596$ & 546 & 535 & 7 & 7 \\
      & $12$  & $716352$ & $59714$ & $29939$ & 27999 & 27723 & 127 & 84 \\
      & $13$  & $2784264$ & $232027$ & $116246$ & 109378 & 108885 & 243 & 173 \\
      & $14$  & $1065876$ & $88856$ & $44575$ & 42345 & 42068 & 24 & 19 \\
      & $15$  & $85884$ & $7159$ & $3607$ & 3462 & 3314 & 72 & 4 \\
      & $16$  & $6960$ & $580$ & $302$ & 283 & 259 & 15 & 1 \\ 
      & $17$  & $24$ & $2$ & $2$ & 2 & 2 & 0 & 0 \\ \hline
\end{tabular}\vspace {3mm} \\
\end{center}
\newpage
\begin{center}
Table~\ref{four} (continued)
\begin{tabular}{|c|c|c|c|c|c|c|c|c|} \hline
 LS & Size & \#CS & \#Iso & \#Main& \#NS & \#Strong & \#TW & \#BWTW \\ \hline
$6.10$& $10$  & $4$ & $1$ & $1$ & 1 & 1 & 0 & 0 \\
      & $11$  & $13748$ & $3437$ & $587$ & 555 & 547 & 6 & 3 \\
      & $12$  & $858348$ & $214587$ & $35899$ & 33814 & 33644 & 169 & 123 \\
      & $13$  & $3894038$ & $973520$ & $162538$ & 154803 & 154404 & 279 & 195 \\
      & $14$  & $1715492$ & $428873$ & $71685$ & 69560 & 69375 & 38 & 29 \\
      & $15$  & $155000$ & $38753$ & $6513$ & 6355 & 6232 & 34 & 4 \\
      & $16$  & $10540$ & $2635$ & $461$ & 443 & 423 & 13 & 1 \\ 
      & $17$  & $120$ & $30$ & $6$ & 6 & 6 & 0 & 0 \\ \hline
$6.11$& $10$  & $40$ & $10$ & $3$ & 3 & 3 & 0 & 0 \\
      & $11$  & $63540$ & $15885$ & $2673$ & 2617 & 2590 & 9 & 5 \\
      & $12$  & $2292266$ & $573254$ & $95781$ & 92453 & 92029 & 96 & 59 \\
      & $13$  & $7075888$ & $1768972$ & $295196$ & 284917 & 284027 & 145 & 96 \\
      & $14$  & $2203696$ & $550993$ & $91977$ & 88209 & 87499 & 39 & 22 \\
      & $15$  & $188344$ & $47086$ & $7890$ & 7584 & 7175 & 161 & 8  \\
      & $16$  & $17172$ & $4293$ & $729$ & 685 & 645 & 35 & 4 \\ 
      & $17$  & $36$ & $9$ & $2$ & 1 & 1 & 0 & 0 \\ \hline
$6.12$& $11$  & $143208$ & $1326$ & $232$ & 229 & 228 & 0 & 0 \\
      & $12$  & $3518478$ & $32664$ & $5510$ & 5384 & 5358 & 7 & 6 \\
      & $13$  & $9025344$ & $83568$ & $14037$ & 13636 & 13584 & 17 & 14 \\
      & $14$  & $2104704$ & $19506$ & $3315$ & 3146 & 3107 & 6 & 4 \\
      & $15$  & $200340$ & $1855$ & $326$ & 316 & 297 & 8 & 1 \\
      & $16$  & $17820$ & $165$ & $32$ & 29 & 28 & 1 & 0 \\ \hline
\end{tabular}\vspace {3mm} \\
\end{center}


D{\'e}nes and Keedwell ~\cite{dk} point out that, for a given order $n$, each
isotopy class of $n \times n$ Latin squares has a number of Latin
squares associated with it, and similarly each main class of $n \times
n$ Latin squares has a number of isotopy classes associated with it.
Similarly, for any given main class $n.z$ of $n \times n$ Latin squares
and given size of critical set $m$, if we consider the main classes of
critical sets of size $m$ within the main class $n.z$, we have several
associated isotopy classes of critical sets.  In the same way, if we
consider the isotopy classes of critical sets of size $m$ within the
main class $n.z$, we have several associated critical sets of size $m$
in the main class $n.z$.
\newpage
In Tables~\ref{five} to \ref{eight}, the head line refers to the twelve main classes of $6 \times
6$ Latin squares, and the side line refers to the possible sizes of
critical sets.  For $6 \times 6$ Latin squares, we consider results related to these observations.

Each isotopy class of critical sets in $6 \times 6$ Latin squares has between 
2 to 216 associated critical sets.  This result is given in Table~\ref{five}.

\begin{table}
\begin{center}
\setlength{\tabcolsep}{4.5pt}
\begin{footnotesize}
\caption{Numbers of critical sets in each isotopy class of critical sets of order six}
\vspace{1mm}
\label{five}
\begin{tabular}{|*{13}{c|}} \hline
& $6.1$ & $6.2$ & $6.3$ & $6.4$ & $6.5$ & $6.6$ & $6.7$ & $6.8$ & $6.9$ & $6.10$ & $6.11$ & $6.12$ \\
\hline
9 &  72 &   -   &    -   &  -   &  -   &   -   &    -   &  -   & -     &  -   &  -   &  - \\
10 &   -   &   -   &    -   & 8 & 4 &   -   &    -   &  -   & 12 & 4 & 4 &  - \\
11 & 72 & 24 & 120 & 8 & 4 & 36 &    -   & 8 & 12 & 4 & 4 & 108  \\
12 & 12,36,72 & 8,12,24 & 8  & 8 & 4 & 36 & 216 & 8 & 6,12 & 4 & 2,4 & 54,108 \\
13 & 36,72 & 12,24 & 60,120 & 4,8 & 2,4 & 18,36 & 108,216 & 4,8 & 6,12 & 2,4 & 4 & 108 \\
14 & 36,72 & 12,24 & 60,120 & 8 & 4  & 36 & 108,216 & 4,8 & 6,12 & 4 & 2,4 & 54,108 \\
15 & 36,72 & 12,24 & 24,40,120 & 4,8 & 4 & 36 & 108,216 & 4,8 & 6,12 & 2,4 & 4 & 108 \\
16 & 72 & 24 & 120 & 8 & 4 & 36 & 54,216 & 8 & 12 & 4 & 4 & 108 \\
17 & 36 & 24 & 120 & 4,8 & 4 & 36 & 216 & 8 & 12 & 4 & 4 &    - \\
18 &   -   &   -   &    -   &  -   &  -   &   -   & 216 &  -   &   -   &  -   &  -   &    -  \\
\hline
\end{tabular}\vspace {3mm} \\
\end{footnotesize}
\end{center}
\end{table}

Each main class of critical sets in $6 \times 6$ Latin squares has either
1, 2, 3 or 6 associated isotopy classes of critical sets.  
This result is given in Table~\ref{six}. 

\begin{table}
\begin{center}
\setlength{\tabcolsep}{4.5pt}
\caption{Numbers of isotopy classes of critical sets in each main class of critical sets of order six \vspace{3mm}}
\label{six}
\begin{tabular}{|*{13}{c|}} \hline
& $6.1$ & $6.2$ & $6.3$ & $6.4$ & $6.5$ & $6.6$ & $6.7$ & $6.8$ & $6.9$ & $6.10$ & $6.11$ & $6.12$ \\
\hline
9 &  1 &   -   &    -   &  -   &  -   &   -   &    -   &  -   & -     &  -   &  -   &  - \\
10 &   -   &   -   &    -   & 1,2 & 1,2 &   -   &    -   &  -   & 1 & 1 & 1,3,6 &  - \\
11 & 1,3,6 & 3,6 & 1,2 & 1,2 & 1,2 & 1,2 &    -   & 3,6 & 1,2  & 2,3,6 & 1,2,3,6 & 3,6  \\
12 & 1,2,3,6 & 1,2,3,6 & 1,2 & 1,2 & 1,2 & 1,2 & 1,3,6 & 1,2,3,6 & 1,2 & 1,2,3,6 & 1,2,3,6 & 1,2,3,6 \\
13 & 2,3,6 & 1,2,3,6 & 1,2 & 1,2 & 1,2 & 1,2 & 1,3,6 & 1,2,3,6 & 1,2 & 1,2,3,6 & 1,2,3,6 & 3,6 \\
14 & 1,2,3,6 & 1,2,3,6 & 1,2 & 1,2 & 1,2 & 1,2 & 1,2,3,6 & 1,2,3,6 & 1,2 & 2,3,6 & 2,3,6 & 3,6 \\
15 & 1,2,3,6 & 1,2,3,6 & 1,2 & 1,2 & 1,2 & 1,2 & 1,3,6 & 3,6 & 1,2 & 1,2,3,6 & 1,3,6 & 1,2,3,6 \\
16 & 3,6 & 3,6 & 1,2 & 1,2 & 1,2 & 1,2 & 1,3,6 & 2,3,6 & 1,2 & 1,2,3,6 & 3,6 & 3,6 \\
17 & 3 & 1,3,6 & 1,2 & 1,2 & 1,2 & 1,2 & 1,3,6 & 3,6 & 1 & 3,6 & 3,6 &    - \\
18 &   -   &   -   &    -   &  -   &  -   &   -   & 3 &  -   &   -   &  -   &  -   &    -  \\
\hline
\end{tabular}\vspace {3mm} \\
\end{center}
\end{table}
 
\section{Some observations}
We define some notation used in the following observations. We
shall denote the number of critical sets of size $x$ in a main class
$n.z$ by $CS(n,z,x)$. 
The number of isotopy classes of critical sets
of size $x$ in a main class $n.z$ shall be denoted $IC(n,z,x)$, and
the number of main classes of these critical sets shall be denoted
$MC(n,z,x)$.  The greatest common divisor of the number of critical
sets of all sizes in a particular main class $n.z$ will be referred to
as GCDCS$(n,z)$.

We shall concentrate on observations concerning the $6 \times 6$ Latin squares.

We find that when the main class $6.z$ is fixed and $x$ takes all possible
values, \break $CS(6,z,x)$ / $IC(6,z,x)$ is in most cases close to an integer
constant.  There is one exception: the critical sets of size 17 in main
class 6.1, where all other values of $CS(6,1,x) / IC(6,1,x)$ are approximately 72,
but $CS(6,1,17) / IC(6,1,17) = 36$.  We also find that this integer constant is a
multiple of GCDCS$(6,z)$.  These ratios are given in Table~\ref{seven},
truncated at two decimal places.  The last line tabulates the values
of GCDCS$(6,z)$.
\begin{table}
\begin{center}
\begin{footnotesize}
\caption{Ratio of critical sets to isotopy classes of critical sets of order six}
\label{seven}
\begin{tabular}{|c|c|c|c|c|c|c|c|c|c|c|c|c|} \hline
& $6.1$ & $6.2$ & $6.3$ & $6.4$ & $6.5$ & $6.6$ & $6.7$ & $6.8$ & $6.9$ & $6.10$ & $6.11$ & $6.12$ \\
\hline
9 &  72.00 &   -   &    -   &  -   &  -   &   -   &    -   &  -   & -     &  -   &  -   &  - \\
10 &   -   &   -   &    -   & 8.00 & 4.00 &   -   &    -   &  -   & 12.00 & 4.00 & 4.00 &  - \\
11 & 72.00 & 24.00 & 120.00 & 8.00 & 4.00 & 36.00 &    -   & 8.00 & 12.00 & 4.00 & 4.00 & 108.00  \\
12 & 71.89 & 23.98 & 120.00 & 8.00 & 4.00 & 36.00 & 216.00 & 8.00 & 11.99 & 4.00 & 3.99 & 107.71 \\
13 & 71.97 & 23.99 & 119.97 & 7.99 & 3.99 & 35.99 & 215.56 & 7.99 & 11.99 & 3.99 & 4.00 & 108.00 \\
14 & 71.79 & 23.99 & 119.99 & 8.00 & 4.00 & 36.00 & 215.29 & 7.99 & 11.99 & 4.00 & 3.99 & 107.90 \\
15 & 71.61 & 23.99 & 119.83 & 7.99 & 4.00 & 36.00 & 215.10 & 7.99 & 11.99 & 3.99 & 4.00 & 108.00 \\
16 & 72.00 & 24.00 & 120.00 & 8.00 & 4.00 & 36.00 & 215.72 & 8.00 & 12.00 & 4.00 & 4.00 & 108.00 \\
17 & 36.00 & 24.00 & 120.00 & 7.78 & 4.00 & 36.00 & 216.00 & 8.00 & 12.00 & 4.00 & 4.00 &    - \\
18 &   -   &   -   &    -   &  -   &  -   &   -   & 216.00 &  -   &   -   &  -   &  -   &    -  \\
\hline
gcd & 36 & 12 & 2 & 4 & 2 & 18 & 54 & 4 & 12 & 2 & 2 & 54  \\
\hline
\end{tabular}\vspace {3mm} \\
\end{footnotesize}
\end{center}
\end{table}

In seven of the twelve main classes (6.1, 6.2, 6.7, 6.8,
6.10, 6.11, and 6.12), the ratio $MC(6,z,x)$/$IC(6,z,x)$ is close to 6 with a few exceptions.
In the other five main classes (6.3, 6.4, 6.5, 6.6, and 6.9) this ratio
is close to 2 with a few exceptions.  These ratios are given in Table~\ref{eight}, truncated at
two decimal places.

\begin{table}
\begin{center}
\caption{Ratio of main classes of critical sets to isotopy classes of critical sets of order six \vspace {3mm}}
\label{eight}
\begin{tabular}{|c|c|c|c|c|c|c|c|c|c|c|c|c|} \hline
& $6.1$ & $6.2$ & $6.3$ & $6.4$ & $6.5$ & $6.6$ & $6.7$ & $6.8$ & $6.9$ & $6.10$ & $6.11$ & $6.12$ \\
\hline
9 &   1.00 &   -  &  -   &  -   &  -   &  -   &  -   &  -   &   -  &  -   &  -   &  - \\
10 &   -   &   -  &  -   & 1.40 & 1.66 &  -   &  -   &  -   & 1.00 & 1.00 & 3.33 &  - \\
11 &  5.63 & 5.45 & 1.42 & 1.97 & 1.98 & 1.91 &  -   & 5.44 & 1.95 & 5.85 & 5.94 & 5.71  \\
12 &  5.89 & 5.93 & 1.91 & 1.99 & 1.99 & 1.98 & 4.40 & 5.95 & 1.99 & 5.97 & 5.98 & 5.92 \\
13 &  5.95 & 5.96 & 1.97 & 1.99 & 1.99 & 1.99 & 5.38 & 5.96 & 1.99 & 5.98 & 5.99 & 5.95 \\
14 &  5.90 & 5.94 & 1.97 & 1.99 & 1.99 & 1.98 & 5.63 & 5.96 & 1.99 & 5.98 & 5.99 & 5.88 \\
15 &  5.53 & 5.88 & 1.96 & 1.99 & 1.99 & 1.95 & 5.45 & 5.88 & 1.98 & 5.95 & 5.96 & 5.69 \\
16 &  5.62 & 5.73 & 1.87 & 1.97 & 1.97 & 1.90 & 5.34 & 5.71 & 1.92 & 5.71 & 5.88 & 5.15 \\
17 &  3.00 & 3.33 & 1.75 & 1.72 & 1.75 & 1.50 & 2.92 & 5.40 & 1.00 & 5.00 & 4.50 &  - \\
18 &   -   &   -  &  -   &  -   &  -   &  -   & 3.00 &  -   &  -   &  -   &  -   &  -  \\
\hline
\end{tabular}\vspace {3mm} \\
\end{center}
\end{table}

In each main class $6.z$, GCDCS$(6,z)$ is a multiple of
2, and for those main classes with $3 \times 3$ subsquares (6.1, 6.6,
6.7 and 6.12) this number is a multiple of 18.

We also note that in the main classes of the $4 \times 4$ and $6 \times 6$ 
Latin squares, the smallest and largest possible critical sets (4 and 7
for the $4 \times 4$ case and 9 and 18 for the $6 \times 6$ case) each
have only one isotopy and main class.

This is an interesting property which we are unable to explain at
the present time.  It may be the case that the enumeration of
all critical sets of order 7 would give more insight into this
property.

\section{Observations concerning the union of critical sets}

For the $4 \times 4$ and $6 \times 6$ back-circulant Latin squares
it is possible to find four disjoint critical sets which partition
the corresponding Latin square.
This can easily be generalised to the $n \times n$ case \cite{disj}. 

If $L$ is any $6 \times 6$ Latin square it is possible to find three
disjoint critical sets of size 12 which partition $L$.  We give a
visual representation of these decompositions for Latin squares from
representatives of each of the main classes, denoted $6.1$, \dots, $6.12$.

\begin{center} 
\setlength{\tabcolsep}{4.5pt}
\begin{tabular}{ccccccc} 
${6.1}$   & $=$ & 
\begin{tabular}{|*{6}{c|}} 
\hline \ecell & \ecell & \ecell & \ecell & \ecell & \ecell \\
\hline \ecell & \ecell & \ecell & \ecell & \ecell & 1 \\
\hline \ecell & \ecell & \ecell & \ecell & 1 & 2 \\
\hline \ecell & \ecell & \ecell & 1 & 2 & 3 \\
\hline \ecell & 6 & 1 & 2 & 3 & \ecell \\
\hline 6 & \ecell & \ecell & \ecell & \ecell & 5 \\
\hline 
\end{tabular} & 
 $+$ & 
 
\begin{tabular}{|*{6}{c|}} 
\hline \ecell & \ecell & \ecell & \ecell & 5 & 6 \\
\hline 2 & \ecell & \ecell & 5 & 6 & \ecell \\ 
\hline 3 & 4 & 5 & \ecell & \ecell & \ecell \\ 
\hline 4 & \ecell & 6 & \ecell & \ecell & \ecell \\ 
\hline \ecell & \ecell & \ecell & \ecell & \ecell & 4 \\
\hline \ecell & 1 & \ecell & \ecell & \ecell & \ecell \\ 
\hline 
\end{tabular} & 
 $+$ & 
 
\begin{tabular}{|*{6}{c|}} 
\hline 1 & 2 & 3 & 4 & \ecell & \ecell \\ 
\hline \ecell & 3 & 4 & \ecell & \ecell & \ecell \\ 
\hline \ecell & \ecell & \ecell & 6 & \ecell & \ecell \\ 
\hline \ecell & 5 & \ecell & \ecell & \ecell & \ecell \\ 
\hline 5 & \ecell & \ecell & \ecell & \ecell & \ecell \\ 
\hline \ecell & \ecell & 2 & 3 & 4 & \ecell \\ 
\hline 
\end{tabular}
\\
\end{tabular} 
\end{center} 

\vspace*{1cm}

\vspace*{1mm}
\begin{center} 
\setlength{\tabcolsep}{4.5pt}
\begin{tabular}{ccccccc} 
${6.2}$   & $=$ & 
\begin{tabular}{|*{6}{c|}} 
\hline \ecell & \ecell & \ecell & \ecell & \ecell & \ecell \\
\hline \ecell & 1 & \ecell & 3 & \ecell & 5 \\
\hline \ecell & \ecell & \ecell & \ecell & 1 & 2 \\
\hline \ecell & 3 & 6 & \ecell & \ecell & \ecell \\
\hline 5 & \ecell & \ecell & 2 & \ecell & \ecell \\
\hline 6 & \ecell & 2 & \ecell & \ecell & 4 \\
\hline 
\end{tabular} & 
 $+$ & 
 
\begin{tabular}{|*{6}{c|}} 
\hline \ecell & 2 & \ecell & 4 & \ecell & 6 \\
\hline \ecell & \ecell & 4 & \ecell & \ecell & \ecell \\ 
\hline 3 & 4 & \ecell & \ecell & \ecell & \ecell \\ 
\hline \ecell & \ecell & \ecell & 5 & 2 & \ecell \\ 
\hline \ecell & \ecell & 1 & \ecell & \ecell & 3 \\
\hline \ecell & 5 & \ecell & \ecell & 3 & \ecell \\ 
\hline 
\end{tabular} & 
 $+$ & 
 
\begin{tabular}{|*{6}{c|}} 
\hline 1 & \ecell & 3 & \ecell & 5 & \ecell \\ 
\hline 2 & \ecell & \ecell & \ecell & 6 & \ecell \\ 
\hline \ecell & \ecell & 5 & 6 & \ecell & \ecell \\ 
\hline 4 & \ecell & \ecell & \ecell & \ecell & 1 \\
\hline \ecell & 6 & \ecell & \ecell & 4 & \ecell \\ 
\hline \ecell & \ecell & \ecell & 1 & \ecell & \ecell \\ 
\hline 
\end{tabular} 
\end{tabular} 
\end{center} 

\vspace*{1cm}

\vspace*{1mm}
\begin{center} 
\setlength{\tabcolsep}{4.5pt}
\begin{tabular}{ccccccc} 
${6.3}$   & $=$ & 
\begin{tabular}{|*{6}{c|}} 
\hline \ecell & \ecell & \ecell & \ecell & \ecell & 6 \\
\hline \ecell & 1 & \ecell & 3 & \ecell & \ecell \\
\hline \ecell & \ecell & 1 & \ecell & 2 & \ecell \\
\hline \ecell & \ecell & 5 & 1 & \ecell & \ecell \\
\hline 5 & \ecell & \ecell & 2 & \ecell & \ecell \\
\hline 6 & \ecell & 2 & \ecell & 4 & \ecell \\
\hline 
\end{tabular} & 
 $+$ & 
 
\begin{tabular}{|*{6}{c|}} 
\hline \ecell & 2 & \ecell & 4 & \ecell & \ecell \\ 
\hline \ecell & \ecell & \ecell & \ecell & 6 & 5 \\
\hline 3 & 5 & \ecell & \ecell & \ecell & \ecell \\ 
\hline 4 & \ecell & \ecell & \ecell & 3 & \ecell \\ 
\hline \ecell & \ecell & 6 & \ecell & 1 & \ecell \\ 
\hline \ecell & 3 & \ecell & \ecell & \ecell & 1 \\
\hline 
\end{tabular} & 
 $+$ & 
 
\begin{tabular}{|*{6}{c|}} 
\hline 1 & \ecell & 3 & \ecell & 5 & \ecell \\ 
\hline 2 & \ecell & 4 & \ecell & \ecell & \ecell \\ 
\hline \ecell & \ecell & \ecell & 6 & \ecell & 4 \\
\hline \ecell & 6 & \ecell & \ecell & \ecell & 2 \\
\hline \ecell & 4 & \ecell & \ecell & \ecell & 3 \\
\hline \ecell & \ecell & \ecell & 5 & \ecell & \ecell \\ 
\hline 
\end{tabular} 
\end{tabular} 
\end{center} 

\vspace*{10mm}

\vspace*{1mm}
\begin{center} 
\setlength{\tabcolsep}{4.5pt}
\begin{tabular}{ccccccc} 
${6.4}$   & $=$ & 
\begin{tabular}{|*{6}{c|}} 
\hline \ecell & \ecell & \ecell & \ecell & \ecell & \ecell \\
\hline \ecell & 1 & \ecell & 3 & \ecell & 5 \\
\hline \ecell & \ecell & 1 & \ecell & \ecell & 2 \\
\hline \ecell & \ecell & \ecell & 1 & \ecell & 3 \\
\hline \ecell & 4 & 6 & \ecell & \ecell & \ecell \\
\hline 6 & 3 & \ecell & 5 & \ecell & \ecell \\
\hline 
\end{tabular} & 
 $+$ & 
 
\begin{tabular}{|*{6}{c|}} 
\hline \ecell & \ecell & 3 & \ecell & \ecell & 6 \\
\hline 2 & \ecell & \ecell & \ecell & 6 & \ecell \\ 
\hline \ecell & \ecell & \ecell & 6 & 4 & \ecell \\ 
\hline 4 & \ecell & 5 & \ecell & \ecell & \ecell \\ 
\hline \ecell & \ecell & \ecell & 2 & 3 & \ecell \\ 
\hline \ecell & \ecell & \ecell & \ecell & 1 & 4 \\
\hline 
\end{tabular} & 
 $+$ & 
 
\begin{tabular}{|*{6}{c|}} 
\hline 1 & 2 & \ecell & 4 & 5 & \ecell \\ 
\hline \ecell & \ecell & 4 & \ecell & \ecell & \ecell \\ 
\hline 3 & 5 & \ecell & \ecell & \ecell & \ecell \\ 
\hline \ecell & 6 & \ecell & \ecell & 2 & \ecell \\ 
\hline 5 & \ecell & \ecell & \ecell & \ecell & 1 \\
\hline \ecell & \ecell & 2 & \ecell & \ecell & \ecell \\ 
\hline 
\end{tabular} 
\end{tabular} 
\end{center} 

\vspace*{10mm}

\begin{center} 
\setlength{\tabcolsep}{4.5pt}
\begin{tabular}{ccccccc} 
${6.5}$   & $=$ & 
\begin{tabular}{|*{6}{c|}} 
\hline \ecell & \ecell & \ecell & \ecell & \ecell & \ecell \\
\hline \ecell & 1 & \ecell & 3 & \ecell & 5 \\
\hline \ecell & \ecell & \ecell & \ecell & 1 & 2 \\
\hline \ecell & \ecell & 6 & \ecell & 3 & \ecell \\
\hline 5 & \ecell & \ecell & \ecell & \ecell & 3 \\
\hline 6 & \ecell & \ecell & 5 & \ecell & 4 \\
\hline 
\end{tabular} & 
 $+$ & 
 
\begin{tabular}{|*{6}{c|}} 
\hline \ecell & \ecell & 3 & \ecell & 5 & 6 \\
\hline 2 & \ecell & \ecell & \ecell & \ecell & \ecell \\ 
\hline \ecell & 4 & \ecell & 6 & \ecell & \ecell \\ 
\hline 4 & 5 & \ecell & \ecell & \ecell & \ecell \\ 
\hline \ecell & 6 & 2 & \ecell & \ecell & \ecell \\ 
\hline \ecell & \ecell & 1 & \ecell & 2 & \ecell \\ 
\hline 
\end{tabular} & 
 $+$ & 
 
\begin{tabular}{|*{6}{c|}} 
\hline 1 & 2 & \ecell & 4 & \ecell & \ecell \\ 
\hline \ecell & \ecell & 4 & \ecell & 6 & \ecell \\ 
\hline 3 & \ecell & 5 & \ecell & \ecell & \ecell \\ 
\hline \ecell & \ecell & \ecell & 2 & \ecell & 1 \\
\hline \ecell & \ecell & \ecell & 1 & 4 & \ecell \\ 
\hline \ecell & 3 & \ecell & \ecell & \ecell & \ecell \\ 
\hline 
\end{tabular} 
\end{tabular} 
\end{center} 

\vspace*{10mm}

\vspace*{1mm}
\begin{center} 
\setlength{\tabcolsep}{4.5pt}
\begin{tabular}{ccccccc} 
${6.6}$   & $=$ & 
\begin{tabular}{|*{6}{c|}} 
\hline \ecell & \ecell & \ecell & \ecell & \ecell & 6 \\
\hline \ecell & 1 & \ecell & 3 & \ecell & \ecell \\
\hline \ecell & \ecell & \ecell & \ecell & 1 & \ecell \\
\hline \ecell & \ecell & \ecell & 1 & 2 & 3 \\
\hline 5 & \ecell & \ecell & 2 & \ecell & \ecell \\
\hline 6 & \ecell & 2 & \ecell & 4 & \ecell \\
\hline 
\end{tabular} & 
 $+$ & 
 
\begin{tabular}{|*{6}{c|}} 
\hline \ecell & 2 & 3 & 4 & \ecell & \ecell \\ 
\hline \ecell & \ecell & \ecell & \ecell & 6 & \ecell \\ 
\hline 3 & 4 & 5 & 6 & \ecell & \ecell \\ 
\hline 4 & 5 & \ecell & \ecell & \ecell & \ecell \\ 
\hline \ecell & \ecell & \ecell & \ecell & \ecell & 4 \\
\hline \ecell & \ecell & \ecell & \ecell & \ecell & 1 \\
\hline 
\end{tabular} & 
 $+$ & 
 
\begin{tabular}{|*{6}{c|}} 
\hline 1 & \ecell & \ecell & \ecell & 5 & \ecell \\ 
\hline 2 & \ecell & 4 & \ecell & \ecell & 5 \\
\hline \ecell & \ecell & \ecell & \ecell & \ecell & 2 \\
\hline \ecell & \ecell & 6 & \ecell & \ecell & \ecell \\ 
\hline \ecell & 6 & 1 & \ecell & 3 & \ecell \\ 
\hline \ecell & 3 & \ecell & 5 & \ecell & \ecell \\ 
\hline 
\end{tabular} 
\end{tabular} 
\end{center} 

\vspace*{10mm}

\vspace*{1mm}
\begin{center} 
\setlength{\tabcolsep}{4.5pt}
\begin{tabular}{ccccccc} 
${6.7}$   & $=$ & 
\begin{tabular}{|*{6}{c|}} 
\hline \ecell & \ecell & \ecell & \ecell & \ecell & 6 \\
\hline \ecell & \ecell & 1 & 6 & \ecell & 5 \\
\hline \ecell & 1 & 2 & \ecell & \ecell & \ecell \\
\hline 4 & \ecell & \ecell & \ecell & 2 & \ecell \\
\hline 5 & \ecell & \ecell & 3 & \ecell & \ecell \\
\hline \ecell & 4 & \ecell & \ecell & 3 & \ecell \\
\hline 
\end{tabular} & 
 $+$ & 
 
\begin{tabular}{|*{6}{c|}} 
\hline \ecell & 2 & \ecell & 4 & 5 & \ecell \\ 
\hline 2 & 3 & \ecell & \ecell & \ecell & \ecell \\ 
\hline \ecell & \ecell & \ecell & 5 & \ecell & \ecell \\ 
\hline \ecell & \ecell & 6 & \ecell & \ecell & 3 \\
\hline \ecell & \ecell & 4 & \ecell & 1 & \ecell \\ 
\hline 6 & \ecell & \ecell & \ecell & \ecell & 1 \\
\hline 
\end{tabular} & 
 $+$ & 
 
\begin{tabular}{|*{6}{c|}} 
\hline 1 & \ecell & 3 & \ecell & \ecell & \ecell \\ 
\hline \ecell & \ecell & \ecell & \ecell & 4 & \ecell \\ 
\hline 3 & \ecell & \ecell & \ecell & 6 & 4 \\
\hline \ecell & 5 & \ecell & 1 & \ecell & \ecell \\ 
\hline \ecell & 6 & \ecell & \ecell & \ecell & 2 \\
\hline \ecell & \ecell & 5 & 2 & \ecell & \ecell \\ 
\hline 
\end{tabular} 
\end{tabular} 
\end{center}

\vspace*{10mm}

\vspace*{1mm}
\vspace*{1mm}
\begin{center} 
\setlength{\tabcolsep}{4.5pt}
\begin{tabular}{ccccccc} 
${6.8}$   & $=$ & 
\begin{tabular}{|*{6}{c|}} 
\hline \ecell & \ecell & \ecell & \ecell & \ecell & \ecell \\
\hline \ecell & 1 & \ecell & 3 & \ecell & 5 \\
\hline \ecell & \ecell & 1 & \ecell & 2 & 4 \\
\hline 4 & 6 & \ecell & \ecell & \ecell & \ecell \\
\hline \ecell & 3 & 6 & \ecell & \ecell & \ecell \\
\hline \ecell & \ecell & \ecell & \ecell & 3 & 2 \\
\hline 
\end{tabular} & 
 $+$ & 
 
\begin{tabular}{|*{6}{c|}} 
\hline \ecell & \ecell & 3 & 4 & 5 & \ecell \\ 
\hline 2 & \ecell & \ecell & \ecell & 6 & \ecell \\ 
\hline \ecell & 5 & \ecell & \ecell & \ecell & \ecell \\ 
\hline \ecell & \ecell & 2 & \ecell & \ecell & 3 \\
\hline 5 & \ecell & \ecell & \ecell & \ecell & 1 \\
\hline \ecell & 4 & \ecell & 1 & \ecell & \ecell \\ 
\hline 
\end{tabular} & 
 $+$ & 
 
\begin{tabular}{|*{6}{c|}} 
\hline 1 & 2 & \ecell & \ecell & \ecell & 6 \\
\hline \ecell & \ecell & 4 & \ecell & \ecell & \ecell \\ 
\hline 3 & \ecell & \ecell & 6 & \ecell & \ecell \\ 
\hline \ecell & \ecell & \ecell & 5 & 1 & \ecell \\ 
\hline \ecell & \ecell & \ecell & 2 & 4 & \ecell \\ 
\hline 6 & \ecell & 5 & \ecell & \ecell & \ecell \\ 
\hline 
\end{tabular} 
\end{tabular} 
\end{center}

\vspace*{10mm}

\vspace*{1mm}
\begin{center} 
\setlength{\tabcolsep}{4.5pt}
\begin{tabular}{ccccccc} 
${6.9}$   & $=$ & 
\begin{tabular}{|*{6}{c|}} 
\hline \ecell & \ecell & \ecell & \ecell & \ecell & \ecell \\
\hline \ecell & 1 & \ecell & 3 & \ecell & 5 \\
\hline \ecell & \ecell & 1 & \ecell & 2 & 4 \\
\hline \ecell & 6 & 2 & \ecell & \ecell & \ecell \\
\hline \ecell & 4 & \ecell & \ecell & 1 & 3 \\
\hline 6 & \ecell & \ecell & \ecell & \ecell & \ecell \\
\hline 
\end{tabular} & 
 $+$ & 
 
\begin{tabular}{|*{6}{c|}} 
\hline \ecell & \ecell & \ecell & 4 & 5 & 6 \\
\hline 2 & \ecell & \ecell & \ecell & \ecell & \ecell \\ 
\hline 3 & 5 & \ecell & \ecell & \ecell & \ecell \\ 
\hline \ecell & \ecell & \ecell & 5 & \ecell & 1 \\
\hline \ecell & \ecell & 6 & \ecell & \ecell & \ecell \\ 
\hline \ecell & 3 & 5 & \ecell & 4 & \ecell \\ 
\hline 
\end{tabular} & 
 $+$ & 
 
\begin{tabular}{|*{6}{c|}} 
\hline 1 & 2 & 3 & \ecell & \ecell & \ecell \\ 
\hline \ecell & \ecell & 4 & \ecell & 6 & \ecell \\ 
\hline \ecell & \ecell & \ecell & 6 & \ecell & \ecell \\ 
\hline 4 & \ecell & \ecell & \ecell & 3 & \ecell \\ 
\hline 5 & \ecell & \ecell & 2 & \ecell & \ecell \\ 
\hline \ecell & \ecell & \ecell & 1 & \ecell & 2 \\
\hline 
\end{tabular} 
\end{tabular} 
\end{center}

\vspace*{10mm}

\vspace*{1mm}
\begin{center} 
\setlength{\tabcolsep}{4.5pt}
\begin{tabular}{ccccccc} 
${6.10}$   & $=$ & 
\begin{tabular}{|*{6}{c|}} 
\hline \ecell & \ecell & \ecell & \ecell & \ecell & \ecell \\
\hline \ecell & 1 & \ecell & 3 & \ecell & 5 \\
\hline \ecell & \ecell & 1 & \ecell & \ecell & 2 \\
\hline \ecell & \ecell & \ecell & 1 & \ecell & 3 \\
\hline 5 & \ecell & 6 & \ecell & \ecell & \ecell \\
\hline 6 & 4 & \ecell & \ecell & 3 & \ecell \\
\hline 
\end{tabular} & 
 $+$ & 
 
\begin{tabular}{|*{6}{c|}} 
\hline \ecell & \ecell & 3 & 4 & \ecell & 6 \\
\hline 2 & \ecell & \ecell & \ecell & \ecell & \ecell \\ 
\hline \ecell & 5 & \ecell & 6 & \ecell & \ecell \\ 
\hline 4 & \ecell & \ecell & \ecell & 2 & \ecell \\ 
\hline \ecell & \ecell & \ecell & \ecell & 1 & 4 \\
\hline \ecell & \ecell & 2 & 5 & \ecell & \ecell \\ 
\hline 
\end{tabular} & 
 $+$ & 
 
\begin{tabular}{|*{6}{c|}} 
\hline 1 & 2 & \ecell & \ecell & 5 & \ecell \\ 
\hline \ecell & \ecell & 4 & \ecell & 6 & \ecell \\ 
\hline 3 & \ecell & \ecell & \ecell & 4 & \ecell \\ 
\hline \ecell & 6 & 5 & \ecell & \ecell & \ecell \\ 
\hline \ecell & 3 & \ecell & 2 & \ecell & \ecell \\ 
\hline \ecell & \ecell & \ecell & \ecell & \ecell & 1 \\
\hline 
\end{tabular} 
\end{tabular} 
\end{center}

\vspace*{10mm}

\vspace*{1mm}
\vspace*{1mm}
\begin{center} 
\setlength{\tabcolsep}{4.5pt}
\begin{tabular}{ccccccc} 
${6.11}$   & $=$ & 
\begin{tabular}{|*{6}{c|}} 
\hline \ecell & \ecell & \ecell & \ecell & \ecell & \ecell \\
\hline \ecell & 1 & \ecell & \ecell & \ecell & 3 \\
\hline \ecell & \ecell & \ecell & \ecell & 1 & 5 \\
\hline \ecell & 6 & \ecell & 2 & 3 & \ecell \\
\hline \ecell & \ecell & \ecell & 1 & 2 & 4 \\
\hline 6 & 5 & \ecell & \ecell & \ecell & \ecell \\
\hline 
\end{tabular} & 
 $+$ & 
 
\begin{tabular}{|*{6}{c|}} 
\hline \ecell & \ecell & \ecell & 4 & 5 & 6 \\
\hline 2 & \ecell & \ecell & \ecell & 6 & \ecell \\ 
\hline \ecell & \ecell & 2 & \ecell & \ecell & \ecell \\ 
\hline 4 & \ecell & 5 & \ecell & \ecell & \ecell \\ 
\hline \ecell & 3 & 6 & \ecell & \ecell & \ecell \\ 
\hline \ecell & \ecell & \ecell & 3 & 4 & \ecell \\ 
\hline 
\end{tabular} & 
 $+$ & 
 
\begin{tabular}{|*{6}{c|}} 
\hline 1 & 2 & 3 & \ecell & \ecell & \ecell \\ 
\hline \ecell & \ecell & 4 & 5 & \ecell & \ecell \\ 
\hline 3 & 4 & \ecell & 6 & \ecell & \ecell \\ 
\hline \ecell & \ecell & \ecell & \ecell & \ecell & 1 \\
\hline 5 & \ecell & \ecell & \ecell & \ecell & \ecell \\ 
\hline \ecell & \ecell & 1 & \ecell & \ecell & 2 \\
\hline 
\end{tabular} 
\end{tabular} 
\end{center} 

\vspace*{10mm}

\vspace*{1mm}
\begin{center} 
\setlength{\tabcolsep}{4.5pt}
\begin{tabular}{ccccccc} 
${6.12}$   & $=$ & 
\begin{tabular}{|*{6}{c|}} 
\hline \ecell & \ecell & \ecell & \ecell & \ecell & \ecell \\
\hline \ecell & \ecell & 1 & \ecell & \ecell & 5 \\
\hline \ecell & 1 & 2 & \ecell & 6 & \ecell \\
\hline \ecell & \ecell & \ecell & \ecell & 3 & \ecell \\
\hline \ecell & \ecell & 4 & \ecell & 2 & 3 \\
\hline 6 & 4 & 5 & \ecell & \ecell & \ecell \\
\hline 
\end{tabular} & 
 $+$ & 
 
\begin{tabular}{|*{6}{c|}} 
\hline \ecell & \ecell & 3 & \ecell & 5 & 6 \\
\hline 2 & 3 & \ecell & 6 & \ecell & \ecell \\ 
\hline \ecell & \ecell & \ecell & \ecell & \ecell & 4 \\
\hline 4 & \ecell & \ecell & \ecell & \ecell & 1 \\
\hline \ecell & 6 & \ecell & 1 & \ecell & \ecell \\ 
\hline \ecell & \ecell & \ecell & 3 & \ecell & \ecell \\ 
\hline 
\end{tabular} & 
 $+$ & 
 
\begin{tabular}{|*{6}{c|}} 
\hline 1 & 2 & \ecell & 4 & \ecell & \ecell \\ 
\hline \ecell & \ecell & \ecell & \ecell & 4 & \ecell \\ 
\hline 3 & \ecell & \ecell & 5 & \ecell & \ecell \\ 
\hline \ecell & 5 & 6 & 2 & \ecell & \ecell \\ 
\hline 5 & \ecell & \ecell & \ecell & \ecell & \ecell \\ 
\hline \ecell & \ecell & \ecell & \ecell & 1 & 2 \\
\hline 
\end{tabular} 
\end{tabular} 
\end{center} 
\chapter{Conclusion}\label{ch9}
In this thesis, we have developed several new results.  The algorithms
developed in Chapter~\ref{ch3} have been used throughout the remainder of the
thesis to discover new bounds and existence results on the sizes of
possible critical sets.

We have proved a new bound on \lcs{n} in Chapter~\ref{ch4}, and given many
examples of critical sets of sizes not previously known.

In Chapter~\ref{ch5}, a new bound was given on the maximum number of intercalates
in Latin squares of orders $2^\alpha m$ and $2^\alpha m+1$ for $\alpha
\geq 2$ and $m$ odd ($\alpha \neq 3$ in the $2^\alpha m+1$ case).  Also, a
critical set in Latin squares of order $4m$ was given, and large critical
sets in intercalate-rich Latin squares of orders 11 and 14 were examined.

We have completed the spectrum of critical sets between the bounds
conjectured by Nelder ($\displaystyle{\frac{n^2}{4}}$ and $\displaystyle{\frac{n^2-n}{2}}$).
This result was given in Chapter~\ref{ch6}.

In Chapter~\ref{ch7}, we looked at all trades of volume between 6 and 9, and
determined which of the corresponding partial Steiner Latin squares were
decomposable into disjoint Latin interchanges.

In Chapter~\ref{ch8}, the new bound on lcs$(n)$ from Chapter~\ref{ch4} was used to
reduce the search space of critical sets for Latin squares of order 6,
and all the critical sets in Latin squares of order at most six were
then determined.

Further research could include determining those values $s >
\displaystyle{\frac{n^{2}-n}{2}}$ for which there exists a critical set of order $n$
and size $s$.  Instead of looking at the maximum number of intercalates,
we could look at the maximum number of $m \times m$ subsquares, $m > 
2$, in a Latin square of given order.

The questions proposed at the conclusion to Chapter~\ref{ch4} also deserve more
research.  That is, is there a relationship between critical sets of 
size \lcs{n} and Latin
squares with $I(n)$ intercalates, and must critical sets with \lcs{n}
entries have a missing row, column, and symbol?

Enumerating the critical sets for the Latin squares of order 7 is not
possible with current computer hardware and algorithms, but may become
possible in the future.  This would settle the question of what \lcs7
is, and could possibly provide more information to help prove new bounds
on \scs{n}.  Such an enumeration might also give more information to
assist in understanding the patterns noted in Chapter~\ref{ch8}.

A further idea for research based on the results of Chapter~\ref{ch8} is to check
the critical sets of size 17 and order 6 for a possible construction
for a critical set of size $\displaystyle{\frac{n^2-n}{2}} + 2$ for $n \geq 6$.

The results of this thesis could possibly also be used in music
composition, as the 20th century composers Karlheinz Stockhausen
\cite{stock} and Peter Maxwell Davies \cite{pmd} have used Latin squares
extensively in their compositions.

\clearpage
\iflatexml
  \appendix
  \renewcommand{\thechapter}{\arabic{chapter}}
  \setcounter{chapter}{0}
\else
  \setcounter{chapter}{0}
  \makeatletter
  \renewcommand\theHchapter{app.\arabic{chapter}}
  \makeatother
\fi


\iflatexml
  \chapter{Examples of critical sets}\label{app1}
\else
  \refstepcounter{chapter}
  \chapter*{Appendix \thechapter\newline\newline Examples of critical sets}\label{app1}
  \addcontentsline{toc}{chapter}{Appendices}
  \addcontentsline{toc}{chapter}{\hspace{1.5em}Appendix~\thechapter\quad Examples of critical sets}
  \vspace{0.5cm}
\fi

Here we give some examples of large
critical sets for $n = 5, 7$, $9$, and $10$.  By combining the results
of the two papers ~\cite{MR1758263} and ~\cite{MR99k:05038}, and this appendix,
we can show the existence of critical sets of all sizes between
$\lfloor \displaystyle{\frac{n^2}{4}} \rfloor$ and the current upper bound for 
\lcs{n}
for $1 \leq n \leq 10$.

\noindent
A critical set of order 5 and size 11:

\begin{center}
\begin{latinsq}{5}
\hline \ecell & \ecell & \ecell & \ecell & \ecell \\
\hline  2& \ecell &4&5& \ecell \\
\hline \ecell & \ecell &1&2& \ecell \\
\hline \ecell & \ecell &5&1&2 \\
\hline  5& \ecell &2& \ecell &1 \\
\hline
\end{latinsq}
\end{center}

\noindent
A critical set of order 7 and size 25:

\begin{center}
\begin{latinsq}{7}
\hline \ecell &3&2&7& \ecell &5& \ecell \\
\hline \ecell & \ecell & \ecell & \ecell & \ecell & \ecell & \ecell \\
\hline \ecell & \ecell &3&5&4&7&6 \\
\hline \ecell &6&5&4&3&2& \ecell \\
\hline \ecell & \ecell &4&3&5& \ecell & \ecell \\
\hline \ecell &4&7&2& \ecell &6&3 \\
\hline \ecell & \ecell &6& \ecell & \ecell &3&7 \\
\hline
\end{latinsq}
\end{center}



\newpage
\noindent
A critical set of order 9 and size 40:

\begin{center}
\begin{latinsq}{9}
\hline  \ecell & \ecell & \ecell & \ecell & \ecell & \ecell & \ecell & \ecell & \ecell \\
\hline  \ecell & \ecell &6&9&1&8&2&4& \ecell \\
\hline 9& \ecell &4& \ecell &2& \ecell & \ecell & \ecell & \ecell \\
\hline  \ecell & \ecell &8&2& \ecell &1& \ecell & \ecell & \ecell \\
\hline  \ecell & \ecell &1& \ecell &6&5& \ecell &2& \ecell \\
\hline 7& \ecell &2&8&4&9& \ecell & \ecell & \ecell \\
\hline 8& \ecell &7&5&9&4& \ecell &1&2 \\
\hline 4& \ecell &9& \ecell &8&2&6&5&7 \\
\hline 2& \ecell &5& \ecell & \ecell & \ecell &4&9&8 \\
\hline
\end{latinsq}
\end{center}

\noindent
A critical set of order 9 and size 41:

\begin{center}
\begin{latinsq}{9}
\hline  \ecell & \ecell & \ecell & \ecell & \ecell & \ecell & \ecell & \ecell & \ecell \\
\hline  \ecell &7& \ecell & \ecell &1& \ecell & \ecell & \ecell &3 \\
\hline  \ecell &8&4& \ecell & \ecell & \ecell & \ecell & \ecell &1 \\
\hline  \ecell &4& \ecell & \ecell &3&1&9& \ecell &5 \\
\hline  \ecell &9&1&7&6& \ecell &8& \ecell &4 \\
\hline  \ecell &5& \ecell &8&4&9&1& \ecell &6 \\
\hline  \ecell &6& \ecell & \ecell & \ecell & \ecell &3&1& \ecell \\
\hline  \ecell &3&9&1&8& \ecell &6&5&7 \\
\hline  \ecell &1&5&6&7&3&4&9&8 \\
\hline
\end{latinsq}
\end{center}

\noindent
A critical set of order 9 and size 42:

\begin{center}
\begin{latinsq}{9}
\hline  \ecell & \ecell &3& \ecell &5&6&7&8&9 \\
\hline  \ecell & \ecell & \ecell & \ecell & \ecell & \ecell & \ecell & \ecell & \ecell \\
\hline  \ecell &8&4& \ecell & \ecell & \ecell &5&6& \ecell \\
\hline  \ecell &4& \ecell & \ecell &3& \ecell &9&7& \ecell \\
\hline  \ecell &9& \ecell & \ecell &6&5&8& \ecell & \ecell \\
\hline  \ecell &5& \ecell &8&4&9&1&3&6 \\
\hline  \ecell &6& \ecell &5& \ecell & \ecell &3& \ecell & \ecell \\
\hline  \ecell &3&9&1&8& \ecell &6&5& \ecell \\
\hline  \ecell &1&5&6&7&3&4&9&8 \\
\hline
\end{latinsq}
\end{center}

\noindent
A critical set of order 9 and size 43:

\begin{center}
\begin{latinsq}{9}
\hline  \ecell &3& \ecell & \ecell &9& \ecell &4&6& \ecell \\
\hline  \ecell & \ecell &1&9&8& \ecell &6& \ecell & \ecell \\
\hline  \ecell & \ecell & \ecell & \ecell & \ecell & \ecell & \ecell & \ecell & \ecell \\
\hline 7&9&8&4&6& \ecell &1&3&2 \\
\hline 9&8& \ecell & \ecell & \ecell & \ecell &3&2&1 \\
\hline 8& \ecell &9& \ecell &4& \ecell &2&1& \ecell \\
\hline  \ecell &6& \ecell &1&3& \ecell &7&9&8 \\
\hline  \ecell & \ecell & \ecell & \ecell &2& \ecell &9&8&7 \\
\hline  \ecell &4&6&2&1& \ecell &8&7&9 \\
\hline
\end{latinsq}
\end{center}

\noindent
A critical set of order 9 and size 44:

\begin{center}
\begin{latinsq}{9}
\hline \ecell & \ecell & \ecell & \ecell & \ecell & \ecell & \ecell & \ecell & \ecell \\
\hline \ecell &1& \ecell &3& \ecell &5& \ecell & \ecell &7 \\
\hline \ecell & \ecell &1&2& \ecell & \ecell & \ecell &6&5 \\
\hline \ecell &3&2&1& \ecell & \ecell &6&5&8 \\
\hline \ecell & \ecell & \ecell & \ecell &1& \ecell &2&3&4 \\
\hline \ecell &5& \ecell & \ecell &2&1&4&7&3 \\
\hline \ecell & \ecell & \ecell &5&3&2&1&4&6 \\
\hline \ecell & \ecell &6&7&4&3& \ecell &1&2 \\
\hline \ecell &7&5&6&8&4&3&2&1 \\
\hline
\end{latinsq}
\end{center}


\noindent
A critical set of order 10 and size 56:

\begin{center}
\begin{latinsq}[10]{10}
\hline  \ecell & \ecell & \ecell & \ecell & \ecell & \ecell & \ecell & \ecell & \ecell & \ecell \\
\hline  \ecell &1&4&3&6&5&8&7&10&9 \\
\hline  \ecell &4&1& \ecell & \ecell & \ecell &5& \ecell &6&8 \\
\hline  \ecell &3& \ecell &1& \ecell &8& \ecell & \ecell &7&5 \\
\hline  \ecell &6& \ecell & \ecell &1& \ecell &3&9& \ecell &4 \\
\hline  \ecell &5& \ecell & \ecell & \ecell &1&10&4&3& \ecell \\
\hline  \ecell &8&5& \ecell &3&10&1& \ecell &4&6 \\
\hline  \ecell &7& \ecell &6&9& \ecell & \ecell &1&5&3 \\
\hline  \ecell &10&6&7& \ecell &3&4&5&1& \ecell \\
\hline  \ecell &9&8&5&4& \ecell &6&3& \ecell &1 \\
\hline
\end{latinsq}
\end{center}

\noindent
A critical set of order 10 and size 57:

\begin{center}
\begin{latinsq}[10]{10}
\hline   \ecell & \ecell & \ecell & \ecell & \ecell & \ecell & \ecell & \ecell & \ecell & \ecell \\
\hline   \ecell &   1 & \ecell &   3 & \ecell &   5 & \ecell &   7 & \ecell &   9 \\
\hline   \ecell & \ecell &   1 &   2 & \ecell & \ecell &   5 & \ecell &   6 &   8 \\
\hline   \ecell &   3 &   2 &   1 & \ecell & \ecell &   9 &   6 &   7 &   5 \\
\hline   \ecell & \ecell & \ecell & \ecell &   1 &   2 &   3 & \ecell &   8 &   4 \\
\hline   \ecell &   5 & \ecell & \ecell &   2 &   1 &  10 &   4 &   3 & \ecell \\
\hline   \ecell & \ecell &   5 &   9 &   3 &  10 &   1 &   2 & \ecell &   6 \\
\hline   \ecell &   7 & \ecell &   6 & \ecell &   4 &   2 &   1 &   5 &   3 \\
\hline   \ecell & \ecell &   6 &   7 &   8 &   3 & \ecell &   5 &   1 &   2 \\
\hline   \ecell &   9 &   8 &   5 &   4 & \ecell &   6 &   3 &   2 &   1 \\
\hline
\end{latinsq}
\end{center}



\iflatexml
  \chapter{Critical sets in Latin squares of order~6}\label{app2}
\else
  \refstepcounter{chapter}
  \chapter*{Appendix \thechapter\newline\newline Critical sets in Latin squares of order~6}\label{app2}
  \addcontentsline{toc}{chapter}{\hspace{1.5em}Appendix~\thechapter\quad Critical sets in Latin squares of order~6}
  \vspace{0.5cm}
\fi

This appendix is associated with Chapter~\ref{ch8} and gives examples of 
critical sets of all possible sizes in each of the 12 main classes,
denoted 6.1 to 6.12, of $6 \times 6$ Latin squares.

\begin{footnotesize}
\begin{center}
\begin{tabular}{c@{\hspace{1.0mm}}c@{\hspace{1.0mm}}c@{\hspace{1.0mm}}c@{\hspace{1.0mm}}}
\begin{tabular}{|c|c|c|c|c|c|}
\hline 1 & 2 & 3 & \ecell & \ecell & \ecell \\
\hline 2 & 3 & \ecell & \ecell & \ecell & \ecell \\
\hline 3 & \ecell & \ecell & \ecell & \ecell & \ecell \\
\hline \ecell & \ecell & \ecell & \ecell & \ecell & \ecell \\
\hline \ecell & \ecell & \ecell & \ecell & \ecell & 4 \\
\hline \ecell & \ecell & \ecell & \ecell & 4 & 5 \\
\hline 
\end{tabular} &
\begin{tabular}{|c|c|c|c|c|c|}
\hline \ecell & \ecell & \ecell & \ecell & \ecell & \ecell \\
\hline \ecell & \ecell & \ecell & \ecell & \ecell & 1 \\
\hline \ecell & \ecell & \ecell & \ecell & 1 & 2 \\
\hline \ecell & \ecell & \ecell & 1 & 2 & 3 \\
\hline \ecell & \ecell & 1 & 2 & 3 & 4 \\
\hline 6 & \ecell & \ecell & \ecell & \ecell & \ecell \\
\hline 
\end{tabular} &
\begin{tabular}{|c|c|c|c|c|c|}
\hline \ecell & \ecell & \ecell & \ecell & \ecell & \ecell \\
\hline \ecell & \ecell & \ecell & \ecell & \ecell & 1 \\
\hline \ecell & \ecell & \ecell & \ecell & 1 & 2 \\
\hline \ecell & \ecell & \ecell & 1 & 2 & 3 \\
\hline \ecell & 6 & 1 & 2 & 3 & \ecell \\
\hline 6 & \ecell & \ecell & \ecell & \ecell & 5 \\
\hline 
\end{tabular} &
\begin{tabular}{|c|c|c|c|c|c|}
\hline \ecell & \ecell & \ecell & \ecell & \ecell & \ecell \\
\hline \ecell & \ecell & \ecell & \ecell & \ecell & 1 \\
\hline \ecell & \ecell & \ecell & \ecell & 1 & 2 \\
\hline \ecell & \ecell & 6 & 1 & 2 & \ecell \\
\hline \ecell & 6 & 1 & 2 & 3 & \ecell \\
\hline \ecell & \ecell & \ecell & 3 & 4 & 5 \\
\hline 
\end{tabular}\\ 
6.1, size 9
 & 6.1, size 11
 & 6.1, size 12
 & 6.1, size 13
\\
\end{tabular} 
\end{center} 
\begin{center}
\begin{tabular}{c@{\hspace{1.0mm}}c@{\hspace{1.0mm}}c@{\hspace{1.0mm}}c@{\hspace{1.0mm}}}
\begin{tabular}{|c|c|c|c|c|c|}
\hline \ecell & \ecell & \ecell & \ecell & \ecell & \ecell \\
\hline \ecell & \ecell & 1 & \ecell & \ecell & 4 \\
\hline \ecell & 1 & 2 & \ecell & 4 & 5 \\
\hline \ecell & \ecell & \ecell & \ecell & \ecell & 1 \\
\hline \ecell & \ecell & 4 & 3 & 1 & 2 \\
\hline \ecell & 4 & 5 & \ecell & 2 & \ecell \\
\hline 
\end{tabular} &
\begin{tabular}{|c|c|c|c|c|c|}
\hline \ecell & \ecell & \ecell & \ecell & \ecell & \ecell \\
\hline \ecell & \ecell & \ecell & \ecell & \ecell & 1 \\
\hline \ecell & \ecell & \ecell & \ecell & 1 & 2 \\
\hline \ecell & \ecell & \ecell & 1 & 2 & 3 \\
\hline \ecell & \ecell & 1 & 2 & 3 & 4 \\
\hline \ecell & 1 & 2 & 3 & 4 & 5 \\
\hline 
\end{tabular} &
\begin{tabular}{|c|c|c|c|c|c|}
\hline \ecell & \ecell & \ecell & \ecell & \ecell & \ecell \\
\hline \ecell & \ecell & \ecell & \ecell & \ecell & 1 \\
\hline \ecell & \ecell & 5 & \ecell & 1 & 2 \\
\hline \ecell & 5 & 6 & 1 & 2 & \ecell \\
\hline \ecell & 6 & 1 & 2 & \ecell & 4 \\
\hline \ecell & 1 & 2 & 3 & \ecell & 5 \\
\hline 
\end{tabular} &
\begin{tabular}{|c|c|c|c|c|c|}
\hline \ecell & \ecell & \ecell & \ecell & \ecell & \ecell \\
\hline \ecell & \ecell & \ecell & 5 & 6 & 1 \\
\hline \ecell & \ecell & 5 & \ecell & 1 & 2 \\
\hline \ecell & 5 & 6 & 1 & 2 & 3 \\
\hline \ecell & 6 & 1 & \ecell & 3 & \ecell \\
\hline \ecell & 1 & 2 & 3 & \ecell & \ecell \\
\hline 
\end{tabular}\\ 
6.1, size 14
 & 6.1, size 15
 & 6.1, size 16
 & 6.1, size 17
\\
\end{tabular} 
\end{center} 
\begin{center}
\begin{tabular}{c@{\hspace{1.0mm}}c@{\hspace{1.0mm}}c@{\hspace{1.0mm}}c@{\hspace{1.0mm}}}
\begin{tabular}{|c|c|c|c|c|c|}
\hline \ecell & \ecell & \ecell & \ecell & \ecell & \ecell \\
\hline \ecell & 1 & \ecell & 3 & \ecell & 5 \\
\hline \ecell & \ecell & \ecell & 6 & 1 & \ecell \\
\hline \ecell & 3 & 6 & \ecell & 2 & \ecell \\
\hline 5 & \ecell & \ecell & \ecell & \ecell & \ecell \\
\hline \ecell & \ecell & 2 & \ecell & \ecell & 4 \\
\hline 
\end{tabular} &
\begin{tabular}{|c|c|c|c|c|c|}
\hline \ecell & \ecell & \ecell & \ecell & \ecell & \ecell \\
\hline \ecell & 1 & \ecell & 3 & \ecell & 5 \\
\hline \ecell & \ecell & \ecell & \ecell & 1 & 2 \\
\hline \ecell & 3 & 6 & \ecell & \ecell & \ecell \\
\hline 5 & \ecell & \ecell & 2 & \ecell & \ecell \\
\hline 6 & \ecell & 2 & \ecell & \ecell & 4 \\
\hline 
\end{tabular} &
\begin{tabular}{|c|c|c|c|c|c|}
\hline \ecell & \ecell & \ecell & \ecell & \ecell & \ecell \\
\hline \ecell & 1 & \ecell & 3 & \ecell & 5 \\
\hline \ecell & \ecell & \ecell & \ecell & 1 & 2 \\
\hline \ecell & 3 & \ecell & 5 & 2 & \ecell \\
\hline \ecell & 6 & 1 & \ecell & 4 & \ecell \\
\hline \ecell & \ecell & 2 & \ecell & \ecell & 4 \\
\hline 
\end{tabular} &
\begin{tabular}{|c|c|c|c|c|c|}
\hline \ecell & \ecell & \ecell & \ecell & \ecell & \ecell \\
\hline \ecell & 1 & \ecell & 3 & \ecell & 5 \\
\hline \ecell & \ecell & \ecell & \ecell & 1 & 2 \\
\hline \ecell & 3 & \ecell & 5 & 2 & \ecell \\
\hline \ecell & \ecell & 1 & 2 & 4 & \ecell \\
\hline \ecell & 5 & 2 & \ecell & \ecell & 4 \\
\hline 
\end{tabular}\\ 
6.2, size 11
 & 6.2, size 12
 & 6.2, size 13
 & 6.2, size 14
\\
\end{tabular} 
\end{center} 
\begin{center}
\begin{tabular}{c@{\hspace{1.0mm}}c@{\hspace{1.0mm}}c@{\hspace{1.0mm}}c@{\hspace{1.0mm}}}
\begin{tabular}{|c|c|c|c|c|c|}
\hline \ecell & \ecell & \ecell & \ecell & \ecell & \ecell \\
\hline \ecell & 1 & \ecell & 3 & \ecell & 5 \\
\hline \ecell & \ecell & \ecell & \ecell & 1 & 2 \\
\hline \ecell & 3 & \ecell & 5 & 2 & \ecell \\
\hline \ecell & \ecell & 1 & 2 & 4 & \ecell \\
\hline 6 & \ecell & 2 & 1 & 3 & \ecell \\
\hline 
\end{tabular} &
\begin{tabular}{|c|c|c|c|c|c|}
\hline \ecell & \ecell & \ecell & \ecell & \ecell & \ecell \\
\hline \ecell & 1 & \ecell & 3 & \ecell & 5 \\
\hline \ecell & \ecell & \ecell & \ecell & 1 & 2 \\
\hline \ecell & 3 & 6 & 5 & 2 & 1 \\
\hline \ecell & 6 & 1 & \ecell & \ecell & \ecell \\
\hline \ecell & 5 & 2 & 1 & 3 & \ecell \\
\hline 
\end{tabular} &
\begin{tabular}{|c|c|c|c|c|c|}
\hline \ecell & \ecell & \ecell & \ecell & \ecell & \ecell \\
\hline \ecell & 1 & \ecell & 3 & \ecell & 5 \\
\hline \ecell & \ecell & \ecell & \ecell & 1 & 2 \\
\hline \ecell & 3 & \ecell & 5 & 2 & 1 \\
\hline \ecell & \ecell & 1 & 2 & \ecell & 3 \\
\hline \ecell & 5 & 2 & 1 & 3 & 4 \\
\hline 
\end{tabular} &
\begin{tabular}{|c|c|c|c|c|c|}
\hline \ecell & \ecell & \ecell & \ecell & 5 & 6 \\
\hline \ecell & 1 & \ecell & 3 & \ecell & \ecell \\
\hline \ecell & \ecell & 1 & \ecell & \ecell & \ecell \\
\hline 4 & \ecell & \ecell & \ecell & \ecell & 2 \\
\hline \ecell & 4 & \ecell & \ecell & 1 & \ecell \\
\hline 6 & \ecell & 2 & \ecell & \ecell & \ecell \\
\hline 
\end{tabular}\\ 
6.2, size 15
 & 6.2, size 16
 & 6.2, size 17
 & 6.3, size 11
\\
\end{tabular} 
\end{center} 
\begin{center}
\begin{tabular}{c@{\hspace{1.0mm}}c@{\hspace{1.0mm}}c@{\hspace{1.0mm}}c@{\hspace{1.0mm}}}
\begin{tabular}{|c|c|c|c|c|c|}
\hline \ecell & \ecell & \ecell & \ecell & \ecell & 6 \\
\hline \ecell & 1 & \ecell & 3 & \ecell & \ecell \\
\hline \ecell & \ecell & 1 & \ecell & 2 & \ecell \\
\hline \ecell & \ecell & 5 & 1 & \ecell & \ecell \\
\hline 5 & \ecell & \ecell & 2 & \ecell & \ecell \\
\hline 6 & \ecell & 2 & \ecell & 4 & \ecell \\
\hline 
\end{tabular} &
\begin{tabular}{|c|c|c|c|c|c|}
\hline \ecell & \ecell & \ecell & \ecell & \ecell & 6 \\
\hline \ecell & 1 & \ecell & 3 & \ecell & \ecell \\
\hline \ecell & \ecell & 1 & \ecell & 2 & \ecell \\
\hline \ecell & \ecell & \ecell & 1 & 3 & 2 \\
\hline 5 & 4 & \ecell & \ecell & \ecell & \ecell \\
\hline \ecell & 3 & \ecell & 5 & 4 & \ecell \\
\hline 
\end{tabular} &
\begin{tabular}{|c|c|c|c|c|c|}
\hline \ecell & \ecell & \ecell & \ecell & \ecell & 6 \\
\hline \ecell & 1 & \ecell & 3 & \ecell & \ecell \\
\hline \ecell & \ecell & 1 & \ecell & 2 & \ecell \\
\hline \ecell & \ecell & \ecell & 1 & 3 & 2 \\
\hline \ecell & 4 & \ecell & \ecell & 1 & 3 \\
\hline 6 & \ecell & 2 & \ecell & 4 & \ecell \\
\hline 
\end{tabular} &
\begin{tabular}{|c|c|c|c|c|c|}
\hline \ecell & \ecell & \ecell & \ecell & \ecell & \ecell \\
\hline \ecell & 1 & \ecell & 3 & \ecell & 5 \\
\hline \ecell & \ecell & 1 & \ecell & 2 & 4 \\
\hline \ecell & \ecell & \ecell & 1 & 3 & 2 \\
\hline \ecell & \ecell & 6 & 2 & 1 & \ecell \\
\hline 6 & \ecell & 2 & \ecell & 4 & \ecell \\
\hline 
\end{tabular}\\ 
6.3, size 12
 & 6.3, size 13
 & 6.3, size 14
 & 6.3, size 15
\\
\end{tabular} 
\end{center} 
\begin{center}
\begin{tabular}{c@{\hspace{1.0mm}}c@{\hspace{1.0mm}}c@{\hspace{1.0mm}}c@{\hspace{1.0mm}}}
\begin{tabular}{|c|c|c|c|c|c|}
\hline \ecell & \ecell & \ecell & \ecell & \ecell & \ecell \\
\hline \ecell & 1 & \ecell & 3 & \ecell & 5 \\
\hline \ecell & \ecell & 1 & \ecell & 2 & 4 \\
\hline \ecell & \ecell & \ecell & 1 & 3 & 2 \\
\hline \ecell & \ecell & 6 & 2 & 1 & \ecell \\
\hline \ecell & 3 & \ecell & 5 & 4 & 1 \\
\hline 
\end{tabular} &
\begin{tabular}{|c|c|c|c|c|c|}
\hline \ecell & \ecell & \ecell & \ecell & \ecell & \ecell \\
\hline \ecell & 1 & \ecell & 3 & \ecell & 5 \\
\hline \ecell & \ecell & 1 & \ecell & 2 & 4 \\
\hline \ecell & \ecell & \ecell & 1 & 3 & 2 \\
\hline \ecell & 4 & \ecell & \ecell & 1 & 3 \\
\hline \ecell & 3 & 2 & 5 & 4 & 1 \\
\hline 
\end{tabular} &
\begin{tabular}{|c|c|c|c|c|c|}
\hline \ecell & \ecell & \ecell & 4 & \ecell & 6 \\
\hline \ecell & 1 & \ecell & \ecell & \ecell & \ecell \\
\hline \ecell & 5 & 1 & \ecell & \ecell & \ecell \\
\hline 4 & \ecell & \ecell & \ecell & \ecell & 3 \\
\hline \ecell & \ecell & \ecell & \ecell & \ecell & \ecell \\
\hline 6 & \ecell & 2 & \ecell & \ecell & 4 \\
\hline 
\end{tabular} &
\begin{tabular}{|c|c|c|c|c|c|}
\hline \ecell & \ecell & \ecell & \ecell & \ecell & \ecell \\
\hline \ecell & 1 & \ecell & 3 & \ecell & 5 \\
\hline \ecell & \ecell & 1 & \ecell & \ecell & 2 \\
\hline \ecell & \ecell & \ecell & 1 & \ecell & 3 \\
\hline \ecell & 4 & 6 & \ecell & \ecell & \ecell \\
\hline 6 & \ecell & 2 & \ecell & \ecell & \ecell \\
\hline 
\end{tabular}\\ 
6.3, size 16
 & 6.3, size 17
 & 6.4, size 10
 & 6.4, size 11
\\
\end{tabular} 
\end{center} 
\begin{center}
\begin{tabular}{c@{\hspace{1.0mm}}c@{\hspace{1.0mm}}c@{\hspace{1.0mm}}c@{\hspace{1.0mm}}}
\begin{tabular}{|c|c|c|c|c|c|}
\hline \ecell & \ecell & \ecell & \ecell & \ecell & \ecell \\
\hline \ecell & 1 & \ecell & 3 & \ecell & 5 \\
\hline \ecell & \ecell & 1 & \ecell & \ecell & 2 \\
\hline \ecell & \ecell & \ecell & 1 & \ecell & 3 \\
\hline \ecell & 4 & 6 & \ecell & \ecell & \ecell \\
\hline 6 & 3 & \ecell & 5 & \ecell & \ecell \\
\hline 
\end{tabular} &
\begin{tabular}{|c|c|c|c|c|c|}
\hline \ecell & \ecell & \ecell & \ecell & \ecell & \ecell \\
\hline \ecell & 1 & \ecell & 3 & \ecell & 5 \\
\hline \ecell & \ecell & 1 & \ecell & \ecell & 2 \\
\hline \ecell & \ecell & \ecell & 1 & \ecell & 3 \\
\hline \ecell & \ecell & 6 & 2 & 3 & \ecell \\
\hline 6 & 3 & \ecell & \ecell & 1 & \ecell \\
\hline 
\end{tabular} &
\begin{tabular}{|c|c|c|c|c|c|}
\hline \ecell & \ecell & \ecell & \ecell & \ecell & \ecell \\
\hline \ecell & 1 & \ecell & 3 & \ecell & 5 \\
\hline \ecell & \ecell & 1 & \ecell & \ecell & 2 \\
\hline \ecell & \ecell & \ecell & 1 & \ecell & 3 \\
\hline \ecell & \ecell & 6 & 2 & 3 & \ecell \\
\hline \ecell & 3 & \ecell & 5 & 1 & 4 \\
\hline 
\end{tabular} &
\begin{tabular}{|c|c|c|c|c|c|}
\hline \ecell & \ecell & \ecell & \ecell & \ecell & \ecell \\
\hline \ecell & 1 & \ecell & 3 & \ecell & 5 \\
\hline \ecell & \ecell & 1 & \ecell & \ecell & 2 \\
\hline \ecell & \ecell & \ecell & 1 & \ecell & 3 \\
\hline \ecell & 4 & 6 & 2 & 3 & \ecell \\
\hline \ecell & 3 & 2 & \ecell & 1 & 4 \\
\hline 
\end{tabular}\\ 
6.4, size 12
 & 6.4, size 13
 & 6.4, size 14
 & 6.4, size 15
\\
\end{tabular} 
\end{center} 
\begin{center}
\begin{tabular}{c@{\hspace{1.0mm}}c@{\hspace{1.0mm}}c@{\hspace{1.0mm}}c@{\hspace{1.0mm}}}
\begin{tabular}{|c|c|c|c|c|c|}
\hline \ecell & \ecell & \ecell & \ecell & \ecell & \ecell \\
\hline \ecell & 1 & \ecell & 3 & \ecell & 5 \\
\hline \ecell & \ecell & 1 & \ecell & \ecell & 2 \\
\hline \ecell & \ecell & \ecell & 1 & \ecell & 3 \\
\hline \ecell & 4 & \ecell & 2 & 3 & 1 \\
\hline \ecell & 3 & 2 & 5 & 1 & 4 \\
\hline 
\end{tabular} &
\begin{tabular}{|c|c|c|c|c|c|}
\hline \ecell & \ecell & \ecell & \ecell & \ecell & \ecell \\
\hline \ecell & 1 & \ecell & 3 & \ecell & 5 \\
\hline 3 & \ecell & 1 & \ecell & \ecell & 2 \\
\hline 4 & \ecell & 5 & 1 & \ecell & 3 \\
\hline 5 & 4 & \ecell & 2 & \ecell & \ecell \\
\hline \ecell & 3 & 2 & 5 & \ecell & 4 \\
\hline 
\end{tabular} &
\begin{tabular}{|c|c|c|c|c|c|}
\hline \ecell & \ecell & \ecell & \ecell & \ecell & 6 \\
\hline \ecell & 1 & \ecell & 3 & \ecell & 5 \\
\hline \ecell & \ecell & 5 & \ecell & \ecell & \ecell \\
\hline \ecell & 5 & \ecell & \ecell & \ecell & 1 \\
\hline \ecell & \ecell & 2 & \ecell & 4 & \ecell \\
\hline \ecell & \ecell & \ecell & \ecell & 2 & \ecell \\
\hline 
\end{tabular} &
\begin{tabular}{|c|c|c|c|c|c|}
\hline \ecell & \ecell & \ecell & \ecell & \ecell & \ecell \\
\hline \ecell & 1 & \ecell & 3 & \ecell & 5 \\
\hline \ecell & \ecell & \ecell & \ecell & 1 & 2 \\
\hline \ecell & 5 & \ecell & \ecell & \ecell & \ecell \\
\hline \ecell & \ecell & 2 & \ecell & 4 & \ecell \\
\hline \ecell & \ecell & 1 & \ecell & 2 & 4 \\
\hline 
\end{tabular}\\ 
6.4, size 16
 & 6.4, size 17
 & 6.5, size 10
 & 6.5, size 11
\\
\end{tabular} 
\end{center} 
\begin{center}
\begin{tabular}{c@{\hspace{1.0mm}}c@{\hspace{1.0mm}}c@{\hspace{1.0mm}}c@{\hspace{1.0mm}}}
\begin{tabular}{|c|c|c|c|c|c|}
\hline \ecell & \ecell & \ecell & \ecell & \ecell & \ecell \\
\hline \ecell & 1 & \ecell & 3 & \ecell & 5 \\
\hline \ecell & \ecell & \ecell & \ecell & 1 & 2 \\
\hline \ecell & \ecell & \ecell & 2 & 3 & 1 \\
\hline 5 & 6 & \ecell & \ecell & \ecell & \ecell \\
\hline 6 & \ecell & \ecell & \ecell & 2 & \ecell \\
\hline 
\end{tabular} &
\begin{tabular}{|c|c|c|c|c|c|}
\hline \ecell & \ecell & \ecell & \ecell & \ecell & \ecell \\
\hline \ecell & 1 & \ecell & 3 & \ecell & 5 \\
\hline \ecell & \ecell & \ecell & \ecell & 1 & 2 \\
\hline \ecell & \ecell & \ecell & 2 & 3 & 1 \\
\hline \ecell & \ecell & 2 & 1 & 4 & 3 \\
\hline 6 & \ecell & \ecell & \ecell & \ecell & \ecell \\
\hline 
\end{tabular} &
\begin{tabular}{|c|c|c|c|c|c|}
\hline \ecell & \ecell & \ecell & \ecell & \ecell & \ecell \\
\hline \ecell & 1 & \ecell & 3 & \ecell & 5 \\
\hline \ecell & \ecell & \ecell & \ecell & 1 & 2 \\
\hline \ecell & \ecell & \ecell & 2 & 3 & 1 \\
\hline 5 & \ecell & \ecell & \ecell & 4 & 3 \\
\hline 6 & 3 & \ecell & \ecell & 2 & \ecell \\
\hline 
\end{tabular} &
\begin{tabular}{|c|c|c|c|c|c|}
\hline \ecell & \ecell & \ecell & \ecell & \ecell & \ecell \\
\hline \ecell & 1 & \ecell & 3 & \ecell & 5 \\
\hline \ecell & \ecell & \ecell & \ecell & 1 & 2 \\
\hline \ecell & \ecell & \ecell & 2 & 3 & 1 \\
\hline \ecell & 6 & 2 & \ecell & 4 & \ecell \\
\hline \ecell & \ecell & 1 & 5 & 2 & 4 \\
\hline 
\end{tabular}\\ 
6.5, size 12
 & 6.5, size 13
 & 6.5, size 14
 & 6.5, size 15
\\
\end{tabular} 
\end{center} 
\begin{center}
\begin{tabular}{c@{\hspace{1.0mm}}c@{\hspace{1.0mm}}c@{\hspace{1.0mm}}c@{\hspace{1.0mm}}}
\begin{tabular}{|c|c|c|c|c|c|}
\hline \ecell & \ecell & \ecell & \ecell & \ecell & \ecell \\
\hline \ecell & 1 & \ecell & 3 & \ecell & 5 \\
\hline \ecell & \ecell & \ecell & \ecell & 1 & 2 \\
\hline \ecell & \ecell & \ecell & 2 & 3 & 1 \\
\hline \ecell & 6 & 2 & 1 & 4 & \ecell \\
\hline \ecell & 3 & 1 & \ecell & 2 & 4 \\
\hline 
\end{tabular} &
\begin{tabular}{|c|c|c|c|c|c|}
\hline \ecell & \ecell & \ecell & \ecell & \ecell & \ecell \\
\hline \ecell & 1 & \ecell & 3 & \ecell & 5 \\
\hline \ecell & \ecell & \ecell & \ecell & 1 & 2 \\
\hline \ecell & \ecell & \ecell & 2 & 3 & 1 \\
\hline \ecell & \ecell & 2 & 1 & 4 & 3 \\
\hline \ecell & 3 & 1 & 5 & 2 & 4 \\
\hline 
\end{tabular} &
\begin{tabular}{|c|c|c|c|c|c|}
\hline \ecell & \ecell & \ecell & \ecell & \ecell & 6 \\
\hline \ecell & 1 & \ecell & 3 & \ecell & \ecell \\
\hline \ecell & \ecell & \ecell & \ecell & 1 & \ecell \\
\hline 4 & 5 & \ecell & \ecell & \ecell & \ecell \\
\hline 5 & 6 & \ecell & \ecell & \ecell & 4 \\
\hline \ecell & \ecell & 2 & \ecell & 4 & \ecell \\
\hline 
\end{tabular} &
\begin{tabular}{|c|c|c|c|c|c|}
\hline \ecell & \ecell & \ecell & \ecell & \ecell & 6 \\
\hline \ecell & 1 & \ecell & 3 & \ecell & \ecell \\
\hline \ecell & \ecell & \ecell & \ecell & 1 & \ecell \\
\hline \ecell & \ecell & \ecell & 1 & 2 & 3 \\
\hline 5 & \ecell & \ecell & 2 & \ecell & \ecell \\
\hline 6 & \ecell & 2 & \ecell & 4 & \ecell \\
\hline 
\end{tabular}\\ 
6.5, size 16
 & 6.5, size 17
 & 6.6, size 11
 & 6.6, size 12
\\
\end{tabular} 
\end{center} 
\begin{center}
\begin{tabular}{c@{\hspace{1.0mm}}c@{\hspace{1.0mm}}c@{\hspace{1.0mm}}c@{\hspace{1.0mm}}}
\begin{tabular}{|c|c|c|c|c|c|}
\hline \ecell & \ecell & \ecell & \ecell & \ecell & \ecell \\
\hline \ecell & 1 & \ecell & 3 & \ecell & 5 \\
\hline \ecell & \ecell & \ecell & \ecell & 1 & 2 \\
\hline \ecell & \ecell & \ecell & 1 & 2 & 3 \\
\hline 5 & 6 & \ecell & \ecell & \ecell & \ecell \\
\hline 6 & \ecell & 2 & \ecell & 4 & \ecell \\
\hline 
\end{tabular} &
\begin{tabular}{|c|c|c|c|c|c|}
\hline \ecell & \ecell & \ecell & \ecell & \ecell & \ecell \\
\hline \ecell & 1 & \ecell & 3 & \ecell & 5 \\
\hline \ecell & \ecell & \ecell & \ecell & 1 & 2 \\
\hline \ecell & \ecell & \ecell & 1 & 2 & 3 \\
\hline \ecell & 6 & 1 & \ecell & 3 & \ecell \\
\hline 6 & \ecell & 2 & \ecell & 4 & \ecell \\
\hline 
\end{tabular} &
\begin{tabular}{|c|c|c|c|c|c|}
\hline \ecell & \ecell & \ecell & \ecell & \ecell & \ecell \\
\hline \ecell & 1 & \ecell & 3 & \ecell & 5 \\
\hline \ecell & \ecell & \ecell & \ecell & 1 & 2 \\
\hline \ecell & \ecell & \ecell & 1 & 2 & 3 \\
\hline \ecell & \ecell & 1 & 2 & 3 & 4 \\
\hline 6 & \ecell & 2 & \ecell & 4 & \ecell \\
\hline 
\end{tabular} &
\begin{tabular}{|c|c|c|c|c|c|}
\hline \ecell & \ecell & \ecell & \ecell & \ecell & \ecell \\
\hline \ecell & 1 & \ecell & 3 & \ecell & 5 \\
\hline \ecell & \ecell & \ecell & \ecell & 1 & 2 \\
\hline \ecell & \ecell & 6 & 1 & 2 & \ecell \\
\hline \ecell & \ecell & 1 & 2 & 3 & 4 \\
\hline \ecell & 3 & \ecell & 5 & 4 & 1 \\
\hline 
\end{tabular}\\ 
6.6, size 13
 & 6.6, size 14
 & 6.6, size 15
 & 6.6, size 16
\\
\end{tabular} 
\end{center} 
\begin{center}
\begin{tabular}{c@{\hspace{1.0mm}}c@{\hspace{1.0mm}}c@{\hspace{1.0mm}}c@{\hspace{1.0mm}}}
\begin{tabular}{|c|c|c|c|c|c|}
\hline \ecell & \ecell & \ecell & \ecell & \ecell & \ecell \\
\hline \ecell & 1 & \ecell & 3 & \ecell & 5 \\
\hline \ecell & \ecell & \ecell & \ecell & 1 & 2 \\
\hline \ecell & \ecell & \ecell & 1 & 2 & 3 \\
\hline \ecell & \ecell & 1 & 2 & 3 & 4 \\
\hline \ecell & 3 & 2 & 5 & 4 & 1 \\
\hline 
\end{tabular} &
\begin{tabular}{|c|c|c|c|c|c|}
\hline \ecell & \ecell & \ecell & \ecell & \ecell & 6 \\
\hline \ecell & \ecell & 1 & \ecell & \ecell & \ecell \\
\hline \ecell & 1 & 2 & 5 & \ecell & \ecell \\
\hline 4 & 5 & \ecell & \ecell & \ecell & \ecell \\
\hline 5 & \ecell & \ecell & 3 & \ecell & \ecell \\
\hline \ecell & \ecell & \ecell & 2 & 3 & 1 \\
\hline 
\end{tabular} &
\begin{tabular}{|c|c|c|c|c|c|}
\hline \ecell & \ecell & \ecell & \ecell & \ecell & \ecell \\
\hline \ecell & \ecell & 1 & \ecell & \ecell & 5 \\
\hline 3 & \ecell & \ecell & \ecell & 6 & \ecell \\
\hline \ecell & \ecell & \ecell & 1 & 2 & 3 \\
\hline \ecell & 6 & 4 & \ecell & 1 & \ecell \\
\hline 6 & 4 & 5 & \ecell & \ecell & \ecell \\
\hline 
\end{tabular} &
\begin{tabular}{|c|c|c|c|c|c|}
\hline \ecell & \ecell & \ecell & \ecell & \ecell & \ecell \\
\hline \ecell & \ecell & 1 & \ecell & \ecell & 5 \\
\hline \ecell & 1 & 2 & \ecell & 6 & \ecell \\
\hline \ecell & \ecell & \ecell & 1 & 2 & 3 \\
\hline \ecell & 6 & \ecell & 3 & 1 & \ecell \\
\hline 6 & 4 & \ecell & \ecell & 3 & \ecell \\
\hline 
\end{tabular}\\ 
6.6, size 17
 & 6.7, size 12
 & 6.7, size 13
 & 6.7, size 14
\\
\end{tabular} 
\end{center} 
\begin{center}
\begin{tabular}{c@{\hspace{1.0mm}}c@{\hspace{1.0mm}}c@{\hspace{1.0mm}}c@{\hspace{1.0mm}}}
\begin{tabular}{|c|c|c|c|c|c|}
\hline \ecell & \ecell & \ecell & \ecell & \ecell & \ecell \\
\hline \ecell & \ecell & 1 & \ecell & \ecell & 5 \\
\hline \ecell & 1 & 2 & \ecell & 6 & \ecell \\
\hline \ecell & \ecell & \ecell & 1 & 2 & 3 \\
\hline \ecell & \ecell & 4 & 3 & 1 & 2 \\
\hline 6 & \ecell & \ecell & 2 & 3 & \ecell \\
\hline 
\end{tabular} &
\begin{tabular}{|c|c|c|c|c|c|}
\hline \ecell & \ecell & \ecell & \ecell & \ecell & \ecell \\
\hline \ecell & \ecell & 1 & \ecell & \ecell & 5 \\
\hline \ecell & 1 & 2 & \ecell & 6 & \ecell \\
\hline \ecell & \ecell & \ecell & 1 & 2 & 3 \\
\hline \ecell & \ecell & 4 & 3 & 1 & 2 \\
\hline \ecell & 4 & 5 & 2 & \ecell & 1 \\
\hline 
\end{tabular} &
\begin{tabular}{|c|c|c|c|c|c|}
\hline \ecell & \ecell & \ecell & \ecell & \ecell & \ecell \\
\hline \ecell & \ecell & 1 & \ecell & \ecell & 5 \\
\hline \ecell & 1 & 2 & \ecell & 6 & \ecell \\
\hline \ecell & 5 & 6 & 1 & 2 & 3 \\
\hline \ecell & \ecell & \ecell & 3 & 1 & 2 \\
\hline \ecell & \ecell & 5 & 2 & 3 & 1 \\
\hline 
\end{tabular} &
\begin{tabular}{|c|c|c|c|c|c|}
\hline \ecell & \ecell & \ecell & \ecell & \ecell & \ecell \\
\hline \ecell & \ecell & 1 & \ecell & \ecell & 5 \\
\hline \ecell & 1 & 2 & 5 & \ecell & 4 \\
\hline \ecell & \ecell & \ecell & 1 & 2 & 3 \\
\hline \ecell & \ecell & 4 & 3 & 1 & 2 \\
\hline \ecell & 4 & 5 & 2 & 3 & 1 \\
\hline 
\end{tabular}\\ 
6.7, size 15
 & 6.7, size 16
 & 6.7, size 17
 & 6.7, size 18
\\
\end{tabular} 
\end{center} 
\begin{center}
\begin{tabular}{c@{\hspace{1.0mm}}c@{\hspace{1.0mm}}c@{\hspace{1.0mm}}c@{\hspace{1.0mm}}}
\begin{tabular}{|c|c|c|c|c|c|}
\hline \ecell & \ecell & \ecell & \ecell & \ecell & \ecell \\
\hline \ecell & 1 & \ecell & 3 & 6 & \ecell \\
\hline \ecell & 5 & 1 & \ecell & \ecell & 4 \\
\hline 4 & \ecell & \ecell & \ecell & \ecell & 3 \\
\hline \ecell & \ecell & 6 & \ecell & \ecell & \ecell \\
\hline \ecell & \ecell & \ecell & \ecell & 3 & 2 \\
\hline 
\end{tabular} &
\begin{tabular}{|c|c|c|c|c|c|}
\hline \ecell & \ecell & \ecell & \ecell & \ecell & \ecell \\
\hline \ecell & 1 & \ecell & 3 & \ecell & 5 \\
\hline \ecell & \ecell & 1 & \ecell & 2 & 4 \\
\hline 4 & 6 & \ecell & \ecell & \ecell & \ecell \\
\hline \ecell & 3 & 6 & \ecell & \ecell & \ecell \\
\hline \ecell & \ecell & \ecell & \ecell & 3 & 2 \\
\hline 
\end{tabular} &
\begin{tabular}{|c|c|c|c|c|c|}
\hline \ecell & \ecell & \ecell & \ecell & \ecell & \ecell \\
\hline \ecell & 1 & \ecell & 3 & \ecell & 5 \\
\hline \ecell & \ecell & 1 & \ecell & 2 & 4 \\
\hline \ecell & \ecell & 2 & 5 & \ecell & \ecell \\
\hline \ecell & 3 & \ecell & \ecell & 4 & \ecell \\
\hline \ecell & 4 & 5 & \ecell & \ecell & 2 \\
\hline 
\end{tabular} &
\begin{tabular}{|c|c|c|c|c|c|}
\hline \ecell & \ecell & \ecell & \ecell & \ecell & \ecell \\
\hline \ecell & 1 & \ecell & 3 & \ecell & 5 \\
\hline \ecell & \ecell & 1 & \ecell & 2 & 4 \\
\hline \ecell & \ecell & 2 & 5 & \ecell & \ecell \\
\hline \ecell & 3 & 6 & \ecell & 4 & \ecell \\
\hline \ecell & 4 & \ecell & \ecell & 3 & 2 \\
\hline 
\end{tabular}\\ 
6.8, size 11
 & 6.8, size 12
 & 6.8, size 13
 & 6.8, size 14
\\
\end{tabular} 
\end{center} 
\begin{center}
\begin{tabular}{c@{\hspace{1.0mm}}c@{\hspace{1.0mm}}c@{\hspace{1.0mm}}c@{\hspace{1.0mm}}}
\begin{tabular}{|c|c|c|c|c|c|}
\hline \ecell & \ecell & \ecell & \ecell & \ecell & \ecell \\
\hline \ecell & 1 & \ecell & 3 & \ecell & 5 \\
\hline \ecell & \ecell & 1 & \ecell & 2 & 4 \\
\hline \ecell & \ecell & 2 & 5 & \ecell & \ecell \\
\hline 5 & \ecell & 6 & \ecell & 4 & 1 \\
\hline 6 & \ecell & 5 & \ecell & \ecell & 2 \\
\hline 
\end{tabular} &
\begin{tabular}{|c|c|c|c|c|c|}
\hline \ecell & \ecell & \ecell & \ecell & \ecell & \ecell \\
\hline \ecell & 1 & \ecell & 3 & \ecell & 5 \\
\hline \ecell & \ecell & 1 & 6 & 2 & \ecell \\
\hline \ecell & 6 & 2 & \ecell & 1 & 3 \\
\hline \ecell & 3 & 6 & \ecell & 4 & 1 \\
\hline \ecell & \ecell & \ecell & 1 & 3 & \ecell \\
\hline 
\end{tabular} &
\begin{tabular}{|c|c|c|c|c|c|}
\hline \ecell & \ecell & \ecell & \ecell & \ecell & \ecell \\
\hline \ecell & 1 & \ecell & 3 & \ecell & 5 \\
\hline 3 & 5 & \ecell & \ecell & \ecell & 4 \\
\hline 4 & \ecell & \ecell & 5 & 1 & 3 \\
\hline 5 & 3 & \ecell & \ecell & 4 & 1 \\
\hline \ecell & 4 & \ecell & \ecell & 3 & 2 \\
\hline 
\end{tabular} &
\begin{tabular}{|c|c|c|c|c|c|}
\hline \ecell & \ecell & \ecell & \ecell & \ecell & 6 \\
\hline \ecell & 1 & 4 & 3 & \ecell & \ecell \\
\hline \ecell & 5 & 1 & \ecell & \ecell & \ecell \\
\hline \ecell & \ecell & \ecell & \ecell & \ecell & \ecell \\
\hline \ecell & 4 & \ecell & \ecell & 1 & \ecell \\
\hline 6 & \ecell & \ecell & \ecell & \ecell & 2 \\
\hline 
\end{tabular}\\ 
6.8, size 15
 & 6.8, size 16
 & 6.8, size 17
 & 6.9, size 10
\\
\end{tabular} 
\end{center} 
\begin{center}
\begin{tabular}{c@{\hspace{1.0mm}}c@{\hspace{1.0mm}}c@{\hspace{1.0mm}}c@{\hspace{1.0mm}}}
\begin{tabular}{|c|c|c|c|c|c|}
\hline \ecell & \ecell & \ecell & \ecell & \ecell & \ecell \\
\hline \ecell & 1 & \ecell & 3 & \ecell & 5 \\
\hline \ecell & \ecell & 1 & 6 & 2 & \ecell \\
\hline \ecell & \ecell & 2 & \ecell & \ecell & \ecell \\
\hline 5 & 4 & \ecell & \ecell & \ecell & 3 \\
\hline \ecell & 3 & \ecell & \ecell & \ecell & \ecell \\
\hline 
\end{tabular} &
\begin{tabular}{|c|c|c|c|c|c|}
\hline \ecell & \ecell & \ecell & \ecell & \ecell & \ecell \\
\hline \ecell & 1 & \ecell & 3 & \ecell & 5 \\
\hline \ecell & \ecell & 1 & \ecell & 2 & 4 \\
\hline \ecell & \ecell & 2 & 5 & \ecell & \ecell \\
\hline \ecell & \ecell & 6 & 2 & 1 & \ecell \\
\hline \ecell & 3 & \ecell & \ecell & \ecell & \ecell \\
\hline 
\end{tabular} &
\begin{tabular}{|c|c|c|c|c|c|}
\hline \ecell & \ecell & \ecell & \ecell & \ecell & \ecell \\
\hline \ecell & 1 & \ecell & 3 & \ecell & 5 \\
\hline \ecell & \ecell & 1 & \ecell & 2 & 4 \\
\hline \ecell & \ecell & 2 & 5 & \ecell & \ecell \\
\hline \ecell & \ecell & 6 & 2 & 1 & \ecell \\
\hline 6 & \ecell & \ecell & \ecell & 4 & \ecell \\
\hline 
\end{tabular} &
\begin{tabular}{|c|c|c|c|c|c|}
\hline \ecell & \ecell & \ecell & \ecell & \ecell & \ecell \\
\hline \ecell & 1 & \ecell & 3 & \ecell & 5 \\
\hline \ecell & \ecell & 1 & \ecell & 2 & 4 \\
\hline \ecell & \ecell & 2 & 5 & \ecell & \ecell \\
\hline \ecell & \ecell & \ecell & 2 & 1 & 3 \\
\hline \ecell & 3 & 5 & \ecell & 4 & \ecell \\
\hline 
\end{tabular}\\ 
6.9, size 11
 & 6.9, size 12
 & 6.9, size 13
 & 6.9, size 14
\\
\end{tabular} 
\end{center} 
\begin{center}
\begin{tabular}{c@{\hspace{1.0mm}}c@{\hspace{1.0mm}}c@{\hspace{1.0mm}}c@{\hspace{1.0mm}}}
\begin{tabular}{|c|c|c|c|c|c|}
\hline \ecell & \ecell & \ecell & \ecell & \ecell & \ecell \\
\hline \ecell & 1 & \ecell & 3 & \ecell & 5 \\
\hline \ecell & \ecell & 1 & \ecell & 2 & 4 \\
\hline 4 & 6 & \ecell & \ecell & \ecell & \ecell \\
\hline 5 & 4 & 6 & \ecell & \ecell & 3 \\
\hline 6 & 3 & 5 & \ecell & \ecell & \ecell \\
\hline 
\end{tabular} &
\begin{tabular}{|c|c|c|c|c|c|}
\hline \ecell & \ecell & \ecell & \ecell & \ecell & \ecell \\
\hline \ecell & 1 & \ecell & 3 & \ecell & 5 \\
\hline 3 & \ecell & \ecell & \ecell & 2 & 4 \\
\hline 4 & \ecell & \ecell & 5 & 3 & 1 \\
\hline 5 & 4 & \ecell & 2 & 1 & 3 \\
\hline \ecell & \ecell & \ecell & \ecell & \ecell & 2 \\
\hline 
\end{tabular} &
\begin{tabular}{|c|c|c|c|c|c|}
\hline \ecell & 2 & 3 & 4 & 5 & 6 \\
\hline \ecell & \ecell & 4 & 3 & \ecell & 5 \\
\hline \ecell & 5 & \ecell & \ecell & \ecell & 4 \\
\hline \ecell & \ecell & \ecell & \ecell & \ecell & \ecell \\
\hline \ecell & 4 & 6 & \ecell & \ecell & 3 \\
\hline \ecell & 3 & 5 & \ecell & 4 & 2 \\
\hline 
\end{tabular} &
\begin{tabular}{|c|c|c|c|c|c|}
\hline 1 & \ecell & \ecell & \ecell & 5 & 6 \\
\hline \ecell & \ecell & \ecell & 3 & \ecell & \ecell \\
\hline \ecell & \ecell & 1 & \ecell & \ecell & \ecell \\
\hline 4 & \ecell & \ecell & \ecell & \ecell & \ecell \\
\hline \ecell & 3 & \ecell & 2 & \ecell & \ecell \\
\hline 6 & \ecell & \ecell & \ecell & \ecell & 1 \\
\hline 
\end{tabular}\\ 
6.9, size 15
 & 6.9, size 16
 & 6.9, size 17
 & 6.10, size 10
\\
\end{tabular} 
\end{center} 
\begin{center}
\begin{tabular}{c@{\hspace{1.0mm}}c@{\hspace{1.0mm}}c@{\hspace{1.0mm}}c@{\hspace{1.0mm}}}
\begin{tabular}{|c|c|c|c|c|c|}
\hline \ecell & \ecell & \ecell & \ecell & \ecell & 6 \\
\hline \ecell & 1 & \ecell & 3 & \ecell & \ecell \\
\hline \ecell & \ecell & 1 & \ecell & \ecell & \ecell \\
\hline 4 & \ecell & \ecell & \ecell & \ecell & \ecell \\
\hline \ecell & 3 & \ecell & 2 & 1 & \ecell \\
\hline 6 & \ecell & \ecell & 5 & 3 & \ecell \\
\hline 
\end{tabular} &
\begin{tabular}{|c|c|c|c|c|c|}
\hline \ecell & \ecell & \ecell & \ecell & \ecell & \ecell \\
\hline \ecell & 1 & \ecell & 3 & \ecell & 5 \\
\hline \ecell & \ecell & 1 & \ecell & \ecell & 2 \\
\hline \ecell & \ecell & \ecell & 1 & \ecell & 3 \\
\hline 5 & \ecell & 6 & \ecell & \ecell & \ecell \\
\hline 6 & 4 & \ecell & \ecell & 3 & \ecell \\
\hline 
\end{tabular} &
\begin{tabular}{|c|c|c|c|c|c|}
\hline \ecell & \ecell & \ecell & \ecell & \ecell & \ecell \\
\hline \ecell & 1 & \ecell & 3 & \ecell & 5 \\
\hline \ecell & \ecell & 1 & \ecell & \ecell & 2 \\
\hline \ecell & 6 & \ecell & 1 & \ecell & \ecell \\
\hline \ecell & 3 & 6 & \ecell & 1 & \ecell \\
\hline 6 & \ecell & 2 & 5 & \ecell & \ecell \\
\hline 
\end{tabular} &
\begin{tabular}{|c|c|c|c|c|c|}
\hline \ecell & \ecell & \ecell & \ecell & \ecell & \ecell \\
\hline \ecell & 1 & \ecell & 3 & \ecell & 5 \\
\hline \ecell & \ecell & 1 & \ecell & \ecell & 2 \\
\hline \ecell & \ecell & \ecell & 1 & \ecell & 3 \\
\hline \ecell & 3 & \ecell & 2 & 1 & 4 \\
\hline 6 & 4 & \ecell & \ecell & 3 & \ecell \\
\hline 
\end{tabular}\\ 
6.10, size 11
 & 6.10, size 12
 & 6.10, size 13
 & 6.10, size 14
\\
\end{tabular} 
\end{center} 
\begin{center}
\begin{tabular}{c@{\hspace{1.0mm}}c@{\hspace{1.0mm}}c@{\hspace{1.0mm}}c@{\hspace{1.0mm}}}
\begin{tabular}{|c|c|c|c|c|c|}
\hline \ecell & \ecell & \ecell & \ecell & \ecell & \ecell \\
\hline \ecell & 1 & \ecell & 3 & \ecell & 5 \\
\hline \ecell & \ecell & 1 & \ecell & \ecell & 2 \\
\hline \ecell & \ecell & \ecell & 1 & \ecell & 3 \\
\hline \ecell & 3 & 6 & 2 & 1 & \ecell \\
\hline \ecell & 4 & 2 & \ecell & 3 & 1 \\
\hline 
\end{tabular} &
\begin{tabular}{|c|c|c|c|c|c|}
\hline \ecell & \ecell & \ecell & \ecell & \ecell & \ecell \\
\hline \ecell & 1 & \ecell & 3 & \ecell & 5 \\
\hline \ecell & \ecell & 1 & \ecell & \ecell & 2 \\
\hline \ecell & \ecell & \ecell & 1 & \ecell & 3 \\
\hline \ecell & 3 & \ecell & 2 & 1 & 4 \\
\hline \ecell & 4 & 2 & 5 & 3 & 1 \\
\hline 
\end{tabular} &
\begin{tabular}{|c|c|c|c|c|c|}
\hline \ecell & \ecell & \ecell & \ecell & \ecell & \ecell \\
\hline \ecell & 1 & \ecell & 3 & 6 & \ecell \\
\hline \ecell & \ecell & 1 & 6 & \ecell & \ecell \\
\hline \ecell & 6 & 5 & 1 & 2 & 3 \\
\hline \ecell & 3 & 6 & 2 & 1 & \ecell \\
\hline \ecell & \ecell & \ecell & 5 & 3 & 1 \\
\hline 
\end{tabular} &
\begin{tabular}{|c|c|c|c|c|c|}
\hline \ecell & \ecell & \ecell & \ecell & \ecell & 6 \\
\hline 2 & 1 & \ecell & \ecell & \ecell & \ecell \\
\hline \ecell & 4 & \ecell & \ecell & 1 & \ecell \\
\hline \ecell & \ecell & \ecell & \ecell & 3 & \ecell \\
\hline \ecell & 3 & \ecell & 1 & \ecell & \ecell \\
\hline 6 & \ecell & \ecell & \ecell & \ecell & 2 \\
\hline 
\end{tabular}\\ 
6.10, size 15
 & 6.10, size 16
 & 6.10, size 17
 & 6.11, size 10
\\
\end{tabular} 
\end{center} 
\begin{center}
\begin{tabular}{c@{\hspace{1.0mm}}c@{\hspace{1.0mm}}c@{\hspace{1.0mm}}c@{\hspace{1.0mm}}}
\begin{tabular}{|c|c|c|c|c|c|}
\hline \ecell & \ecell & \ecell & \ecell & \ecell & \ecell \\
\hline \ecell & 1 & \ecell & \ecell & \ecell & 3 \\
\hline \ecell & \ecell & \ecell & \ecell & 1 & 5 \\
\hline \ecell & \ecell & \ecell & 2 & 3 & 1 \\
\hline 5 & \ecell & 6 & \ecell & \ecell & \ecell \\
\hline 6 & \ecell & \ecell & \ecell & \ecell & 2 \\
\hline 
\end{tabular} &
\begin{tabular}{|c|c|c|c|c|c|}
\hline \ecell & \ecell & \ecell & \ecell & \ecell & \ecell \\
\hline \ecell & 1 & \ecell & \ecell & \ecell & 3 \\
\hline \ecell & \ecell & \ecell & \ecell & 1 & 5 \\
\hline \ecell & \ecell & \ecell & 2 & 3 & 1 \\
\hline \ecell & \ecell & 6 & 1 & 2 & \ecell \\
\hline 6 & 5 & \ecell & \ecell & \ecell & \ecell \\
\hline 
\end{tabular} &
\begin{tabular}{|c|c|c|c|c|c|}
\hline \ecell & \ecell & \ecell & \ecell & \ecell & \ecell \\
\hline \ecell & 1 & \ecell & \ecell & \ecell & 3 \\
\hline \ecell & \ecell & \ecell & \ecell & 1 & 5 \\
\hline \ecell & \ecell & \ecell & 2 & 3 & 1 \\
\hline 5 & \ecell & \ecell & \ecell & 2 & 4 \\
\hline 6 & \ecell & \ecell & 3 & 4 & \ecell \\
\hline 
\end{tabular} &
\begin{tabular}{|c|c|c|c|c|c|}
\hline \ecell & \ecell & \ecell & \ecell & \ecell & \ecell \\
\hline \ecell & 1 & \ecell & \ecell & \ecell & 3 \\
\hline \ecell & \ecell & \ecell & \ecell & 1 & 5 \\
\hline \ecell & \ecell & \ecell & 2 & 3 & 1 \\
\hline 5 & \ecell & \ecell & \ecell & 2 & 4 \\
\hline \ecell & \ecell & 1 & 3 & 4 & 2 \\
\hline 
\end{tabular}\\ 
6.11, size 11
 & 6.11, size 12
 & 6.11, size 13
 & 6.11, size 14
\\
\end{tabular} 
\end{center} 
\begin{center}
\begin{tabular}{c@{\hspace{1.0mm}}c@{\hspace{1.0mm}}c@{\hspace{1.0mm}}c@{\hspace{1.0mm}}}
\begin{tabular}{|c|c|c|c|c|c|}
\hline \ecell & \ecell & \ecell & \ecell & \ecell & \ecell \\
\hline \ecell & 1 & \ecell & \ecell & \ecell & 3 \\
\hline \ecell & \ecell & \ecell & \ecell & 1 & 5 \\
\hline \ecell & \ecell & \ecell & 2 & 3 & 1 \\
\hline \ecell & 3 & 6 & 1 & 2 & \ecell \\
\hline \ecell & \ecell & 1 & 3 & 4 & 2 \\
\hline 
\end{tabular} &
\begin{tabular}{|c|c|c|c|c|c|}
\hline \ecell & \ecell & \ecell & \ecell & \ecell & \ecell \\
\hline \ecell & 1 & \ecell & \ecell & \ecell & 3 \\
\hline \ecell & \ecell & \ecell & \ecell & 1 & 5 \\
\hline \ecell & \ecell & \ecell & 2 & 3 & 1 \\
\hline \ecell & 3 & \ecell & 1 & 2 & 4 \\
\hline \ecell & 5 & 1 & 3 & 4 & 2 \\
\hline 
\end{tabular} &
\begin{tabular}{|c|c|c|c|c|c|}
\hline \ecell & \ecell & \ecell & \ecell & \ecell & \ecell \\
\hline \ecell & 1 & \ecell & 5 & 6 & \ecell \\
\hline 3 & 4 & \ecell & 6 & 1 & \ecell \\
\hline \ecell & 6 & \ecell & \ecell & 3 & \ecell \\
\hline \ecell & 3 & 6 & 1 & \ecell & \ecell \\
\hline 6 & 5 & 1 & 3 & 4 & \ecell \\
\hline 
\end{tabular} &
\begin{tabular}{|c|c|c|c|c|c|}
\hline \ecell & \ecell & \ecell & \ecell & \ecell & \ecell \\
\hline \ecell & \ecell & 1 & \ecell & \ecell & 5 \\
\hline \ecell & 1 & 2 & \ecell & 6 & \ecell \\
\hline \ecell & \ecell & \ecell & \ecell & 3 & \ecell \\
\hline \ecell & \ecell & 4 & \ecell & 2 & 3 \\
\hline 6 & \ecell & \ecell & 3 & \ecell & \ecell \\
\hline 
\end{tabular}\\ 
6.11, size 15
 & 6.11, size 16
 & 6.11, size 17
 & 6.12, size 11
\\
\end{tabular} 
\end{center} 
\begin{center}
\begin{tabular}{c@{\hspace{1.0mm}}c@{\hspace{1.0mm}}c@{\hspace{1.0mm}}c@{\hspace{1.0mm}}}
\begin{tabular}{|c|c|c|c|c|c|}
\hline \ecell & \ecell & \ecell & \ecell & \ecell & \ecell \\
\hline \ecell & \ecell & 1 & \ecell & \ecell & 5 \\
\hline \ecell & 1 & 2 & \ecell & 6 & \ecell \\
\hline \ecell & \ecell & \ecell & \ecell & 3 & \ecell \\
\hline \ecell & \ecell & 4 & \ecell & 2 & 3 \\
\hline 6 & 4 & 5 & \ecell & \ecell & \ecell \\
\hline 
\end{tabular} &
\begin{tabular}{|c|c|c|c|c|c|}
\hline \ecell & \ecell & \ecell & \ecell & \ecell & \ecell \\
\hline \ecell & \ecell & 1 & \ecell & \ecell & 5 \\
\hline \ecell & 1 & 2 & \ecell & 6 & \ecell \\
\hline \ecell & \ecell & \ecell & \ecell & 3 & \ecell \\
\hline \ecell & \ecell & 4 & 1 & 2 & 3 \\
\hline \ecell & 4 & 5 & \ecell & \ecell & 2 \\
\hline 
\end{tabular} &
\begin{tabular}{|c|c|c|c|c|c|}
\hline \ecell & \ecell & \ecell & \ecell & \ecell & \ecell \\
\hline \ecell & \ecell & 1 & \ecell & \ecell & 5 \\
\hline \ecell & 1 & 2 & \ecell & 6 & \ecell \\
\hline \ecell & \ecell & \ecell & \ecell & 3 & \ecell \\
\hline \ecell & 6 & 4 & \ecell & 2 & 3 \\
\hline \ecell & 4 & \ecell & 3 & 1 & 2 \\
\hline 
\end{tabular} &
\begin{tabular}{|c|c|c|c|c|c|}
\hline \ecell & \ecell & \ecell & \ecell & \ecell & \ecell \\
\hline \ecell & \ecell & 1 & \ecell & \ecell & 5 \\
\hline \ecell & 1 & 2 & \ecell & 6 & \ecell \\
\hline \ecell & \ecell & \ecell & 2 & \ecell & 1 \\
\hline \ecell & \ecell & 4 & 1 & 2 & 3 \\
\hline \ecell & 4 & 5 & 3 & \ecell & 2 \\
\hline 
\end{tabular}\\ 
6.12, size 12
 & 6.12, size 13
 & 6.12, size 14
 & 6.12, size 15
\\
\end{tabular} 
\end{center} 
\begin{center}
\begin{tabular}{c}
\begin{tabular}{|c|c|c|c|c|c|}
\hline \ecell & \ecell & \ecell & \ecell & \ecell & \ecell \\
\hline \ecell & \ecell & 1 & \ecell & \ecell & 5 \\
\hline \ecell & 1 & 2 & \ecell & 6 & 4 \\
\hline \ecell & 5 & 6 & 2 & \ecell & 1 \\
\hline 5 & 6 & 4 & 1 & \ecell & \ecell \\
\hline \ecell & 4 & 5 & \ecell & \ecell & \ecell \\
\hline 
\end{tabular}\\ 
6.12, size 16
\\
\end{tabular} 
\end{center} 
\end{footnotesize}

\iflatexml
  \chapter{Construction for Latin interchanges in a back-circulant array}\label{app3}
\else
  \refstepcounter{chapter}
  \chapter*{Appendix \thechapter\newline\newline Construction for Latin interchanges in a back-circulant array}\label{app3}
  \addcontentsline{toc}{chapter}{\hspace{1.5em}Appendix~\thechapter\quad Construction for Latin interchanges in a back-circulant array}
  \vspace{0.5cm}
\fi

This Appendix gives a construction for Latin interchanges referred
to in Chapter~\ref{ch6}, which are called ``Variety 3'' Latin interchanges there.

The construction given here is that of~\cite{MR1758263}, and results
in a Latin interchange $I$ in a back-circulant Latin square.  Recall that the
completion of the critical set $D$ given in Theorem~\ref{thm636} resulted in an $n
\times n$ Latin square, denoted $\mathcal{LD}$, of which the first
$\displaystyle{\frac{n}{2}}$ rows were the same as an $n \times n$ back-circulant
Latin square.  $D$ contains entries from the first $\displaystyle{\frac{n}{2}}$
rows of $LD$, and the following result gives a Latin interchange $I$
which intersects $D$ in any given cell $(x,y)$ in those rows.

Let $\mathcal{A}$ denote the Latin subrectangle in $\mathcal{LD^{T}}$ (the
transpose of $\mathcal{LD}$)
bounded by the entries
$(x, y; y+x)$, $(n-1, y; y - 1)$,
$(x, \displaystyle{\frac{n}{2}} - 1; \displaystyle{\frac{n}{2}} - 1 + x)$, and
$(n-1, \displaystyle{\frac{n}{2}} - 1; \displaystyle{\frac{n}{2}} - 2)$.
All future row and column references are relative to this subrectangle;
that is, a reference to the entry $(i, j; k)$ means the entry $(i-x,j-y;k)$
in $\mathcal{LD^{T}}$.

Let $c = \displaystyle{\frac{n}{2}} - y$, $r = n - x$, and $e = n + 1 - c$.

Define the sequence of numbers $\alpha_{1}, \alpha_{2}, ..., \alpha_{P}$ to be integers where
\begin{eqnarray*}
\alpha_{1} & = & c - 1 {\rm ~(mod~}e) {\rm ~and,~for~} i \geq 2, \\
\alpha_{i} & = & \alpha_{i-1} {\rm ~(mod~}(e-\alpha_{1}-...-\alpha_{i-1})).
\end{eqnarray*}
Let $P$ be the value such that $\alpha_{P} \neq 0$ and $\alpha_{P+i} = 0$ for all $i > 0$.
For $i = 1, 2, ..., P$, let $\delta_{i} = \alpha_{1} + \alpha_{2} + ... + \alpha_{i}$.
Define
\begin{eqnarray*}
A_{0} &=& \{ (0,0; x+y), (0,c-1; c-1+x+y) \}, {\rm ~and~if~} \alpha_{1} \neq c - 1 {\rm ~define}\\
B_{0} &=& \{ (c - 1 - ae, ae; c-1+x+y), (c - 1 - ae, (a + 1) e; x+y) \\ 
&& \quad{}\mid  0 \leq a \leq \displaystyle{\frac{c-1-\alpha_{1}}{e}} - 1 \}.
\end{eqnarray*}
If $\alpha_{1} \neq 0$, define
\begin{eqnarray*}
A_{1} &=& \{ (e, c - 1 - \alpha_{1}; c-1+e-\alpha_{1}+x+y), (e,c-1;x+y) \},
\end{eqnarray*}
and if $\alpha_{1} \neq \alpha_{2}$ define
\begin{eqnarray*}
B_{1} &=& \{ (\alpha_{1} - a(e-\alpha_{1}), c - 1 - \alpha_{1}; c-1+x+y ), \\
&& \quad{}(\alpha_{1} - a (e - \alpha_{1}), c - 1 - \alpha_{1} + (a+1)(e-\alpha_{1}); c-1+e-\alpha_{1}+x+y) \\
&& \quad{}\mid 0 \leq a \leq \displaystyle{\frac{\alpha_{1}-\alpha_{2}}{e-\alpha_{1}}} - 1 \}.
\end{eqnarray*}
If $P \geq 2$, for $2 \leq i \leq P$, define
\begin{eqnarray*}
A_{i} &=& \{ (e - \delta_{i-1}, c - 1 - \alpha_{i} ; c-1+e-\delta_{i}+x+y), \\
&& \quad{}(e - \delta_{i-1}, c-1; c-1+e-\delta_{i-1}+x+y) \} 
\end{eqnarray*}
and if $\alpha_{i} \neq \alpha_{i+1}$, define
\begin{eqnarray*}
B_{i} &=& \{ ( \alpha_{i} - (e-\delta_{i})a, c - 1 - \alpha_{i} + a (e-\delta_{i}); c-1+x+y ), \\
&& \quad{}(\alpha_{i} - a(e-\delta_{i}), c - 1 - \alpha_{i} + (a+1)(e-\delta_{i}); c-1+e-\delta_{i}+x+y) \mid  \\
&& \quad{}0 \leq a \leq \displaystyle{\frac{\alpha_{i}-\alpha_{i+1}}{e-\delta_{i}}} - 1 \}.
\end{eqnarray*}
Then the set $I = A_{0} \cup B_{0} \cup A_{1} \cup B_{1} \cup ... \cup A_{P} \cup B_{P}$ is the required Latin interchange.

\addcontentsline{toc}{chapter}{\protect\numberline{\ }{Bibliography}}


\end{document}